\documentclass[a4paper,oneside]{article}
\usepackage{amssymb}
\usepackage{amsmath}
\usepackage{amsthm}
\usepackage{float}
\usepackage[T1]{fontenc}
\usepackage[utf8]{inputenc}
\usepackage[british]{babel}
\usepackage{geometry}
\usepackage{caption}
\usepackage{graphicx}

\usepackage{nomencl}
\makeglossary
\makenomenclature

\newcommand{\Hyp}{\mathbb{H}}
\newcommand{\C}{\mathcal{C}}
\newcommand{\U}{\mathcal{U}}
\newcommand{\R}{\mathbb{R}}

\newcommand{\SU}{\mathbb{S}^1}
\newcommand{\N}{\mathbb{N}}
\newcommand{\Z}{\mathbb{Z}}
\newcommand{\F}{\mathcal{F}}
\newcommand{\Pants}{\mathcal{P}}

\newcommand{\G}{\Gamma}
\newcommand{\g}{\gamma}
\newcommand{\la}{\lambda}

\newcommand{\de}{\delta}
\newcommand{\tv}{\rightarrow}

\newcommand{\CC}{\mathcal{CC}}

\newcommand{\gs}{\mathcal{I}}
\newcommand{\GSt}{\mathcal{G}\widetilde{S}}
\newcommand{\Lam}{\mathcal{L}}
\newcommand{\MLam}{\mathcal{ML}}
\newcommand{\PMLam}{\mathcal{PML}}
\newcommand{\GC}{\mathcal{GC}}
\newcommand{\RP}{\mathbb{RP}}
\newcommand{\PGC}{\mathcal{PGC}}
\newcommand{\EQ}[3]{\mathcal{E}_{#1}^{#2}{(#3)} }

\newcommand{\Produit}{\Hyp^2 \times \Hyp^2}

\newtheorem{theorem}{Theorem}[section]

\newtheorem{lemme}[theorem]{Lemma}

\newtheorem{proposition}[theorem]{Proposition}

\newtheorem{corollaire}[theorem]{Corollary}

\newtheorem{definition}[theorem]{Definition}

\newtheorem*{TheoremNoCount}{Theorem}
\newtheorem*{PropNoCount}{Proposition}
\newtheorem*{Example}{Examples}

\DeclareMathOperator{\Teich}{Teich}
\DeclareMathOperator{\Isom}{Isom}
\DeclareMathOperator{\Card}{Card}
\DeclareMathOperator{\PSL}{PSL}

\DeclareMathOperator{\dil}{dil}

\DeclareMathOperator{\SL}{SL}

\DeclareMathOperator{\HDim}{Hdim}

\DeclareMathOperator{\AdS}{AdS}

\begin{document}
 
\title{ Counting closed geodesics in globally hyperbolic maximal compact AdS 3-manifolds.}
\author{Olivier Glorieux}

\maketitle
%\begin{abstract}[ABSTRACT MANHATTANN
%%In this article we study the critical exponent of surface groups acting diagonally on the product $\Hyp^2\times \Hyp^2$ endowed with the Manhattan metric. 
%In this article we  study  the Manhattan curve as defined by M. Burger and different invariant including critical exponent. The principal result is an isolation theorem, precising the sharpness of the rigidity criteria of C. Bishop and S. Steger, for two representations of surface group in $\PSL_2(\R)$ to be conjugated. We exhibit some examples lightening the behavior of the critical exponent over the Teichmüller space. In Appendix we add a generalization of a theorem of G. Knieper \cite{Knieper} for two factors comparing critical exponent and the exponential growth of the number of closed geodesics.
%\end{abstract}

\begin{abstract}							%%%%%Abstract ADS
We propose a definition for the length of closed geodesics in  a globally hyperbolic maximal compact (GHMC) Anti-De Sitter manifold. We then prove that the number of closed geodesics of length less than $R$ grows exponentially fast with $R$ and the exponential growth rate is related to the critical exponent associated to the two hyperbolic surfaces coming from Mess parametrization. We get an equivalent of three results for quasi-Fuchsian manifolds in the GHMC setting : R. Bowen's rigidity theorem  of critical exponent, A. Sanders' isolation theorem and C. McMullen's examples lightening the behaviour of this exponent when the surfaces range over Teichmüller space. 
\end{abstract}
%\begin{TheoremNoCount}

%\end{TheoremNoCount}

\section{Introduction}
A classical problem in Riemannian geometry is to count the number of closed geodesics on a manifold, or estimate its growth.  For compact negatively curved manifolds, this number grows exponentially fast and we know a very precise estimate since G. Margulis' thesis, \cite{margulis1969applications}, showing the relation with volume entropy. This exponential growth rate is called critical exponent, this is also the abscissa of convergence of the Poincaré series of the fundamental group of the manifold acting on the universal cover. 

For a wide class of manifolds we understand quite well this invariant. For hyperbolic compact manifolds $M$, it is constant equal to $\dim(M)-1$.  More generally for hyperbolic convex-cocompact manifolds it is equal to the Hausdorff dimension of the limit set on the sphere at infinity, see \cite{BishopJones} for an even more  general  result. A vast category of convex-cocompact 3-manifolds are given by quasi-Fuchsian manifolds. These are hyperbolic 3-manifolds  whose limit set of their fundamental group on the sphere at infinity is a topological circle. They are topologically the product of a surface of genus greater than 2 and $\R$. The geometry of a quasi-Fuchsian is encoded by two points in the Teichmüller space of $S$, $\Teich(S)$, through the so called Bers simultaneous uniformization \cite{bers1972uniformization}.  

The behaviour of the critical exponent for quasi-Fuchsian manifolds has been deeply studied. 
A theorem of R. Bowen \cite{bowen1979hausdorff} says that the critical exponent is greater or equal to $1$, and equality occurs if and only if the two points are the same, which geometrically says that the limit set is a round circle  of the sphere at infinity.  Quantitative theorems have also been shown. A. Sanders in \cite{sanders2014entropy} proved an isolation theorem: he showed that the critical exponent of a sequence of  quasi-Fuchsian manifolds encoded, through Bers simultaneous uniformisation, by two sequences of point goes to 1 if and only if the two points tend to the same limit in the Teichmüller space. Finally C. McMullen in \cite{mcmullen1999hausdorff} studied the behaviour of the critical exponent for sequences of quasi-Fuchsian manifolds parametrized  by a fixed surface and a sequence going to the boundary of the Teichmüller space. 

 For Lorentzian manifolds, the same counting problem makes no sense in general, since the length of a curve is not necessarily well defined.  However, there is a subclass of Lorentzian manifolds called \emph{Globally Hyperbolic Anti-de Sitter manifolds}, which are the analogous to quasi-Fuchsian manifolds, for which we propose in this article a natural definition for critical exponent. The aim of this article is to define, study and prove the counterpart of the three theorems given in the last paragraph, for this critical exponent. 

We will give a quick review of the Lorentzian manifolds we are interested in. It will turn out that their geometry is also encoded by two points in the Teichmüller space and a large part of this article will study the action of two Fuchsian representations on the product of two hyperbolic spaces: $\Hyp^2\times \Hyp^2$.  A preprint with D. Monclair and the author, \cite{glorieux2016}, continues the investigation of these invariants in a more Lorentzian perspective. 

We will finally recall the usual definition  of critical exponent and explain how it is related to other classical invariants.
\subsection{Globally hyperbolic  maximal compact anti-de Sitter manifolds}

A lot of work has been done on \emph{Globally Hyperbolic} (GH)  \emph{Anti-de Sitter} ($\AdS$) manifolds during the last decades, based on the pioneering work of G. Mess \cite{mess2007lorentz} describing the geometry of such manifolds. A concise and complete presentation of the geometric background for GH \, $\AdS$ manifolds can be found in \cite{diallo2014metriques}, and I will follow this text.  More detailed ones are \cite{barbot2007constant} and \cite{benedetti2009canonical}. %The problem we consider can in fact be translated to a hyperbolic geometry problem and we will try to explain the basic ideas of GH manifolds without digging deeper in this rich theory. 

Recall that on Lorentzian manifold there are three types of tangent vectors classified by the sign of the quadratic form. Let $Q$ denotes the quadratic form  coming from the Lorentzian scalar product and $v$ a tangent vector. We say that $v$ is spacelike if $Q(v)>0$, lightlike if $Q(v)=0$ and timelike if $Q(v)<0$. This vocabulary extends to $C^1$ curves : we say that a $C^1$ curve is spacelike (respectively lightlike, timelike), if all its tangent vectors are spacelike  (respectively lightlike, timelike).

A \emph{Cauchy surface} in a Lorentzian 3-manifold is a spacelike surface which intersects every inextendable timelike and lightlike curves exactly once.   If a  Lorentzian manifold contains a Cauchy surface it is said  \emph{globally hyperbolic}. Moreover it is said \emph{maximal} if there is no isometric embedding in a strictly larger space time, sending a Cauchy surface on a Cauchy surface. 

It follows from global hyperbolicity that every Cauchy surfaces are homeomorphic and we will suppose in this paper that Cauchy surfaces are compact surfaces of genus $g\geq 2$. The manifolds satisfying those last two conditions are called \emph{globally hyperbolic maximal compact} (GHMC).  

The Anti-de Sitter space  is a maximal symmetric Lorentzian space of constant curvature $-1$, this is the equivalent of the hyperbolic space in the Lorentzian setting. Let us present the following linear model for $\AdS$. Consider $M_2(\R)$ the space of 2 by 2 real matrices endowed with the scalar product $\eta$ induced by the quadratic form $-\det$. The signature of $-\det$ is (2,2). The level $-1$ of this quadratic form is $\SL_2(\R)$, and the restriction of $\eta$ is of signature $(2,1)$, hence it is a Lorentzian manifold called \emph{Anti-de Sitter space}. It is a Lorentzian space time of constant negative curvature $-1$.  We will consider 3-manifolds locally modeled on $\AdS$ 3-space, that is  a manifold admitting a $(\Isom_0(\AdS_3), \AdS_3)-$structure, where $\Isom_0(\AdS_3)$ is the connected component of $(Id, Id)$ in the group of isometries of $\AdS_3$.  We can see that $\Isom_0(\AdS_3)$ is isomorphic to $\SL_2(\R)\times \SL_2(\R)/(-Id, -Id)$, \cite{barbot2007constant}, where an element $(\g_L, \g_R)\in \SL_2(\R)\times \SL_2(\R)/(-Id, -Id)$ acts by left and right multiplication.  This action of $\SL_2(\R)\times \SL_2(\R) $ on $M_2(\R)$ preserves the quadratic form and therefore $-\det$ induces a bi-invariant Lorentzian metric on $\SL_2(\R)$. 

It will sometimes be more convenient for us to use projective model : $\eta$ induces on $\PSL_2(\R)$ a Lorentzian structure for which the isometry group is $\PSL_2(\R) \times \PSL_2(\R)$. Hence the holonomy of a $\AdS$ 3-manifolds, $M$,  is naturally given by two representations of $\pi_1(M)$ in $\PSL_2(\R)$. We will call these two representations \emph{left} and \emph{right} : $(\rho_L,\rho_R)$.

Let $M$ be a GHMC, $\AdS$ manifold. Recall that we supposed the Cauchy surface  to be a compact surface $S$ of genus greater than 2. Then from global hyperbolicity  we have  $\pi_1(M)=\pi_1(S)$.   G. Mess, showed  in \cite{mess2007lorentz} that for GHMC, the two  representations $(\rho_L,\rho_R)$ are faithful and discrete, in other words, they are points in the Teichmüller space of $S$, $\Teich(S)$. Conversely, he explained how to construct a GHMC manifold with these two points as left and right holonomies. His construction gives a  parametrization of  GHMC manifolds by the product of two  Teichmüller spaces of $S$. 

This can be considered as the Bers simultaneous unifomization of quasi-Fuchsian manifolds, and GHMC manifolds can be seen as the Lorentzian counterpart of quasi-Fuchsian manifolds in many aspects. Both are homeomorphic to a compact surface time $\R$. Their holonomy is given by product of Teichmüller representations. The holonomy of a GHMC manifolds gives a limit set on the boundary of $\AdS$, this limit set is a curve which is a round circle if and only if the left and right representations are the same in $\Teich(S)$. As for quasi-Fuchsian manifolds, GHMC ones have a convex core, for which the boundaries components are two hyperbolic pleated surfaces. There are open questions on the geometry of those boundaries :  we know in both cases that we can prescribe the metric on them (from the work of Epstein and Marden, \cite{marden111convex}, for quasi-Fuchsian case, and from the work of Diallo, \cite{diallo2014metriques}, for  GHMC case) and it is still an open question to know whether or not the metrics on the boundaries determine the entire manifold (these questions have been raised by W. Thurston for quasi-Fuchsian and G. Mess for GHMC).

\subsection{Dynamical invariants for quasi-Fuchsian manifolds}
There are a lot of dynamical invariants associated to quasi-Fuchsian manifolds, for example the Hausdorff dimension of the limit set, the volume entropy of the convex core, the critical exponent, the growth rate of the number of closed geodesics... It is a standard fact these invariants are equal in the context of quasi-Fuchsian manifolds, see for example the introduction of  \cite{mcmullen1999hausdorff}. A natural question is to know if those invariants can be defined in the GHMC setting and if they satisfy nice properties ?  

Let us recall the known results in the quasi-Fuchsian setting we want to obtain in the GHMC setting. 
We call $\delta_{QF}(S_0,S_1)$ the critical exponent associated to the quasi-Fuchsian manifold parametrized by $S_0$ and $S_1$ through Bers simultaneous uniformization. We would at least expect an invariant which distinguishes the Fuchsian case, ie. when $\rho_L$ is conjugated to $ \rho_R$. In the quasi-Fuchsian settings, it is given by the following rigidity theorem due to R. Bowen

\begin{theorem}\cite{bowen1979hausdorff}
$$\delta_{QF}(S_0,S_1)\geq 1.$$ 
Moreover the equality occurs if and only if $S_0=S_1$. 
\end{theorem}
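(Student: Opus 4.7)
The plan is to reduce Bowen's rigidity to a statement about the Hausdorff dimension of the limit set of the quasi-Fuchsian representation. Let $\rho : \pi_1(S) \to \PSL_2(\mathbb{C})$ denote the holonomy of the quasi-Fuchsian manifold associated by Bers' simultaneous uniformization to $(S_0, S_1)$, and let $\Lambda_\rho \subset S^2 = \partial_\infty \Hyp^3$ be its limit set. Since $\rho(\pi_1(S))$ is convex cocompact, Sullivan's theorem---exactly the identification between critical exponent, volume entropy of the convex core, and Hausdorff dimension of the limit set recalled in the preceding subsection---gives
$$\delta_{QF}(S_0, S_1) \;=\; \HDim(\Lambda_\rho),$$
so everything reduces to a Hausdorff-dimension statement for a quasi-Fuchsian limit set.

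For the inequality, I would use that $\Lambda_\rho$ is a Jordan curve in $S^2$, a classical consequence of Bers' description of the quasi-Fuchsian locus. Any topological circle has topological dimension $1$, and topological dimension is a lower bound for Hausdorff dimension, so $\HDim(\Lambda_\rho) \geq 1$. When $S_0 = S_1$ the representation $\rho$ is conjugate into $\PSL_2(\R)$, $\Lambda_\rho$ is then the round circle bounding a totally geodesic copy of $\Hyp^2 \subset \Hyp^3$, and $\HDim(\Lambda_\rho) = 1$.

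The substantive direction is the converse: if $\HDim(\Lambda_\rho) = 1$, then $\Lambda_\rho$ is a round circle. My approach is to exploit that $\Lambda_\rho$ is the image of a round circle under a $K$-quasi-conformal homeomorphism of $S^2$ conjugating a base Fuchsian representation to $\rho$, which exhibits $\Lambda_\rho$ as a $K$-quasi-circle. The boundary action of $\rho(\pi_1(S))$ on $\Lambda_\rho$ admits a Markov coding by an expanding map $T$, and a Bowen--Manning type pressure formula identifies $\HDim(\Lambda_\rho)$ as the unique zero $s$ of $\sigma \mapsto P(-\sigma \log |T'|)$. Strict convexity of the pressure, combined with Livschitz's cohomological theorem, forces that the value $s=1$ is attained only when $\log |T'|$ is, on periodic orbits, cohomologous to the analogous potential coming from a Fuchsian coding; equivariance under $\rho(\pi_1(S))$ then pins down the Patterson--Sullivan measure in the conformal class of arc length, and a distortion argument for quasi-symmetric maps forces $\Lambda_\rho$ to be round. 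Once roundness is established, $\rho(\pi_1(S))$ preserves a totally geodesic copy of $\Hyp^2 \subset \Hyp^3$, so $\rho$ is conjugate into $\PSL_2(\R)$ and $S_0 = S_1$ in $\Teich(S)$. The main obstacle I anticipate is exactly this rigidity step: transforming the dimension equality into the quasi-symmetric rigidity of the boundary map without circularity is the technical heart of Bowen's original argument and seems to require the full thermodynamic-formalism machinery rather than a soft ergodic-theoretic input.
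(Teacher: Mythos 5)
This theorem is quoted in the paper as background from Bowen's article; the paper gives no proof of it (its own contribution is the $\AdS$ analogue, Theorem \ref{th - rigidité des GHMC}, which is deduced from the Bishop--Steger inequality $\delta(S_0,S_1)\leq 1/2$ rather than reproven). So your proposal has to be judged against Bowen's original argument. The reduction to $\HDim(\Lambda_\rho)$ via the standard chain of equalities for convex-cocompact groups, and the inequality $\HDim(\Lambda_\rho)\geq 1$ from the fact that $\Lambda_\rho$ is a Jordan curve together with $\dim_{\mathrm{top}}\leq \HDim$, are both correct and are indeed how the easy half goes.

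The gap is in the equality case, which is the entire content of the theorem. The step ``strict convexity of the pressure, combined with Livschitz's theorem, forces $\log|T'|$ to be cohomologous to the analogous potential coming from a Fuchsian coding'' does not work as stated: there is only one group and one coding in play, so there is no canonical ``Fuchsian potential'' to compare $\log|T'|$ with, and the single scalar identity $P(-\log|T'|)=0$ --- which is all that $\HDim(\Lambda_\rho)=1$ gives you through the Bowen--Manning formula --- is one equation, not a cohomological one; Livschitz's theorem requires a function whose periodic data you have shown to vanish, and you have not produced one. What Bowen actually does at this point is geometric rather than cohomological: dimension $1$ for a quasi-circle forces the $1$-dimensional Hausdorff measure of $\Lambda_\rho$ to be a finite, positive conformal density, hence $\Lambda_\rho$ is a rectifiable $1$-set with tangents almost everywhere; invariance of the tangent line field under the group, combined with ergodicity of the boundary action, then forces $\Lambda_\rho$ to be a round circle. (An alternative modern route is the second-derivative/variance computation of the dimension along a quasi-conformal deformation, which vanishes only at the Fuchsian locus.) You correctly flag this rigidity step as the technical heart, but flagging it is not the same as supplying it, so the proposal is incomplete precisely where the theorem is nontrivial.
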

It is moreover known that $\delta_{QF} <2$, for any quasi-Fuchsian manifolds. \\

In fact, there is a quantitative result of this last theorem given by A. Sanders. 
Let us recall before stating this theorem what we mean by the thick part of the set of  quasi-Fuchsian manifolds. We say that a sequence of  quasi-Fuchsian manifolds stays in the thick part of the set of Quasi-Fuchsian manifolds if the injectivity radius is bounded below by $\epsilon$, for some $\epsilon>0$.  Recently, A. Sanders showed that if a quasi-Fuchsian manifolds $M$ stays in the thick part of the Teichmüller space of $S$, the value $1$  for the   critical exponent is isolated around Fuchsian locus : 
\begin{theorem}\cite[Theorem 5.5]{sanders2014entropy}
Fix $\epsilon_0>0$, and suppose $M$ is a quasi-Fuchsian manifold in the $\epsilon_0$ thick part, parametrized by $S_0$ and $S_1$. For every $\epsilon$ there exists $\eta(\epsilon,\epsilon_0)$ such that if 
$$\delta_{QF} (S_0,S_1) \leq 1+\eta,$$
then $d_{Teich}(S_0,S_1) \leq  \epsilon,$ where $d_{Teich}$ is the Teichmüller distance on $\Teich(S)$.
\end{theorem}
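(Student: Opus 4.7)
The plan is a compactness-plus-rigidity argument by contradiction. Suppose the conclusion fails: there exist $\epsilon>0$ and a sequence of pairs $(S_0^n,S_1^n)$ with both factors in the $\epsilon_0$-thick part of $\Teich(S)$ such that $\delta_{QF}(S_0^n,S_1^n)\to 1$ while $d_{Teich}(S_0^n,S_1^n)\geq\epsilon$ for every $n$. The goal is to pass to a limit and invoke Bowen's rigidity for a contradiction.

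First I would normalize by the mapping class group: both $\delta_{QF}$ and $d_{Teich}$ are invariant under the diagonal action of $\mathrm{MCG}(S)$. By Mumford's compactness, the projection of the $\epsilon_0$-thick part to moduli space is compact, so after replacing each pair by $(\phi_n\cdot S_0^n,\phi_n\cdot S_1^n)$ for a suitable $\phi_n\in\mathrm{MCG}(S)$ and extracting a subsequence, I may assume $S_0^n\to S_0^\infty\in\Teich(S)$. To simultaneously extract a limit of $S_1^n$, I would need a uniform upper bound on $d_{Teich}(S_0^n,S_1^n)$. This should follow from an a priori estimate showing that, in the $\epsilon_0$-thick part, $\delta_{QF}$ stays bounded away from $1$ once $d_{Teich}$ is large; granted this, I extract $S_1^n\to S_1^\infty$ with $d_{Teich}(S_0^\infty,S_1^\infty)\geq\epsilon>0$.

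Next I would use continuity of $\delta_{QF}$ on quasi-Fuchsian space. Quasi-Fuchsian representations are convex-cocompact, their limit sets vary continuously (in Hausdorff topology) with the representation, and by classical results of Bowen and Ruelle the Hausdorff dimension of the limit set, which coincides with $\delta_{QF}$, is continuous under such convergence. Consequently $\delta_{QF}(S_0^\infty,S_1^\infty)\leq\liminf\delta_{QF}(S_0^n,S_1^n)=1$. Combined with Bowen's lower bound $\delta_{QF}\geq 1$ equality holds, so the rigidity clause of Bowen's theorem forces $S_0^\infty=S_1^\infty$, contradicting $d_{Teich}(S_0^\infty,S_1^\infty)\geq\epsilon$.

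The main obstacle is the first step, i.e.\ ruling out the scenario $d_{Teich}\to\infty$ while $\delta_{QF}\to 1$. Without the thick-part hypothesis such degenerations are genuinely possible (they contain McMullen-type cusped limits), so the thickness assumption must enter quantitatively. In practice I expect this to come from a Patterson--Sullivan or thermodynamic-formalism argument in the convex core, whose local geometry is controlled by $\epsilon_0$: one would convert the deviation $\delta_{QF}-1$ into a quantitative bound on how far the limit set deviates from a round circle, hence on the bending lamination of the convex core boundary, and finally on $d_{Teich}(S_0^n,S_1^n)$ via standard comparisons between bending data and the Teichmüller metric in the thick part.
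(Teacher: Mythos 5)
Your compactness--rigidity argument correctly handles only the easy half of the problem, and the step you flag as ``the main obstacle'' is in fact the entire content of the theorem. Concretely: after normalizing by the mapping class group you can extract $S_0^n\to S_0^\infty$, but to extract a limit of $S_1^n$ you need precisely the a priori bound you do not have, namely that $\delta_{QF}\to 1$ in the thick part forces $d_{Teich}(S_0^n,S_1^n)$ to stay bounded. That bound is (up to the compactness upgrade from ``bounded'' to ``small'') the contrapositive of the statement being proved, so deferring it to ``a Patterson--Sullivan or thermodynamic-formalism argument'' leaves the proof essentially where it started. Your proposed mechanism for closing the gap --- converting $\delta_{QF}-1$ into a bound on the bending lamination and then into a Teichm\"uller-distance bound --- is speculative: no quantitative estimate of either implication is given, and neither is standard. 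Sanders' actual proof goes through minimal surfaces: the thick-part hypothesis controls the principal curvatures of the minimal surface in the convex core, the entropy is bounded below in terms of these curvatures, and the curvature bound in turn controls the quasiconformal distortion between the two ends, hence $d_{Teich}$.

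Note also that the present paper does not prove this quasi-Fuchsian statement at all --- it cites Sanders and proves the AdS analogue (Theorem \ref{th - isolation 1 surface} and Corollary \ref{cor - isolation 2 surfaces}). There the compact case is treated exactly as in your proposal (continuity of the critical exponent plus the Bishop--Steger rigidity playing the role of Bowen's theorem), but the divergent case is excluded by a genuinely different device: connect $S_0$ to $S_n$ by an earthquake along a unit-length lamination for time $t_n\to\infty$, use convexity of length functions along earthquake paths together with Kifer's large-deviation theorem to show that ``most'' closed geodesics (in the Liouville sense) have length growing linearly in $t_n$, and then bound the critical exponent of the exceptional, exponentially small set of curves strictly below $1/2$ using the critical exponent with slope read off the Manhattan curve. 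Some substitute for this step --- whether Sanders' minimal-surface estimates or an earthquake/large-deviation argument --- is indispensable, and your write-up does not supply one.
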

We can rephrase it with sequences : 
\begin{TheoremNoCount}\cite[Theorem 5.5]{sanders2014entropy}
Let  $(M_n)$ be a sequence of  quasi-Fuchsian manifolds in the $\epsilon_0$ thick part, parametrized by $(S_n)$ and $(S_n')$, then:
$$\lim_{n\tv \infty}\delta_{QF} (S_n,S_n')=1, \quad \text{iff} \quad \lim_{n\tv \infty} d_{Teich}(S_n,S_n') =0,$$
%Where $d_{Teich}$ is the Teichmüller distance on $\Teich(S)$ 
\end{TheoremNoCount}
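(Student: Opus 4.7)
The plan is to prove the two implications separately. The forward direction $(\Rightarrow)$ follows essentially as a direct translation of the preceding quantitative theorem by Sanders, while the reverse direction $(\Leftarrow)$ is a continuity argument combined with Bowen's rigidity theorem.

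For $(\Rightarrow)$, assume $\delta_{QF}(S_n,S_n') \tv 1$. Given any $\epsilon > 0$, let $\eta=\eta(\epsilon,\epsilon_0)$ be the constant supplied by the preceding theorem. For $n$ large enough we have $\delta_{QF}(S_n,S_n') \leq 1+\eta$, hence $d_{Teich}(S_n,S_n') \leq \epsilon$. As $\epsilon$ is arbitrary, $d_{Teich}(S_n,S_n') \tv 0$.

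For $(\Leftarrow)$, assume $d_{Teich}(S_n,S_n') \tv 0$ and exploit the fact that $\delta_{QF}$ is invariant under the diagonal action of the mapping class group $MCG(S)$ on $\Teich(S) \times \Teich(S)$. By Mumford compactness, the $\epsilon_0$-thick part of the moduli space $\Teich(S)/MCG(S)$ is compact, so one can find $\phi_n \in MCG(S)$ and, along a subsequence, $\phi_n \cdot S_n \tv S_\infty$ in $\Teich(S)$. Since $MCG(S)$ acts by isometries on $(\Teich(S), d_{Teich})$ and $d_{Teich}(S_n,S_n') \tv 0$, the same subsequence also satisfies $\phi_n \cdot S_n' \tv S_\infty$. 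Continuity of $\delta_{QF}$ on $\Teich(S) \times \Teich(S)$, together with its $MCG$-invariance, then yields
$$\delta_{QF}(S_n,S_n') = \delta_{QF}(\phi_n \cdot S_n, \phi_n \cdot S_n') \tv \delta_{QF}(S_\infty,S_\infty) = 1,$$
where the last equality is Bowen's rigidity theorem. Applying this to every subsequence extracted from the original one, one concludes $\delta_{QF}(S_n,S_n') \tv 1$.

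The main obstacle is the joint continuity of $\delta_{QF}$ on $\Teich(S) \times \Teich(S)$, which must be invoked with enough uniformity to handle the compactness extraction above. In the quasi-Fuchsian setting this is classical, and can be derived from the continuous dependence of the Hausdorff dimension of the limit set under quasiconformal deformations, as in McMullen's work. A secondary point is to check that the $\epsilon_0$-thick hypothesis on the three-manifolds $M_n$ descends to uniform lower bounds on the injectivity radii of the Teichmüller representatives $S_n$ and $S_n'$, so that Mumford compactness applies simultaneously to both factors.
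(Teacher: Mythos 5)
Your proof is correct and matches the intended argument: the paper itself offers no proof of this statement (it is presented as a restatement of the cited quantitative theorem of Sanders), but your two directions — the direct $\epsilon$--$\eta$ translation for $(\Rightarrow)$, and continuity of $\delta_{QF}$ plus Bowen's rigidity plus Mumford compactness with the diagonal $MCG$-action for $(\Leftarrow)$ — are exactly the strategy the paper uses for its own GHMC analogues (Theorem \ref{th - isolation 1 surface}, easy direction, and Corollary \ref{cor - isolation 2 surfaces}). The only point worth pinning down is the one you flag yourself: the $\epsilon_0$-thickness of $M_n$ passes to the Bers parameters via Bers' inequality $\ell_{M}(\gamma)\leq 2\ell_{S_i}(\gamma)$, so both surface sequences stay in a fixed thick part of $\Teich(S)$.
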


Finally we also expect a good behaviour at infinity and C. McMullen \cite{mcmullen1999hausdorff} gave examples of quasi-Fuchsian sequences for which we know the behaviour of critical exponent.  
\begin{theorem}\cite{mcmullen1999hausdorff}
Let $S_0$ be a fixed point on $\Teich(S)$. 
\begin{itemize}
\item Let $A$ be  a pseudo-Anosov diffeomorphism $S$ and call $S_n := A^{n} S_0$. Then 
$$\lim_{n\tv \infty}\delta_{QF}(S_0,S_n) = 2$$ 
\item Let $S_n$ be the surface obtained after pinching a disjoint set of simple closed curves on $S_0$. Then 
$$\lim_{n\tv \infty} \delta_{QF} (S_0,S_n) = \alpha <2 $$
\item Let $c$ be a simple closed curve, and $\tau$ the Dehn twist along $c$. Call $S_n := \tau^{n} S_0$. Then 
$$\lim_{n\tv \infty}\delta_{QF}(S_0,S_n) = \alpha.$$ 
\item Let $S_t$ be a surfaces path obtained by Fenchel twist along a simple closed geodesic $c$. Then there exists a $\ell_{S_0}(c)$-periodic  function $\delta $ such that: 
$$\lim_{t\tv \infty} | \delta_{QF}(S_0,S_t) -\delta(t) | =0.$$
\end{itemize}
\end{theorem}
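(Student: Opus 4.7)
The strategy rests on the classical identification $\delta_{QF}(S_0,S_1) = \HDim(\Lambda_{S_0,S_1})$, where $\Lambda_{S_0,S_1}\subset S^2_\infty$ is the limit set of the quasi-Fuchsian group determined by $(S_0,S_1)$; combined with the uniform bound $\delta_{QF}\leq 2$, each of the four items becomes a question about how the Hausdorff dimension of a family of quasi-circles behaves under a specific deformation in $\Teich(S)\times\Teich(S)$.

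For the pseudo-Anosov statement, I would first show that after conjugating back to a fixed base point the quasi-Fuchsian representations $(\rho_0,\rho_0\circ A_*^n)$ accumulate on an algebraic limit which is a doubly degenerate Kleinian surface group whose pair of ending laminations are the stable and unstable laminations of $A$. By a theorem of Thurston, doubly degenerate surface groups have limit set equal to all of $S^2_\infty$, hence $\HDim(\Lambda_\infty) = 2$. The remaining and hardest step is to upgrade lower semi-continuity of Hausdorff dimension under algebraic convergence to actual convergence to $2$: one argues via pleated surface estimates inside the convex core, showing that the bending locus approaches a filling lamination and forces quantitative wrinkling of the limit curve, yielding explicit lower bounds on $\HDim(\Lambda_n)$ approaching $2$.

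For the pinching statement, I would use that the representations $(\rho_0,\rho_n)$ converge geometrically to a geometrically finite Kleinian group $\Gamma_\infty$ with rank one cusps at the pinched curves. On the geometrically finite locus the critical exponent equals the Hausdorff dimension of the limit set and is continuous under strong geometric convergence (Bishop--Jones, Canary--Taylor), so $\delta_{QF}(S_0,S_n)\to \alpha:=\HDim(\Lambda_\infty)<2$. For the Dehn twist statement I would observe that $\tau_c^n S_0$ and the pinching sequence admit the same geometric limit after suitable rebasing: conjugating $\rho_0\circ(\tau_c)^n_*$ back by $\tau_c^n$ translates the action to a fundamental domain for $\langle \tau_c\rangle$, and the resulting group in the geometric topology is precisely the cusped limit of the pinching sequence. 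Hence $\delta_{QF}(S_0,\tau_c^n S_0)\to\alpha$ as well.

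For the Fenchel twist, I would exploit the identity $S_{t+\ell_{S_0}(c)}=\tau_c(S_t)$. Splitting $t=n\ell_{S_0}(c)+s$ with $s\in [0,\ell_{S_0}(c))$, the analysis of the previous paragraph gives a geometric limit of $(\rho_0,\rho_t)$ depending only on $s$, and the resulting critical exponent $\delta(s)$ is a continuous $\ell_{S_0}(c)$-periodic function. The main technical point is that the contribution of the quasi-Fuchsian geometry \emph{outside} a collar of the short curve stabilizes, so that $\delta_{QF}(S_0,S_t)=\delta(s)+o(1)$ uniformly in the fractional part; this follows from uniform geometric control on the thick part of $M_t$ together with continuity of the critical exponent under strong convergence restricted to the thick part.
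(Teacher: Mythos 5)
The first thing to say is that the paper contains no proof of this statement: it is quoted from McMullen \cite{mcmullen1999hausdorff} purely as motivating background, so there is no internal argument to measure yours against. What the paper actually proves are the AdS analogues in Section \ref{subsec - Examples}, and it does so by entirely two-dimensional means: geodesic currents and the filling property of $\Lam_+\cup\Lam_-$ (Proposition \ref{pr - quand un courant remplit on peut comparer à une fonction longueur}), convexity of length functions along earthquakes, invariance of the length spectrum under the diagonal action of the mapping class group, and the Manhattan-curve inequalities --- with no appeal to limit sets, geometric limits, or three-dimensional hyperbolic geometry. Your route, via $\delta_{QF}=\HDim(\Lambda)$ and strong convergence of Kleinian groups, is essentially McMullen's own; it buys the exact values ($2$ for the pseudo-Anosov case, $\alpha=\HDim$ of a cusped geometrically finite limit for pinching and Dehn twists), whereas the paper's surface-intrinsic route buys transportability to the AdS setting, where no conformal boundary theory is available.

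Two substantive caveats on your sketch. First, with the marking on the first factor held fixed, the algebraic limit of $(\rho_0,\rho_0\circ A^n_*)$ is a \emph{singly} degenerate group (one end stays conformally $S_0$, the other degenerates to the attracting lamination), not a doubly degenerate one. To reach the doubly degenerate group you must renormalize the marking symmetrically, using $\delta_{QF}(S_0,A^{2n}S_0)=\delta_{QF}(A^{-n}S_0,A^{n}S_0)$ --- exactly the rebasing trick the paper uses for its own analogue in Proposition \ref{pr - example pseudo diff delta tends vers 0}. This is not cosmetic: the continuity theorem you need requires strong convergence together with $\Lambda_\infty=S^2$, and your proposal to ``upgrade lower semi-continuity \ldots via pleated surface estimates'' is hand-waving at precisely the hard analytic content (Hausdorff dimension of limit sets is \emph{not} in general lower semicontinuous under algebraic convergence alone). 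Second, in the pinching and Dehn twist items you should justify that the convergence is strong (algebraic limit equals geometric limit), since that is the hypothesis under which continuity of $\HDim$ at geometrically finite groups holds; with those two points repaired, your outline matches the cited source.
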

The purpose of this paper is to give analogous statements in the GHMC setting.

Let us say a few words on the others invariants. There are several definitions  for critical exponent which all agree for quasi-Fuchsian manifolds: 
\begin{eqnarray*}
\delta_{QF} &=& \limsup_{R\tv\infty} \frac{1}{R}\log\Card\{\g \in \G \, |\, d(\g o,o ) \leq R\} \\
					&=& \limsup_{R\tv\infty} \frac{1}{R}\log\Card\{c \in \C \, |\, \ell_{QF} (c) \leq R\} \\
					&=&  \HDim(\Lambda).
\end{eqnarray*}

 Generally, the critical exponent for a group $\G$ acting on metric space $(X,d)$ is defined by the exponential growth rate of 
$\{ \g\in \G \, |\, d(\g o , o )\leq R\} $  where  $o$ is  any point in $X$.  As Lorentzian manifolds are not metric spaces, we have to set a definition for the "distance" between two points.\\  %A work in progress, let us think that there is a good definition of distance, at least for GHMC manifolds, such that the two definitions of critical exponent (the exponential growth of the number closed geodesics or of points in an orbit)  coincides. 
Hausdorff dimension is a numerical invariant showing how wild is the limit set and is defined through a metric on the boundary of the hyperbolic space. Here again for Lorentzian manifolds  we need to find a good class of metrics on the boundary. \\
% Hausdorff dimension of the limit set also coincides with critical exponent for quasi-Fuchsian manifolds. However, the limit set of a GHMC is the graph of a Hölder homeomorphism of $\RP^1$, hence it has always  Hausdorff dimension  equal to 1 (for  Riemannian  product metric). Maybe a better invariant is the optimal Hölder  regularity of this homomorphism, nevertheless nothing is known about it. 
Our work with D. Monclair \cite{glorieux2016} explains how these two invariants  can be generalised to GHMC manifolds (called AdS quasi-Fuchsian manifolds in the latter) and shows that we have equality between them. \\
Finally, let us mention that the work of D. Sullivan for hyperbolic manifolds shows  the critical exponent is related to the spectrum of the Laplacian. It has been studied for compact  $\AdS$ manifolds by F. Kassel and T. Kobayashi \cite{kassel2012discrete}, however nothing is known for GHMC manifolds. 

\bigskip 

In this article we will consider the exponential growth rate of the number of closed geodesics, which is the definition that can be translate in the Lorentzian setting in the most straightforward way.  

\subsection{Statement of results}
Let $M$ be  a GHMC $\AdS$ manifold and $S$ a Cauchy surface in $M$. 
Let $\C$ be the set of free homotopy classes of closed curves on $S$, this corresponds to the conjugacy classes of $\pi_1(S)$.  It is well known that if $S$ is endowed with a negatively curved metric there is an unique geodesic representative for any $c\in \C$. We will show in the next Section this is also true on $M$ endowed with its $\AdS$ metric. Hence, it gives for any $c\in\C$ a number $\ell_{Lor} (c)$ which is the length of the geodesic representative of $c$ in $M$. In a GH manifold there is neither  time nor lightlike closed geodesic, hence this length is positive. This is also the translation length of the conjugacy class of an element $\g\in \G$ associated to $c$. Let $\rho_L$ and $\rho_R$ be the two representations defining $M$. Recall that these representations are faithful and discrete in $\PSL_2(\R)$ henceforth they define two hyperbolic marked surfaces $S_L :=\Hyp^2/\rho_L(\G)$ and $S_R:=\Hyp^2/\rho_R(\G)$. The following  gives the relation between the length of a closed geodesic in $M$ and on the pair $(S_L,S_R)$. 
\begin{PropNoCount}[Proposition \ref{pr - lien longueur lorentzienne et hyp}, and \ref{pr - unique geodesic in free homotopy class}]
For every $c\in \C$, there is a unique geodesic representative of $c$ in $M$. Denoting by $\ell_{Lor} (c) $ its lorentzian length we have moreover. 
$$  \ell_{Lor}(c) =\frac{\ell_{S_L}(c) +\ell_{S_R}(c)}{2}.$$
\end{PropNoCount}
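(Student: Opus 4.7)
The plan is to identify the geodesic representative of $c$ with the projection of a distinguished $\gamma$-invariant spacelike axis in $\AdS_3$, and then compute its Lorentzian length by a direct matrix calculation. Fix a representative $\gamma \in \G$ of $c$ and set $h_L := \rho_L(\gamma)$, $h_R := \rho_R(\gamma)$. By Mess's theorem both are hyperbolic in $\PSL_2(\R)$, with translation lengths $\ell_L = \ell_{S_L}(c)$, $\ell_R = \ell_{S_R}(c)$ and axes $A_L, A_R \subset \Hyp^2$. In the model $\AdS_3 = \PSL_2(\R)$ with the $-\det$ metric, $\gamma$ acts by $x \mapsto h_L\, x\, h_R^{-1}$.

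The first step is to locate the $\gamma$-invariant geodesics. Writing a general geodesic through $g_0$ as $t \mapsto g_0 \exp(tY)$ with $Y \in \mathfrak{sl}_2(\R)$, invariance forces $\exp(tY)$ to commute with $h_R$ and with $g_0^{-1} h_L g_0$; hence $Y$ must be hyperbolic, and $g_0^{-1} h_L g_0$ must share its axis with $h_R$ in $\Hyp^2$. Two choices of $g_0$ realise this, according to whether $g_0$ sends the attracting fixed point $a_R$ of $h_R$ to $a_L$ or to $r_L$. These produce two spacelike invariant axes in $\AdS_3$, whose boundary endpoints in $\partial \AdS_3 \simeq \RP^1 \times \RP^1$ are respectively $\{(a_L,a_R),(r_L,r_R)\}$ and $\{(a_L,r_R),(r_L,a_R)\}$; the other two pairs of fixed points are joined by lightlike lines.

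To single out the correct axis I would invoke Mess's description of the limit curve $\Lambda \subset \partial \AdS_3$ as the graph of an orientation-preserving, $\G$-equivariant homeomorphism $\phi : \RP^1 \to \RP^1$. Equivariance forces $\phi(a_L) = a_R$ and $\phi(r_L) = r_R$, so only the pair $\{(a_L,a_R),(r_L,r_R)\}$ lies on $\Lambda$; the corresponding axis is contained in the convex hull of $\Lambda$, hence in the developing image of $\widetilde M$, whereas the other axis is not. Since a globally hyperbolic manifold admits no closed timelike or lightlike geodesic, this single spacelike axis descends to the unique closed geodesic representative of $c$ in $M$.

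For the length formula, choose $g_0 \in \PSL_2(\R)$ with $g_0(r_R) = a_L$ and $g_0(a_R) = r_L$; then $\tilde h_L := g_0^{-1} h_L g_0$ has axis $A_R$ but translates in the direction opposite to $h_R$. Let $Y \in \mathfrak{sl}_2(\R)$ be the hyperbolic generator for which $\exp(tY)$ translates along $A_R$ by hyperbolic distance $t$ in the direction of $h_R$; then $h_R = \exp(\ell_R Y)$ and $\tilde h_L = \exp(-\ell_L Y)$, and
\begin{equation*}
h_L \cdot g_0 \exp(tY) \cdot h_R^{-1} \;=\; g_0 \,\exp\!\bigl((t - \ell_L - \ell_R)\,Y\bigr),
\end{equation*}
so the parameter on the axis is translated by $\ell_L + \ell_R$. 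After conjugating $Y$ to the diagonal form $\operatorname{diag}(1/2,-1/2)$ one checks directly that $\|Y\|_{\AdS}^2 = -\det(Y) = 1/4$ in the bi-invariant metric, so the Lorentzian length of the translation, and hence of the closed geodesic, is $(\ell_L + \ell_R)/2$. The main obstacle is the uniqueness step: verifying that the axis picked out by $\Lambda$ is the only $\gamma$-invariant geodesic meeting the developing domain requires a careful reading of Mess's convex domain and of the equivariant boundary homeomorphism to rule out the competing axis; once the correct $g_0$ is identified, the length computation itself is the routine matrix identity displayed above.
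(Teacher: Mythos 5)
The length computation is correct and is essentially the paper's own argument: conjugate the pair to diagonal form, observe that the invariant spacelike geodesic is translated by the sum of the half-translation-lengths, and normalise by the speed of the parametrisation. Your identification of exactly two $\g$-invariant spacelike geodesics, distinguished by whether the conjugating element matches the axes of $h_L$ and $h_R$ with the same or opposite orientations, also matches the paper.

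The genuine gap is the one you flag yourself: ruling out the second axis. Your argument shows that the axis whose endpoints lie on the limit curve $\Lambda$ is contained in the convex hull of $\Lambda$, hence in $D(\widetilde M)$; but for the competing axis you only establish that its endpoints are \emph{not} on $\Lambda$, and this does not imply it is disjoint from $D(\widetilde M)$, since the developing image is strictly larger than the convex hull (plenty of spacelike geodesics with endpoints off $\Lambda$ meet it). The paper closes this with a dual-plane argument from Barbot--B\'eguin--Zeghib: the identity lies in the convex core (both endpoints of the good axis are limit points), its dual plane is $\{\Tr(X)=0\}$, the dual plane of any point of the convex core is disjoint from $D(\widetilde M)$ (their Lemma 6.17), and the second invariant geodesic $\left[\begin{smallmatrix} 0 & e^t\\ -e^{-t} & 0\end{smallmatrix}\right]$ is traceless, hence excluded. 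Some such input is needed; the equivariant homeomorphism alone does not suffice. A secondary point: because the boundary action on the right factor is by $(B^{-1})^t$ rather than $B$, the graph of Mess's homeomorphism passes through $(a_L,r_R)$ and $(r_L,a_R)$ when $a_R,r_R$ denote the fixed points of $h_R$ itself; your paragraph identifying the ``good'' axis with $\{(a_L,a_R),(r_L,r_R)\}$ is therefore crossed relative to your own length computation, which (correctly) uses the orientation-reversing $g_0$ with $g_0(a_R)=r_L$. As written, you compute the length of the axis you earlier declared to be the excluded one; the computation is the right one, but the labelling must be fixed so that the axis selected by $\Lambda$ and the axis whose translation length is $(\ell_L+\ell_R)/2$ are the same geodesic.
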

The proof  is two folded. First we need to show the uniqueness result which follows from the knowledge of the geodesics in $\AdS$. Then we need to compute its length, which results from an easy algebraic computation. It follows from this proposition that the counting problem in GHMC $\AdS$ manifolds is well defined. We define critical exponent by
$$\delta_{Lor}(M) :=\limsup_{T\tv \infty} \frac{1}{T}\log \Card \{c\in \C \, | \, \ell_{Lor} (c) \leq T\}.$$
The aim of this paper is the study of $\delta_{Lor}(M)$, when $(S_L,S_R)$ ranges over $\Teich(S)\times \Teich(S)$. 

Proposition \ref{pr - lien longueur lorentzienne et hyp} allows us to translate the purely Lorentzian problem into the counting problem on product of hyperbolic planes. Let us define for a pair of diffeomorphic marked hyperbolic surfaces $(S_L,S_R)$ the critical exponent by  
$$\delta(S_L,S_R):=\limsup_{T\tv \infty}  \frac{1}{T}\log \Card \{c\in \C \, | \, \ell_{S_L} (c)  +\ell_{S_R} (c)   \leq  T\},$$
where $\ell_* (c)$ designed the length of the unique closed geodesic in the free homotopy class of $c$ on $S_L$ or $S_R$. From Proposition \ref{pr - lien longueur lorentzienne et hyp}, it it clear that 
$$\delta_{Lor}(M) =2\delta(S_L,S_R).$$
We will then make the study of $\delta(S_L,S_R)$.

The invariant $\delta(S_L,S_R) $ has already been studied by C. Bishop and T. Steger in \cite{bishop1991three} where they showed the following rigidity result 
\cite{bishop1991three}
\begin{eqnarray}\label{th bishop steger}
\delta(S_0,S_1)\leq 1/2.
\end{eqnarray}
Moreover the equality occurs if and only if $S_0=S_1$. 

We can rephrase it in Lorentzian setting by, 
\begin{TheoremNoCount}[Theorem \ref{th - rigidité des GHMC}]
Let $M$ be a GHMC manifold parametrized by $(S_L,S_R)$, then 
$$\delta_{Lor}(M) \leq 1.$$ 
Moreover the equality occurs if and only if $S_L=S_R$.  
\end{TheoremNoCount}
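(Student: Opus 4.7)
The plan is to reduce the Lorentzian statement to the Bishop--Steger inequality (\ref{th bishop steger}) for pairs of hyperbolic surfaces, and then sketch a proof of that inequality. Proposition \ref{pr - lien longueur lorentzienne et hyp} already does most of the work: every free homotopy class $c\in\C$ has a unique geodesic representative in $M$, with Lorentzian length $\ell_{Lor}(c)=\frac{\ell_{S_L}(c)+\ell_{S_R}(c)}{2}$. Inserting this into the $\limsup$ defining $\delta_{Lor}(M)$ gives $\delta_{Lor}(M)=2\,\delta(S_L,S_R)$, so the theorem is exactly the statement that $\delta(S_L,S_R)\leq 1/2$ with equality if and only if $S_L=S_R$.

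For the upper bound, I would view $\delta(S_L,S_R)$ as the abscissa of convergence of the Poincar\'e-type series $\Phi(s)=\sum_{c\in\C} e^{-s(\ell_{S_L}(c)+\ell_{S_R}(c))}$. Termwise Cauchy--Schwarz gives
$$\Phi(s)\ \leq\ \Bigl(\sum_{c\in\C} e^{-2s\ell_{S_L}(c)}\Bigr)^{1/2}\Bigl(\sum_{c\in\C} e^{-2s\ell_{S_R}(c)}\Bigr)^{1/2}.$$
Each factor on the right is the Poincar\'e series of a closed hyperbolic surface, whose critical exponent equals $1$ by the prime geodesic theorem, and so both factors are finite as soon as $s>1/2$. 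Hence $\delta(S_L,S_R)\leq 1/2$. The easy half of the equality statement is then immediate: when $S_L=S_R$, $\ell_{S_L}\equiv\ell_{S_R}$ and $\delta(S_L,S_R)=1/2$.

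The main obstacle is the reverse implication $\delta(S_L,S_R)=1/2\Rightarrow S_L=S_R$, since saturating Cauchy--Schwarz at the critical abscissa does not immediately promote to pointwise equality of the two length functions. To handle it, I would use thermodynamic formalism on the unit tangent bundle of $S_L$: the length spectrum of $S_R$ is realised as a H\"older reparametrisation potential over the geodesic flow, the map $t\mapsto P\bigl(-t(\ell_{S_L}+\ell_{S_R})\bigr)$ is real-analytic with unique zero $\delta(S_L,S_R)$, and it is strictly convex unless $\ell_{S_L}$ and $\ell_{S_R}$ are cohomologous up to a multiplicative constant. Strict convexity, together with the boundary values of $P$, forces $\delta(S_L,S_R)<1/2$ whenever $\ell_{S_L}$ and $\ell_{S_R}$ are not proportional on $\C$. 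If they are proportional, $\ell_{S_L}=k\,\ell_{S_R}$, then by Otal's marked length spectrum rigidity $S_L$ is isometric to the rescaled surface $k\cdot S_R$; since both structures have curvature $-1$ on the same topological surface, Gauss--Bonnet forces $k=1$, and $S_L=S_R$.
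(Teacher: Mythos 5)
Your reduction to the surface statement is exactly the paper's: Propositions \ref{pr - lien longueur lorentzienne et hyp} and \ref{pr - unique geodesic in free homotopy class} give $\delta_{Lor}(M)=2\delta(S_L,S_R)$, so the theorem becomes $\delta(S_L,S_R)\leq 1/2$ with equality iff $S_L=S_R$. Where you diverge is that the paper simply cites Bishop--Steger \cite{bishop1991three} at this point, whereas you re-derive it; your derivation is in substance the Manhattan-curve argument the paper develops in Section \ref{sec - Manhattan curve} for other purposes. Your termwise Cauchy--Schwarz is the $t=1/2$ case of the H\"older inequality used in Proposition \ref{pr - basic properties} to prove convexity of $\C_M$ (applied to the points $(1,0)$ and $(0,1)$), and your rigidity step is the strict-convexity-of-pressure mechanism behind Burger's Theorem \ref{Rigidite}. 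Two points of care in your sketch. First, strict convexity of the one-variable function $t\mapsto P\bigl(-t(\ell_{S_L}+\ell_{S_R})\bigr)$ is not what yields $\delta<1/2$: you need strict convexity of the pressure along the segment joining the potentials $-\ell_{S_L}$ and $-\ell_{S_R}$ (equivalently, strict convexity of $\C_M$ between $(1,0)$ and $(0,1)$), together with the fact that both endpoints lie on the zero locus of $P$; this gives $P\bigl(-(\ell_{S_L}+\ell_{S_R})/2\bigr)<0$, and then $\delta<1/2$ follows because $s\mapsto P\bigl(-s(\ell_{S_L}+\ell_{S_R})\bigr)$ is strictly decreasing with unique zero at $\delta$. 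Second, in the degenerate case $\ell_{S_L}=k\,\ell_{S_R}$ you can bypass Otal and Gauss--Bonnet: both surfaces are closed hyperbolic, so both length spectra have exponential growth rate $1$, which forces $k=1$, and equal marked length spectra determine the point of $\Teich(S)$ already at the level of traces of the Fuchsian representations. What your route buys is a self-contained proof; what the paper's buys is brevity, at the cost of importing \cite{bishop1991three} as a black box (or, implicitly, the Section \ref{sec - Manhattan curve} machinery if one wants to avoid that import).
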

This is the counterpart of Bowen's rigidity theorem for GHMC. We give a completly Lorentzian proof of this result in \cite{glorieux2016}. 

The counterpart for C. McMullen's Theorem is given by the following behaviour for $\delta_{Lor}$, they are explained with details in Section \ref{subsec - Examples}.
\begin{Example}[Section \ref{subsec - Examples}]
Let $S_0$ be a fixed point in $\Teich(S)$. 
\begin{enumerate}
\item Let $A$ be a pseudo-Anosov diffeomorphism on $S$ and call $S_n := A^{2n} S_0$. Then 
$$\lim_{n\tv \infty}\delta(S_0,S_n) = 0.$$
\item Let $S_n$ be the surface obtained after shrinking one simple closed geodesic on $S_0$. Then 
$$\liminf_{n\tv \infty} \delta (S_0,S_n) = \alpha >0 $$
\item Let $\tau $ be a Dehn twist around a simple closed curve and call $S_n := \tau^{2n} S_0$. Then 
$$\lim_{n\tv \infty}\delta(S_0,S_n) = \alpha<1/2 .$$
\item Let $S_t$ be a surfaces path obtained by Fenchel twist along a simple closed geodesic $c$. Then there exists a $2\ell_{S_0}(c)$-periodic  function $\delta $ such that: 
$$\lim_{t\tv \infty} | \delta(S_0,S_t) -\delta(t) | =0.$$
\end{enumerate}
\end{Example}

Finally, the counterpart Sanders' Theorem is our main theorem. It gives the isolation of the critical exponent $\delta_{Lor}$,
\begin{TheoremNoCount}[Theorem \ref{th - isolation 1 surface}]
Let $(M_n)$  be a sequence of GHMC manifolds, parametrized by $(S_0,S_n)$, where $S_0$ is a fixed hyperbolic surface, then 
$$\lim_{n\tv \infty}\delta_{Lor} (M_n) =1 \quad \text{iff} \quad \lim_{n\tv \infty} d_{Th}(S_0,S_n) =0,$$
where $d_{Th}$ is the (symmetrized) Thurston distance on the Teichmüller space. 
\end{TheoremNoCount}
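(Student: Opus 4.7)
The plan is to first invoke Proposition~\ref{pr - lien longueur lorentzienne et hyp} to convert the statement into the equivalent claim about the hyperbolic counting exponent,
$$\lim_{n\to\infty}\delta(S_0,S_n)=1/2\quad\Longleftrightarrow\quad\lim_{n\to\infty} d_{Th}(S_0,S_n)=0,$$
and then prove the two implications separately.

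\emph{Easy direction ($\Leftarrow$).} I would begin with Thurston's characterisation of his asymmetric metric in terms of length-ratios: if the symmetrised Thurston distance satisfies $d_{Th}(S_0,S_n)<\epsilon$, then $e^{-\epsilon}\ell_{S_0}(c)\leq\ell_{S_n}(c)\leq e^\epsilon\ell_{S_0}(c)$ for every $c\in\C$, whence $\ell_{S_0}(c)+\ell_{S_n}(c)\leq(1+e^\epsilon)\ell_{S_0}(c)$ and
$$\bigl\{c:\ell_{S_0}(c)\leq T/(1+e^\epsilon)\bigr\}\subseteq\bigl\{c:\ell_{S_0}(c)+\ell_{S_n}(c)\leq T\bigr\}.$$
Huber's prime geodesic theorem on the hyperbolic surface $S_0$ then gives $\delta(S_0,S_n)\geq 1/(1+e^\epsilon)$; letting $\epsilon\to 0$ and combining with the Bishop--Steger bound $\delta\leq 1/2$ yields convergence to $1/2$.

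\emph{Hard direction ($\Rightarrow$).} I would argue by contraposition: assume (after a subsequence) that $d_{Th}(S_0,S_n)\geq\eta>0$, and produce $\theta(\eta)>0$ such that $\delta(S_0,S_n)\leq 1/2-\theta$. The natural starting point is the Bishop--Steger Cauchy--Schwarz argument
$$P_n(s):=\sum_c e^{-s(\ell_{S_0}(c)+\ell_{S_n}(c))}\leq\Bigl(\sum_c e^{-2s\ell_{S_0}(c)}\Bigr)^{1/2}\Bigl(\sum_c e^{-2s\ell_{S_n}(c)}\Bigr)^{1/2},$$
in which (asymptotic) equality would force $\ell_{S_0}\equiv\ell_{S_n}$. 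Thurston's characterisation of $d_{Th}$ provides a measured geodesic lamination (possibly a simple closed curve) realising the stretch $e^\eta$; Bonahon's continuity of length on the space of geodesic currents propagates this stretch to an open neighbourhood, and Margulis--Bonahon equidistribution of closed geodesics of $S_0$ then supplies a uniform positive proportion $\alpha(\eta)>0$ of closed curves $c$ with $\ell_{S_0}(c)\leq R$ for which $\bigl|\log\bigl(\ell_{S_n}(c)/\ell_{S_0}(c)\bigr)\bigr|\geq\eta/2$. Splitting $P_n(s)$ along this dichotomy and reapplying Cauchy--Schwarz to each piece should force convergence of $P_n$ at some $s=1/2-\theta(\eta)$, uniformly in $n$, contradicting $\delta(S_0,S_n)\to 1/2$.

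\emph{Main obstacle.} The delicate step is the quantitative transfer just sketched: turning the pointwise Thurston lower bound into a \emph{density} statement about closed geodesics, and then converting that density into a Cauchy--Schwarz gap that is uniform in $n$. A secondary difficulty is that $S_n$ is allowed to degenerate in $\Teich(S)$, so one cannot appeal to compactness of the target; however, a very short curve on $S_n$ automatically forces $d_{Th}(S_0,S_n)\to\infty$ and produces an exceptionally large stretch on that curve and its powers, so the degenerating regime should be handled by a separate, direct estimate that yields even stronger control than the generic case.
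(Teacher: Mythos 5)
Your easy direction coincides with the paper's (ratio bounds $e^{\pm\epsilon}$ on $\ell_n/\ell_0$, comparison of Poincar\'e series, Bishop--Steger upper bound), so that part is fine. The hard direction, however, has a genuine gap at exactly the step you flag as "the delicate step", and the mechanism you propose for it does not work. You claim that equidistribution of closed geodesics on $S_0$ converts the existence of a Thurston-maximally-stretched lamination into a \emph{positive proportion} of closed curves with $|\log(\ell_n(c)/\ell_0(c))|\geq\eta/2$. Equidistribution says the opposite of what you need: all but an exponentially deficient set of closed geodesics (ordered by $\ell_0$) have ratio close to the geodesic stretch $\gs(S_0,S_n)=i(L_0,L_n)/i(L_0,L_0)$, so unless you first prove that $\gs(S_0,S_n)$ is itself bounded away from $1$, the proportion of deviating curves tends to $0$, not to a positive constant; a single maximally stretched lamination is invisible to a density of closed geodesics. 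Bounding $\gs$ away from $1$ under the sole hypothesis $d_{Th}(S_0,S_n)\geq\eta$ is precisely the hard content. The paper gets it by a dichotomy: if $(S_n)$ stays in a compact set one concludes by continuity plus the rigidity Theorem \ref{th - rigidité des GHMC}; if not, one writes $S_n=\EQ{\Lam_n}{t_n}{m_0}$ with $\Lam_n$ of unit $m_0$-length and $t_n\tv\infty$ (Corollary \ref{Geology}), uses compactness of unit-length laminations to extract a uniform $\epsilon>0$ with $i(\EQ{\Lam}{1}{m_0},L_0)\geq(1+\epsilon)\,i(L_0,L_0)$, and propagates this by convexity along earthquakes (Theorem \ref{Convexity along earthquake}) to $\ell_n(c)\geq(1+t_n\epsilon/2)\ell_0(c)$ for every $c$ in a Kifer large-deviation neighbourhood (Theorem \ref{Large deviation au niveau des groupes}).

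Even granting a density statement, your Cauchy--Schwarz splitting does not close. A positive proportion of deviating curves leaves the complementary (non-deviating) piece with full exponential growth rate $1$ in $\ell_0$, where Cauchy--Schwarz only yields convergence for $s>1/2$ --- no gap. Conversely, if you do have exponential deficiency $\Card\leq Me^{(1-\eta)T}$ of the exceptional set, you still must control its Poincar\'e sum, and there $\ell_n(c)$ may be far \emph{smaller} than $\ell_0(c)$, so the naive bound gives critical exponent $1-\eta$ for that piece, which exceeds $1/2$. Closing this requires the directional critical exponent machinery of Section 3 (Theorem \ref{rigidite de l exposant critique directionel} and Corollaries \ref{l'exposant critique directionnel est maximum  quand il est egal a deltaM} and \ref{Si delta tends vers 1 lambda tends vers 1}): the critical exponent of the full series is realized on curves with $\ell_n/\ell_0$ near the maximal slope $\lambda_n$, and $\lambda_n\tv 1$ whenever $\delta(S_0,S_n)\tv 1/2$, which turns the bound into $(1-\eta)/(1+\lambda_n)\tv(1-\eta)/2<1/2$ and yields the contradiction. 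Your sketch contains no substitute for this step. Finally, your proposed treatment of the degenerating regime --- one short curve on $S_n$ "and its powers" --- concerns only polynomially many classes and cannot affect any exponential growth rate; the degenerating case is exactly the one handled by the earthquake argument above.
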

Let $\Teich_\epsilon (S)$ be the thick part of Teichmüller space, that is the surfaces for which no closed geodesic has length less than $\epsilon>0$. A sequence $X_n\in \Teich(S)$ is said to stay in the thick part if there exists a $\epsilon>0$ such that $X_n\in \Teich_\epsilon(S)$ for all $n\in \N$.\\
We copy this definition and call the thick part of the set of GHMC $\AdS $ manifolds, a set for which the length of the smallest geodesic is bounded below by a fixed constant. From Proposition \ref{pr - lien longueur lorentzienne et hyp}, the sequence parametrized by $(S_0,S_n)$ in the previous Theorem stays in the thick part, since the length of the systole of the $\AdS$ manifold is bounded below by the systole of $S_0$ divided by 2. 

We deduce the following corollary thanks to Mumford's compactness Theorem and the invariance of critical exponent by diagonal action of the mapping class group 
\begin{TheoremNoCount}[Corollary \ref{cor - isolation 2 surfaces}]
Let $(M_n)$  be a sequence of GHMC manifolds, parametrized by $(S_n,S_n')$. If one of the sequences stays in the thick part of $\Teich(S)$ then
$$\lim_{n\tv \infty}\delta_{Lor} (M_n) =1 \quad \text{iff} \quad \lim_{n\tv \infty} d_{Th}(S_n,S_n') =0.$$
%where $d_{Th}$ is the (symmetrized) Thurston distance on Teichmüller space. 
\end{TheoremNoCount}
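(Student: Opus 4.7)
The plan is to reduce the two-parameter statement to Theorem~\ref{th - isolation 1 surface}, which covers a fixed base surface, by bringing the thick sequence $(S_n)$ into a compact subset of $\Teich(S)$ via the mapping class group. I will argue by subsequences: it suffices to show that every subsequence of $(S_n,S_n')$ has a sub-subsequence for which the claimed equivalence holds.

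Fix $\epsilon_0>0$ with $(S_n)\subset \Teich_{\epsilon_0}(S)$. Mumford's compactness theorem makes the image of $(S_n)$ in the moduli space $\Teich(S)/\mathrm{MCG}(S)$ relatively compact, so after extraction I can find $\phi_n\in \mathrm{MCG}(S)$ and a limit $S_\infty\in \Teich_{\epsilon_0}(S)$ with $\phi_n\cdot S_n \tv S_\infty$ in $\Teich(S)$. The diagonal action of $\mathrm{MCG}(S)$ merely permutes the classes in $\C$ and preserves every marked length function, so it leaves both $\delta(\cdot,\cdot)$ and $d_{Th}(\cdot,\cdot)$ invariant. Replacing $(S_n,S_n')$ by $(\phi_nS_n,\phi_nS_n')$, I may henceforth assume $S_n\tv S_\infty$ while keeping both the dynamical and the metric quantities unchanged.

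Set $\epsilon_n:=d_{Th}(S_n,S_\infty)\tv 0$. By the triangle inequality, $d_{Th}(S_n,S_n')\tv 0$ if and only if $d_{Th}(S_\infty,S_n')\tv 0$. For the critical exponent I will use that the symmetric Thurston distance controls marked length ratios: $e^{-\epsilon_n}\ell_{S_\infty}(c)\leq \ell_{S_n}(c)\leq e^{\epsilon_n}\ell_{S_\infty}(c)$ for every $c\in\C$. This gives the inclusion $\{c\in\C:\ell_{S_n}(c)+\ell_{S_n'}(c)\leq T\}\subseteq\{c\in\C:\ell_{S_\infty}(c)+\ell_{S_n'}(c)\leq e^{\epsilon_n}T\}$ and the reverse inclusion with the same factor. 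Taking $\limsup_{T\tv\infty}\frac{1}{T}\log\#$ on both sides and performing the change of variables $T\mapsto e^{-\epsilon_n}T$ inside the $\limsup$ yields
$$e^{-\epsilon_n}\delta(S_\infty,S_n')\leq \delta(S_n,S_n')\leq e^{\epsilon_n}\delta(S_\infty,S_n').$$
Since $\delta\leq 1/2$ is bounded and $\epsilon_n\tv 0$, the two sequences $\delta(S_n,S_n')$ and $\delta(S_\infty,S_n')$ share their limit behaviour.

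The pair $(S_\infty,S_n')$ now has a fixed first entry, so Theorem~\ref{th - isolation 1 surface} applies and gives $\delta_{Lor}(M_n)\tv 1$ iff $d_{Th}(S_\infty,S_n')\tv 0$. Chaining this with the two previous equivalences finishes the corollary. The main obstacle I expect is the normalisation step itself: one must check that the diagonal $\mathrm{MCG}$-action preserves simultaneously the Lorentzian critical exponent and the symmetric Thurston distance, so that Mumford compactness can be invoked without altering the problem. Both are defined purely in terms of marked length functions, which makes this automatic; everything downstream is then a routine continuity argument based on the length-ratio description of the Thurston metric.
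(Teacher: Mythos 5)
Your proof is correct and takes essentially the same route as the paper: normalise the thick sequence by mapping classes via Mumford compactness, use the invariance of both $\delta$ and $d_{Th}$ under the diagonal action of the mapping class group, transfer the problem to the fixed limit surface $S_\infty$ through the length-ratio bounds, and conclude with Theorem \ref{th - isolation 1 surface}. Your explicit sub-subsequence bookkeeping and the symmetric treatment of both directions of the equivalence are just a slightly more careful packaging of the argument the paper gives.
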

Our hypothesis on the thick part is a bit more restrictive than just saying $(M_n)$ stays in the thick part of GHMC $\AdS$ manifolds ie. there is no closed Lorentzian geodesic of arbitrary small length. Indeed from \ref{pr - lien longueur lorentzienne et hyp} the length of a geodesic on $M_n$ goes to zero if and only if it goes to zero on both hyperbolic surfaces. 
Actually we found a counter example if $(M_n)$ does not stay in any thick part, cf Section \ref{sssection  - pinching at different speed}:
\begin{TheoremNoCount}[Theorem \ref{L'exposant critique tends vers 1/2 quand on pince une geodesique}]
There exists a sequence $(M_n)$ of GHMC manifolds, (not staying in the thick part as $\AdS$ manifolds) parametrized by $(S_n,S_n')$ such that
$$\lim_{n\tv \infty}\delta_{Lor} (M_n) =1 \quad \text{and} \quad \lim_{n\tv \infty} d_{Th}(S_n,S_n') =+\infty,$$
\end{TheoremNoCount}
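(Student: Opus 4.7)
My plan is to degenerate a simple closed curve on both surfaces while twisting aggressively along it. Fix a non-separating simple closed curve $c_0 \subset S$, let $\Sigma := S \setminus c_0$ be the compact subsurface (with two boundary components), and choose $S_n \in \Teich(S)$ with $\ell_{S_n}(c_0) = \epsilon_n \to 0$, all other Fenchel--Nielsen coordinates fixed. Let $\tau$ denote the Dehn twist along $c_0$, and set $S'_n := \tau^{k_n} S_n$ for integers $k_n$ to be chosen. For any simple closed curve $\alpha$ with $i(\alpha,c_0)\geq 1$, the length $\ell_{S'_n}(\alpha) = \ell_{S_n}(\tau^{-k_n}\alpha)$ diverges as $k_n \to \infty$ (with $\epsilon_n$ fixed), so I can pick $k_n$ large enough to ensure $\ell_{S'_n}(\alpha) \geq n\cdot \ell_{S_n}(\alpha)$, forcing $d_{Th}(S_n,S'_n) \geq \log n \to \infty$. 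Since $\ell_{Lor}(c_0) = \epsilon_n \to 0$, the associated GHMC manifolds $(M_n)$ automatically leave every $\AdS$ thick part.

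The key lower bound for the critical exponent exploits that $\tau$ acts trivially on the hyperbolic structure of $\Sigma$: for every free homotopy class $c\in\C$ whose $S_n$-geodesic representative lies inside $\Sigma$,
$$\ell_{S_n}(c) \;=\; \ell_{S'_n}(c) \;=\; \ell_{\Sigma_n}(c),$$
where $\Sigma_n$ is $\Sigma$ equipped with the hyperbolic structure induced by $S_n$ (a compact hyperbolic surface with geodesic boundary of length $\epsilon_n$). Distinct conjugacy classes in $\pi_1(\Sigma)$ give distinct geodesics in $\Sigma_n$, hence distinct elements of $\C$, so the prime geodesic theorem for the convex-cocompact Fuchsian group $\pi_1(\Sigma_n) < \PSL_2(\R)$ yields
$$\Card\{c\in\C\ :\ \ell_{S_n}(c) + \ell_{S'_n}(c)\leq T\} \;\geq\; \Card\{c\in\C\ :\ c \text{ has a representative in } \Sigma,\ \ell_{\Sigma_n}(c)\leq T/2\} \;\sim\; \frac{e^{\delta_n T/2}}{\delta_n T/2},$$
where $\delta_n$ is the critical exponent of $\pi_1(\Sigma_n)$. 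Hence $\delta(S_n,S'_n) \geq \delta_n/2$.

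The main obstacle is to verify that $\delta_n \to 1$ as $\epsilon_n \to 0$. This is a standard cusp degeneration of convex-cocompact Fuchsian groups: as the boundary geodesics shrink, the two boundary elements of $\pi_1(\Sigma)$ become parabolic in the limit, and the limit set $\Lambda_n\subset\partial\Hyp^2$ fills out the entire circle. Since $\delta_n = \HDim(\Lambda_n)$ for convex-cocompact groups, lower semi-continuity of Hausdorff dimension along this algebraic convergence yields $\delta_n \to 1$; concretely this can be checked by constructing exponentially many short closed geodesics inside the degenerating collar of $c_0$. Combining with Bishop--Steger's upper bound $\delta(S_n,S'_n)\leq 1/2$ sandwiches $\delta(S_n,S'_n) \to 1/2$, whence $\delta_{Lor}(M_n) = 2\delta(S_n,S'_n) \to 1$ while $d_{Th}(S_n,S'_n) \to \infty$, completing the construction.
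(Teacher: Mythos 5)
Your construction is genuinely different from the paper's. The paper pinches a curve along a Wolpert path of finite Weil--Petersson length, picks two parameters $n<t_n$ that are far apart in Thurston distance but Weil--Petersson close, and concludes via Bonahon's estimate $i(m,m')=i(m,m)+o(d_{WP}(m,m'))$ that the geodesic stretch tends to $1$, hence $\delta\tv 1/2$ by Corollary \ref{Si I tends vers 1 delta tends vers 1/2}. Your reduction is instead purely geodesic-counting: the Thurston blow-up via large twist powers is fine, the observation that $\ell_{Lor}(c_0)=\epsilon_n\tv 0$ forces the manifolds out of every thick part is fine, and the lower bound $\delta(S_n,S_n')\geq \delta_n/2$ (where $\delta_n$ is the critical exponent of the convex cocompact subsurface group $\pi_1(\Sigma_n)$, whose closed geodesics are untouched by the twist) is correct. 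Everything therefore hinges on the single claim $\delta_n\tv 1$ as $\epsilon_n\tv 0$.

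That claim is true but is precisely where your argument has a gap, and neither justification you offer stands. First, ``lower semi-continuity of Hausdorff dimension along algebraic convergence'' is not a quotable general principle: algebraic convergence only gives pointwise convergence of the orbital counting functions $N_n(R)\tv N_\infty(R)$ for each fixed $R$, which says nothing about the asymptotic growth rates $\delta_n$; the available continuity statements (McMullen, the very reference \cite{mcmullen1999hausdorff}) require \emph{strong} convergence to a geometrically finite limit, and here new parabolics appear in the limit, so strong convergence must actually be verified. Second, the proposed ``concrete check'' is false as stated: a collar of $c_0$ is an annulus, whose only closed geodesic is its core, so there are no exponentially many closed geodesics inside the degenerating collar --- indeed the systole of $\Sigma_n$ away from the boundary stays bounded below. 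To close the gap you must genuinely prove $\delta_n\tv 1$, for instance by: (i) verifying strong convergence of $\pi_1(\Sigma_n)$ to the cusped lattice and invoking McMullen's continuity theorem; or (ii) using Sullivan's identity $\lambda_0=\delta_n(1-\delta_n)$ for $\delta_n\geq 1/2$ together with a test-function estimate showing $\lambda_0(\Hyp^2/\pi_1(\Sigma_n))\tv 0$ (the convex core has fixed area $2\pi|\chi(\Sigma)|$ while its total boundary length is $2\epsilon_n\tv 0$, so a function equal to $1$ on the core and decaying linearly into the funnels has Rayleigh quotient $O(\epsilon_n)$); or (iii) approximating the limit lattice from inside by convex cocompact subgroups of exponent close to $1$ and using structural stability of such subgroups. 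Any of these completes your proof; as written, the decisive step is asserted rather than proved.
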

%In this counterexample  we need symmetric  Thurston's distance and not the asymmetric one. 

We summarize the results previously cited in the following table : 
\begin{center}
\begin{tabular}{|c|c|}
  \hline
  \text{Quasi-Fuchsian setting } & GHMC AdS  setting.\\
  \hline
  $\delta_{QF} \in [1,2)$  & $\delta_{Lor} \in (0,1]$ \\
  $\delta_{QF}=1$ iff $S_0=S_1$ &   $\delta_{Lor}=1$ iff $S_0=S_1$.\\
  \cite{bowen1979hausdorff} & \textit{Theorem \ref{th - rigidité des GHMC}.}\\
  \hline 
  $\lim_{n\tv \infty}\delta_{QF} (S_0,A^n S_0)  = 2$ & $\lim_{n\tv \infty}\delta_{Lor}(S_0,A^{2n}S_0) = 0.$\\
  where $A$ is pseudo Anosov. &  where $A$ is pseudo Anosov.\\
  \cite{mcmullen1999hausdorff} & \textit{Proposition \ref{pr - example pseudo diff delta tends vers 0}.}\\
  \hline 
    $\lim_{n\tv \infty}\delta_{QF} (S_0,\tau^nS_0) =\alpha$ & $\lim_{n\tv \infty}\delta_{Lor}(S_0,\tau^{2n}S_0) = \alpha>0.$\\
 where $\tau $ is a Dehn twist along & where $\tau $ is a Dehn twist along  \\
 a simple  closed curve. &a simple  closed curve.  \\
 \cite{mcmullen1999hausdorff} & \textit{Section \ref{sssection Dehn twists}}\\  
    \hline
    $\lim_{t\tv \infty}\delta_{QF} (S_0,S_t) =\alpha<2$ & $\liminf_{n\tv \infty}\delta_{Lor}(S_0,S_t) = \alpha>0.$\\
  where $S_t$ is obtained by pinching &    where $S_t$ is obtained by pinching\\
  disjoint simple closed curves.  &  disjoint simple closed curves.\\
  \cite{mcmullen1999hausdorff} & \textit{Section \ref{sssection pinching one geodesic}}\\
  
 \hline
 $\lim_{t\tv \infty} | \delta_{QF}(S_0,S_t) - \delta(t) | =0$ &  $\lim_{t\tv \infty} | \delta_{Lor}(S_0,S_t) - \delta(t) | =0$ \\
 where $S_t$ is obtained by Fenchel twist &  where $S_t$ is obtained by Fenchel twist  \\
  along a simple closed curve $c$  &  along a simple closed curve $c$  \\
 and $\delta(t)$ is $\ell_0(c)$-periodic. &  and $\delta(t)$ is  $2\ell_0(c)$-periodic.\\
 \cite{mcmullen1999hausdorff} & \textit{Section \ref{sssection - fenchel nielsen twist}}\\
 \hline
 Let $M_n$ be a sequence of quasi-Fuchsian &   Let $M_n$ be a sequence of  GHMC AdS\\
 manifolds  parametrized by $(S_n,S_n')$  &    manifolds parametrized by $(S_n,S_n')$\\
staying in the thick part.   &  with one of the surface in thick part of $\Teich(S)$\\
Then : $\lim_{n\tv \infty}\delta_{QF} (M_n) =1 $  &  Then : $\lim_{n\tv \infty}\delta_{Lor} (M_n) =1 $  \\
 iff  $\lim_{n\tv \infty} d_{Teich}(S_n,S_n')=0 $ &   iff  $\lim_{n\tv \infty} d_{Th}(S_n,S_n')=0 $\\
 where $d_{Teich}$ is the Teichmüller distance. &  where $d_{Th}$ is the Thurston distance.\\
  \cite{sanders2014entropy} & \textit{Corollary \ref{cor - isolation 2 surfaces}}\\
\hline
\end{tabular}
\end{center}

Let us say a few words about the proof of Theorem \ref{th - isolation 1 surface}. There are mainly three ingredients in the proof. 
\begin{itemize}
\item A Anti-de Sitter geometry problem that we reduced to a problem on a product of hyperbolic planes. 
\item Understand critical exponent of pairs of Teichmüller representations acting on $\Hyp^2\times \Hyp^2$ through the Manhattan curve. 
\item See how Teichmüller representations degenerate thanks to earthquakes. 
\end{itemize}

\paragraph{Sketch of proof of Theorem \ref{th - isolation 1 surface}}
In Section 2, we see that the action on $\AdS$ can be reinterpreted in terms of the diagonal action on the product of two hyperbolic planes and we will then study the critical exponent $\delta(S_L,S_R)$ instead of $\delta_{Lor} (M)$. 

Then the proof of Theorem  \ref{th - isolation 1 surface} goes as follow. First, remark that one way is trivial and follows from the continuity of critical exponent. For the converse, let $(S_n)$ such that $\delta(S_0,S_n)\tv 1/2$. Two cases may happen: $(S_n)$ stays in a compact set or  $(S_n)$ goes out of every compact set of $\Teich(S)$.  For the first case, using rigidity theorem and continuity we easily show that $(S_n)$ must converge to $S_0$. We will show that the second case cannot happen. By Thurston's geology Theorem, we can connect $S_0$ to any point via an earthquake and we will show that along earthquake paths the critical exponent cannot tend to $1/2$. This will conclude the proof.

The idea to show that critical exponent cannot tend to $1/2$ along earthquake is to adapt the Dehn twist example. A Dehn twist $\tau$ is a particular case of earthquake and we can show using convexity of the geodesic length along earthquake paths that the critical exponent $\delta(S_0,\tau^{2n} S_0)$ is  decreasing, hence admits a limit which must be strictly less than $1/2$ by rigidity. The trick in this example is to symmetrize the length function : we show in fact
that $\ell(\tau^n c) +\ell(\tau^{-n} c)$ is increasing \emph{for all $c\in \C$}. From the invariance of $\delta (\cdot, \cdot) $ by the diagonal action of the mapping class group, it implies that $\delta(S_0,\tau^{2n} S_0)=\delta(\tau^{-n} S_0, \tau^n S_0)$ is decreasing. 

 We cannot use the same argument for general earthquakes. Indeed, there is no reason to have equality between $\delta(S_0,\EQ{\Lam}{2t}{S_0})$ and $\delta(\EQ{\Lam}{-t}{S_0},\EQ{\Lam}{t}{S_0})$ because an earthquake changes the length spectrum.  To take care of this difficulty we use a large deviation Theorem of geodesic flow, showing that "most"  curves on $S_0$
\footnote{A property $P$ is satisfied by most curves on $S_0$ if there is $\eta>0$ such that $\frac{\Card \{ c\in \C \, |\, \ell_0(c)\leq R \} \cap P }{\Card \{ c\in \C \, |\, \ell_0(c)\leq R \}} =o(e^{-\eta R } )$}
grows along earthquake. The next difficulty is to show a correlation between the behaviour of "most" curves on $S_0$ and "most" curves on the pair $(S_0,S_n)$\footnote{ We replace $ \{ c\in \C \, |\, \ell_0(c)\leq R \}$ by   $\{ c\in \C \, |\, \ell_0(c)+\ell_n(c)\leq R \}$ in the previous definition}. This will be done using what we call \emph{critical exponent with slope}, which is defined in a similar fashion as the critical exponent but where we impose the ratio  $\frac{\ell_n(c)}{\ell_0(c)}$ to be almost constant. The proof ends using estimates we found in Section 3 on critical exponents with slope and maximal slope.
\bigskip

The plan of the paper is the following. The next section is devoted to the Lorentzian geometry of GHMC AdS manifolds. The aim is to show how we can pass from a Lorentzian problem to a question on product of hyperbolic planes. We study the critical exponent for diagonal action on product of hyperbolic planes in  Section 3. We introduce the Manhattan curve as defined by M. Burger \cite{Burger} and make a careful study of the different invariants that can be  read out of  this curve: in particular, many inequalities between these invariants are deduced from the convexity of the Manhattan curve.  At the end of Section 3, we will prove as a by pass a generalisation of a Theorem of R. Sharp and R. Schwarz about pair of geodesics on two surfaces. The Section 4 is a description of geometric results about currents, laminations and earthquakes.  We will follow the presentation  of F. Bonahon \cite{BonahonGeometryofteichmuller} in order to get a clear picture of the results we need, namely, Corollary \ref{Geology}. This section contains also examples where we can compute either the limit of the critical exponent of some sequences, or at least get some bounds. The last section is devoted to the proof of the isolation Theorem  \ref{th - isolation 1 surface} and Corollary \ref{cor - isolation 2 surfaces}. 

\paragraph{Acknowledgements} This work is a part of my Ph. D. thesis and I am very grateful to Gilles Courtois for his support and advices. I would like to thank Maxime Wolff for his help about Teichmüller space, laminations, and Thurston compactification. I am also grateful to Jean-Marc Schlenker who suggested me the Anti-de Sitter interpretation of my work. 
Finally I would like to thank the referees for many useful comments. 

\section{From Lorentzian to hyperbolic geometry}

As we said in the introduction, the geometry of GHMC, $\AdS$ manifolds is encoded through Mess parametrization by two Fuchsian representations,  ie. faithful and discrete representations in $\PSL_2(\R)$. The aim of the section is to recall some basic facts about $\AdS$  geometry and to explain how closed geodesics on Lorentzian manifolds are related to the closed geodesics on the hyperbolic surfaces used in Mess parametrization. 

Our main result in this section is the Theorem \ref{th - rigidité des GHMC}, which is the counterpart of Bowen's Theorem for globally hyperbolic manifolds. 
\paragraph{Proof of Theorem \ref{th - rigidité des GHMC}.} First we compute the length of a closed geodesic in the $\AdS$ manifold parametrized by two Fuchsian representations, in terms of the lengths of closed geodesics in the corresponding hyperbolic surfaces: Proposition \ref{pr - lien longueur lorentzienne et hyp}. Then we prove that there is a unique closed geodesic in every non trivial isotopy class of closed curve: Proposition \ref{pr - unique geodesic in free homotopy class}. Applying the Theorem of Bishop-Steger \cite{bishop1991three} we get Theorem \ref{th - rigidité des GHMC}.

\subsection{Model for AdS space}
The model for $\AdS$ we will use is the projective model \cite{barbot2007constant}, namely $\AdS\simeq \PSL_2(\R)$ endowed with the restriction of the quadratic form $-\det \left(\begin{array}{cc}
a & b\\
c & d \\
\end{array} \right)= -ad+bc$. The isometry group of $\AdS$ is $\PSL_2(\R)\times \PSL_2(\R)$ ; an element $\g=(A,B) \in \PSL_2(\R)\times \PSL_2(\R)$ acts on $X\in \AdS$ by, $\g \cdot X = A X B^{-1}$.
The topological boundary of $\AdS$ is  the set of projective non-invertible matrices.  It is parametrized by $\RP^1_L \times \RP^1_R$, through the Segre embedding :  to $u_L= \left[ \begin{array}{c}
x_L \\ 
y_L
\end{array}  \right]$ 
and 
$u_R= \left[ \begin{array}{c}
 x_R\\ 
y_R
\end{array}  \right]$ 
we associated the projective non-invertible matrix $u_Lu_R^t =\left[ \begin{array}{cc}
x_Lx_R & x_L y_R \\ 
y_L x_R & y_L y_R
\end{array} \right]$
The action on $\AdS$ extends on a continuous action on the boundary given for   $\g =(A,B)\in \PSL_2(\R)\times \PSL_2(\R)$ by $\g \cdot u_Lu_R^t = Au_L u_R^t B^{-1} = Au_L (B^*u_R)^t$, where $B^* =  (B^{-1})^t$.

\subsection{Geodesics of $\AdS$}
In a Lorentzian manifolds we can defined geodesics in the same manner as in a Riemannian manifold by parallel transport with respect to the Levi-Civita connection. In our model of $\AdS$ space there is a simple description given in \cite{barbot2007constant}. Namely, geodesics in $\AdS$ are the intersection of projective lines and $\AdS$. The type (timelike, null or spacelike) is invariant by left and right multiplication by elements of $\PSL_2(\R)$. It is determined by the signature of $-\det$ restricted to the plane defining the projective line.   It is then easy to see that the one passing through identity  are given by  1-parameter subgroups of $\PSL_2(\R)$. 
\begin{itemize}
\item They are time-like if the corresponding 1-parameter subgroup is elliptic, ie conjugated to 
$\left[  \begin{array}{cc}
\cos(\theta) & \sin(\theta) \\ 
-\sin(\theta)  & \cos(\theta)
\end{array} \right].$ These geodesics are entirely contained in $\AdS$.  

\item They are light-like if the corresponding 1-parameter subgroup is parabolic, ie conjugated to $\left[  \begin{array}{cc}
1 & x \\ 
0  & 1
\end{array} \right].$ These geodesics are tangent to $\partial AdS$.  

\item They are space-like if the corresponding 1-parameter  subgroup is hyperbolic, ie conjugated to 
$\left[  \begin{array}{cc}
 e^{t} & 0 \\ 
0 & e^{-t} 
\end{array} \right].$ 
These geodesics have two endpoints in $\partial AdS$ 
$x(+\infty)=\left[  \begin{array}{cc}
1 & 0 \\ 
0  & 0
\end{array} \right] =\left[ \begin{array}{c}
 1\\ 
0
\end{array}  \right] \times  \left[ \begin{array}{c}
 1\\ 
0
\end{array}  \right]^t $  and  $x(-\infty)=\left[  \begin{array}{cc}
0 & 0 \\ 
0 & 1
\end{array} \right] =\left[ \begin{array}{c}
 0\\ 
1
\end{array}  \right] \times  \left[ \begin{array}{c}
 0\\ 
1
\end{array}  \right]^t  .$
\end{itemize}
Let $\g\in \PSL_2(\R)\times \PSL_2(\R)$  be given by
 $\left( \left[  \begin{array}{cc}
e^\lambda & 0\\ 
0  & e^{-\lambda} 
\end{array} \right],\left[  \begin{array}{cc}
e^{-\mu} & 0\\ 
0  & e^{\mu} 
\end{array} \right] \right), \lambda,\mu>0,$ 

A computation shows that the element $g$ fixes the geodesic  $\left[  \begin{array}{cc}
 e^{t} & 0 \\ 
0 & e^{-t} 
\end{array} \right]_{t\in \R},$ and acts by translation on this geodesic. 
 
\begin{eqnarray*}
g\cdot \left[  \begin{array}{cc}
1 & 0\\ 
0  & 1 
\end{array} \right]  &=&  \left[  \begin{array}{cc}
e^\lambda & 0\\ 
0  & e^{-\lambda} 
\end{array} \right] \left[  \begin{array}{cc}
1 & 0\\ 
0  & 1 
\end{array} \right] \left[  \begin{array}{cc}
e^{-\mu} & 0\\ 
0  & e^{\mu} 
\end{array} \right] ^{-1}\\
&=& \left[  \begin{array}{cc}
e^{\lambda+\mu} & 0\\ 
0  & e^{-(\lambda+\mu)}
\end{array} \right]
\end{eqnarray*} 

We can then define the attracting fixed point to be  $$g^{+} = x(+\infty)=\left[  \begin{array}{cc}
1 & 0 \\ 
0  & 0
\end{array} \right] $$
and the repelling fixed point to be 
$$g^{-} = x(-\infty)=\left[  \begin{array}{cc}
0& 0 \\ 
0  & 1
\end{array} \right].$$

Remark that the geodesic $t\tv  \left[  \begin{array}{cc}
 e^{t} & 0 \\ 
0 & e^{-t} 
\end{array} \right]$ is parametrized by unit speed. Hence the translation length for the induced metric on this geodesic is equal to $\lambda+\mu$. 

Recall that any hyperbolic element of $\PSL_2(\R)$ is conjugated to $  \left[  \begin{array}{cc}
e^\lambda & 0\\ 
0  & e^{-\lambda} 
\end{array} \right] $,  hence a pair of hyperbolic elements $(g_L,g_R)$ acting on $\AdS$ has a well defined space like axis.

\begin{definition}
We call axis of a pair of hyperbolic elements $g=(g_L,g_R)$ acting on $\AdS$ the unique spacelike geodesic joining $g^-$ to $g^+$
\end{definition}
Let us remark that this is not the unique geodesic fixed by $g$. Indeed for the element $\left( \left[  \begin{array}{cc}
e^\lambda & 0\\ 
0  & e^{-\lambda} 
\end{array} \right],\left[  \begin{array}{cc}
e^{-\mu} & 0\\ 
0  & e^{\mu} 
\end{array} \right] \right),$
the geodesic 
$\left[  \begin{array}{cc}	
0 &e^t\\ 
-e^{-t} & 0
\end{array} \right]_{t\in \R},$ is also preserved. This geodesic joins the attracting fixed point of $ \left[  \begin{array}{cc}
e^\lambda & 0\\ 
0  & e^{-\lambda} 
\end{array} \right],$ to the repelling fixed point of $ \left[  \begin{array}{cc}
e^{-\mu} & 0\\ 
0  & e^{\mu} 
\end{array} \right]$ and therefore does not have a satisfying geometric meaning. 
We will indeed show in the next section that for a GHMC manifolds this geodesic is not in the range of the developing map.\\

As in the hyperbolic setting, we define the translation length of $g$ :
\begin{definition}
We call translation length of an isometry $g$ consisting of a pair of hyperbolic elements $g=(g_L,g_R)$, and we denote $\ell_{Lor}(g)$ the distance (for the induced metric) between $g$ and $g\cdot o$ for any point $o$ on the axis of $g$. 
\end{definition}

The Lorentzian translation length is invariant by conjugation  as well as the hyperbolic translation. Moreover this last one for 
$\left[  \begin{array}{cc}
e^\lambda & 0\\ 
0  & e^{-\lambda} 
\end{array} \right] $ is equal to  $2\lambda$.  We have proved the following 
\begin{proposition}\label{pr - lien longueur lorentzienne et hyp}
Let $\g=(\g_1,\g_2)$ be an isometry of $\AdS$ defined by a pair of hyperbolic elements  then $\ell_{Lor}(\g) =\frac{\ell_{\Hyp}(\g_1)+\ell_{\Hyp}(\g_2)}{2}.$
\end{proposition}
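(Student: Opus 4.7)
The proof will be essentially a direct algebraic computation, most of which has been assembled in the paragraphs immediately preceding the statement; what remains is to organise these ingredients into a proof and to justify the reduction to normal form.

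First I would use the conjugation invariance of both sides of the identity. The hyperbolic translation length $\ell_{\Hyp}$ is invariant under conjugation in $\PSL_2(\R)$, and the Lorentzian translation length $\ell_{Lor}$ is invariant under the action of $\Isom_0(\AdS_3) = \PSL_2(\R) \times \PSL_2(\R)$ (so in particular under simultaneous conjugation $(\g_1,\g_2) \mapsto (A\g_1 A^{-1}, B\g_2 B^{-1})$). Since $\g_1$ and $\g_2$ are hyperbolic, after such a conjugation I may assume
\[
\g_1 = \begin{pmatrix} e^\lambda & 0 \\ 0 & e^{-\lambda} \end{pmatrix},
\qquad
\g_2 = \begin{pmatrix} e^{-\mu} & 0 \\ 0 & e^{\mu} \end{pmatrix},
\]
with $\lambda,\mu>0$ chosen so that $[1:0]$ is the attracting fixed point of $\g_1$ on $\RP^1_L$ and of the right-action $\g_2^\ast$ on $\RP^1_R$. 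Then $\ell_{\Hyp}(\g_1) = 2\lambda$ and $\ell_{\Hyp}(\g_2) = 2\mu$.

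Next I would identify the axis of $\g = (\g_1,\g_2)$. By the classification of geodesics of $\AdS$ recalled above, the $1$-parameter subgroup $t \mapsto \operatorname{diag}(e^t, e^{-t})$ is a unit-speed spacelike geodesic passing through the identity, with endpoints $[1,0;0,0]$ and $[0,0;0,1]$ on $\partial\AdS$. These are exactly $\g^+$ and $\g^-$, so this geodesic is the axis of $\g$ in the sense of the definition above.

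Then I would compute the action of $\g$ on this axis. The computation already carried out in the excerpt gives
\[
\g \cdot \begin{pmatrix} e^t & 0 \\ 0 & e^{-t} \end{pmatrix}
= \begin{pmatrix} e^{t+\lambda+\mu} & 0 \\ 0 & e^{-(t+\lambda+\mu)} \end{pmatrix},
\]
so $\g$ acts on the axis by a shift of the parameter $t$ by $\lambda + \mu$. Since the parameter is unit-speed for the induced spacelike metric, the translation length is
\[
\ell_{Lor}(\g) = \lambda + \mu = \frac{2\lambda + 2\mu}{2} = \frac{\ell_{\Hyp}(\g_1) + \ell_{\Hyp}(\g_2)}{2}.
\]

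The only mildly delicate point, and the step I would be most careful about, is the choice of normalization of $\g_2$: one must write it as $\operatorname{diag}(e^{-\mu}, e^{\mu})$ (not $\operatorname{diag}(e^{\mu}, e^{-\mu})$) so that, together with the right action $X \mapsto X\g_2^{-1}$ via $B^* = (B^{-1})^t$, the boundary point $[1:0]$ is indeed the attracting fixed point on $\RP^1_R$. With the opposite sign convention one would instead fix the other invariant geodesic mentioned in the excerpt (joining $\g_1^+$ to $\g_2^-$), which is not the axis in the sense of the definition, and the computation of the translation length on the correct axis would break down. Once the normalization is fixed, the proof reduces to the matrix product above.
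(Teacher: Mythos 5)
Your proof is correct and follows essentially the same route as the paper, which presents exactly this computation (reduction to the normal form $(\mathrm{diag}(e^\lambda,e^{-\lambda}),\mathrm{diag}(e^{-\mu},e^{\mu}))$, identification of the unit-speed axis $\mathrm{diag}(e^t,e^{-t})$, and the shift by $\lambda+\mu$) in the paragraphs preceding the statement and then concludes "we have proved the following". Your extra care about the sign convention for $\g_2$ is consistent with the paper's choice and correctly flags the only point where one could go wrong.
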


In Subsection \ref{subsec - closed geodesics on GHMC mnaifolds}, we are going to prove the 1-to-1 correspondence between the set of free isotopy classes of closed curves on Cauchy surfaces and closed geodesics in globally hyperbolic manifolds.

\subsection{Closed geodesics on GHMC manifolds}\label{subsec - closed geodesics on GHMC mnaifolds}

Our aim is to show that in any free homotopy class of closed curve, there is exactly one geodesic. 
A GHMC, $M$, is topogically the product $\R\times S$, hence the set $\C$ of free homotopy classes of closed curves on $M$ is the same as the set of free homotopy classes of closed curves on $S$.
The proof relies on the knowledge of geodesics in the range of the developing map,  $D : \tilde{M}\tv \AdS$. 

The only closed geodesics of $\AdS$ are timelike, and it is proven in \cite[Corollary 5.20 and Remark 5.16 ]{barbot2007constant} that $D (\tilde{M})$ does not contain any timelike geodesic. Hence it does not contain any closed geodesic. 

From the work of  G. Mess in \cite{mess2007lorentz}, we know that the holonomy $\rho$ is given by the product of two Fuchsian representations. Since $S$ is supposed to be compact, it follows that for every $\g\in \G$, $\rho(\g) =(\g_1,\g_2) \in \PSL_2(\R)\times \PSL_2(\R)$  is a pair of hyperbolic isometries. 
We have seen that such an isometry fixes two geodesics in $\AdS$, we have to show  that only one of them is in $D(\tilde{M})$, which is the axis of $\rho(\g)$. 

For this, recall that we defined the axis as the geodesic joining the attracting and the repelling point of $\rho(\g)$, those points are in $\Lambda$ the limit set of $\G$ on $\AdS$. As before, without loss of generality, we can conjugate $\rho$ such that $\rho(\g) = \left(\left[\begin{array}{cc}
e^\lambda & 0\\ 
0  & e^{-\lambda} 
\end{array} \right],\left[  \begin{array}{cc}
e^{-\mu} & 0\\ 
0  & e^{\mu} 
\end{array} \right] \right).$
The axis $\left[\begin{array}{cc}
e^t & 0\\ 
0  & e^{-t} 
\end{array} \right]_{(t\in \R)}$ is in the convex core of the limit set, since once again $\left[\begin{array}{cc}
1 & 0\\ 
0  & 0
\end{array} \right] $ and $\left[\begin{array}{cc}
0& 0\\ 
0  & 1
\end{array} \right]$ are points of the limit sets.
In particular, this implies that $Id =\left[\begin{array}{cc}
1 & 0\\ 
0  & 1
\end{array} \right]$ is in the convex core. From Lemma 6.17 of \cite{barbot2007constant}, for every point in the convex core, its dual plane is disjoint from the $D (\tilde{M})$. The dual plane of a point $p$ is the intersection of the orthogonal for the quadratic form with $\SL_2(\R)$, which is for $Id$ the plane $Tr(X)=0$. I follows that the other geodesic fixed by $\rho(\g)$ : $\left[\begin{array}{cc}
0 & e^t\\ 
-e^{-t}   & 0
\end{array} \right]_{(t\in \R)}$, is not contained in $D(\tilde{M})$. 

Finally, we conclude the proof as  Lemma B.4.5 of \cite{benedetti2012lectures}, that we reproduce for the convenience of the reader. 
\begin{proposition}\label{pr - unique geodesic in free homotopy class}
In every non trivial free homotopy class of closed curves in $M$ there is a unique geodesic. 
\end{proposition}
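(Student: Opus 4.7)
The strategy is to use the parametrization by the two Fuchsian representations together with the preparatory work already done: we know exactly which geodesics of $\AdS$ are preserved by $\rho(\g)$, and we know which of them land in $D(\tilde{M})$. The proof will just have to package this correctly.

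First I would set the correspondence. Since $M$ is globally hyperbolic and compact, $M$ is diffeomorphic to $\R\times S$, so free homotopy classes of closed curves on $M$ are in bijection with conjugacy classes in $\G=\pi_1(M)=\pi_1(S)$. Fix such a class and pick a representative $\g \in \G$. By Mess' parametrization, $\rho(\g)=(\g_1,\g_2) \in \PSL_2(\R)\times\PSL_2(\R)$ with both $\g_i$ hyperbolic, so the previous discussion applies.

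For \emph{existence}, consider the axis $\alpha$ of $\rho(\g)$, i.e.\ the unique spacelike geodesic joining $\rho(\g)^-$ to $\rho(\g)^+$. We showed above that $\alpha$ lies in $D(\tilde{M})$ and that $\rho(\g)$ acts by translation on $\alpha$ with translation length $\ell_{Lor}(\rho(\g))=\tfrac{\ell_{\Hyp}(\g_1)+\ell_{\Hyp}(\g_2)}{2}>0$. Pulling $\alpha$ back to $\tilde{M}$ via the developing map and then projecting to $M$ yields a closed spacelike geodesic in $M$ whose free homotopy class is exactly that of $\g$ (it is the projection of a $\rho(\g)$-invariant geodesic, and the loop it traces corresponds, up to conjugation, to $\g$).

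For \emph{uniqueness}, suppose $c$ is any closed geodesic in the given free homotopy class. Choosing a lift of $c$ through a suitable basepoint produces a complete geodesic $\tilde{c}$ in $\tilde{M}$ which is invariant under the deck action of $\g$; developing, we obtain a geodesic $D(\tilde{c})\subset D(\tilde{M})\subset \AdS$ invariant under $\rho(\g)$. But a pair of hyperbolic elements $(\g_1,\g_2)$ fixes exactly two projective lines in $\AdS$, namely $\alpha$ and the timelike/Lorentzian geodesic joining the attracting fixed point of $\g_1$ to the repelling fixed point of $\g_2$; and we showed using the dual plane argument (\cite[Lemma 6.17]{barbot2007constant}) that this second geodesic is disjoint from $D(\tilde{M})$. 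Hence $D(\tilde{c})=\alpha$, so $c$ coincides with the geodesic produced in the existence step.

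The only subtle point is verifying that the $\g$-invariant geodesic obtained by lifting a closed geodesic of $M$ genuinely develops to one of the two axes in $\AdS$ rather than, say, a curve that is geodesic only in $M$. This is where one uses that $D$ is a local isometry of Lorentzian manifolds and therefore sends geodesics to geodesics; once this is noted, the rest is bookkeeping about fixed geodesics of commuting hyperbolic pairs, which has already been carried out above.
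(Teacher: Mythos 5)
Your argument is correct and follows essentially the same route as the paper: identify free homotopy classes with conjugacy classes of $\G$, observe that $\rho(\g)$ preserves exactly two geodesics of $\AdS$, use the dual-plane argument of \cite[Lemma 6.17]{barbot2007constant} to exclude the second one from $D(\tilde{M})$, and then lift/develop any competing closed geodesic to conclude. One small correction: the second invariant geodesic (e.g.\ $\bigl[\begin{smallmatrix}0 & e^t\\ -e^{-t} & 0\end{smallmatrix}\bigr]$ in the normalized picture) is \emph{spacelike}, not timelike --- it has two endpoints on $\partial \AdS$ --- which is exactly why the dual-plane argument is needed rather than the simpler fact that $D(\tilde{M})$ contains no timelike geodesics; since you do invoke the dual-plane argument, the logic is unaffected.
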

 
\begin{proof}
Let $\alpha$ be a non trivial closed loop. Let $\g$ be a representative in $\pi_1(M)$ of the class of $\alpha$. We have seen that $\rho(\g)$ fixes exactly one geodesic $\tilde{g}$ in $D(\tilde{M})$. The projection of $\tilde{g}$ is a geodesic loop which is in the same class as $\alpha$. 

Conversely, let $g_1$ be a geodesic loop in $M$ representing the same class as $\alpha$.  Let $\tilde{g_1}$ denote a lift, and $\g_1$ the element in $\G$ such that $\rho(\g_1)(\tilde{g_1}(0))=\tilde{g_1}(1)$. Since $\tilde{g_1}$ is a geodesic it follows that it is invariant by $\rho(\g_1)$. Moreover, there exists $h$ such that $\rho(\g_1)=\rho(h^{-1} \g h)$, whence $\tilde{g_1}= \rho(h^{-1} ) (\tilde{g})$ and from uniqueness of the geodesic fixed by $\rho(\g)$, we have $\tilde{g_1}= \tilde{g}.$
\end{proof}

The proof and Proposition \ref{pr - lien longueur lorentzienne et hyp}  shows that for any $c\in \C$ there is a unique geodesic whose length is equal to $\ell_{Lor} = \frac{\ell_L(c)+\ell_R(c)}{2}$.

We then defined critical exponent for GHMC $\AdS$ manifolds :
$$\delta(M) :=\limsup_{R\tv \infty} \frac{1}{R} \log\Card \{ c\in \C \, | \, \ell_{Lor}(c) \leq R\}.$$
Moreover, if $M$ is parametrized by $(\rho_L,\rho_R)$ we have 
\begin{eqnarray}
\delta(M) &=&\limsup_{R\tv \infty} \frac{1}{R} \log \Card \{ c\in \C \, | \, \frac{\ell_L(c)+\ell_R(c)}{2} \leq R\}\\
				&=&2 \limsup_{R\tv \infty} \frac{1}{R} \log \Card \{ c\in \C \, | \, \ell_L(c)+\ell_R(c) \leq R\}
\end{eqnarray}

Now using Bishop-Steger Theorem  \cite{bishop1991three} we prove the result equivalent to Bowen's result for quasi-Fuchsian manifold. 

\begin{theorem}\label{th - rigidité des GHMC}
Let $M$ be a GHMC manifold parametrized by $(S_L,S_R)$. We have
$$\delta_{Lor}(M) \leq 1$$ 
Moreover the equality occurs if and only if $S_L=S_R$. 
\end{theorem}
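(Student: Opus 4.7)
The plan is that this theorem should fall out almost immediately from the three ingredients already in place: the length formula of Proposition \ref{pr - lien longueur lorentzienne et hyp}, the uniqueness of closed geodesics in each free homotopy class (Proposition \ref{pr - unique geodesic in free homotopy class}), and the Bishop--Steger rigidity result \eqref{th bishop steger}. So the proof is essentially a book-keeping argument that translates the Lorentzian counting problem into a counting problem on the pair $(S_L,S_R)$.

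First I would note that by Proposition \ref{pr - unique geodesic in free homotopy class}, every $c\in \C$ corresponds to exactly one closed geodesic in $M$, and by Proposition \ref{pr - lien longueur lorentzienne et hyp} its Lorentzian length satisfies
\[
\ell_{Lor}(c) \;=\; \frac{\ell_{S_L}(c) + \ell_{S_R}(c)}{2}.
\]
Substituting this into the definition of $\delta_{Lor}$ gives
\[
\delta_{Lor}(M) \;=\; \limsup_{R\to\infty}\frac{1}{R}\log\Card\Bigl\{c\in\C \,\Big|\, \tfrac{\ell_{S_L}(c)+\ell_{S_R}(c)}{2}\leq R\Bigr\}
\;=\; 2\,\delta(S_L,S_R).
\]
The factor $2$ comes from the straightforward rescaling $R \mapsto 2R$ in the counting function.

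Now I would invoke the Bishop--Steger theorem \eqref{th bishop steger}, which states that $\delta(S_L,S_R)\leq 1/2$, with equality if and only if $S_L=S_R$. Combined with the identity $\delta_{Lor}(M)=2\,\delta(S_L,S_R)$, this gives $\delta_{Lor}(M)\leq 1$, and equality holds precisely when $S_L=S_R$, which is exactly the claim.

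I do not foresee a genuine obstacle here: the real content of the theorem is already packaged in Bishop--Steger's result on the critical exponent for the diagonal action on $\Hyp^2\times\Hyp^2$, and the work done in this section is to justify that the Lorentzian length is indeed the average of the two hyperbolic lengths and that the counting is over the same index set $\C$. The only subtlety worth double-checking is that the correspondence $c\mapsto$ unique Lorentzian geodesic representative is a genuine bijection (no multiplicities, no collapse), but this is guaranteed by Proposition \ref{pr - unique geodesic in free homotopy class} together with the fact that in a GH manifold there are no timelike or lightlike closed geodesics, so $\ell_{Lor}(c)>0$ for every nontrivial class.
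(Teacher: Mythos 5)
Your proposal is correct and follows exactly the paper's own route: Proposition \ref{pr - unique geodesic in free homotopy class} and Proposition \ref{pr - lien longueur lorentzienne et hyp} give $\delta_{Lor}(M)=2\,\delta(S_L,S_R)$, and the Bishop--Steger bound $\delta(S_L,S_R)\leq 1/2$ with its equality case then yields the theorem. Nothing is missing.
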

 %However in this article we will focus on the critical exponent given by the diagonal action on $\Hyp^2\times \Hyp^2$. 

The aim of the rest of this paper is to study $\delta$ when $\rho_L$ and $\rho_R$ move in $\Teich(S)$.  We are not going to use the Lorentzian interpretation anymore.\footnote{ With D. Monclair, we propose a  Lorentzian proof of the previous result independent of the one of Bishop Steger in \cite{glorieux2016}. } From now on, we will study the distribution of closed geodesics on the two hyperbolic surfaces associated to $\rho_L$ and $\rho_R$. This corresponds geometrically to the diagonal action on $\Hyp^2\times \Hyp^2 $
endowed with the Manhattan metric, $d_M $, which is the sum of the two hyperbolic metrics. The first tool we are going to introduce is the Manhattan curve defined by M. Burger in \cite{Burger} for which we will replace, for the sake of coherence,  the "left" and "right" notations by "1" and "2", since it is a curve in $\R^2$, for example Manhattan metric will be denoted by $d_M=d_1+d_2$.

Manhattan curve is a  nice tool which allow to visualize different invariants associated to two Fuchsian representations. We can easily read on this curve the critical exponent for the diagonal action,  
$$\delta(S_1,S_2) =  \limsup_{R\tv \infty} \frac{1}{R} \log \Card \{ c\in \C \, | \, \ell_1(c)+\ell_2(c) \leq R\},$$ which is also equal to the exponential growth of the cardinal of an orbit in $\Produit$ \footnote{this  follows from a small modifications of the arguments in \cite{Knieper}}  :
$$\delta(S_1,S_2) =\delta(\rho_1,\rho_2) =  \limsup_{R\tv \infty} \frac{1}{R} \log \Card \{ \g\in \G \, | \, d(\rho_1(\g) o, o) + d(\rho_2( \g ) o ,o ) \leq R\}.$$ This last invariant is actually the one which is called \emph{critical exponent} usually. Moreover on the Manhattan curve, we can read out what we will call \emph{critical exponent with slope}, the \emph{maximal slope}, and \emph{geodesic stretch}. These three  invariants are related by different means to the ratio of the geodesic lengths on the two surfaces $\frac{\ell_2(c)}{\ell_1(c)}$. From the convexity of Manhattan curve it is easy to deduce inequalities between all of them.

\section{Manhattan curve}\label{sec - Manhattan curve}

Let $S$ be a compact surface of genus $g\geq 2$, $\G:=\pi_1(S)$ its the fundamental group, $\rho_1$, $\rho_2$, two Fuchsian representations of $\G$ and $\C$ the set of free homotopy classes of closed curves. 
We will denote by $\G_i := \rho_i(\G)$ the corresponding subgroups of $\PSL(2,\R)$ and $S_i := \Hyp^2 / \G_i$ the hyperbolic surfaces homeomorphic to $S$. We will often forget $\rho_i$ and denote $\rho_i (\g)$ by $\g_i$. We look at the diagonal action on $\Hyp^2\times \Hyp^2$, ie. $\g.(x,y) = (\rho_1(\g) x, \rho_2(\g) y)$, that we endow with different Manhattan distances, defined by the weighted sum of the two hyperbolic distances on each factor, $d_M^{x,y} := xd_1 +yd_2$. Since $x,y$  can be negative it is not necessarily genuine distances, however we will often restrict ourselves to $x,y\in (0,1)$.  Let $o=(p,q) \in \Produit $ be a point fixed once for all.
\begin{definition}\label{def Poincaré series}
 The \emph{Poincaré series } is defined by 
$$P_M[\rho_1,\rho_2,x,y](s) := \sum_{\g\in \G} e^{-s d_M^{x,y}(\g o,o)   }.$$
\end{definition}

There is a convenient way to normalize the pair $(x,y)$ which is the definition of the Manhattan curve :
\begin{definition}\label{def Manhattan curve}
The Manhattan curve is defined by 
$$C_M := \{(x,y) \in \R^2 \, | \, \text{The abscissa of convergence of } P_M[\rho_1,\rho_2,x,y](s) \text{ is 1}\}.$$
\end{definition}

Recall the definition of the critical exponent associated to the pair $(\rho_1,\rho_2)$.
\begin{definition}\label{def critical exponent via poincaré serie}
We define the \emph{critical exponent of $\rho_1$ and $\rho_2$} by 
$$ \delta(\rho_1,\rho_2) := \inf \{ s >0 \, | \, P_M[\rho_1,\rho_2,1,1](s)<\infty \}.$$
\end{definition}

By triangular inequality $ \delta(\rho_1,\rho_2)$  depends neither on $o\in \Produit$, nor on the conjugacy class of $\rho_1$ and $\rho_2$, hence it defines a function on $\Teich(S) \times \Teich(S)$, still denoted by $\delta$. 

We could have defined it in the following equivalent way : 
\begin{proposition}
The following are equal :
\begin{enumerate}
\item $\delta(\rho_1,\rho_2)$ as defined in Definition \ref{def critical exponent via poincaré serie}.
\item $\limsup_{R\tv \infty} \frac{1}{R} \log \Card \{ \g\in \G \, | \, d(\rho_1(\g) o, o) + d(\rho_2( \g ) o ,o ) \leq R\}$.
\item The abscissa of convergence of $\sum_{c\in \C} e^{-s (\ell_1(c) +\ell_2(c))}.$
\item $\limsup_{R\tv \infty} \frac{1}{R} \log \Card \{ c\in \C \, | \, \ell_1(c)+\ell_2(c) \leq R\}$.
\end{enumerate}
\end{proposition}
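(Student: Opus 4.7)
My plan proceeds in two stages: the formal Dirichlet-type equivalences (1)$\Leftrightarrow$(2) and (3)$\Leftrightarrow$(4), followed by the substantive equivalence (2)$\Leftrightarrow$(4).

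For the first stage, I invoke the general Cauchy--Hadamard principle: for any countable multiset $(\lambda_\alpha)$ of positive reals tending to infinity with counting function $N(R) := \Card\{\alpha : \lambda_\alpha \leq R\}$, the abscissa of convergence of $\sum_\alpha e^{-s\lambda_\alpha}$ equals $\limsup_{R\to\infty}\frac{1}{R}\log N(R)$. This is proved by rewriting the sum as the Stieltjes integral $s\int_0^\infty N(R) e^{-sR}\,dR$ via Abel summation and comparing with $\int_0^\infty e^{(\alpha - s)R}\,dR$, which converges precisely for $s>\alpha$. Applied with $\lambda_\g = d_M(\g o, o)$, this gives (1)$\Leftrightarrow$(2); applied with $\lambda_c = \ell_1(c) + \ell_2(c)$, it gives (3)$\Leftrightarrow$(4).

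For (2)$\Leftrightarrow$(4), I would follow the strategy of Knieper cited in the footnote. To bound the quantity in (2) above by the quantity in (4), the idea is to group the orbital sum by primitive conjugacy classes: each non-trivial $\g \in \G$ is written uniquely as $\eta\g_0^k\eta^{-1}$ with $\g_0$ primitive and $\eta$ ranging in the coset space $\G/\langle\g_0\rangle$. The elementary hyperbolic identity $\sinh(d_i(\rho_i(\g) x, x)/2) = \cosh(d_i(x, \Axis(\rho_i(\g)))) \sinh(\ell_i(\g)/2)$ in each factor then shows that the inner sum $\sum_{\eta \in \G/\langle\g_0\rangle} e^{-s d_M(\eta\g_0^k\eta^{-1} o, o)}$ is controlled, above a one-factor threshold, by a finite constant $K(s)$ times $e^{-sk\ell_M(c_0)}$; summing over $k$ and $c_0$ gives the comparison. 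Conversely, to bound (4) above by (2), one uses that each conjugacy class $c$ admits a representative $\g_c$ with bounded excess displacement $d_M(\g_c o, o) - \ell_M(c)$, leveraging the cocompactness of each action $\G_i \curvearrowright \Hyp^2$ (so that every closed geodesic on $S_i$ comes within the diameter of $S_i$ of the projection of $o_i$) to conjugate $\g$ and shrink the displacement in each factor.

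The main obstacle is that the diagonal $\G$-action on $\Hyp^2 \times \Hyp^2$ is typically not cocompact (cocompactness occurs only when $\rho_1$ and $\rho_2$ are conjugate), so a single conjugation cannot bring both factor axes simultaneously close to $o_1$ and $o_2$. One must instead balance the two factors against each other and absorb the loss using the convergence of the one-factor Poincaré series, as in the Knieper-style analysis above.
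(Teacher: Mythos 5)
Your first stage is fine: the Cauchy--Hadamard/Abel-summation principle correctly identifies the abscissa of convergence of a Dirichlet series with the exponential growth rate of its counting function, giving (1)$\Leftrightarrow$(2) and (3)$\Leftrightarrow$(4). For context, the paper itself gives no proof of this proposition beyond a footnote citing Knieper, so the real question is whether your second stage closes the substantive equivalence (2)$\Leftrightarrow$(4) --- and there it has a genuine gap, in both directions.

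For (2)$\leq$(4): writing $d_i(\rho_i(\eta\g_0^k\eta^{-1})o_i,o_i)\approx k\ell_i(c_0)+2\,d_i(o_i,\eta_i A_i)$ with $A_i$ the axis of $\rho_i(\g_0)$, your inner sum is essentially $\sum_{\eta}e^{-2s(d_1(o_1,\eta_1A_1)+d_2(o_2,\eta_2A_2))}$, and discarding one factor gives convergence only for $2s>1$, i.e.\ $s>1/2$ --- your ``one-factor threshold''. But the full orbital series already converges for every $s>1/2$ by Cauchy--Schwarz ($\sum_\g e^{-s(d_1+d_2)}\leq(\sum_\g e^{-2sd_1})^{1/2}(\sum_\g e^{-2sd_2})^{1/2}$), and by Bishop--Steger the abscissa one is trying to identify satisfies $\delta\leq 1/2$, with strict inequality whenever $S_1\neq S_2$. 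So an estimate valid only above $s=1/2$ carries no information about whether the orbital abscissa is $\leq\delta_{conj}$; the argument proves nothing in the generic case $\delta_{conj}<1/2$. The joint coset count $\Card\{\eta : d_1(o_1,\eta_1A_1)+d_2(o_2,\eta_2A_2)\leq D\}$ is itself an orbital-type count whose exponent is comparable to the unknown $\delta$, so the Knieper decomposition is circular here. The standard repairs are a closing lemma (a fixed finite set $F\subset\G$ such that every $\g$ admits $\mu\in F$ with $\g\mu$ uniformly biproximal for $\rho_1$ and $\rho_2$ simultaneously, whence $\ell_i(\g\mu)\geq d_i(\rho_i(\g\mu)o_i,o_i)-C$ and the map $\g\mapsto[\g\mu]$ has polynomially bounded fibres), or the thermodynamic route via Ledrappier's lemma relating both series to the pressure of $-x-y\psi_{\rho_2}$, which is the machinery the paper actually sets up in Section 3.2.

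For (4)$\leq$(2): you correctly observe that cocompactness of each factor separately does not yield one conjugate with both excesses bounded, but ``balancing the factors and absorbing the loss through the one-factor series'' does not repair this either, for the same reason as above. The claim you need is in fact true with a single conjugation, but the missing ingredient is the Morse lemma rather than cocompactness: choose $\eta$ so that $d_1(o_1,\eta_1A_1(\g))\leq\Diam(S_1)$; then automatically $d_2(o_2,\eta_2A_2(\g))\leq C(\rho_1,\rho_2)$, because the orbit map $\rho_1(\mu)o_1\mapsto\rho_2(\mu)o_2$ is a quasi-isometry carrying a net in $A_1(\g)$ to a quasi-geodesic whose endpoints are the fixed points of $\rho_2(\g)$, hence lying within uniformly bounded Hausdorff distance of $A_2(\g)$. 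With that lemma, every class has a representative with $d_M(\g_c o,o)\leq\ell_1(c)+\ell_2(c)+C$ and this direction closes; without it, your sketch stops exactly at the obstacle you yourself identify.
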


From the geometric nature of points 3 and 4, we choose to endow the Teichmüller space of $S$ with the Thurston distance. 
\begin{definition}\cite{thurston1998minimal}
 Let $(S_1,S_2)$ be  two hyperbolic surfaces. The Thurston distance is defined by 
$$d(S_1,S_2) =\log \sup_{c\in \C}\max \left( \frac{\ell_2(c)}{\ell_1(c)} ; \frac{\ell_1(c)}{\ell_2(c)}\right)$$
\end{definition}
We define $\dil^+ := \sup_{c\in \C} \frac{\ell_2(c)}{\ell_1(c)}$ and $\dil^-:= \inf  \frac{\ell_2(c)}{\ell_1(c)}$, therefore $d(S_1,S_2)= \log \max \left(\dil^+, \frac{1}{\dil^-}\right).$

Two remarks have to be pointed out. First, we choose the symmetric distance instead of the usual asymmetric because our isolation result is stronger in this way. Second, it is usually defined taking the supremum over simple closed curves, but from Thurston's work \cite[Proposition 3.5]{thurston1998minimal}, the two definitions coincide.

We begin this section by some inequalities between critical exponent and intersection number then we study the correlation number with slope.
The intersection between two hyperbolic surfaces will  be defined in Definition \ref{def - intersection entre deux surfaces}. This is a generalisation of the classical geometric intersection between two closed curves. In this section, the precise definition does not matter, we will just prove some inequalities coming from the Theorem of M. Burger \cite{Burger} about the Manhattan curve. For all this section we fix $S_1$ and $S_2$ two hyperbolic surfaces and we  call $\gs$ the geodesic stretch function on $\Teich(S)\times \Teich(S)$ defined by, $\gs : (S_1,S_2) \tv \frac{i(S_1,S_2)}{i(S_1,S_1)}$, where $i(S_i, S_j)$ is the intersection number between  $S_i$ and $S_j$. The function $\gs$ has a natural geometric interpretation, it corresponds to the ratio $\frac{\ell_2(c)}{\ell_1(c)}$ for "typical curves" on $S_1$. More precisely, if $(c_n)$ is a sequence of closed curves on $S_1$ which becomes equidistributed with respect to the Liouville measure on $T^1S_1$, we have $\gs(S_1,S_2) = \lim\frac{\ell_2(c_n)}{\ell_1(c_n)}$. Finally let $\lambda(x)$ be the slope of a normal vector to $\C_M$ at the abscissa $x$. Recall the following, 
\begin{theorem}\cite[Theorem  1]{Burger}\label{Rigidite}
 The Manhattan curve is  the straight line containing $(1,0)$ and $(0,1)$ if and only if $S_1$ and $S_2$ are equal in $\Teich(S)$. Moreover 
\begin{itemize}
\item $\lambda(1)=\gs(S_1,S_2)$.
\item $\lim_{x\tv +\infty}  \lambda(x) = \dil^+$, $\lim_{x\tv -\infty}  \lambda(x) = \dil^-$.
\end{itemize}
\end{theorem}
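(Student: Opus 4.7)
\emph{Proof plan.} The strategy is to treat $C_M$ as the unit level set of the abscissa-of-convergence function of the two-parameter Poincaré family, and to extract its geometry by convex analysis together with one input from thermodynamic formalism. Both $(1,0)$ and $(0,1)$ lie on $C_M$, since $P_M[\rho_1,\rho_2,1,0](s) = \sum_{\g \in \G} e^{-s d_1(\g o,o)}$ has abscissa $\delta(\rho_1)=1$ because $\rho_1$ is Fuchsian (and likewise for $(0,1)$). Setting $s^*(x,y) := \inf\{s : P_M[\rho_1,\rho_2,x,y](s) < \infty\}$, a direct Hölder estimate on the series gives convexity of $s^*$ in $(x,y)$; since $s^*$ is also homogeneous of degree $-1$, the level set $C_M = \{s^*=1\}$ bounds a convex region containing the origin.

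To identify $\lambda(1)$ with $\gs(S_1,S_2)$, I would compute $\partial_x s^*$ and $\partial_y s^*$ at $(1,0)$. Euler's relation $x\,\partial_x s^* + y\,\partial_y s^* = -s^*$ yields $\partial_x s^*(1,0) = -1$. The cross derivative $\partial_y s^*(1,0)$ is, by the standard pressure/equilibrium-state correspondence for the geodesic flow of $S_1$, the negative of the ergodic average of $d_2(\g o,o)/d_1(\g o,o)$ against the Bowen--Margulis--Patterson--Sullivan measure of $\rho_1$; this average equals $i(S_1,S_2)/i(S_1,S_1) = \gs(S_1,S_2)$ by the characterisation of the intersection number as a Patterson--Sullivan integral. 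Hence the normal slope at $(1,0)$ is $\partial_y s^* / \partial_x s^* = \gs(S_1,S_2)$.

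For the asymptotic slopes I would pass to a Legendre-dual picture: parametrising $\g \in \G$ by the approximate ratio $r = d_2(\g o,o)/d_1(\g o,o)$ and letting $h(r)$ denote the corresponding restricted exponential growth rate, one has $s^*(x,y) = \sup_r h(r)/(x+yr)$. Thus $C_M$ is the envelope of the lines $x+yr = h(r)$; its point parametrised by $r$ has normal slope exactly $r$, and as $r \to \dil^+$ (respectively $r \to \dil^-$), $h(r)\to 0$ while $h'(r) \to -\infty$ (resp.\ $+\infty$), forcing $x(r) = h(r) - r h'(r) \to +\infty$ (resp.\ $-\infty$). This gives $\lim_{x \to \pm \infty} \lambda(x) = \dil^\pm$.

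Finally, for rigidity: if $S_1 = S_2$ in $\Teich(S)$, then up to conjugation $d_1 = d_2$ and $P_M[\rho_1,\rho_2,x,y](s) = P_M[\rho_1,\rho_1,1,0](s(x+y))$, so $C_M = \{x+y=1\}$. Conversely, if $C_M$ is this straight segment, then $\dil^+ = \dil^- = 1$ by the limit computation above, hence $\ell_1(c) = \ell_2(c)$ for every $c \in \C$; Otal's marked-length-spectrum rigidity for closed hyperbolic surfaces then forces $S_1 = S_2$ in $\Teich(S)$. The main obstacle will be the first step: matching $\partial_y s^*(1,0)$ with the geodesic stretch requires a careful setup of thermodynamic formalism on $T^1 S_1$ and an appeal to the characterisation of the intersection number as a Patterson--Sullivan average; the convexity, asymptotic-slope, and rigidity parts then follow essentially by convex analysis and known length-spectrum rigidity.
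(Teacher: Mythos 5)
First, a point of reference: the paper does not prove Theorem \ref{Rigidite} at all --- it is imported verbatim from Burger \cite{Burger} --- so there is no internal proof to measure yours against. What can be compared is the machinery: Section \ref{sec:correlation number} of the paper derives exactly the thermodynamic description you are reaching for, namely $\C_M=\{(x,y)\,:\,P(-x-y\psi_{\rho_2})=0\}$, the graph $y=q(x)$ with $q'(x)=-1/\int\psi_{\rho_2}\,d\mu_{-q(x)\psi_{\rho_2}}$, and Proposition \ref{les pentes sont incluses dans J(psi)}. Your plan is essentially Burger's original argument recast in that language: the normal slope at $(1,0)$ is the Liouville average of the reparametrizing function $\psi_{\rho_2}$ (your Euler-relation computation $\partial_x s^*(1,0)=-1$ is correct, and for a cocompact Fuchsian group the Bowen--Margulis measure you invoke is the Liouville measure), and Bonahon's identification of $i(S_1,S_2)/i(S_1,S_1)$ with that average gives $\lambda(1)=\gs$. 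The rigidity step (direct computation one way; $\dil^+=\dil^-=1$ plus marked-length-spectrum rigidity the other way) is a legitimate route, equivalent to Burger's use of the independence lemma.

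There are two soft spots. The minor one: the convex region bounded by $\C_M$ is $\{s^*\le 1\}$, which does \emph{not} contain the origin, since $s^*(0,0)=+\infty$; the curve separates the origin from its convex side. The substantive one is the asymptotic-slope step. You write $\C_M$ as the envelope of the lines $x+yr=h(r)$ and conclude $x(r)=h(r)-rh'(r)\to+\infty$ from the assertion that $h'(r)\to-\infty$ as $r\to\dil^+$. But a finite concave function on a bounded interval need not have infinite one-sided derivative at an endpoint, and the claim that the normal slopes exhaust $(\dil^-,\dil^+)$ precisely as $x\to\pm\infty$ is the statement to be proven, so as written this is circular. The honest argument runs through the pressure: the normal slope as a function of the ordinate is $y\mapsto\int\psi_{\rho_2}\,d\mu_{-y\psi_{\rho_2}}$, which converges as $y\to+\infty$ (resp.\ $y\to-\infty$, i.e.\ $x\to+\infty$) to $\inf_\nu\int\psi_{\rho_2}\,d\nu$ (resp.\ $\sup_\nu\int\psi_{\rho_2}\,d\nu$), and these equal $\dil^-$ and $\dil^+$ because periodic-orbit measures are weak-* dense among flow-invariant measures. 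With that substitution, and the careful treatment of $\int\psi_{\rho_2}\,dL_1=\gs(S_1,S_2)$ that you yourself flag as the main obstacle, the plan goes through and reproduces Burger's theorem.
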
 

The other result we will need is the 
\begin{theorem}\cite{Sharp}
The Manhattan curve is real analytic. 
\end{theorem}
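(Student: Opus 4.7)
The plan is to invoke thermodynamic formalism and reduce the analyticity of $C_M$ to the classical analyticity of the topological pressure on the space of Hölder potentials. Since $S_1$ is a compact hyperbolic surface, the geodesic flow on $T^1 S_1$ is a transitive Anosov flow, and by the work of Bowen--Ratner it admits a Markov section. This realizes the flow as a suspension over a subshift of finite type $(\Sigma,\sigma)$ with a strictly positive Hölder roof function $r_1 \colon \Sigma \to \R$ whose Birkhoff sums over periodic points $\sigma^n x = x$ equal the hyperbolic lengths $\ell_1(c)$ of the corresponding closed geodesics on $S_1$.

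The main geometric input is to produce a second Hölder potential $r_2 \colon \Sigma \to \R$ whose Birkhoff sums over the same periodic orbits recover $\ell_2(c)$. This is done as in the work of Series, Bonahon and Sharp: given a $\rho_1,\rho_2$-equivariant (Hölder) boundary map $F \colon \partial \Hyp^2_1 \to \partial \Hyp^2_2$ between the limit circles, one builds $r_2$ from the pullback by $F$ of the Busemann cocycle on $\Hyp^2_2$; Hölder regularity of $F$ (which follows from $\rho_1,\rho_2$ being convex cocompact and topologically conjugate on the boundary) transfers to $r_2$, and the cocycle identity forces $\ell_2(c) = S_n r_2(x) := \sum_{k=0}^{n-1} r_2(\sigma^k x)$ for every periodic orbit.

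With these two potentials in hand, the Poincaré series $P_M[\rho_1,\rho_2,x,y](s)$ is comparable (in the sense of having the same abscissa of convergence) to the dynamical zeta-type series $\sum_{c\in\C} e^{-s(x\ell_1(c)+y\ell_2(c))}$, which is a Ruelle-type zeta function associated to the Hölder potential $\phi_{x,y} := -xr_1 - yr_2$. Standard thermodynamic formalism then identifies the abscissa of convergence with a value determined by the topological pressure: concretely, $(x,y)\in C_M$ if and only if $P(\phi_{x,y}) = 0$, where $P$ denotes the pressure of the suspension (equivalently, the unique $h$ such that $P_\sigma(-xr_1 - yr_2 - h\, r_1) = 0$ after reducing to the base).

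By Ruelle's theorem, $P \colon C^{\alpha}(\Sigma) \to \R$ is real analytic on the Banach space of Hölder potentials, so $(x,y) \mapsto P(\phi_{x,y})$ is real analytic on $\R^2$. Its partial derivative in $y$ at a point of $C_M$ equals $-\int r_2\, d\mu_{x,y}$ where $\mu_{x,y}$ is the equilibrium state of $\phi_{x,y}$; since $r_2 > 0$ (as $\ell_2(c)>0$ for all closed geodesics) this integral is strictly positive, so $\partial_y P \neq 0$ along the curve. The analytic implicit function theorem then expresses $y$ as a real analytic function of $x$, proving that $C_M$ is a real analytic curve. The main obstacle, and the only step requiring genuine geometric input rather than abstract dynamics, is the construction of the Hölder cocycle $r_2$; once this is in place, everything reduces to Ruelle's theorem plus implicit differentiation.
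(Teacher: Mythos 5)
Your proposal is correct and follows essentially the same route as Sharp's argument and as the machinery the paper itself assembles in Section 3 (realise $\ell_2$ as the periodic data of a Hölder potential built from the Busemann cocycle, identify $C_M$ with the zero set of the pressure $P(-xr_1-yr_2)$, and apply analyticity of pressure plus the implicit function theorem); working through an explicit Markov coding rather than Ledrappier's flow-level cocycle correspondence is only a presentational difference. The one loose point is the parenthetical ``$r_2>0$ since $\ell_2(c)>0$'': positivity of periodic Birkhoff sums does not give pointwise positivity of the potential, but what you actually need, $\int r_2\,d\mu_{x,y}>0$, does follow since $\ell_2(c)\geq C\ell_1(c)$ for cocompact representations and invariant measures are approximated by periodic orbit measures.
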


%
%\begin{center}
%\includegraphics[scale=0.4]{courbedemanhattan.png}
%\end{center} 
For concision we write $\delta$ for $\delta(S_1,S_2)$ and $\gs$ for   $\gs(S_1,S_2)$ .

We first begin by recalling some basic facts about $\delta$ and $\C_M$. 
\begin{proposition}\label{pr - basic properties}
\begin{enumerate}
\item  The points $(1,0)$ and $(0,1)$  are on $\C_M$. 
\item  The  intersection point between $\C_M$ and the line $y=x$ has coordinates $(\delta, \delta)$.
\item The Manhattan curve is convex and strictly convex if $S_1\neq S_2$.
\item If $S_1\neq S_2$ then $\lambda : \R \tv (\dil^-,\dil^+)$ is one-to-one. 
\end{enumerate}
\end{proposition}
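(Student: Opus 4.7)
I would address the four points in order, using Hölder's inequality together with Burger's rigidity (Theorem \ref{Rigidite}) and the real-analyticity of $\C_M$ due to Sharp.

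For (1), observe that $P_M[\rho_1,\rho_2,1,0](s) = \sum_{\g\in\G} e^{-s d_{\Hyp^2}(\rho_1(\g)p,p)}$ is the ordinary Poincaré series of the cocompact Fuchsian group $\rho_1(\G)$ acting on $\Hyp^2$. Its abscissa of convergence equals the critical exponent, which for a cocompact Fuchsian group is $1$ (for instance, because the limit set is all of $\partial\Hyp^2$ and has Hausdorff dimension $1$, or by direct comparison with the volume growth $e^r$ of balls in $\Hyp^2$). Hence $(1,0)\in\C_M$, and the symmetric argument gives $(0,1)\in\C_M$. For (2), at $(\delta,\delta)$ the Poincaré series reads $\sum_\g e^{-s\delta(d_1+d_2)(\g o, o)}$, whose abscissa of convergence in $s$ is $\delta/\delta=1$ by Definition \ref{def critical exponent via poincaré serie}; uniqueness of the intersection with $y=x$ will follow from the fact (see (3)) that $\C_M$ is a graph of a strictly decreasing function near the diagonal.

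For (3), convexity is a direct consequence of Hölder. Setting $\varphi_i(\g) := x_i d_1(\g o, o) + y_i d_2(\g o, o)$ for $(x_i,y_i)\in\C_M$, $i=1,2$, and given $t\in[0,1]$ and any $s>1$, Hölder with conjugate exponents $1/t,1/(1-t)$ yields
$$\sum_\g e^{-s[t\varphi_1(\g) + (1-t)\varphi_2(\g)]} \leq \Bigl(\sum_\g e^{-s\varphi_1(\g)}\Bigr)^t \Bigl(\sum_\g e^{-s\varphi_2(\g)}\Bigr)^{1-t} < \infty.$$
Since the left-hand side is precisely $P_M[\rho_1,\rho_2,tx_1+(1-t)x_2,ty_1+(1-t)y_2](s)$, this shows that the sublevel region bounded by $\C_M$ (on which the abscissa is $\leq 1$) is convex, so $\C_M$ itself is convex. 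For strict convexity when $S_1\neq S_2$, I would combine two already-cited facts: Theorem \ref{Rigidite} says $\C_M$ is not a straight line, and Sharp's theorem says it is real-analytic. A convex real-analytic curve containing a line segment would, by the identity principle, coincide with that line everywhere on its domain; since it does not, the curve is strictly convex.

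For (4), write $\C_M$ locally as $y=f(x)$ with $f$ convex and decreasing between $(0,1)$ and $(1,0)$. Then the slope of the normal is $\lambda(x)=-1/f'(x)$, and strict convexity from (3) makes $f'$ strictly increasing, so $\lambda$ is strictly increasing, hence injective. The asymptotic values $\lim_{x\to\pm\infty}\lambda(x)=\dil^{\pm}$ from Theorem \ref{Rigidite}, combined with continuity of $\lambda$, yield surjectivity onto $(\dil^-,\dil^+)$ by the intermediate value theorem. The main obstacle is the strict convexity in (3): equality in Hölder only rules out proportionality of the two exponent sequences at a single scale and does not by itself exclude a line segment in $\C_M$, so the argument genuinely requires both Burger's rigidity (no line) and Sharp's analyticity.
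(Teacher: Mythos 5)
Your proof is correct and follows essentially the same route as the paper: critical exponent $1$ for a cocompact Fuchsian group for (1), direct evaluation at $(\delta,\delta)$ for (2), Hölder's inequality for convexity and analyticity for strict convexity in (3), and strict monotonicity of the normal slope for (4). The only difference is that you spell out the strict-convexity step more carefully than the paper does — the paper merely says it "follows from the analyticity of $\C_M$", while you correctly note that one must also invoke Burger's rigidity to exclude the affine case before the identity principle applies.
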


\begin{proof}

\begin{enumerate}
\item Follows from the compactness of $S$. Indeed the critical exponent of $\sum_{c\in \C} e^{-sl(c)}$ is $1$ for a compact surface. 
\item The intersection point has coordinates of the form $(x,x) $ since it is on the line $y=x$. Since $(x,x)$ is on $\C_M$, $\sum_{c\in \C} e^{-s x(\ell_1(c)+\ell_2(c))}$ has critical exponent equal to $1$, this exactly means that $x=\delta$ 
\item Follows from the convexity of the exponential map. More precisely let $(x_1,y_1)$, $(x_2,y_2)$ be two points of $\C_M$ then by Hölder's inequality 
$$P[S_1,S_2,tx_1+(1-t)x_2, ty_1+(1-t)y_2](s) \leq (P[S_1,S_2,x_1, y_1](s))^t (P[S_1,S_2,x_2, y_2](s))^{1-t}.$$ By definition, both series of the right hand side have critical exponent equal to $1$, hence $P[S_1,S_2,tx_1+(1-t)x_2, ty_1+(1-t)y_2](s)$ has critical less than $1$, which exactly means that $(tx_1+(1-t)x_2, ty_1+(1-t)y_2)$ is above $\C_M$. The strict convexity follows from the analiticity of $\C_M$.
\item By strict convexity, $\lambda$ is strictly increasing and it is  continuous hence one-to-one. 
\end{enumerate} 
\end{proof}

The convexity  of $\C_M$ and Theorem  \ref{Rigidite} imply the rigidity Theorem of C. Bishop and S. Steger we mentioned in the introduction, $\delta \leq \frac{1}{2}$ with equality if and only if $S_1=S_2$.  

The last statement in Proposition \ref{pr - basic properties}, shows that for every $\lambda \in  (\dil^-,\dil^+)$ there is a point on the curve for which the slope of the normal is exactly $\lambda$. We set the following notation
\begin{definition}\label{def - x(lambda)}
The point on the Manhattan curve for which the slope of the normal is $\lambda\in (\dil^-,\dil^+)$, will be noted $(x(\lambda),y(\lambda))\in \C_M$.
\end{definition}

\begin{definition}\label{def maximal slope}
We call \emph{maximal slope} associated to $S_0$ and $S_1$ the slope of a normal vector at $(\delta,\delta)$. This is $\lambda(\delta)$. 
\end{definition}

 By convexity $\C_M$ is above the line $y = \frac{-1}{\lambda(\delta)} (x-\delta) +\delta$. Since  $(0,1)\in \C_M$ and $(1,0)\in \C_M$, we obtain the two following inequalities : 
$$ \frac{\delta}{1-\delta} \leq \lambda(\delta) \leq \frac{1-\delta}{\delta}.$$

We will use the following corollary of these inequalities.  
\begin{corollaire}\label{Si delta tends vers 1 lambda tends vers 1}
Let $S_n$ and $S_n'$ be two sequences of hyperbolic surfaces. If $\, \lim_{n\tv \infty} \delta(S_n,S_n') =\frac{1}{2}$ then $\lim_{n\tv \infty}\lambda(\delta(S_n,S_n')) = 1$.
\end{corollaire}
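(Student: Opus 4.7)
The plan is to apply the squeeze theorem directly to the two-sided inequality
\[
\frac{\delta}{1-\delta} \;\leq\; \lambda(\delta) \;\leq\; \frac{1-\delta}{\delta}
\]
that was just established from the convexity of the Manhattan curve. This inequality holds pointwise for each pair $(S_n,S_n')$, so writing $\delta_n := \delta(S_n,S_n')$ we obtain
\[
\frac{\delta_n}{1-\delta_n} \;\leq\; \lambda(\delta_n) \;\leq\; \frac{1-\delta_n}{\delta_n}.
\]
Under the hypothesis $\delta_n \to 1/2$, both rational functions of $\delta_n$ on the extremes converge to $\frac{1/2}{1/2}=1$, forcing $\lambda(\delta_n)\to 1$.

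The only point that needs a word of care is that the inequalities were derived assuming $\delta_n \in (\dil^-_n, \dil^+_n)$, so that the notation $\lambda(\delta_n)$ makes sense. For $S_n = S_n'$ we have $\dil^\pm_n = 1$ and $\delta_n = 1/2$, in which case $\lambda(\delta_n)=1$ holds trivially by Theorem \ref{Rigidite}. Otherwise the strict convexity from Proposition \ref{pr - basic properties} gives a well-defined $\lambda(\delta_n)$ and the squeeze argument applies verbatim. There is no real obstacle: the entire content is the elementary limit $\delta \to 1/2 \Rightarrow \delta/(1-\delta) \to 1$, combined with the convexity bounds already obtained. I would therefore state the proof in a single short paragraph.
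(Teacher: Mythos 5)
Your proof is correct and coincides with the paper's argument: the corollary is stated there precisely as an immediate consequence of the two-sided bound $\frac{\delta}{1-\delta}\leq\lambda(\delta)\leq\frac{1-\delta}{\delta}$, and the intended justification is exactly the squeeze you describe. Your remark on the degenerate case $S_n=S_n'$ is a reasonable extra precaution but adds nothing essential.
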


By convexity again, the line $y = \frac{-1}{\gs} (x-1)$ is below $\C_M$. Hence taking the intersection with $y=x$ we get $\delta \geq \frac{-1}{\gs} (\delta-1)$, which is equivalent to for every $S_0,S_1\in \Teich(S)$ : 
$$\delta(S_0,S_1)\geq \frac{1}{1+\gs(S_0,S_1)}.$$
This gives the following,
\begin{corollaire}\label{Si I tends vers 1 delta tends vers 1/2}
Let $S_n$ and $S_n'$ be two sequences of hyperbolic surfaces. If $ \, \lim_{n\tv \infty}  \gs(S_n,S_n')=1$, then $\lim_{n\tv \infty} \delta(S_n,S_n') =\frac{1}{2}$ .
\end{corollaire}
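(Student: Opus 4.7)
The plan is essentially to exploit the convexity of $\C_M$ together with the identification $\lambda(1)=\gs$ from Theorem \ref{Rigidite}, which has already been set up in the paragraph immediately preceding the corollary. Concretely, the supporting line to $\C_M$ at the point $(1,0)$ has slope $-1/\gs$, so by convexity the line
\[
y = -\frac{1}{\gs}(x-1)
\]
lies entirely below $\C_M$. Since $(\delta,\delta)\in\C_M$ by Proposition \ref{pr - basic properties}(2), it lies above that line, which gives the inequality
\[
\delta \;\geq\; \frac{1}{1+\gs}.
\]

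Applying this pointwise to the sequences $S_n, S_n'$ yields
\[
\delta(S_n,S_n') \;\geq\; \frac{1}{1+\gs(S_n,S_n')}.
\]
The hypothesis $\gs(S_n,S_n')\to 1$ therefore forces $\liminf_{n\to\infty}\delta(S_n,S_n')\geq 1/2$. On the other hand, the Bishop--Steger rigidity bound \eqref{th bishop steger} (restated in the paper as a consequence of convexity of $\C_M$) gives $\delta(S_n,S_n')\leq 1/2$ for every $n$. Combining the two inequalities yields $\lim_n \delta(S_n,S_n')=1/2$, as required.

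There is no real obstacle to overcome here: once one has the supporting-line inequality $\delta\geq 1/(1+\gs)$, the conclusion is a sandwich argument between that lower bound and the universal upper bound $\delta\leq 1/2$. The only point worth highlighting is that the derivation of the supporting-line inequality relies on $\lambda(1)=\gs$, i.e.\ on Burger's identification of the slope of the Manhattan curve at $(1,0)$ with the geodesic stretch; this is the content of Theorem \ref{Rigidite} and is applied uniformly in $n$, so no additional regularity of the sequence is needed.
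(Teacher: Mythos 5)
Your argument is correct and is essentially identical to the paper's: the inequality $\delta \geq \frac{1}{1+\gs}$ is obtained exactly as you do, from the convexity of $\C_M$, Burger's identification $\lambda(1)=\gs$, and the intersection of the tangent line at $(1,0)$ with $y=x$, and the conclusion then follows by sandwiching against the Bishop--Steger bound $\delta \leq 1/2$. No differences worth noting.
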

Moreover, $\delta \geq \frac{1}{1+\gs}$ gives, $\gs \geq \frac{1}{\de}-1$, hence we have the following corollary which is in the paper of M. Burger, 
\begin{corollaire}\cite[Corollary 1]{Burger}\label{l'intersction est plus grande que }
We have $\gs(S_1,S_2)\geq 1$, with equality if and only if $S_1=S_2$.
\end{corollaire}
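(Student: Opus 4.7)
The key tool is already in place: the paragraph just above the corollary derives, from convexity of $\C_M$ and the fact that the chord between $(1,0)$ and $(\delta,\delta)$ lies above the curve, the inequality
\[
\delta(S_1,S_2) \geq \frac{1}{1+\gs(S_1,S_2)},
\]
or equivalently $\gs(S_1,S_2) \geq \frac{1}{\delta(S_1,S_2)} - 1$. My plan is to combine this with the Bishop--Steger bound (\ref{th bishop steger}) stated in the introduction, namely $\delta(S_1,S_2) \leq 1/2$, which immediately gives $\frac{1}{\delta} \geq 2$ and hence $\gs \geq 1$.

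For the rigidity part, I will use the two directions separately. If $S_1 = S_2$ then by Theorem~\ref{Rigidite} the Manhattan curve is the straight line joining $(1,0)$ and $(0,1)$, and $\lambda(1) = \gs$ is the slope of the normal to this line at $(1,0)$, which is $1$; alternatively, the definition $\gs = i(S_1,S_2)/i(S_1,S_1)$ trivially gives $\gs = 1$. Conversely, if $\gs(S_1,S_2) = 1$, the chain of inequalities
\[
1 = \gs \geq \frac{1}{\delta} - 1
\]
forces $\delta \geq 1/2$, and combined with Bishop--Steger this yields $\delta = 1/2$, which by the equality case of (\ref{th bishop steger}) (equivalently the last assertion of Theorem~\ref{Rigidite}) gives $S_1 = S_2$.

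There is really no hard step here; the whole argument is a two-line rearrangement of the two facts already on the table, $\delta \geq 1/(1+\gs)$ and $\delta \leq 1/2$. The only thing worth being slightly careful about is that the inequality $\delta \geq 1/(1+\gs)$ was obtained by intersecting the tangent at $(1,0)$ of slope $-1/\gs$ with the diagonal $y=x$, and this uses the convexity of $\C_M$ together with the fact that the tangent direction at $(1,0)$ is indeed given by $\gs$; both pieces are supplied by Theorem~\ref{Rigidite} and Proposition~\ref{pr - basic properties}, so no further work is needed.
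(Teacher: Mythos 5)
Your argument is exactly the paper's: the inequality $\gs \geq \frac{1}{\delta}-1$ combined with the Bishop--Steger bound $\delta \leq 1/2$ gives $\gs \geq 1$, and the equality case reduces to the rigidity statement $\delta = 1/2 \Leftrightarrow S_1 = S_2$. The proof is correct and follows the same route as the text preceding the corollary.
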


\subsection{Critical exponent with slope}

A central idea in our work is to study the proportionality factors between the two lengths of the geodesic corresponding  to a closed curve on $S_1$ and $S_2$. 
%\begin{center}
%\includegraphics[scale=0.4]{delta1.png}
%\end{center}

The principal result in this Section is a formula for the critical exponent with slope $\lambda$ in terms of the Manhattan curve. 

For $\lambda\in (\dil^-,\dil^+)$ and $\epsilon>0$, let 
\begin{equation}\label{eq - def de CS(lambda epsilon)}
\C(\lambda,\epsilon) := \left\{ c \in \C  \, | \, \left| \frac{\ell_2(c) }{\ell_1 (c)}  - \lambda \right| \leq \epsilon \right\}.
\end{equation}

To this set is naturally associated a critical exponent, namely 
$$\delta(S_1,S_2,\lambda,\epsilon) := \inf\left\{s>0 \, | \, \sum_{c\in \C(\lambda,\epsilon)} e^{-s(\ell_1(c)+\ell_2(c)) } < +\infty\right\}.$$
\begin{definition}\label{def - critical exponent with slope}
 The \emph{critical exponent with slope $\lambda$}, is defined by 
 $$\delta(S_1,S_2,\lambda) := \lim_{\epsilon \tv 0} \delta(S_1,S_2,\lambda, \epsilon).$$
\end{definition}
As the critical exponent we could have defined it by 
$$\delta(S_1,S_2,\lambda) = \lim_{\epsilon\tv 0} \limsup_{T\tv +\infty}\frac{1}{T} \log \Card \{ c\in \C(\lambda,\epsilon) \, |\, \ell_1(c)+\ell_2(c) \leq T\}.$$

\begin{theorem}\label{rigidite de l exposant critique directionel}
Let $(x,y)\in \C_M$, and $\lambda \in (\dil^-,\dil^+)$. We have the following inequality $$\delta(\lambda) \leq \frac{x+\lambda y}{1+\lambda},$$
 equality occurs if and only if $(x,y)=(x(\lambda),y(\lambda))$, the point on the Manhattan curve for which the slope of the normal vector is equal to $\lambda$, see Definition \ref{def - x(lambda)}.
\end{theorem}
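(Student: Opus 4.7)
The proof breaks into three steps: establish the inequality by comparing exponents on slope windows, prove equality at the extremal point $(x(\lambda),y(\lambda))$ by contradicting the defining property of $\C_M$, and deduce uniqueness of the equality case from strict convexity.

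For the inequality, fix $(x,y)\in\C_M$ and $s>1$, so that $\sum_{c\in\C} e^{-s(x\ell_1(c)+y\ell_2(c))}$ converges (the closed-geodesic analogue of Definition \ref{def critical exponent via poincaré serie}). For $c\in\C(\lambda,\epsilon)$, writing $\eta_c=\ell_2(c)/\ell_1(c)\in[\lambda-\epsilon,\lambda+\epsilon]$, one has $x\ell_1(c)+y\ell_2(c)=\frac{x+\eta_c y}{1+\eta_c}(\ell_1(c)+\ell_2(c))$, and by continuity $\beta(\epsilon):=\max_{\eta\in[\lambda-\epsilon,\lambda+\epsilon]}\tfrac{x+\eta y}{1+\eta}$ tends to $\frac{x+\lambda y}{1+\lambda}$. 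Term by term $e^{-s(x\ell_1(c)+y\ell_2(c))}\geq e^{-s\beta(\epsilon)(\ell_1(c)+\ell_2(c))}$, so $\sum_{c\in\C(\lambda,\epsilon)} e^{-s\beta(\epsilon)(\ell_1(c)+\ell_2(c))}$ converges, giving $\delta(S_1,S_2,\lambda,\epsilon)\leq s\beta(\epsilon)$. Letting $\epsilon\to 0$ and $s\to 1^+$ yields $\delta(\lambda)\leq\frac{x+\lambda y}{1+\lambda}$.

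For equality at $(x(\lambda),y(\lambda))$, I argue by contradiction. Suppose $\delta(\lambda)<\frac{x(\lambda)+\lambda y(\lambda)}{1+\lambda}$. For any $\mu\in(\dil^-,\dil^+)\setminus\{\lambda\}$, strict convexity of $\C_M$ (Proposition \ref{pr - basic properties}(3); the case $S_1=S_2$ is vacuous since $\dil^-=\dil^+=1$) yields the strict tangent inequality $x(\lambda)+\mu y(\lambda)>x(\mu)+\mu y(\mu)$. Combined with the inequality from the previous step, applied at $(x(\mu),y(\mu))\in\C_M$ and slope $\mu$,
\[
\delta(\mu)\leq\frac{x(\mu)+\mu y(\mu)}{1+\mu}<\frac{x(\lambda)+\mu y(\lambda)}{1+\mu}.
\]
Together with the assumed strict inequality at $\mu=\lambda$ and continuity of $\mu\mapsto\delta(\mu)$ (from analyticity of $\C_M$, Sharp's theorem), this produces a uniform gap $\delta(\mu)\leq\frac{x(\lambda)+\mu y(\lambda)}{1+\mu}-2\eta$ for some $\eta>0$ on the compact closure of the slope range. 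Partitioning into finitely many windows of width $\epsilon$ small enough that $\delta(S_1,S_2,\mu_k,\epsilon)<\delta(\mu_k)+\eta$ uniformly, the same comparison as in the previous step shows that each $\sum_{c\in\C(\mu_k,\epsilon)}e^{-s(x(\lambda)\ell_1+y(\lambda)\ell_2)}$ converges for some $s<1$ uniform in $k$. Summing over the windows, $\sum_{c\in\C}e^{-s(x(\lambda)\ell_1+y(\lambda)\ell_2)}$ converges at such an $s<1$, contradicting $(x(\lambda),y(\lambda))\in\C_M$ (which requires divergence for $s<1$).

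For uniqueness: granted $\delta(\lambda)=\frac{x(\lambda)+\lambda y(\lambda)}{1+\lambda}$, any other $(x,y)\in\C_M$ with $\delta(\lambda)=\frac{x+\lambda y}{1+\lambda}$ satisfies $x+\lambda y=x(\lambda)+\lambda y(\lambda)$; but the linear functional $(x,y)\mapsto x+\lambda y$ is minimized on $\C_M$ uniquely at $(x(\lambda),y(\lambda))$, since the inward normal there is proportional to $(1,\lambda)$ and strict convexity forbids a flat facet. Hence $(x,y)=(x(\lambda),y(\lambda))$. The main obstacle is the uniform-gap step in the contradiction argument: I must turn the pointwise strict inequality into a uniform gap over the closed slope range by controlling the continuity of $\mu\mapsto\delta(\mu)$ and the convergence rate of $\delta(S_1,S_2,\mu,\epsilon)$ to $\delta(\mu)$; near the endpoints $\dil^\pm$, where closed geodesics become sparse and $\mu\mapsto(x(\mu),y(\mu))$ may be unbounded, this may require bounding the endpoint contribution separately.
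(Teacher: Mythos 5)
Your proof of the inequality is correct and is essentially the paper's Lemma \ref{inegalitépourdeltalambda}: the identity $x\ell_1(c)+y\ell_2(c)=\frac{x+\eta_c y}{1+\eta_c}(\ell_1(c)+\ell_2(c))$ with $\eta_c$ confined to a window, a termwise comparison of Poincar\'e series, and a limit in $\epsilon$ and $s$. Your version is in fact a little cleaner, since taking the extremum of $\eta\mapsto\frac{x+\eta y}{1+\eta}$ over the compact window handles the sign cases $x<0$ or $y<0$ that the paper treats separately. The uniqueness paragraph (the linear functional $x+\lambda y$ is uniquely minimized on $\C_M$ at $(x(\lambda),y(\lambda))$ by strict convexity) also matches the paper's argument in Corollary \ref{relatioC DELTA et courbe de manhattan}.

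The equality case is where you diverge from the paper, and where your argument has a genuine gap that you yourself flag: the contribution of curves whose slope $\ell_2(c)/\ell_1(c)$ lies near the endpoints $\dil^{\pm}$. This is not a removable technicality in the argument as written. Slopes of closed geodesics accumulate at $\dil^{\pm}$ by definition of these quantities, the windows $\{c:\,\eta_c\in[\dil^+-\epsilon,\dil^+]\}$ can contain exponentially many curves for every fixed $\epsilon>0$, and your source of a ``uniform gap'' --- the strict tangent inequality $x(\lambda)+\mu y(\lambda)>x(\mu)+\mu y(\mu)$ --- degenerates there because $(x(\mu),y(\mu))$ escapes to infinity along $\C_M$ as $\mu\to\dil^{\pm}$, so you have no finite comparison point. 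The gap is fillable (apply your own inequality step to points of $\C_M$ far out along its asymptotic directions to bound the critical exponent of the endpoint windows strictly below $s\cdot\frac{x(\lambda)+\eta y(\lambda)}{1+\eta}$), but this requires an actual argument, including checking positivity of $x(\lambda)+\eta y(\lambda)$ on the whole closed slope range when one of the coordinates is negative. A second, smaller issue: you invoke continuity of $\mu\mapsto\delta(\mu)$ ``from analyticity of $\C_M$'', but that continuity is a consequence of the formula you are trying to prove; what you actually need near $\mu=\lambda$ is only upper semicontinuity of $\delta(\mu)$, which follows directly from the nesting $\C(\mu',\epsilon')\subset\C(\mu,\epsilon)$ and should be stated as such. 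For comparison, the paper proves the lower bound $\delta(\lambda)\geq\frac{x(\lambda)+\lambda y(\lambda)}{1+\lambda}$ by an entirely different route: it introduces the correlation number $m(\lambda)$, shows $m(\lambda)\leq\delta(\lambda)(1+\lambda)$ by an elementary count, and then identifies $m(\lambda)=x(\lambda)+\lambda y(\lambda)$ via thermodynamic formalism (pressure, equilibrium states, and Lalley's asymptotic counting theorem, Theorem \ref{Lalley's theorem}), which sidesteps the endpoint problem entirely and yields the sharper asymptotic $\Card\,\CC(T,\lambda)\sim K e^{m(\lambda)T}/T^{3/2}$ as a bonus. Your approach, if completed, would give a softer and more self-contained proof of the equality case, but as it stands the endpoint estimate is missing.
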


First we are going to prove the inequality, which is a simple algebraic manipulation. This will be done in the following Lemma \ref{inegalitépourdeltalambda}. The equality case is a bit harder and we delay its proof for the next subsection. The Theorem \ref{rigidite de l exposant critique directionel} will finally be proven after Corollary \ref{relatioC DELTA et courbe de manhattan}.

 This kind of results implicitly appears, but for Riemannian metric, in the work of G. Link \cite{link2004hausdorff}. We propose a proof since our context is a little bit different and since recognizing that \cite[Lemma 3.7]{link2004hausdorff} and  Lemma \ref{inegalitépourdeltalambda} from our work are similar is not  obvious. 
\begin{lemme}\label{inegalitépourdeltalambda} Compare to \cite[Lemma 3.7]{link2004hausdorff}.
For every $(x,y)\in \C_M$ and every $\lambda\in (\dil^-,\dil^+)$ we have the inequality $\delta(\lambda) \leq \frac{x+\lambda y}{1+\lambda}$.
\end{lemme}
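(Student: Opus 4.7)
The approach is a direct comparison between the series defining $\delta(S_1,S_2,\lambda,\epsilon)$ and the closed-geodesic Poincar\'e series attached to the point $(x,y)$ of the Manhattan curve. The key algebraic identity is that, for every $c\in\C$,
\[
\frac{x\ell_1(c)+y\ell_2(c)}{\ell_1(c)+\ell_2(c)}\;=\;f\!\left(\frac{\ell_2(c)}{\ell_1(c)}\right),\qquad f(t):=\frac{x+yt}{1+t}.
\]
Setting $\beta:=f(\lambda)=\frac{x+\lambda y}{1+\lambda}$, the continuity of $f$ at $\lambda$ yields a modulus $\alpha(\epsilon)\to 0$ (as $\epsilon\to 0$) with $\sup_{|t-\lambda|\le\epsilon}f(t)\le\beta+\alpha(\epsilon)$. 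Consequently, for every $c\in\C(\lambda,\epsilon)$,
\[
x\ell_1(c)+y\ell_2(c)\;\le\;\bigl(\beta+\alpha(\epsilon)\bigr)\bigl(\ell_1(c)+\ell_2(c)\bigr).
\]

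Now fix any $s>\beta$ and pick $\epsilon>0$ small and $t>1$ close enough to $1$ (in that order) so that $s/t>\beta+\alpha(\epsilon)$, which is possible because $s>\beta$ strictly. The previous display then rewrites as
\[
s\bigl(\ell_1(c)+\ell_2(c)\bigr)\;\ge\;t\bigl(x\ell_1(c)+y\ell_2(c)\bigr)\qquad\text{for all }c\in\C(\lambda,\epsilon),
\]
and passing to exponentials and summing,
\[
\sum_{c\in\C(\lambda,\epsilon)}e^{-s(\ell_1(c)+\ell_2(c))}\;\le\;\sum_{c\in\C}e^{-t(x\ell_1(c)+y\ell_2(c))}\;<\;+\infty.
\]
Finiteness of the right-hand side is the defining property of $(x,y)\in\C_M$ evaluated at $t>1$, once the group-orbit series $\sum_{\g\in\G}e^{-t\,d_M^{x,y}(\g o,o)}$ of Definition~\ref{def Poincaré series} is transferred to the conjugacy-class version (a standard move for Fuchsian representations that does not affect the abscissa of convergence). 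Hence $\delta(S_1,S_2,\lambda,\epsilon)\le s$; since $\delta(\lambda,\epsilon)$ is monotone in $\epsilon$, taking $\epsilon\to 0$ gives $\delta(\lambda)\le s$, and then $s\downarrow\beta$ yields $\delta(\lambda)\le\tfrac{x+\lambda y}{1+\lambda}$.

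There is no deep obstacle in the inequality itself: the only care needed is to keep the comparison one-sided by invoking continuity of $f$ for an \emph{upper} bound, and to fix $\epsilon$ before choosing $t$. The harder half of Theorem~\ref{rigidite de l exposant critique directionel}, reserved for the next subsection, is the equality case: one must exhibit a sequence of conjugacy classes whose slopes tend to $\lambda$ and which actually saturate the exponent $\beta$, and this is where the analyticity of $\C_M$ and a differentiation of the Poincar\'e series (pairing the slope $\lambda$ with the normal direction at the extremal point $(x(\lambda),y(\lambda))$) become essential.
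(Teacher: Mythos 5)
Your proof is correct and follows essentially the same route as the paper's: the identity $x\ell_1(c)+y\ell_2(c)=f\!\left(\ell_2(c)/\ell_1(c)\right)\bigl(\ell_1(c)+\ell_2(c)\bigr)$ with $f(t)=\frac{x+yt}{1+t}$ is exactly the weighted-average identity the paper uses, and the comparison of the restricted Poincar\'e series with the full one via the defining property of $\C_M$ is the same. The only (cosmetic) difference is that by invoking continuity of $f$ near $\lambda$ you handle all sign configurations of $(x,y)$ uniformly, whereas the paper treats the cases $x,y>0$, $x<0<y$ and $y<0<x$ separately.
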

\begin{proof}
 Let $(x,y)\in \C_M$, $\lambda \in  (\dil^-, \dil^+)$, and $ c\in \C(\lambda,\epsilon)$. First we suppose that $x>0$ and $y>0$. Then 
\begin{eqnarray}
 x\ell_1(c)+y\ell_2(c) &=& \left( x \frac{1}{1+\ell_2(c)/\ell_1(c)} + y \frac{\ell_2(c)/\ell_1(c)}{1+ \ell_2(c)/\ell_1(c)} \right) \left( \ell_1(c) +\ell_2(c)\right) \\\label{inegalité 2 interne à la preuve du lemme de Link}
&\leq & \frac{x + (\lambda +\epsilon) y}{1+ \lambda -\epsilon} \left( \ell_1(c) +\ell_2(c) \right) 
\end{eqnarray}
This implies for  the Poincaré series that  : 
\begin{eqnarray}
\sum_{c \in \C} e^{-s(x\ell_1(c) + y\ell_2(c)) } &\geq &   \sum_{c \in \C(\lambda, \epsilon)} e^{-s(x\ell_1(c) + y\ell_2(c)) } \\
 &\geq & \sum_{c \in \C(\lambda, \epsilon)} e^{-s\left(\frac{x + (\lambda +\epsilon) y}{1+ \lambda -\epsilon} \right)\left( \ell_1(c) +\ell_2(c) \right)  } 
\end{eqnarray}
Since $(x,y)\in \C_M$ the critical exponent of the left hand side is equal to 1, therefore
\begin{eqnarray}\label{inegalités 8 interne à la preuve de Link}
1 &\geq& \delta(\lambda,\epsilon)\left( \frac{1+ \lambda -\epsilon}{x + (\lambda +\epsilon) y}\right)\\ 
\delta(\lambda,\epsilon) &\leq & \frac{x+ (\lambda +\epsilon) y}{1+ \lambda -\epsilon}\\
\text{passing to the limit gives,} \quad 
 \label{limite interne Link}
 \delta(\lambda) &\leq & \frac{x + \lambda y}{1+ \lambda }.
\end{eqnarray}
This end the proof for $x>0$ and $y>0$. 

If $x<0$ and $y>0$, the inequality (\ref{inegalité 2 interne à la preuve du lemme de Link}) would become 
$$ x\ell_1(c)+y\ell_2(c)  \leq \left( \frac{x}{1+ \lambda +\epsilon}  +\frac{ (\lambda +\epsilon) y}{1+ \lambda -\epsilon} \right) \left( \ell_1(c) +\ell_2(c) \right) $$
and then the inequality (\ref{inegalités 8 interne à la preuve de Link}) would become 
$$1 \geq \delta(\lambda,\epsilon)\left( \frac{x}{1+ \lambda +\epsilon}  +\frac{ (\lambda +\epsilon) y}{1+ \lambda -\epsilon} \right).$$
Passing to the limit as in  \ref{limite interne Link} ends the proof in the case $x<0$ and $y>0$. 

The case  $x>0$ and $y<0$ can be treated similarly.
\end{proof}

%
%\paragraph{Remark} For $N$ a norm on $\R^2$, let us define the $N-$Manhattan curve $\C_M^N$ as the set of $(a_1,a_2)\in \R^2$ such that $\sum_{c \in \C} e^{-sN(a\ell_1(c),a_2\ell_2(c)) } $ has critical exponent equal to $1$. We can define similarly a $N-$directional exponent $\delta^N(\lambda)$, by restricting the sum on the elements such that $\left| \frac{\ell_2(c)}{\ell_1(c)} -\lambda \right| \leq \epsilon$  and taking the limit $\epsilon\tv 0$ . The inequality would become for all $(a_1,a_2)\in \C^N_M$ : $$\delta^N(\lambda) \leq \frac{N(a_1,\lambda a_2)}{N(1,\lambda)}.$$
%Nonetheless, the conjectural equality : $\delta^N(\lambda) = \frac{N(a_1(\lambda),\lambda a_2(\lambda))}{N(1,\lambda)}$, cannot be easily deduced. Indeed our proof uses the correlation number, the inequality between the correlation number and the critical exponent with slope $\lambda$ and finally the formula $C(\lambda) = a_1(\lambda )+\lambda a_2(\lambda)$. We would have to generalize this latter equality to  $C(\lambda) =N( a_1(\lambda ),\lambda a_2(\lambda))$, which is not clear at all using thermodynamic formalism as we will see thereafter. 
%

The equality case in Theorem \ref{rigidite de l exposant critique directionel} will be our goal for the last part of this section and proven in Corollary \ref{relatioC DELTA et courbe de manhattan}. 

\subsection{Correlation number with slope}\label{sec:correlation number}
We are actually going to show the equality case using  %by two differents methods, by the calculation of the dimension of a certain subset of the limit set of $(\G_1,\G_2) $ in $\partial \Hyp^2\times \partial  \Hyp^2$ and 
an extension of a formula due to R. Sharp, \cite{Sharp}  about the correlation number. This result of R. Sharp uses thermodynamic formalism for the geodesic flow. We will make a brief survey of results on thermodynamic formalism and geodesic flow which ends with Theorems \ref{Formalisme thermodynamique reliant la pression à la somme des orbites fermees du flot} and \ref{Lalley's theorem}. Finally, we will prove Theorem \ref{Sharp lambda} extending the Theorem of R. Sharp. The proof is very similar to the original one and we include it for the sake of completeness.

Here again $S_1$ and $S_2$ will be fixed hyperbolic surfaces.
\begin{definition}\label{def - correlation number}
For $\lambda \in  (\dil^-, \dil^+)$, the \emph{correlation number with slope $\lambda$ } is  
$$m(S_1,S_2,\lambda) := \lim_{T\tv \infty} \frac{1}{T} \log \Card \left\{ c \in \C , \ell_1(c) \in [T,T+1) \text{ and } \, \ell_2(c) \in [\lambda T , \lambda T +1) \right\}.  
$$
\end{definition}
The bound $\lambda T+1$ in the interval $ [\lambda T , \lambda T +1) $ has no consequences on $m$, we could have taken $\lambda T +k $ for any $k>0$.

Since $S_1$ and $S_2$ are fixed, we will note $m(\lambda)$  instead of $m(S_1,S_2,\lambda)$ for the correlation number of slope $\lambda$, cf Definition \ref{def - correlation number}  and $\delta (\lambda)$ instead of $\delta(S_1,S_2,\lambda)$ for the directional critical exponent, cf Definition \ref{def - critical exponent with slope}.

We first begin to prove an inequality : 
\begin{lemme}\label{inegaliteentreclambdaetdeltalambda}
$m(\lambda) \leq \delta(\lambda) (1+ \lambda) $.
\end{lemme}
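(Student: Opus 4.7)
\bigskip

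\textbf{Proof plan for Lemma \ref{inegaliteentreclambdaetdeltalambda}.} The plan is to bound the count defining $m(\lambda)$ by the count governing $\delta(\lambda,\epsilon)$, by observing that every $c$ contributing to $m(\lambda)$ at scale $T$ automatically lies in $\C(\lambda,\epsilon)$ once $T$ is large, and has $\ell_1(c)+\ell_2(c)$ almost exactly $(1+\lambda)T$.

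More precisely, fix $\epsilon>0$. If $\ell_1(c)\in [T,T+1)$ and $\ell_2(c)\in [\lambda T,\lambda T+1)$, then
$$\frac{\ell_2(c)}{\ell_1(c)}=\frac{\lambda T+O(1)}{T+O(1)}=\lambda+O(1/T),$$
so there exists $T_0=T_0(\epsilon,\lambda)$ such that for all $T\geq T_0$ every such $c$ belongs to $\C(\lambda,\epsilon)$. Moreover, for such a $c$ one has $\ell_1(c)+\ell_2(c)<(1+\lambda)T+2$. Consequently, for $T\geq T_0$,
$$\Card\bigl\{c\in\C\,\vert\,\ell_1(c)\in[T,T+1),\ \ell_2(c)\in[\lambda T,\lambda T+1)\bigr\}\leq \Card\bigl\{c\in\C(\lambda,\epsilon)\,\vert\,\ell_1(c)+\ell_2(c)\leq(1+\lambda)T+2\bigr\}.$$

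Using the exponential growth reformulation of the critical exponent with slope given right after Definition \ref{def - critical exponent with slope}, for every $\eta>0$ the right hand side is bounded above by $e^{(\delta(\lambda,\epsilon)+\eta)((1+\lambda)T+2)}$ for $T$ large enough. Taking logarithms, dividing by $T$, and letting $T\to\infty$ yields
$$m(\lambda)\leq (\delta(\lambda,\epsilon)+\eta)(1+\lambda).$$
Since $\eta>0$ was arbitrary, $m(\lambda)\leq \delta(\lambda,\epsilon)(1+\lambda)$, and letting $\epsilon\to 0$ gives the desired inequality $m(\lambda)\leq \delta(\lambda)(1+\lambda)$.

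The argument is essentially bookkeeping: the only nontrivial point is the observation that the ``window'' $\ell_1\in[T,T+1)$, $\ell_2\in[\lambda T,\lambda T+1)$ eventually sits inside the cone $\C(\lambda,\epsilon)$, which allows the correlation count to be controlled by the Poincaré-type count defining $\delta(\lambda,\epsilon)$. No obstacle of substance is expected; the harder direction (equality, or a lower bound on $m(\lambda)$) would presumably require thermodynamic formalism and is deferred by the authors to the subsequent Theorem \ref{Sharp lambda}.
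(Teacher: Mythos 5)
Your argument is correct and is essentially the paper's own proof: both rest on the same two observations, namely that the correlation window $\ell_1(c)\in[T,T+1)$, $\ell_2(c)\in[\lambda T,\lambda T+1)$ is eventually contained in $\C(\lambda,\epsilon)$ (with the same threshold $T\geq \max(1,\lambda)/\epsilon$), and that on this window $\ell_1(c)+\ell_2(c)$ is $(1+\lambda)T+O(1)$. The only cosmetic difference is that you compare counting functions directly while the paper compares the corresponding Poincar\'e series, which is immaterial given the equivalence of the two formulations of $\delta(\lambda,\epsilon)$ stated after Definition \ref{def - critical exponent with slope}.
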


\begin{proof}
Let  $\CC(T,\lambda) := \left\{ c \in \C , \ell_1(c) \in [T,T+1)  \text{ and } \, \ell_2(c) \in [\lambda T , \lambda T +1) \right\}$ the set of closed and "correlated" curves.  Let $\epsilon >0 $ and $c \in \CC(T,\lambda)$ then, for $T > \frac{\max(1,\lambda)}{\epsilon} $, $\left| \frac{\ell_2(c)}{\ell_1(c)} -\lambda \right|\leq \epsilon $, that is to say  $c\in \C(\lambda, \epsilon)$ (cf. eq (\ref{eq - def de CS(lambda epsilon)})).
$$\sum_{k \geq \max(1,\lambda)/\epsilon} \sum_{c \in \CC(k,\lambda)} e^{-s(\ell_1(c) +\ell_2(c))} \leq \sum_{c \in \C(\lambda, \epsilon)} e^{-s(\ell_1(c) +\ell_2(c))}.$$
The right hand side has critical exponent equal to $\delta(\lambda,\epsilon)$ and for $c\in\CC(T,\lambda)$ we have  $\ell_2(c)\leq \lambda T +1 \leq \lambda \ell_1(c) +1$, hence the left hand side satisfies

\begin{eqnarray*}
\sum_{k \geq \max(1,\lambda)/\epsilon} \sum_{c \in \CC(k,\lambda))} e^{-s(\ell_1(c) +\ell_2(c))}  &\geq & 
 \sum_{k \geq \max(1,\lambda)/\epsilon} \sum_{c \in \CC(k,\lambda)} e^{-s\ell_1(c) (1+\lambda) -s}.
\end{eqnarray*}
Since the growth of $\Card \CC(k,\lambda)$ is larger than $e^{(m(\lambda)-\eta) k}$ for all $\eta>0$, there is $k_0\in \N$   such that for every $k\geq k_0$
$$\Card \CC(k,\lambda)\geq  e^{(m(\lambda )-\eta) k}.$$
Set $k_1 := \max( k_0, \max(1,\lambda)/\epsilon ) $, 
\begin{eqnarray*}
 \sum_{k \geq \max(1,\lambda)/\epsilon} \sum_{c \in \CC(k,\lambda)} e^{-s(\ell_1(c) +\ell_2(c))} 
 &\geq & 
  \sum_{k \geq \max(1,\lambda)/\epsilon} \sum_{c \in \CC(k,\lambda)} e^{-s\ell_1(c) (1+\lambda) -s} \\
  &\geq & 
 \sum_{k \geq k_1 } \sum_{c \in \CC(k,\lambda))} e^{-s(k+1) (1+\lambda) -s} \\
 &\geq &
e^{-s(1+\lambda)-s}  \sum_{k \geq k_1}e^{(m(\lambda)-\eta ) k} e^{-s k (1+\lambda) }
\end{eqnarray*}
 
\textit{In fine}, the critical exponent of $\sum_{k \geq \max(1,\lambda)/\epsilon} \sum_{c \in \CC(k,\lambda)} e^{-s(\ell_1(c) +\ell_2(c))}  $ is larger  than $\frac{m(\lambda)-\eta}{1+\lambda}$, therefore we have for all $\epsilon>0$ and $\eta>0$, that 
 $m(\lambda) -\eta \leq \delta(\lambda, \epsilon) (1+\lambda)$. We conclude since $\eta$ and $\epsilon$ are arbitrary.
\end{proof}

Our proof of equality case use thermodynamic formalism that we survey in the next paragraph. 
\paragraph{Reparametrization of geodesic flow.} 

\begin{definition}
Let $\varphi_t$ be the geodesic flow on $T^1S_1$ and $\tau_1$ be a periodic orbit for $\varphi_t$. Let also $\psi : T^1 S_1 \tv \R$ be any Hölder continuous function. We note
$\omega(\psi ,\tau_1) : = \int_{\tau_1} \psi $ the integral of $\psi$ with respect to the arc length along the geodesic associated to $\tau_1$.
\end{definition}
  For example if  $\psi=1$, we have $\omega(\psi,\tau_1) = \ell_1(c)$, where $c$ is the closed geodesic on $S_1$ whose support is $\tau_1$.
We are going to construct a function $\psi$ such that $\omega(\psi ,\tau_1) =  \ell_2(c)$. This reparametrization is classic for example R. Schwartz and R. Sharp suggest a method to construct such a function.  Let's describe this construction. The classical references for what  we are going to introduce are  \cite{Ledrappier,Sambarino2,Sambarino1}. 

A Hölder cocycle is a map $u : \G\times \partial \G \tv \R$ satisfying :
$$u(\g' \g , \xi) = u(\g, \xi) + u(\g', \g \xi )$$
for every pair $\g,\g' \in \G$ and $\xi \in \partial\G$ , which  is a Hölder continuous map in the variable $\xi$.  Since the surface $S$ is compact the boundary of $\G$ can be identified with $\SU$ that is the boundary of $\Hyp^2$. The period of a cocycle $u$, $\ell_u(\g)$ is by definition $u(\g,\g^+)$ where $\g^+$ is the attracting fixed point of $\g$. 
From \cite[Corollary 1 p.106]{Ledrappier} for every Hölder cocycle $u$ there exists a function $\psi$ such that  $\omega(\psi ,\tau_1) =  \ell_u(\g)$, where $\g$ is the elements corresponding to the closed geodesic of support $\tau_1$. Now,  the following Busemann cocycle defined by  $\beta(\g,\xi ) := \lim_{x\tv \xi} d(o,x)-d(\rho_2(\g) o, x)$, where $o$ is any point of $\Hyp^2$, satisfies $\ell_\beta (\g) = \ell_2(\g)$. 
\begin{definition}\label{def - psi}
We call $\psi_{\rho_2}$ the (any) Hölder continuous function defined thanks to \cite[Corollary 1 p.106]{Ledrappier}  and the Busemann cocycle. 
\end{definition}
% This defined a Hölder continuous function $\psi$ with the desired property. 
We then have 
$$\sum_{c\in \mathcal{C}} e^{-x\ell_1(c) -y\ell_2(c)} = \sum_{\tau} e^{\int_\tau -x -y\psi_{\rho_2}},$$
where the sum of the right hand side is taken over all closed orbit of $\varphi_t$. 

Given a Hölder continuous function $f : T^1 S_1 \tv \R$, we define the \textit{pressure} of 
$f$ to be 
$$P(f) = \sup_{\nu} \left\{ h(\nu) + \int_{T^1 S_1} f d\nu\right\},$$
where the supremum is taken over all ($\varphi_t$)-invariant  probability measures, and $h(\nu)$ is the entropy of the geodesic flow with respect to $\nu$.  This supremum is achieved by  a  unique such measure $\mu_f$, called the equilibrium state for $f$. In our setting of a surface of constant curvature,  the equilibrium state for $f\equiv 0$ is the Liouville measure, which is the local product of the Lebesgue measure for the metric associated to $S_1$ and the  arc length along the fibre.

We say that a function $v : T^1 S_1 \tv \R$ is continuously differentiable with respect to $\varphi_t$ if the limit 
$$v'(y) := \lim_{t\tv 0} \frac{v(\varphi_t (y)) - v(y)}{t}$$
exists everywhere, and is continuous. Two Hölder functions
$f$ and $g$ are said to be \textit{cohomologous} if $f-g= v'$ for some continuously differentiable function $v$. 

W. Parry and M. Pollicott showed in their book \cite[Proposition 4.8]{ParryPollicott}, that if $f$ is a Hölder continuous function which is not cohomologous to a constant, then the function $t\tv P(tf)$ is analytic, strictly convex and  furthermore, \cite[Proposition 4.12]{ParryPollicott}
$$P'(tf ):= \frac{d}{dt} P(tf) = \int_{T^1 S_1} \psi d\mu_{tf} $$
holds for each value of $t\in \R$. A version of this theorem more adapted to our notations can be found in \cite[p. 429]{SchwarzSharp}.

The following is a classical theorem in thermodynamic  theory, and has been proved by P. Walters in \cite[Theorem 4.1]{Walters} ; here again, the following  version  of this theorem in \cite[p. 106 eq. 10]{Ledrappier} is more adapted : 
\begin{theorem}\label{Formalisme thermodynamique reliant la pression à la somme des orbites fermees du flot}
Let $f : T^1S_1 \tv \R $ be a Hölder continuous function, then 
$$\lim_{T\tv +\infty} \frac{1}{T} \log \sum_{\tau , \, \omega({1,\tau})\leq T} e^{\int_\tau f} = P(f).$$
\end{theorem}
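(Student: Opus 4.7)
The plan is to reduce the statement to the symbolic setting and conclude via transfer operator techniques. Since $S_1$ is a compact hyperbolic surface, its geodesic flow $\varphi_t$ is Anosov and, by the Bowen--Ratner construction of Markov sections, admits a coding as a suspension flow (up to finitely many orbits coming from boundary identifications) over a two-sided topologically mixing subshift of finite type $(\Sigma,\sigma)$ with a strictly positive Hölder roof $r:\Sigma\to \R_{>0}$. Under this coding, periodic orbits $\tau$ of $\varphi_t$ correspond to periodic $\sigma$-orbits: if $\xi$ has period $n$ then $\omega(1,\tau) = S_n r(\xi) := \sum_{k=0}^{n-1} r(\sigma^k\xi)$ and $\int_\tau f = S_n F(\xi)$, where $F(\xi) := \int_0^{r(\xi)} f(\varphi_u\xi)\, du$ is again Hölder on $\Sigma$.

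After this coding, the sum I would analyse becomes
\[
Z(T) := \sum_{\tau,\;\omega(1,\tau)\leq T} e^{\int_\tau f}
= \sum_{n\geq 1}\frac{1}{n}\sum_{\substack{\sigma^n\xi=\xi\\ S_n r(\xi)\leq T}} e^{S_n F(\xi)} + O(1).
\]
For every $s\in\R$, the Ruelle transfer operator $\mathcal{L}_{F-sr}$ acting on Hölder functions on $\Sigma$ is quasi-compact with a simple, positive leading eigenvalue $e^{P_\sigma(F-sr)}$, where $P_\sigma$ denotes the symbolic pressure. The Abramov relation $P_\sigma(F - P(f)\, r) = 0$ identifies the flow pressure $P(f)$ as the unique $s^*$ where this eigenvalue equals $1$. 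Standard iterated-operator estimates give $\sum_{\sigma^n\xi=\xi} e^{S_n(F-sr)(\xi)} \asymp e^{n P_\sigma(F-sr)}$, and substituting $s=s^*$ together with the constraint $S_nr(\xi)\leq T$ should yield the exponential growth $e^{s^* T}$ of $Z(T)$.

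The main obstacle lies in this last step: the constraint $S_n r(\xi)\leq T$ is not a constraint on $n$, because $r$ is not cohomologous to a constant, so passing from the discrete period $n$ to the continuous length $T$ requires more than a bookkeeping argument. The standard resolution, which I would follow from \cite[Ch.~4--6]{ParryPollicott}, is to study the dynamical zeta function $\zeta(s) = \prod_\tau (1 - e^{\int_\tau f - s\,\omega(1,\tau)})^{-1}$: its logarithmic derivative is a Laplace transform of a counting function equivalent to $Z(T)$, it admits a meromorphic extension past $\mathrm{Re}(s)=s^*$ because of the spectral gap of $\mathcal{L}_{F-sr}$, and it has a simple pole exactly at $s=s^*$. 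A Tauberian theorem then transfers this analytic information into the asymptotic $\tfrac{1}{T}\log Z(T) \to s^* = P(f)$. Alternatively, one can combine the upper bound coming directly from the variational principle (which gives $\int_\tau f \leq P(f)\omega(1,\tau) + o(\omega(1,\tau))$ on average) with a lower bound coming from Bowen's equidistribution of periodic orbits toward the equilibrium state $\mu_f$; the two bounds match and give the identity.
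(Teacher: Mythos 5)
The paper does not actually prove this statement: it is presented as a classical fact, with pointers to \cite[Theorem 4.1]{Walters} and \cite[p.~106, eq.~10]{Ledrappier}, and is then used as a black box. So your proposal is not an alternative to the paper's argument so much as a reconstruction of the proof behind the citation. The route you take --- Bowen--Ratner Markov coding of the Anosov geodesic flow, suspension over a mixing subshift of finite type with Hölder roof $r$, the Bowen--Ruelle/Abramov identification of $P(f)$ as the unique $s^*$ with $P_\sigma(F-s^*r)=0$, and then either the Parry--Pollicott zeta-function and Tauberian argument or the two-sided bound combining separated-set estimates with equidistribution of periodic orbits towards $\mu_f$ --- is exactly the standard proof in \cite{ParryPollicott}, and it is sound at the level of exponential growth rates. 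You also correctly isolate the genuine difficulty (converting the constraint $S_nr(\xi)\le T$ into a statement in $T$ rather than in $n$), which is precisely where the zeta function earns its keep.

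Three caveats. First, the periodic orbits miscounted by the coding are \emph{not} finitely many: they form a flow-invariant set whose (weighted) counting function merely has strictly smaller exponential growth; this is Bowen's and Manning's counting lemma and is the content of \cite[Ch.~6]{ParryPollicott}, so the conclusion survives but your phrase ``up to finitely many orbits'' should be corrected. Second, the ``upper bound coming directly from the variational principle'' is not an argument as stated; the clean upper bound is that periodic orbits with $\omega(1,\tau)\in[T,T+1)$ yield $(T,\delta)$-separated sets, so the weighted sum is controlled by the separated-set definition of pressure. Third, and most importantly, the statement is only true when $P(f)\ge 0$: if $P(f)<0$ the series $\sum_\tau e^{\int_\tau f}$ converges, the left-hand side equals $0$, and correspondingly the Tauberian step fails because the pole at $s^*=P(f)$ lies to the left of $s=0$. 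In the paper this theorem is applied to functions $f$ at parameters where the pressure vanishes, so the restriction is harmless there, but any complete proof must record the hypothesis $P(f)\ge 0$ (or state the conclusion as $\max(P(f),0)$).
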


By the previous discussion the Manhattan curve is the set of $(x,y)\in \R^2$  such that \linebreak
$ \sum_{\tau} e^{-s\int_\tau x +y\psi_{\rho_2}}$ has critical exponent equal to 1.
Applying  \cite[Lemma 1, p106]{Ledrappier}  and the remark after it, to the function $f :=x+y\psi_{\rho_2}$, we see that critical exponent is equal to $1$ if and only if $P(-f)=0$ that is $P(-x-y\psi_{\rho_2})=0$. Hence 
$$\C_M= \{ (x,y) \, , \,  P(-x-y\psi_{\rho_2})=0\} =\{ (x,y) \, , \,  P(-y\psi_{\rho_2})=x\}  .$$

The \textit{independance Lemma} of \cite{SchwarzSharp} shows that $\psi_{\rho_2}$ is not cohomologous to a constant as soon as $\rho_1$ and $\rho_2$ are not conjugated. Hence we will thereafter suppose that $\rho_1$ and $\rho_2$ are not conjugated.   Moreover, the function $\psi_{\rho_2}$ satisfies the property that along any geodesic segment $s$ on $S_1$, $\int_s \psi_{\rho_2} = \ell_2(s)$, hence the function $\psi_{\rho_2}$ is strictly positive.  Taking the derivative of $t\tv P(-t\psi_{\rho_2})$ gives $\frac{\partial P(-t\psi_{\rho_2} )}{ \partial t}= - \int_{T^1 S_1} \psi_{\rho_2} d\mu_{-tf} \neq 0 $ and finally the implicit function Theorem assures the existence of  an analytic function $q(t)$ defined by  $P (-q(t) \psi_{\rho_2} ) =t$. By definition $\C_M$ is the graph of $q$.

We can then compute the derivative of $q$, as it is done in \cite{SchwarzSharp} : 
\begin{eqnarray*}
\frac{d}{dt} P(-q(t) \psi_{\rho_2}) = \left( - \int \psi_{\rho_2} d\mu_{-q(t)\psi_{\rho_2}}  \right) \frac{dq}{dt} = 1
\end{eqnarray*}
and we obtain 
\begin{eqnarray*}\label{formule pour dq/dt }
\frac{dq}{dt} = \frac{-1}{\int \psi_{\rho_2} d\mu_{-q(t)\psi_{\rho_2}} }.
\end{eqnarray*}

\begin{definition}
We set $J(f):= \{ P'(qf)\, , \, q \in \R \}$. By strict convexity , if $\lambda \in J(f) $, there is a unique  real number noted $q_\lambda$ such that $P'(q_\lambda f ) =\lambda$. 
\end{definition}

The next theorem is due to S.P Lalley \cite{Lalley}, 
\begin{theorem}\cite[Theorem 1]{Lalley}\cite[Proposition p.429]{SchwarzSharp}\label{Lalley's theorem} 
Let $f : T^1S_1 \tv \R$ be a Hölder continuous function, which is not cohomologous to a constant.  Let $\la\in J(f)$, there exists $K>0$ such that, 
$$\Card \{ \tau \,| \,  \omega(1,\tau) \in [T, T+1) \, \text{ and } \, \omega(f,\tau ) \in [\la T,\la T+1)\} \sim_{T\tv \infty} K\frac{e^{h(\mu_{q_\lambda f}) T}}{T^{3/2}}.$$ 
\end{theorem}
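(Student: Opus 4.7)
The plan is to run the standard symbolic-dynamics proof, following Lalley's approach. I would begin by using a Markov section (Bowen--Ratner) to encode the geodesic flow $\varphi_t$ on $T^1 S_1$ as a suspension flow over a subshift of finite type $(\Sigma, \sigma)$, with strictly positive Hölder roof function $r$. Primitive closed orbits $\tau$ of $\varphi_t$ correspond (up to finitely many boundary identifications, handled by the usual device of slightly thickening the Markov rectangles) to primitive periodic orbits of $\sigma$: a period-$n$ point $x$ yields a closed orbit of period $r^n(x) := \sum_{k=0}^{n-1} r(\sigma^k x)$. Lifting $f$ to a Hölder function $F$ on $\Sigma$, the integral along the corresponding orbit becomes $\omega(f,\tau) = F^n(x)$.

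Next, I would introduce the two-parameter pressure
$$\Phi(a,b) := P_\sigma(-a r + b F),$$
an analytic, strictly convex function of $(a,b)$. The symbolic version of the Manhattan curve is the level set $\{\Phi = 0\}$. Given $\la \in J(f)$, there is a unique point $(a_\la, b_\la)$ on this curve whose outward normal has slope $\la$; the associated equilibrium state $\mu_\la = \mu_{-a_\la r + b_\la F}$ satisfies $\int F\, d\mu_\la = \la \int r\, d\mu_\la$, and a short computation with pressure identities gives $a_\la = h(\mu_\la)/\int r\, d\mu_\la$, which equals the entropy of the suspension flow for $\mu_\la$, i.e. precisely $h(\mu_{q_\la f})$ in the notation of the statement.

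The counting in the statement now amounts to
$$\Card\bigl\{x \text{ periodic of period } n : r^n(x) \in [T, T+1),\ F^n(x) \in [\la T, \la T + 1)\bigr\},$$
summed over $n$ and divided by the orbit length. Applying the Ruelle transfer operator associated with the twisted potential $-a_\la r + b_\la F$, whose top eigenvalue equals $1$ by the choice of $(a_\la, b_\la)$, and exploiting its spectral gap on the space of Hölder functions, one reduces the count to a joint Birkhoff-sum statistic under the Gibbs measure $\mu_\la$. The standard prime-orbit Tauberian argument produces the factor $e^{h(\mu_\la)T}/T$. The additional window constraint that $(r^n - T, F^n - \la T)$ lie in a bounded box yields a further factor of $T^{-1/2}$ via a local central limit theorem for this two-dimensional Birkhoff sum under the twisted Gibbs measure. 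Multiplying these contributions gives the announced asymptotic $K e^{h(\mu_\la) T}/T^{3/2}$, with $K > 0$ expressible in terms of $\int r\, d\mu_\la$ and the determinant of the CLT covariance matrix.

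The main obstacle is the joint local CLT, which requires a non-arithmeticity condition: no nontrivial linear combination $\alpha r + \beta F$ may be cohomologous (on $\Sigma$) to a function taking values in a proper closed subgroup of $\R$. When $\alpha \ne 0$, this is automatic because the length spectrum of the hyperbolic surface $S_1$ is non-arithmetic, so $\alpha r$ cannot be cohomologous to a lattice-valued function modulo $\beta F$. When $\alpha = 0$, non-arithmeticity of $F$ alone is exactly the assumption that $f$ is not cohomologous to a constant. Once this non-arithmeticity is in place, the classical local limit theorem of Guivarc'h--Hardy for subshifts of finite type, combined with Lalley's saddle-point/Tauberian machinery from \cite{Lalley}, delivers the asymptotic stated in the theorem.
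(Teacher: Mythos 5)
The paper does not prove this statement: it is quoted from Lalley and from the Proposition on p.~429 of Schwartz--Sharp, so there is no internal argument to compare yours against. Your sketch does reproduce the architecture of the cited proofs (Markov coding, suspension flow, tilted transfer operators, a Tauberian step plus a local central limit theorem), which is the right route. However, the two places where your sketch becomes concrete both go wrong. The first is the identification of the exponential rate. With $\mu_\la$ the shift-equilibrium state of $-a_\la r+b_\la F$, the conditions $P_\sigma(-a_\la r+b_\la F)=0$ and $\int F\,d\mu_\la=\la\int r\,d\mu_\la$ give, by the variational principle, $h_\sigma(\mu_\la)=(a_\la-\la b_\la)\int r\,d\mu_\la$; hence by Abramov's formula the suspension-flow entropy is $a_\la-\la b_\la$, not $a_\la$ --- your ``short computation'' drops the term $b_\la\int F\,d\mu_\la$. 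This is not cosmetic: the saddle point of the two-variable transform at the window $(T,\la T)$ yields the exponent $(a_\la-\la b_\la)T$, and the content the paper extracts from this theorem in Theorem~\ref{Sharp lambda} is exactly that the rate equals $x(\la)+\la y(\la)$ (which is $a_\la-\la b_\la$ in your coordinates) rather than $x(\la)$ alone; the cross term vanishes only at the Liouville-typical slope. As written, your identity $a_\la=h(\mu_{q_\la f})$ is false whenever $b_\la\neq 0$.

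The second gap is the non-arithmeticity verification, which you correctly flag as the main obstacle but then dispatch with two incorrect arguments. For $\alpha\neq 0$, non-arithmeticity of the length spectrum of $S_1$ only says that $r$ itself is not cohomologous to a lattice-valued function; it gives no information about $\alpha r+\beta F$ when $\beta\neq 0$, and ``modulo $\beta F$'' does no work. For $\alpha=0$, the hypothesis ``$f$ not cohomologous to a constant'' is strictly weaker than ``$F$ not cohomologous to a constant plus a lattice-valued function'', which is what the local (as opposed to central) limit theorem requires. The joint aperiodicity of the $\R^2$-valued cocycle $(r,F)$ is precisely the delicate hypothesis behind the two-dimensional local CLT; in the application made in this paper it is supplied by the independence lemma of Schwartz--Sharp for pairs of non-conjugate Fuchsian representations, and in Lalley's general theorem it appears as an explicit hypothesis. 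Your sketch would need to either add this hypothesis or actually prove it for the class of $f$ under consideration.
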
 
Applying this theorem to the function $f=\psi_{\rho_2}$, defined in Definition \ref{def - psi}, which is not cohomologous to a constant, gives  for every  $\lambda \in J(\psi_{\rho_2})$,  the existence of $K>0$ such that, 
$$\Card \CC(T,\lambda) \sim_{x\tv \infty} K\frac{e^{h(\mu_{q_\lambda \psi_{\rho_2}})T}}{T^{3/2}},$$ 
hence if $\lambda \in J(\psi_{\rho_2}) $, we have $m(\lambda) = h(\mu_{q_\lambda \psi_{\rho_2}})$.

This next proposition does not figure  in the original paper of R. Sharp but the proof is essentially the same as in the remarks at the end of \cite{SchwarzSharp}
\begin{proposition}\label{les pentes sont incluses dans J(psi)}
For the function $\psi_{\rho_2}$ defined in Definition \ref{def - psi}, we have the following inclusion : $$(\dil^-, \dil^+ ) \subset J(\psi_{\rho_2}).$$
\end{proposition}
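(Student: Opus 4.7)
The plan is to identify the slope $\lambda(t)$ of the normal to the Manhattan curve $\C_M$ with a value of the form $P'(q\psi_{\rho_2})$, and then read off the range of such slopes from Burger's theorem.

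If $S_1=S_2$ the interval $(\dil^-,\dil^+)$ is empty and there is nothing to prove, so assume $\rho_1$ is not conjugate to $\rho_2$. By the independence lemma of Schwartz--Sharp, $\psi_{\rho_2}$ is not cohomologous to a constant, hence $s\mapsto P(s\psi_{\rho_2})$ is real analytic and strictly convex. The discussion preceding the statement shows that $\C_M$ is the graph of the analytic function $q$ defined implicitly by $P(-q(t)\psi_{\rho_2})=t$, and that $q'(t)=-1/\int \psi_{\rho_2}\,d\mu_{-q(t)\psi_{\rho_2}}$. Since $\tfrac{d}{ds}P(s\psi_{\rho_2})=\int\psi_{\rho_2}\,d\mu_{s\psi_{\rho_2}}=P'(s\psi_{\rho_2})$, this reads
\begin{equation*}
q'(t)=\frac{-1}{P'(-q(t)\psi_{\rho_2})}.
\end{equation*}
The tangent to $\C_M$ at $(t,q(t))$ has direction $(1,q'(t))$, so the slope of its normal is
\begin{equation*}
\lambda(t)=-\frac{1}{q'(t)}=P'(-q(t)\psi_{\rho_2})\in J(\psi_{\rho_2}).
\end{equation*}

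To conclude, I would invoke Theorem \ref{Rigidite} together with Proposition \ref{pr - basic properties}(4). Strict convexity of $\C_M$ makes $\lambda:\R\to\R$ continuous and strictly monotone, while Burger's theorem supplies the boundary behaviour $\lim_{t\to-\infty}\lambda(t)=\dil^-$ and $\lim_{t\to+\infty}\lambda(t)=\dil^+$. The intermediate value theorem then gives $\lambda(\R)=(\dil^-,\dil^+)$; combined with the first step this says that every $\mu\in(\dil^-,\dil^+)$ is realised as $\mu=P'(-q(t)\psi_{\rho_2})$ for some $t\in\R$, so $\mu\in J(\psi_{\rho_2})$, yielding the desired inclusion.

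The main subtlety is book-keeping: one must check the sign conventions in $P(-q(t)\psi_{\rho_2})=t$ so that the chain-rule computation of $q'$ is legitimate, and verify that $P'(-q(t)\psi_{\rho_2})\neq 0$ to invert it. Both are ensured by the strict convexity (equivalently, non-cohomology-to-a-constant) of $\psi_{\rho_2}$, which also guarantees the analyticity of $q$ needed to parametrize $\C_M$ smoothly. Beyond this, the argument is a direct translation between the two ways of looking at $\C_M$: as a level set of the pressure, and as the set of exponents $(x,y)$ for which the abscissa of convergence equals $1$.
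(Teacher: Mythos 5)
Your argument is correct, but it is genuinely different from the one in the paper. The paper's proof is a direct Legendre-transform argument: given $\lambda\in(\dil^-,\dil^+)$, it picks two closed geodesics whose length ratios straddle $\lambda$, uses the associated periodic-orbit measures to show that $\lambda$ lies in the \emph{interior} of the interval $I(\psi_{\rho_2})=\{\int\psi_{\rho_2}\,d\nu\}$, deduces from the variational principle the linear lower bound $P(t\psi_{\rho_2})-\lambda t\geq \lvert t\rvert\epsilon$, and concludes that $Q(t)=P(t\psi_{\rho_2})-\lambda t$ attains an interior minimum $q_\lambda$, where $P'(q_\lambda\psi_{\rho_2})=\lambda$. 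You instead identify the normal slope $\lambda(t)=-1/q'(t)$ of the Manhattan curve with $P'(-q(t)\psi_{\rho_2})\in J(\psi_{\rho_2})$ and then appeal to Burger's Theorem \ref{Rigidite} for the asymptotics $\lambda(t)\to\dil^{\pm}$, together with strict monotonicity and the intermediate value theorem. Both are valid; the trade-off is that the paper's proof is self-contained (it needs only the definition of $\dil^{\pm}$ as $\sup/\inf$ of length ratios and the variational principle, not the asymptotic behaviour of the Manhattan curve), whereas yours is shorter given the machinery already in place but imports the nontrivial endpoint statement of Burger's theorem as a black box. Your identification $\lambda(t)=P'(-q(t)\psi_{\rho_2})$ is worth having in any case: it is exactly the computation that later drives Theorem \ref{Sharp lambda}.
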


\begin{proof}
Let $\lambda \in (\dil^-,\dil^+)$. By definition there exists closed geodesics $c$ and $c'$ such that $\ell_2(c)< \la \ell_1(c)$ and  $\ell_2(c') > \la \ell_1 (c')$. They correspond to  closed orbits of $\varphi_t$, $\tau$ and $\tau'$, hence $\omega(\psi_{\rho_2}, \tau)/ \la ( 1, \tau) < \la$ et $\omega(\psi, \tau')/ \omega ( 1, \tau') > \la$. 

Let $I(\psi_{\rho_2})$ denote the set of values $\int \psi_{\rho_2} d\nu$  where $\nu$ ranges over invariant probability measures. Clearly $I(\psi_{\rho_2})$ is a closed interval.  If we take $\nu$ to be the probability measure whose support either $c$ or $c'$ we see that $\omega(\psi_{\rho_2}, \tau)/ \omega (1, \tau) \in I(\psi_{\rho_2})$, and $\omega(\psi_{\rho_2}, \tau')/ \omega(1, \tau') \in I(\psi_{\rho_2})$. Hence $]\la-2\epsilon, \lambda+2\epsilon[ \in I(\psi_{\rho_2})$ for some $\epsilon >0$. 

Since $\lambda \pm \epsilon \in I (\psi_{\rho_2})$, we have, 
$$\forall t \in \R, \quad \lambda t +\lvert t\lvert \epsilon \in t I (\psi_{\rho_2}).$$
And by definition of pressure if $y\in I(\psi_{\rho_2})$ then $P(t\psi_{\rho_2}) \geq ty$. Hence
$$\forall t \in \R , \quad P(t\psi_{\rho_2}) \geq \sup t I(\psi_{\rho_2}).$$
Combining the last two inequalities gives
$$\forall t\in \R, \quad P(t\psi_{\rho_2}) -\lambda t \geq \lvert t \lvert \epsilon. $$

Consider  the function $Q(t) := P(t\psi_{\rho_2}) -\lambda t $ ; we have $Q(0)=P(0)=1$ and we just proved that  for all $\lvert t\lvert >1/\epsilon$, $Q(t)>1$.  Therefore $Q$ has a minimum  $q_\lambda \in [-1/\epsilon, 1/\epsilon]$, where $Q'(q_\lambda) =0$ which is to say $P'(q_\lambda \psi) = \lambda$ and $\lambda\in J(\psi_{\rho_2})$. 
\end{proof}
Now we are going to prove the formula that extends the result of R. Sharp \cite{Sharp}.

\begin{theorem}\label{Sharp lambda}
Let $\lambda \in (\dil^-, \dil^+).$ There exists $K>0$ and  $m(\la)$ such that
$$\Card \,\CC(T,\lambda) \sim K \frac{e^{m(\la)T}}{T^{3/2}}.$$
Moreover  $m(\la) =x(\la) +\la y(\la)$.
\end{theorem}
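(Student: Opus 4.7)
The plan is to invoke Lalley's theorem (Theorem \ref{Lalley's theorem}) with the reparametrizing function $f = \psi_{\rho_2}$ introduced in Definition \ref{def - psi}, and then translate the entropy of the resulting equilibrium state into the coordinates $(x(\lambda), y(\lambda))$ of the corresponding point on the Manhattan curve, using the pressure formalism already set up before the statement.

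First I would verify the applicability of Lalley's theorem: the independence Lemma guarantees that $\psi_{\rho_2}$ is not cohomologous to a constant (we are in the non-conjugated case), and Proposition \ref{les pentes sont incluses dans J(psi)} provides the crucial inclusion $(\dil^-, \dil^+) \subset J(\psi_{\rho_2})$, so for the fixed $\lambda$ there is a unique $q_\lambda \in \R$ with $P'(q_\lambda \psi_{\rho_2}) = \lambda$. Translating the closed-geodesic counting problem into counting periodic orbits of $\varphi_t$ via $\omega(1,\tau) = \ell_1(c)$ and $\omega(\psi_{\rho_2}, \tau) = \ell_2(c)$, Lalley's theorem immediately yields
$$\Card \, \CC(T, \lambda) \sim K \frac{e^{h(\mu_{q_\lambda \psi_{\rho_2}})T}}{T^{3/2}},$$
so the existence of the asymptotic and of $m(\lambda) := h(\mu_{q_\lambda \psi_{\rho_2}})$ is settled.

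The main work is then to identify $m(\lambda)$ with $x(\lambda) + \lambda y(\lambda)$. For this I would use the parametrization $\C_M = \{(x,y) : P(-y\psi_{\rho_2}) = x\}$, i.e.\ $\C_M$ is the graph of the analytic function $q$ with $P(-q(t)\psi_{\rho_2}) = t$, so that $q(x(\lambda)) = y(\lambda)$. Since $\lambda$ is by definition the slope of the \emph{normal} to $\C_M$ at $(x(\lambda), y(\lambda))$, the tangent slope is $q'(x(\lambda)) = -1/\lambda$. Combining this with the formula $\frac{dq}{dt} = -1/\int \psi_{\rho_2}\, d\mu_{-q(t)\psi_{\rho_2}}$ derived in the preceding paragraphs, we get $\int \psi_{\rho_2}\, d\mu_{-y(\lambda)\psi_{\rho_2}} = \lambda$, i.e.\ $P'(-y(\lambda) \psi_{\rho_2}) = \lambda$. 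By strict convexity of $t \mapsto P(t\psi_{\rho_2})$ (hence injectivity of $P'$), this forces $q_\lambda = -y(\lambda)$.

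Finally I would plug this identification into the variational principle characterizing the equilibrium state:
$$P(-y(\lambda)\psi_{\rho_2}) = h(\mu_{-y(\lambda)\psi_{\rho_2}}) - y(\lambda) \int \psi_{\rho_2}\, d\mu_{-y(\lambda)\psi_{\rho_2}} = h(\mu_{q_\lambda \psi_{\rho_2}}) - \lambda y(\lambda).$$
Since $(x(\lambda), y(\lambda)) \in \C_M$ gives $P(-y(\lambda)\psi_{\rho_2}) = x(\lambda)$, this rearranges to $m(\lambda) = h(\mu_{q_\lambda \psi_{\rho_2}}) = x(\lambda) + \lambda y(\lambda)$, as desired. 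The bulk of the argument being already encoded in the pressure/Manhattan dictionary, the only delicate point is the slope identification $q_\lambda = -y(\lambda)$, and this is where the convention that $\lambda$ denotes the slope of the \emph{normal} (rather than the tangent) to $\C_M$ must be handled carefully.
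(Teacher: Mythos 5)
Your proposal is correct and follows essentially the same route as the paper: Lalley's theorem applied to $\psi_{\rho_2}$ (justified by the independence Lemma and Proposition \ref{les pentes sont incluses dans J(psi)}) gives $m(\lambda)=h(\mu_{q_\lambda\psi_{\rho_2}})$, and the identification with $x(\lambda)+\lambda y(\lambda)$ is carried out via the variational principle together with the formula $q'(t)=-1/\int\psi_{\rho_2}\,d\mu_{-q(t)\psi_{\rho_2}}$. The only (harmless) difference is the direction of the identification: you start from the point $(x(\lambda),y(\lambda))$ on $\C_M$ and deduce $q_\lambda=-y(\lambda)$ from the injectivity of $P'$, whereas the paper starts from $q_\lambda$, sets $x$ by $q_\lambda=-q(x)$, and then checks $x=x(\lambda)$ from the slope condition.
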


\begin{proof}
Let $\lambda \in  (\dil^+, \dil^-),$ by Proposition \ref{les pentes sont incluses dans J(psi)},  we know that $\lambda \in J(\psi_{\rho_2})$. The remark after the Theorem \ref{Lalley's theorem} says that $m(\lambda)=h(\mu_{q_\lambda \psi_{\rho_2}})$. We have to show that $h(\mu_{q_\lambda \psi_{\rho_2}})= x(\la) +\la y(\la)$.   
By definition of $h(\mu_{q_\lambda \psi_{\rho_2}})$ we have that
$$m(\lambda)=h(\mu_{q_\lambda \psi_{\rho_2}})= P(q_\lambda \psi_{\rho_2})- \int q_\lambda\psi_{\rho_2} d\mu_{q_\lambda \psi_{\rho_2}}.$$ 
Set $x$ such that $q_\lambda = -q(x)$, this gives, 
$$m(\lambda) = P(-q(x) \psi_{\rho_2}) +q(x) \int \psi_{\rho_2} d\mu_{-q(x) \psi_{\rho_2}}= x+ q(x) \int \psi_{\rho_2} d\mu_{-q(x) \psi_{\rho_2}}.$$
But $q_\lambda$ is such that $\int \psi_{\rho_2} d\mu_{q_\lambda \psi_{\rho_2}} = P'(q_\lambda \psi_{\rho_2} ) = \lambda$, hence 
$$m(\lambda) = x+q(x)\lambda.$$
Moreover, $$\frac{dq}{dt} (x) = \frac{-1}{\int \psi_{\rho_2} d\mu_{-q(x) \psi_{\rho_2}} }  =\frac{-1}{\lambda}.$$ 
This is exactly where the slope of a  normal vector  at $q$ is equal to $\lambda$, that is $x=x(\lambda)$, and 
$$m(\lambda) = x(\lambda )+\lambda q(x(\lambda)) =x(\lambda )+\lambda y(\lambda).$$ 
\end{proof}

We finally conclude the proof of the equality case in Theorem \ref{rigidite de l exposant critique directionel}.
\begin{corollaire}\label{relatioC DELTA et courbe de manhattan}
Let $\lambda \in (\dil^-,\dil^+),$ we have 
$$\frac{m(\lambda)}{1+\lambda} = \delta(\lambda) = \frac{x(\lambda)+\lambda y(\lambda)}{1+\lambda}. $$ 
\end{corollaire}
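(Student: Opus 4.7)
The plan is to observe that the corollary follows immediately from three results already established in this section by a simple squeeze argument, so the work is just to chain the inequalities together.

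First, I would apply Lemma \ref{inegaliteentreclambdaetdeltalambda}, which gives the lower bound
\[
\frac{m(\lambda)}{1+\lambda} \leq \delta(\lambda).
\]
Next, I would apply the inequality of Lemma \ref{inegalitépourdeltalambda} not to an arbitrary point of $\C_M$, but to the distinguished point $(x(\lambda),y(\lambda)) \in \C_M$ of Definition \ref{def - x(lambda)}. This yields the upper bound
\[
\delta(\lambda) \leq \frac{x(\lambda) + \lambda\, y(\lambda)}{1+\lambda}.
\]

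Finally, Theorem \ref{Sharp lambda} identifies the correlation number with slope as exactly $m(\lambda) = x(\lambda) + \lambda y(\lambda)$. Substituting this into the chain above gives
\[
\frac{m(\lambda)}{1+\lambda} \leq \delta(\lambda) \leq \frac{x(\lambda) + \lambda\, y(\lambda)}{1+\lambda} = \frac{m(\lambda)}{1+\lambda},
\]
so all the inequalities collapse into equalities, which is precisely the content of the corollary.

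Since each ingredient has already been established, there is no real obstacle; the only subtlety is making sure that the pressure-theoretic output of Theorem \ref{Sharp lambda} — phrased in terms of $q_\lambda$ and the equilibrium state — is correctly matched with the convex-geometric point $(x(\lambda),y(\lambda))$ on $\C_M$. This matching was in fact carried out during the proof of Theorem \ref{Sharp lambda} via the relation $q_\lambda = -q(x(\lambda))$ and the identification $\frac{dq}{dt}(x(\lambda)) = -1/\lambda$, so here it suffices to invoke the statement. The equality case of Theorem \ref{rigidite de l exposant critique directionel} then follows, since the corollary shows that the upper bound of Lemma \ref{inegalitépourdeltalambda} is attained precisely at $(x,y) = (x(\lambda),y(\lambda))$, and strict convexity of $\C_M$ (Proposition \ref{pr - basic properties}) rules out equality at any other point.
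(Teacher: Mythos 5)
Your proposal is correct and follows essentially the same route as the paper: combining Lemma \ref{inegaliteentreclambdaetdeltalambda} with Lemma \ref{inegalitépourdeltalambda} evaluated at $(x(\lambda),y(\lambda))$, then invoking Theorem \ref{Sharp lambda} to close the squeeze $\frac{m(\lambda)}{1+\lambda} \leq \delta(\lambda) \leq \frac{x(\lambda)+\lambda y(\lambda)}{1+\lambda} = \frac{m(\lambda)}{1+\lambda}$. Your closing remark on deducing the equality case of Theorem \ref{rigidite de l exposant critique directionel} from strict convexity also matches the paper's argument.
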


\begin{proof}[Proof of Corollary \ref{relatioC DELTA et courbe de manhattan} and  Theorem  \ref{rigidite de l exposant critique directionel}]
From Lemma \ref{inegalitépourdeltalambda} and Lemma  \ref{inegaliteentreclambdaetdeltalambda}
we have for all $(x,y)\in \C_M$ and all $\lambda \in  (\dil^-, \dil^+)$  that 
$m(\lambda) \leq \delta(\lambda) (1+\lambda) \leq x+\lambda y$ and  with Theorem \ref{Sharp lambda}, equality occurs for $(x(\lambda),y(\lambda))$. 

We now show that the inequality $\delta(\lambda) \leq \frac{x+\lambda y}{1+\lambda}$ is strict for $(x,y)\neq (x(\lambda),y(\lambda) $. This  is a simple application of the strict convexity of the Manhattan curve. Let $x\tv q(x)$ be the function which graph is the Manhattan curve. Fix $\lambda$ and defined 
$$g(x) :=  \frac{x+\lambda q(x)}{1+\lambda}.$$  
The derivative of $g$ is $g'(x) =\frac{1+\lambda q'(x)}{1+\lambda}.$ By  Definition $\lambda = -1/q'(x_\lambda)$, hence
$g'(x)= \frac{q'(x(\lambda)) -q'(x)}{q'(x(\lambda)) - 1}$. By strict convexity of $q$, $g'(x)=0$ if and only if $x =x(\lambda)$, which finally implies that the inequality is strict for the others values of $(x,y)\in \C_M$.
\end{proof}

Here again, the following corollary  implicitly belongs to the work of  G. Link \cite{link2004hausdorff}. However, G. Link considers $\Produit$ endowed with the Riemannian metric, hence for the Manhattan metric this corollary was not known. 
\begin{corollaire}\label{l'exposant critique directionnel est maximum  quand il est egal a deltaM}\cite[compare to Theorem 3.12, Theorem 5.1]{link2004hausdorff}
For all  $\lambda\in (\dil^-, \dil^+)$ we have $$\delta(S_1,S_2,\lambda) \leq \delta(S_1,S_2),$$
and equality occurs for a unique $\lambda$ : the maximal slope (cf. Definition \ref{def maximal slope}).
\end{corollaire}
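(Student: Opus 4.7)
The plan is to apply Theorem \ref{rigidite de l exposant critique directionel} directly at the distinguished point $(\delta,\delta) \in \C_M$. By Proposition \ref{pr - basic properties}(2), the pair $(\delta,\delta)$ lies on the Manhattan curve, so Theorem \ref{rigidite de l exposant critique directionel} with the choice $(x,y)=(\delta,\delta)$ yields immediately
\[
\delta(\lambda) \;\leq\; \frac{\delta + \lambda\delta}{1+\lambda} \;=\; \delta,
\]
for every $\lambda \in (\dil^-,\dil^+)$. This proves the inequality.

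For the equality statement, the same theorem asserts that equality in the bound $\delta(\lambda) \leq \frac{x+\lambda y}{1+\lambda}$ characterizes the point $(x(\lambda),y(\lambda))$ on $\C_M$. Hence $\delta(\lambda) = \delta$ forces $(x(\lambda),y(\lambda)) = (\delta,\delta)$, which by definition of the maximal slope (Definition \ref{def maximal slope}) means $\lambda = \lambda(\delta)$. Conversely, when $\lambda = \lambda(\delta)$ the equality case of Theorem \ref{rigidite de l exposant critique directionel} gives $\delta(\lambda(\delta)) = \delta$.

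Uniqueness of the maximizing $\lambda$ follows from Proposition \ref{pr - basic properties}(4): the map $\lambda \mapsto (x(\lambda),y(\lambda))$ is one-to-one (this relies on the strict convexity of $\C_M$, which holds since the interesting case is $S_1 \neq S_2$; if $S_1 = S_2$ the curve is the straight line from $(1,0)$ to $(0,1)$ and the statement is trivial since $\dil^- = \dil^+ = 1$). Thus there is exactly one $\lambda \in (\dil^-,\dil^+)$ realizing the equality, namely the maximal slope. No genuine obstacle arises here: the entire content is packaged in the analytic description of $\C_M$ established earlier, and the argument is really just reading off the tangency of the line $x+\lambda y = (1+\lambda)\delta$ with the convex curve $\C_M$ at $(\delta,\delta)$.
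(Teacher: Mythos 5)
Your proof is correct, and it reaches the conclusion by a slightly different (and somewhat more economical) route than the paper. For the inequality, the paper simply notes it is immediate from the definitions (the Poincar\'e series restricted to $\C(\lambda,\epsilon)$ is dominated term by term by the full one), whereas you obtain it by evaluating the bound of Theorem \ref{rigidite de l exposant critique directionel} at the point $(\delta,\delta)\in\C_M$; both are valid. The genuine difference is in the uniqueness of the equality case: the paper redoes a calculus argument, writing $\delta(\lambda(x))=h(x)=\frac{xq'(x)-q(x)}{q'(x)-1}$ along the graph $y=q(x)$ of the Manhattan curve and checking that $h'(x)=\frac{q''(x)(q(x)-x)}{(q'(x)-1)^2}$ changes sign exactly at $x=\delta$, so that $h$ has a strict global maximum $h(\delta)=\delta$. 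You instead invoke the ``equality if and only if $(x,y)=(x(\lambda),y(\lambda))$'' clause of Theorem \ref{rigidite de l exposant critique directionel} at the point $(\delta,\delta)$, which immediately identifies the extremal $\lambda$ with the slope of the normal at $(\delta,\delta)$, i.e.\ the maximal slope of Definition \ref{def maximal slope}. This is legitimate and not circular: that equality clause is established beforehand via Theorem \ref{Sharp lambda} and the strict convexity of $\C_M$ (Corollary \ref{relatioC DELTA et courbe de manhattan}), independently of the present corollary, so your argument simply reuses work the paper chooses to repeat. Your remark on the degenerate case $S_1=S_2$, where $(\dil^-,\dil^+)$ is empty and the statement is vacuous, is a precision the paper leaves implicit.
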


\begin{proof}
The inequality is obvious by definition. For the strict inequality we are going to use a similar method as the previous corollary. %Look at $\delta(\lambda)$ as a fonction of $a_\lambda$, that is $\delta(\lambda) = h(a_\lambda)$ with $h(a) = \frac{a+\lambda q(a)}{1+\lambda} = \frac{aq'(a)- a}{q'(a)-1}$, the derivative of $h$ is equal to 
%$h'(a) = \frac{q''(a) (-a +q(a))}{(q'(a)-1)^2}$ which is the sign of $-a +q(a) $. Recall that critical exponent $\delta_{12}$ is the intersection of $\C_M$ and the line $y=x$ implies that the unique solution of $-a +q(a) =0$  is $a=q(a) =\delta_{12}$.
We use the formula  from  Corollary \ref{relatioC DELTA et courbe de manhattan}, $\delta(\lambda)=  \frac{x(\lambda)+\lambda y(\lambda)}{1+\lambda}.$ Recall that $\lambda(x)$ is  the slope of a normal vector at $(x,q(x))\in \C_M$. Hence $\delta(\lambda(x)) =  \frac{x+\lambda(x) q(x)}{1+\lambda(x)} =  \frac{xq'(x)- q(x)}{q'(x)-1}$, since $\lambda(x) = -1/q'(x)$. Denote by $h$ the function 
$$h(x)= \frac{xq'(x)- q(x)}{q'(x)-1}.$$ The derivative of $h$ is 
$h'(x) = \frac{q''(x) (-x +q(x))}{(q'(x)-1)^2},$
 which has the same the sign as $-x +q(x) $. By Proposition \ref{pr - basic properties}, $\delta$ is the intersection of $\C_M$ and the line $y=x$, this implies that the unique solution of $-x +q(x) =0$  is $x=q(x) =\delta$, and $h'(x)<0$ for all $x>\delta$ and  $h'(x)>0$ for all $x<\delta.$
Hence for all $x\neq \delta$ we have $\delta(\lambda(x)) = h(x)<h(\delta) = \delta$.
\end{proof}

\begin{corollaire}\label{delta(1) tends vers 0 si et seulement si delta(0) tends vers 0}
Let $S_n$ and $S_n'$ be two sequences  in the Teichmüller space  of $S$.  Then the following is equivalent : 
\begin{itemize}
\item $\lim_{n\tv \infty} \delta(S_n,S_n') =0 $
\item $\lim_{n\tv \infty} \delta(S_n,S_n',1) =0 $
\item  $\lim_{n\tv \infty} m(S_n,S_n',1) =0 $
\end{itemize}
\end{corollaire}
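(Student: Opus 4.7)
First, the equivalence of the second and third items is immediate: applying Corollary \ref{relatioC DELTA et courbe de manhattan} at $\la=1$ gives $m(S_n,S_n',1)=2\,\delta(S_n,S_n',1)$, so these two nonnegative quantities tend to $0$ simultaneously. Likewise, the implication from the first item to the second is trivial from Corollary \ref{l'exposant critique directionnel est maximum quand il est egal a deltaM}, which asserts $\delta(S_n,S_n',1)\leq \delta(S_n,S_n')$. The only nontrivial content is therefore the implication $\delta(S_n,S_n',1)\tv 0 \ \Rightarrow\ \delta(S_n,S_n')\tv 0$, and this is where I would focus.

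For each $n$, let $q_n$ denote the convex function whose graph is the Manhattan curve $\C_M^n$ associated to $(S_n,S_n')$; by Proposition \ref{pr - basic properties} we have $q_n(0)=1$, $q_n(1)=0$, and $\delta(S_n,S_n')=\delta_n$ is the unique solution of $q_n(\delta_n)=\delta_n$. The point $(x_n(1),y_n(1))$ is the point of $\C_M^n$ at which the slope of the normal is $1$, equivalently the tangent slope is $-1$; by Corollary \ref{relatioC DELTA et courbe de manhattan} one has $x_n(1)+y_n(1)=2\,\delta(S_n,S_n',1)$. Hence the hypothesis forces $x_n(1)\tv 0$ and $y_n(1)\tv 0$ (both are nonnegative since $(x_n(1),y_n(1))$ is a point of $\C_M^n\subset[0,1]^2$).

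The plan is now to exploit convexity via the chord inequality. Since $q_n$ is convex, on the interval $[x_n(1),1]$ its graph lies below the chord joining $(x_n(1),y_n(1))$ to $(1,0)$, and on $[0,x_n(1)]$ it lies below the chord from $(0,1)$ to $(x_n(1),y_n(1))$. The fixed point $\delta_n$ lies in one of these two intervals. Substituting $q_n(\delta_n)=\delta_n$ into the corresponding chord bound and rearranging gives, in the two cases respectively,
$$\delta_n \leq \frac{y_n(1)}{1-x_n(1)+y_n(1)} \quad\text{or}\quad \delta_n\leq \frac{x_n(1)}{1-y_n(1)+x_n(1)}.$$
Since $x_n(1),y_n(1)\tv 0$, each of these upper bounds tends to $0$, so $\delta_n\tv 0$, concluding the proof.

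The argument is essentially a short convexity exercise once the dictionary between the critical exponents and the Manhattan curve provided by Corollaries \ref{relatioC DELTA et courbe de manhattan} and \ref{l'exposant critique directionnel est maximum quand il est egal a deltaM} is in place. There is no significant obstacle; the only mild care is the case split according to whether $\delta_n$ sits left or right of $x_n(1)$, and the verification that the convexity of $q_n$ (defined precisely in Proposition \ref{pr - basic properties}.3 as the graph lying below its chords) yields the two chord upper bounds used above.
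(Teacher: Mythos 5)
Your proof is correct and follows the same route as the paper: reduce everything to the point $(x_n(1),y_n(1))$ on the Manhattan curve via Corollary \ref{relatioC DELTA et courbe de manhattan}, observe that both coordinates tend to $0$, and conclude by convexity that the intersection of $\C_M^n$ with the diagonal is forced toward the origin --- your explicit chord bounds are in fact a welcome quantitative version of the paper's rather terse final sentence. One small inaccuracy: $\C_M^n$ is an unbounded curve, so it is \emph{not} contained in $[0,1]^2$; the nonnegativity of $x_n(1)$ and $y_n(1)$ instead follows because $q_n$ is strictly decreasing with $q_n(0)=1$ and $q_n(1)=0$, so the unique point where the normal has slope $1$ lies on the arc of the curve between $(0,1)$ and $(1,0)$.
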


\begin{proof}
From Corollary \ref{relatioC DELTA et courbe de manhattan}, $\delta(S_n,S_n',1) = \frac{m(S_n,S_n',1)}{2}$ hence the second and third statements are clearly equivalent. We then show the equivalence of the first and the second ones. 

By definition $0\leq \delta(S_n,S_n',1) \leq \delta(S_n,S_n') $, hence  $\delta(S_n,S_n')  $ tends to zero, implies that $\delta(S_n,S_n',1) $ tends also to zero. 

Conversely, by Corollary \ref{relatioC DELTA et courbe de manhattan} $\delta(S_n,S_n',1) = \frac{a^n_1(1) +a^n_2(1)}{2}$, where $(a^n_1(1),a^n_2(1))$ is the point on the Manhattan curve $\C^n_M$ for the surfaces $S_n$ and $S'_n$, where the slope of one normal vector is $1$. From the convexity of $\C^n_M$ we have that $a^n_1(1)>0$ and $a^n_2(1)>0$. Hence  if $\lim_{n\tv \infty}\delta(S_n,S_n',1)= 0$ it follows that  $\lim_{n\tv \infty} a^n_1 (1)= 0$ and $\lim_{n\tv \infty} a^n_2(1) = 0$. By continuity of $\C^n_M$, this implies that there are points in $\C^n_M$ which are as close as we want from the origin, and since the critical exponent is equal to the abscissa of the intersection of $\C^n_M$ and the line $y=x$, this finally implies that the critical exponent can be made as small as we want. 
\end{proof}

\begin{corollaire}\label{delta(1) tends vers 1/2 si et seulement si delta tends vers 1/2}
Let $S_n$ and $S_n'$ be two sequences in the Teichmüller space  of $S$.  Then the following is equivalent : 
\begin{itemize}
\item $\lim_{n\tv \infty} \delta(S_n,S_n')=1/2 $
\item $\lim_{n\tv \infty}\delta(S_n,S_n',1) =1/2 $
\item  $\lim_{n\tv \infty} m(S_n,S_n',1) =1 $
\end{itemize}
\end{corollaire}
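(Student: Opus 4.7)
The strategy follows the template of Corollary \ref{delta(1) tends vers 0 si et seulement si delta(0) tends vers 0}. The equivalence of the last two statements is immediate from Corollary \ref{relatioC DELTA et courbe de manhattan} applied at $\lambda=1$: we have $\delta(S_n,S_n',1)=m(S_n,S_n',1)/2$, so one tends to $1/2$ iff the other tends to $1$. For the implication $(2)\Rightarrow(1)$, I would invoke the chain $\delta(S_n,S_n',1)\leq \delta(S_n,S_n')\leq 1/2$, where the first inequality is Corollary \ref{l'exposant critique directionnel est maximum quand il est egal a deltaM} and the second is the Bishop--Steger inequality recorded after Proposition \ref{pr - basic properties}; a squeeze argument closes this direction.

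The substantive direction is $(1)\Rightarrow(2)$. First I would apply Corollary \ref{Si delta tends vers 1 lambda tends vers 1} to conclude that the maximal slope $\lambda_n:=\lambda(\delta(S_n,S_n'))$ tends to $1$. Let $q_n$ denote the convex function whose graph is the Manhattan curve associated to $(S_n,S_n')$, so three points are pinned: $q_n(0)=1$, $q_n(1)=0$, and $q_n(\delta_n)=\delta_n$ with $\delta_n:=\delta(S_n,S_n')\to 1/2$. At the middle point the normal has slope $\lambda_n$, hence $q_n'(\delta_n)=-1/\lambda_n\to -1$.

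The main step of the plan is to establish that $q_n\to 1-x$ uniformly on $[0,1]$. By convexity, on each of $[0,\delta_n]$ and $[\delta_n,1]$ the graph of $q_n$ lies below the chord joining the endpoints; these two chords have slopes $-(1-\delta_n)/\delta_n$ and $-\delta_n/(1-\delta_n)$, both tending to $-1$ as $\delta_n\to 1/2$, and therefore converge uniformly to the line $y=1-x$. Simultaneously, convexity places $q_n$ above the tangent at $(\delta_n,\delta_n)$, whose slope $-1/\lambda_n$ tends to $-1$ and which passes through a point converging to $(1/2,1/2)$, so this tangent also converges uniformly to $y=1-x$. A sandwich argument then yields the claimed uniform convergence.

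To conclude, the same chord-slope comparisons at the endpoints give
$$q_n'(0)\leq -(1-\delta_n)/\delta_n\leq -1\leq -\delta_n/(1-\delta_n)\leq q_n'(1)\qquad\text{for } \delta_n\leq 1/2,$$
so by monotonicity of $q_n'$ there exists $x^n(1)\in[0,1]$ with $q_n'(x^n(1))=-1$. Combining with Corollary \ref{relatioC DELTA et courbe de manhattan},
$$\delta(S_n,S_n',1)=\frac{x^n(1)+q_n(x^n(1))}{2}=\frac{x^n(1)+(1-x^n(1))+o(1)}{2}\longrightarrow \frac12,$$
where the $o(1)$ term is controlled by $\|q_n-(1-\cdot)\|_{\infty,[0,1]}$ from the previous paragraph. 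The only delicate point is the sandwich argument; I expect it to be routine once both $\lambda_n\to 1$ and $\delta_n\to 1/2$ are in hand, since it reduces to verifying that three explicit one-parameter families of lines collapse to $y=1-x$.
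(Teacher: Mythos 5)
Your proposal is correct and follows essentially the same route as the paper: both directions hinge on the formula $\delta(S_n,S_n',1)=\tfrac{1}{2}(x^n(1)+y^n(1))$ from Corollary \ref{relatioC DELTA et courbe de manhattan} together with Corollary \ref{Si delta tends vers 1 lambda tends vers 1} giving $\lambda_n\to 1$, and the easy direction is the same squeeze $\delta(S_n,S_n',1)\leq\delta(S_n,S_n')\leq 1/2$. Your chord-and-tangent sandwich showing $q_n\to 1-x$ uniformly is in fact a more careful justification of the step the paper disposes of with the phrase ``since the Manhattan curve is $C^1$'' (which, as stated, does not by itself give uniformity in $n$), so if anything your write-up is the more complete one.
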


\begin{proof}
Here again the second and the third statement are  equivalent from the formula obtained in Corollary \ref{relatioC DELTA et courbe de manhattan}.

We now show the equivalence of the first two statements. By definition $\delta(S_n,S_n',1) \leq \delta(S_n,S_n')\leq 1/2$ hence the second statement implies the first. 

Conversely, if $\lim \delta(S_n,S_n',1) = 1/2$ by  Corollary \ref{Si delta tends vers 1 lambda tends vers 1}, $\lim \lambda(\delta(S_n,S_n',1))= 1$ %($\lambda_n$  is the slope of the normal of the Manhattan curve at $(\delta_n,\delta_n)$ ), 
since the the Manhattan curve is $C^1$, this implies that $\lim ({x^n(1) ,y^n(1)}) = \lim (\delta(S_n,S_n'),\delta(S_n,S_n')) =(1/2 , 1/2)$. Hence $\lim_{n\tv \infty} \delta(S_n,S_n',1) = \lim_{n\tv \infty}  \frac{x^n(1) +y^n(1)}{2} = \lim_{n\tv \infty}  \frac{\delta(S_n,S_n',1)+\delta(S_n,S_n',1)}{2} = 1/2$. 
\end{proof}

\section{Geometric background}\label{sec - geometric background}
\subsection{Geodesic currents}\label{sec:Geodesic currents}
The notion of geodesic currents has been introduced by F. Bonahon in \cite{BonahonGeometryofteichmuller}  in order to get,  in Bonhanon's words  " a better understanding of homotopy classes of unoriented closed curves on $S$".  But it is also a beautiful tool to understand  Thurston compactification of Teichmüller space by measured geodesic laminations, \cite{BonahonEarthquake}, or the ends of three manifolds as in \cite{BonahonBouts}. Geodesic currents are a generalisation of closed geodesics where the set of geodesic measured laminations as well as Teichmüller space $\Teich(S)$ have natural embeddings. All the material of this Section is well known and can be found in \cite{BonahonEarthquake,BonahonGeometryofteichmuller,BonahonBouts,Otal,Vsari}. We survey this material in order to introduce notations and will follow the lines of \cite{BonahonGeometryofteichmuller}.

Let $S$ be a surface of genus $g\geq2$, that we endow with a hyperbolic metric, and $\C$ the set of closed unoriented geodesics on $S$. In this Section, geodesics will be  primitive elements, ie represented by an indivisible element of $\G$. There is a bijective correspondence between the set of homotopy classes of unoriented closed curves and the set of unoriented closed geodesics with multiplicity. We didn't make this distinction in the previous Section since according to Knieper's theorem, the exponential growth of the number of primitive geodesics and non-primitive geodesics are the same, hence the resulting Manhattan curve and critical exponent are the same.

Consider $\widetilde{S}$ the universal covering of $S$, and $\GSt$ the set of geodesics of $\widetilde{S}$. Any closed geodesic lifts to a $\G$-invariant  set of $\GSt$. We can take the multiplicity into account if we identify the lifts with a Dirac measure on this discrete subset, where the Dirac measures are multiplied by the multiplicity of the geodesic. This is equivalent to put weights on geodesics. Hence a weighted sum of geodesics of $S$ is naturally a $\G$  invariant measure of $\GSt$. 

There is a parametrization of unoriented geodesics by the boundary at infinity of $\widetilde{S}$, say $\partial \widetilde{S}$, that is $\GSt \simeq \partial \widetilde{S} \times \partial \widetilde{S} - \Delta /\Z_2$, where $\Delta$ is the diagonal and $\Z_2$ acts by exchanging the two factors. Giving a hyperbolic metric on $S$ gives an identification between $\widetilde{S}$ and $\Hyp^2$, whose boundary is canonically $\SU$. Hence $\GSt$ is naturally identified with $\SU\times \SU -\Delta /\Z_2$. 

\begin{definition}
A geodesic current is a $\G$ invariant borelian measure on $\GSt$. The set $\GC$ of geodesic currents is endowed with the weak* topology defined by the family of semi-distances $d_f$ where $f$ ranges over all continuous functions $f : \GSt \tv \R$ with compact support and where $d_f(\alpha,\beta) = | \alpha(f) -\beta(f)| $.
\end{definition}
By the previous discussion the set of homotopy classes of closed curves on $S$ is embedded in $\GC$.

\begin{proposition}\cite[Proposition 2]{BonahonGeometryofteichmuller}
The space $\GC$ is complete and the real multiples of homotopy classes of closed curves are dense in it. 
\end{proposition}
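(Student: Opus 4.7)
I address the two assertions (completeness and density) in turn.

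For completeness, $\GC$ sits inside the space of Radon measures on $\GSt$ with the weak$^*$ topology induced by pairing with $C_c(\GSt)$. A weak$^*$ Cauchy net $(\mu_\alpha)$ produces, by pointwise limit on test functions, a positive linear functional on $C_c(\GSt)$, and Riesz representation yields a Radon limit measure $\mu$. Since each $\g\in\G$ acts by a homeomorphism of $\GSt$, the $\G$-invariance condition $\mu(f\circ \g^{-1}) = \mu(f)$ passes to weak$^*$ limits. Hence $\GC$ is closed inside a complete space, so complete.

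For density, my plan is to pass through the standard correspondence between $\G$-invariant Radon measures on $\GSt$ and geodesic-flow invariant Borel measures on $T^1S$ (realized via a Lebesgue-along-the-orbit disintegration, using that $T^1\tilde S \simeq \GSt \times \R$ up to the $\varphi_t$ action). Because $T^1S$ is compact, geodesic currents correspond to finite $\varphi_t$-invariant measures; after normalization these are probability measures. Under this correspondence, the current associated to a closed geodesic $c$ goes, up to a factor $\ell(c)$, to the normalized Lebesgue measure on the periodic orbit $\tau_c \subset T^1S$. Density of real multiples of single closed curves in $\GC$ then reduces to the following claim: for every $\varphi_t$-invariant probability measure $\nu$ on $T^1S$, every finite family of continuous test functions $f_1,\dots,f_k$, and every $\epsilon>0$, there is a closed orbit $\tau$ with $\bigl|\int f_i\, d(\mu_\tau/\ell(\tau)) - \int f_i\, d\nu\bigr| < \epsilon$ for all $i$.

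This is a direct consequence of the Bowen specification property for the Anosov geodesic flow on a compact hyperbolic surface. Given $\nu$, one selects a finite collection of Birkhoff orbit segments whose empirical measures $\epsilon$-approximate $\nu$ against the chosen test functions, then invokes specification to shadow the prescribed segments by a single long periodic orbit $\tau$. The normalized measure $\mu_\tau/\ell(\tau)$ is then weak$^*$ close to $\nu$ on $\{f_1,\dots,f_k\}$; scaling by the appropriate factor $r>0$ transfers the approximation back to $\GC$ to produce a single $r[c]$ close to the original current.

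The main obstacle is precisely the use of a \emph{single} closed curve rather than a positive linear combination. A naive box decomposition of a fundamental domain in $\GSt$ followed by Dirac approximation on each box would only produce a sum $\sum_i r_i[c_i]$, one closed geodesic per box; collapsing this sum into one multiple $r[c]$ is exactly what specification provides, by concatenating the orbit segments dictated by each box into a single long closed orbit with the correct statistical weights. This step is where the hyperbolicity of $S$, via the Anosov structure of the geodesic flow, enters essentially.
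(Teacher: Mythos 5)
The paper contains no proof of this statement; it is quoted from Bonahon, so I am comparing your argument with the proof in the cited reference. Your argument is correct, and it rests on the same mechanism as Bonahon's, though packaged differently. The completeness half is exactly the standard argument: a Cauchy net of positive $\G$-invariant Radon measures converges on each $f\in C_c(\GSt)$, Riesz representation produces a limit Radon measure, and invariance passes to the limit since $\G$ acts by homeomorphisms. For density, Bonahon works directly in $\GSt$: he covers the supports of the finitely many test functions by small boxes, replaces the current by a finite positive combination of Dirac masses, represents each box by a geodesic arc repeated with multiplicity roughly proportional to the mass of that box, and closes the whole concatenation into a \emph{single} closed geodesic using the fact that on a hyperbolic surface a broken geodesic with long pieces and small exterior angles is uniformly shadowed by a genuine closed geodesic. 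You instead transfer everything to finite $\varphi_t$-invariant measures on $T^1S$ and quote Sigmund's theorem (density of normalized periodic-orbit measures, via Bowen's specification property); that is the same shadowing argument in dynamical clothing, and you correctly identify the concatenation of orbit segments into one periodic orbit as the crux. Your route is shorter and reduces to a citable theorem, but it carries two obligations you should make explicit: first, that Bonahon's bijection between currents and invariant measures is a homeomorphism for the respective weak$^{*}$ topologies (tested against $C_c(\GSt)$ on one side and $C(T^1S)$ on the other) — this is true but is itself part of the content of the construction; second, that $\GSt$ parametrizes \emph{unoriented} geodesics, so currents correspond to flip-invariant measures on $T^1S$, which is harmless since the periodic-orbit measure furnished by specification pushes down to a multiple of a closed-curve current in any case, but it deserves a sentence. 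With those two points acknowledged, the proof is complete.
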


\begin{definition}
If $[\alpha]$ and $[\beta]$ are two homotopy classes of closed curves on $S$, their geometric intersection is the infinimum of the cardinality of $\alpha' \cap \beta'$ for all $\alpha' \in [\alpha]$ and $\beta' \in [\beta]$. 
\end{definition}
This infinimum is realized for $\alpha'$ and $\beta'$  the geodesic representatives, \cite[Ch 3, Proposition 10]{FLP} . 

The crucial fact is that the geometric intersection can be extended to geodesic currents. 

\begin{theorem}\cite[Paragraph 4.2]{BonahonBouts}\cite[Proposition 3]{BonahonGeometryofteichmuller}\label{th - def intersection  et l'intersection est continue}
There is a continuous symmetric bilinear function $i : \GC\times \GC \tv \R^+$ such that, for any two homotopy classes of closed curves $\alpha,\beta \in \GC$, $i(\alpha,\beta)$ is the above geometric intersection. 
\end{theorem}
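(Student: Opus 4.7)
The construction of $i$ proceeds via counting intersections at the level of lifts to $\widetilde{S}$. I would define $DG \subset \GSt \times \GSt$ to be the set of pairs $(g, g')$ of geodesics in $\widetilde{S}$ meeting transversally in exactly one point. Under the identification $\GSt \simeq (\partial \widetilde{S} \times \partial \widetilde{S} \setminus \Delta)/\Z_2$, this is the linking condition on pairs of endpoints, so $DG$ is an open subset. The diagonal $\G$-action on $DG$ is properly discontinuous, and the crucial geometric input is that the quotient $DG/\G$ is \emph{relatively compact}: its closure identifies with the compact space of (possibly tangent or coincident) pairs of geodesic directions based at a common point of $S$, essentially a quotient of $T^1 S \times S^1$.

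Given $\alpha, \beta \in \GC$, the product measure $\alpha \otimes \beta$ on $\GSt \times \GSt$ is invariant under the diagonal action of $\G$, so it descends to a Radon measure on $DG/\G$. I would set
\[
i(\alpha, \beta) := (\alpha \otimes \beta)\bigl(DG/\G\bigr).
\]
Symmetry is immediate, since the swap $(g, g') \mapsto (g', g)$ is a measure-preserving involution of $DG/\G$. Bilinearity follows from bilinearity of the product-measure construction. To verify agreement with the geometric intersection on closed curves $\alpha, \beta$, I would use that the associated currents are sums of Dirac masses on the $\G$-orbits of their lifts; then $(\alpha \otimes \beta)(DG/\G)$ literally counts pairs of lifts $(\widetilde{\alpha},\widetilde{\beta})$ crossing in a fundamental domain, which coincides with the geometric count recalled just before the theorem statement.

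The main obstacle, as expected, is continuity. The functional $\mu \mapsto \mu(U)$ on an open set is only lower semi-continuous in the weak-$*$ topology, so one cannot naively apply continuity to $\chi_{DG/\G}$. My plan would be to exhaust $DG/\G$ by an increasing family $(K_n)$ of compact subsets and write $i$ as the monotone limit of the functionals $(\alpha, \beta) \mapsto (\alpha \otimes \beta)(K_n)$, each of which is continuous because $\chi_{K_n}$ can be approximated by compactly supported continuous functions and the product measure is weak-$*$ continuous on such. The content is then to show that $(\alpha \otimes \beta)\bigl((DG/\G)\setminus K_n\bigr)$ tends to $0$ \emph{uniformly} in $(\alpha, \beta)$ on a neighborhood in $\GC \times \GC$. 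This reduces to a local estimate in foliated charts near the boundary stratum of tangent or coincident pairs, where the product measure factorizes into transverse measures and where the non-atomicity of a geodesic current outside the countable set of closed geodesics forces the boundary mass to vanish.

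I expect this uniform control of the boundary mass to be essentially all the work; the algebraic properties (symmetry, bilinearity) and the compatibility on closed curves are bookkeeping, and the geometric picture of $DG/\G$ as relatively compact is the reason why an extension can exist at all.
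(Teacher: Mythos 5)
First, a point of reference: the paper does not prove this statement at all --- it is quoted from Bonahon with explicit citations --- so your proposal can only be measured against Bonahon's original argument. Your construction is exactly his: the open set $DG\subset\GSt\times\GSt$ of linking pairs, proper discontinuity of the diagonal action (via the equivariant, proper intersection-point map $DG\tv\widetilde{S}$), relative compactness of $DG/\G$ inside the bundle of pairs of directions over the compact surface, and $i(\alpha,\beta)$ defined as the total mass of the descended product measure. Symmetry, bilinearity and the verification on closed curves are correct as you state them.

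The gap is in the continuity argument, which you yourself identify as ``essentially all the work'' but then do not carry out; moreover the mechanism you propose for the key estimate is based on a false premise. You assert that the uniform smallness of the mass of $(DG/\G)\setminus K_n$ follows because ``the non-atomicity of a geodesic current outside the countable set of closed geodesics forces the boundary mass to vanish.'' But the frontier stratum of tangent or coincident pairs genuinely carries $\alpha\otimes\beta$-mass whenever $\alpha$ and $\beta$ share an atom: for $\alpha=\beta=\delta_c$ the current of a simple closed geodesic, the coincident pair $(\widetilde{c},\widetilde{c})$ has positive product mass while $i(c,c)=0$, and continuity must nevertheless hold at such points --- indeed this is exactly the regime the present paper uses later ($i(L_m,L_m)$, the characterization $i(\Lam,\Lam)=0$ of laminations, the function $g$ in the proof of Theorem \ref{th - isolation 1 surface}). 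So no Portmanteau-type ``the boundary is null'' argument can work; what must be shown is that currents \emph{close} to $(\alpha,\beta)$ assign uniformly small mass to pairs crossing transversally at very small angle, even though the limit measure may charge the frontier itself. Bonahon obtains this by covering the frontier with finitely many product boxes of the form $Q(I,J)\times Q(I',J')$ (geodesics with endpoints in prescribed arcs), chosen with $\alpha$- and $\beta$-null boundaries, and estimating the transverse crossings inside each box; nothing of this sort appears in your sketch. A secondary omission: continuity of $(\alpha,\beta)\mapsto(\alpha\otimes\beta)(K_n)$ itself requires choosing $K_n$ with $(\alpha\otimes\beta)$-negligible boundary, since evaluation on a compact set is only upper semicontinuous. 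The strategy is the right one --- it is Bonahon's --- but the single step carrying the content of the theorem is missing, and the heuristic offered in its place would, if valid, prove something false.
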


If we fix a current $\alpha$ whose support is sufficiently large (if it fills $S$), then the set of currents whose intersections with $\alpha$ is bounded, is compact. More precisely we have the following, 

\begin{theorem}\label{compacité des courants geodesic d intersection borné}\cite[Proposition 4]{BonahonGeometryofteichmuller}
Let $\alpha$ be a geodesic current with the property that every geodesics of  $\widetilde{S}$ transversely meets some geodesic which is in the support of $\alpha$. Then the set $\left\{ \beta \in \GC \, , \, i(\alpha,\beta) \leq 1 \right\} $ is compact in $\GC$. 
\end{theorem}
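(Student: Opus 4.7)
The plan is to combine a Banach--Alaoglu type argument with closedness of the set $K := \{\beta \in \GC : i(\alpha,\beta)\leq 1\}$. Since geodesic currents are positive Radon measures on the locally compact space $\GSt$, a subset of $\GC$ is compact for the weak* topology as soon as it is closed and uniformly bounded on every compact subset of $\GSt$. Closedness of $K$ is immediate from Theorem~\ref{th - def intersection  et l'intersection est continue}, since the map $\beta \mapsto i(\alpha,\beta)$ is continuous. Everything therefore reduces to proving that for every compact $C \subset \GSt$ there is a constant $M(C)$ with $\beta(C) \leq M(C)$ for all $\beta \in K$.

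To produce such a bound I would use the filling hypothesis on $\alpha$ together with the product nature of the intersection form. Recall that $i(\alpha,\beta)$ is obtained by integrating $\alpha \otimes \beta$ over the set of pairs of geodesics meeting transversely at a point lying in a fixed fundamental domain $\Omega \subset \widetilde S$ for $\G$; in particular, if $W \subset \GSt \times \GSt$ consists of transverse pairs meeting inside $\Omega$ and the projection $W \to (\GSt\times\GSt)/\G$ is injective, then $(\alpha\otimes\beta)(W) \leq i(\alpha,\beta)$. Fix a compact $C \subset \GSt$. For each $g \in C$, the hypothesis yields $h_g \in \supp \alpha$ meeting $g$ transversely at some point $x_g \in \widetilde S$, which after replacing $h_g$ by a $\G$-translate we may assume lies in $\Omega$. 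Since $g$ and $h_g$ are distinct points of $\GSt$, one may pick disjoint basic neighborhoods $U_g \ni g$ and $V_g \ni h_g$ of the standard ``pairs of short arcs on $\partial\widetilde S$'' form, so that every pair $(g',h') \in U_g \times V_g$ meets transversely at a point of a small ball around $x_g$ contained in the interior of $\Omega$.

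Extracting a finite subcover $U_{g_1},\ldots,U_{g_n}$ of $C$ and applying the displayed inequality to $W_i := U_{g_i}\times V_{g_i}$ (together with the symmetry of $i$) yields
\begin{equation*}
\alpha(V_{g_i})\,\beta(U_{g_i}) \;\leq\; i(\alpha,\beta) \;\leq\; 1.
\end{equation*}
Since $h_{g_i} \in \supp \alpha$ forces $\alpha(V_{g_i}) > 0$, we obtain $\beta(U_{g_i}) \leq \alpha(V_{g_i})^{-1}$, and summing gives a bound $\beta(C) \leq \sum_i \alpha(V_{g_i})^{-1} =: M(C)$ independent of $\beta \in K$. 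Combined with closedness of $K$, this yields the desired compactness.

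The main technical obstacle is the bookkeeping in the second step: one must really arrange each box $U_g \times V_g$ so that it consists entirely of transversely intersecting pairs whose crossing point falls inside $\Omega$, and so that it injects into the quotient by the diagonal $\G$-action; otherwise the crude lower bound $\alpha(V_g)\beta(U_g)$ could overcount contributions to $i(\alpha,\beta)$. Both conditions are a matter of choosing the four boundary arcs defining $U_g$ and $V_g$ short enough: transversality is stable under small perturbations of endpoints in $\partial\widetilde S$, and proper discontinuity of the $\G$-action on $\widetilde S$ prevents a sufficiently small ball in the interior of $\Omega$ from meeting any of its non-trivial $\G$-translates.
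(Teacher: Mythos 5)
Your argument is correct and is essentially the standard proof (Bonahon's own proof of Proposition~4 in \cite{BonahonGeometryofteichmuller}, which the paper cites without reproducing): closedness of the set via continuity of $i(\alpha,\cdot)$, plus uniform mass bounds on compacta of $\GSt$ obtained from product boxes $U_g\times V_g$ of transversely crossing pairs that inject into the quotient, giving $\alpha(V_g)\beta(U_g)\leq i(\alpha,\beta)\leq 1$ with $\alpha(V_g)>0$ by the filling hypothesis, and then vague (weak*) compactness of locally uniformly bounded positive Radon measures. The two technical points you flag at the end --- stability of transversality and using proper discontinuity to make the crossing points stay in a ball disjoint from its nontrivial translates --- are exactly the right ones and are handled as you describe.
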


Taking a current which fills $S$ gives the following, 
\begin{corollaire}\cite[Corollary 5]{BonahonGeometryofteichmuller}
The space $\PGC$ of projective geodesic currents is compact. 
\end{corollaire}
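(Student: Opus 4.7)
The plan is to deduce compactness of $\PGC$ from Theorem \ref{compacité des courants geodesic d intersection borné} by exhibiting a single geodesic current $\alpha$ that pairs positively with every nonzero current, and then normalizing so that every projective class has a representative of fixed $\alpha$-intersection.

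First I would produce a ``filling'' current. Choose a finite family of simple closed geodesics $c_1,\dots,c_k$ on $S$ whose union fills $S$, in the sense that every connected component of $S \setminus \bigcup_i c_i$ is an open disk; such a family exists on every closed surface of genus $g\geq 2$ (for instance, a pants decomposition together with a curve dual to each pants curve). Let $\alpha := \sum_{i=1}^k c_i \in \GC$, viewed as the sum of the $\G$-invariant sums of Dirac masses on the lifts of the $c_i$. The filling property translates directly into the hypothesis of Theorem \ref{compacité des courants geodesic d intersection borné}: any geodesic $g \in \GSt$ transversely crosses some lift of some $c_i$, since otherwise the projection of $g$ would be a geodesic of $S$ trapped inside a complementary disk. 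Thus $K_1 := \{\beta \in \GC \mid i(\alpha,\beta) \leq 1\}$ is compact.

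Next I would verify that $i(\alpha,\beta) > 0$ for every nonzero $\beta \in \GC$. Given such a $\beta$, pick $g \in \supp(\beta)$; by the filling property some lift $g'$ of some $c_i$ meets $g$ transversely. Transversality is an open condition on $\GSt \times \GSt$, so in a neighbourhood $U \times V$ of $(g',g)$ the local intersection density used in the construction of $i$ in \cite{BonahonBouts} is identically $1$. The product measure $\alpha \otimes \beta$ assigns positive mass to $U\times V$ because $g'$ is an atom of $\alpha$ and $g$ lies in the support of $\beta$, so $i(\alpha,\beta)>0$. This positivity is the delicate point of the argument, as it reaches back into the very construction of the intersection form.

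Finally, let $K := \{\beta \in \GC \mid i(\alpha,\beta) = 1\}$, which is a closed subset of $K_1$ and therefore compact. By the previous paragraph every nonzero $\beta \in \GC$ writes uniquely as $t\beta_0$ with $\beta_0 \in K$ and $t = i(\alpha,\beta) > 0$, so the canonical projection $\pi : \GC \setminus \{0\} \to \PGC$ restricts to a continuous surjection $K \twoheadrightarrow \PGC$. The continuous image of a compact space is compact, hence $\PGC$ is compact. The only nonformal ingredient is the positivity step; everything else is the direct invocation of Theorem \ref{compacité des courants geodesic d intersection borné} together with elementary topology.
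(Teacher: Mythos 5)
Your proof is correct and is essentially the paper's own argument: the paper deduces the corollary in one line by ``taking a current which fills $S$'' and invoking Theorem \ref{compacité des courants geodesic d intersection borné}, and you have simply fleshed out that line (construction of the filling current, positivity of $i(\alpha,\cdot)$ on nonzero currents, and the continuous surjection from the compact slice $\{i(\alpha,\beta)=1\}$ onto $\PGC$). The only point worth noting is that the hypothesis of the compactness theorem must also hold for geodesics lying in the support of $\alpha$ itself, which is guaranteed because each curve of a filling family is transversely crossed by another; this is implicit in your argument and does not affect its validity.
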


We are not going to use exactly this fact but a related one : the set of geodesic currents of length less or equal to one is compact. This leads us to define a length for any geodesic current. 
Let $\F$ be the foliation of $T^1S$ by the orbit of geodesic flow. For each $\varphi_t$ invariant finite measure $\mu$  there exists an associated transverse measure to $\F$, $\widetilde{\mu}$,  normalized by the requirement that it is locally a product : %of $\mu$ with $dt$ 
$\mu =\widetilde{\mu}\times dt$, where $dt$ is the Lebesgue measure along orbits of $\varphi_t$ in $\F$. By definition $\widetilde{\mu}$ is a geodesic current.  Hence there is a bijective correspondence between the set of  invariant measures by the geodesic flow and the set of currents.  In particular the Liouville measure  gives rise to a geodesic current, $L$. This current depends on the hyperbolic metric and we will write the metric as a subscript if there is an ambiguity on the metric we consider on $S$. For example $L_m$ will be the Liouville geodesic current on $(S,m)$. 

\begin{theorem}\cite[Proposition 14]{BonahonGeometryofteichmuller}
Let $(S,m)$  be a hyperbolic surface. For every  closed curve $c \in \GC$, we have $i(L_m, c) =\ell_m(c)$. 
\end{theorem}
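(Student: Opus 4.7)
The plan is to unwind the definition of the intersection form against a closed curve and then invoke the classical Crofton-type formula for the Liouville measure.

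First I would lift $c$ to a geodesic $\tilde c \subset \widetilde S \cong \Hyp^2$, the axis of an element $\gamma_c \in \G$ whose hyperbolic translation length equals $\ell_m(c)$. The geodesic current associated to $c$ is the $\G$-invariant sum of Dirac masses on the orbit $\G\cdot\tilde c \subset \GSt$. Since the stabilizer of $\tilde c$ in $\G$ is exactly $\langle\gamma_c\rangle$, the bilinear form $i$ (which is determined by continuity from its value on closed curves) can be rewritten as
$$i(L_m, c) \;=\; L_m\!\left(\left\{\tilde g \in \GSt : \tilde g \text{ crosses } I\right\}\right),$$
where $I \subset \tilde c$ is any fundamental segment for $\langle \gamma_c\rangle$, of hyperbolic length $\ell_m(c)$. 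I would establish this reduction first on weighted sums of closed curves transversely crossing $c$ (where it is the geometric intersection number) and then extend by continuity via Theorem \ref{compacité des courants geodesic d intersection borné}.

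Next I would parametrize a geodesic crossing $\tilde c$ by the pair $(s,\theta)\in I\times (0,\pi)$, where $s$ is the arc-length position of the intersection on $\tilde c$ and $\theta$ is the unoriented crossing angle. The heart of the argument is to identify $dL_m$ in these coordinates as
$$dL_m \;=\; \tfrac{1}{2}\sin\theta\, ds\, d\theta.$$
This follows from the definition of $L_m$ as the transverse measure of the Liouville measure: introducing the local chart $(s,\theta,t)\mapsto \varphi_t(v(s,\theta))$ on a tubular neighbourhood of $\tilde c$ in $T^1\Hyp^2$, where $v(s,\theta)$ is the unit vector at $s\in\tilde c$ making angle $\theta$ with $\tilde c$, a direct computation in Fermi coordinates along $\tilde c$ identifies the Liouville volume as $\sin\theta\, ds\, d\theta\, dt$. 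The factor $1/2$ accounts for the standard two-to-one identification between oriented and unoriented geodesics.

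Once this density is in hand, Fubini concludes:
$$i(L_m,c) \;=\; \int_0^{\ell_m(c)}\!\!\int_0^\pi \tfrac{1}{2}\sin\theta\, d\theta\, ds \;=\; \ell_m(c).$$
The step I expect to require the most care is the Jacobian computation giving the factor $\sin\theta$ (a form of Santaló's formula from integral geometry); everything else is bookkeeping on the definitions and elementary hyperbolic trigonometry along $\tilde c$.
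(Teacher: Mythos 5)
Your argument is correct, and it is essentially the standard proof: the paper itself states this result without proof, citing \cite[Proposition 14]{BonahonGeometryofteichmuller}, and Bonahon's own argument is exactly this Crofton-type computation --- reduce $i(L_m,c)$ to the $L_m$-measure of the geodesics crossing a fundamental segment of the axis, then integrate the density $\tfrac12\sin\theta\,ds\,d\theta$ over $I\times(0,\pi)$. The only point worth flagging is that the factor $\tfrac12$ is not derived but is precisely the normalization of the Liouville current (equivalently, of $\frac{dx\,dy}{(x-y)^2}$ on unordered pairs of endpoints) that makes the statement true, so you should fix that convention explicitly before the computation rather than introduce it afterwards.
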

This remarkable property allows us to define the length of any geodesic current once we have set a hyperbolic metric on $S$, by the extension of $i$ to any geodesic current. Let $m$ be a hyperbolic metric, for every $\beta \in \GC$, by definition $\ell_m(\beta) := i(L_m,\beta)$

Since the Liouville current has the property to intersect transversely every  geodesics of $\widetilde{S}$ applying  Theorem \ref{compacité des courants geodesic d intersection borné} we get, 
\begin{theorem}
For any hyperbolic metric $m$, the set of geodesic currents of $m$-length equal to one is compact. That is the set $\{ \beta \in \GC\,  ,\,  i(\beta,L_m) =1 \}$ is compact. 
\end{theorem}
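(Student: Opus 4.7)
The plan is to deduce this compactness statement directly from Theorem \ref{compacité des courants geodesic d intersection borné}, which already gives compactness of $\{\beta \in \GC : i(\alpha,\beta) \leq 1\}$ for any current $\alpha$ whose support meets every geodesic of $\widetilde{S}$ transversely. So the key step is to check that $\alpha := L_m$ fulfils this hypothesis, and then cut out the ``length one'' slice as a closed subset.

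First, I would verify the support condition for $L_m$. Using the identification $\GSt \simeq \SU\times \SU - \Delta/\Z_2$ coming from the hyperbolic metric $m$, the Liouville current is (a constant multiple of) the measure $\frac{d\xi\, d\eta}{|\xi-\eta|^2}$, which is absolutely continuous with full topological support on $\GSt$. In particular, $\supp L_m = \GSt$, so given any geodesic $g$ of $\widetilde{S}$, there exist open neighbourhoods in $\GSt$ of geodesics transverse to $g$, and all such neighbourhoods carry positive $L_m$-mass; hence $g$ transversely meets geodesics in the support of $L_m$. The hypothesis of Theorem \ref{compacité des courants geodesic d intersection borné} is therefore satisfied.

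Applying that theorem with $\alpha = L_m$ yields the compactness of
\[
K := \{\beta \in \GC \,:\, i(L_m,\beta) \leq 1\}.
\]
The set we care about, namely $\{\beta \in \GC : i(L_m,\beta) = 1\}$, is the preimage of the closed point $\{1\}$ under the continuous map $\beta \mapsto i(L_m,\beta)$, where continuity is supplied by Theorem \ref{th - def intersection et l'intersection est continue}. Being a closed subset of the compact set $K$, it is itself compact, which concludes the argument.

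There is no real obstacle here: the only point requiring a brief justification is that $L_m$ has full support on $\GSt$, after which the theorem is an immediate consequence of the general compactness result already established and the continuity of the intersection form.
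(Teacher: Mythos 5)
Your proposal is correct and follows the same route as the paper: the paper obtains this statement by observing that the Liouville current transversely meets every geodesic of $\widetilde{S}$ and then invoking Theorem \ref{compacité des courants geodesic d intersection borné} with $\alpha = L_m$. Your additional details --- the full support of $L_m$ on $\GSt$ and the closedness of the level set $\{i(L_m,\beta)=1\}$ inside the compact sublevel set via continuity of $i$ --- are exactly the points the paper leaves implicit, and they are verified correctly.
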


The Liouville currents are particularly interesting since they allow to embed Teichmüller space into the set of geodesic currents. 
%To stress out  the fact that the Liouville current depends on the hyperbolic metric we will call $m$ the hyperoblic metric on $S$ and $\ell_m$ the Liouville current associated to $m$. 
\begin{theorem} \cite[Theorem 12]{BonahonGeometryofteichmuller}
The map $m \tv L_m$ is a proper embedding of Teichmüller space into the space of geodesic currents. 
\end{theorem}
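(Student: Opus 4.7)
The plan is to verify three properties of the map $m \mapsto L_m$: continuity, injectivity, and properness.

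For continuity, I would use that $L_m$ is determined by the cross-ratio on $\SU \simeq \partial \widetilde{S}$ induced by $m$, or equivalently by the Liouville measure on $T^1 S_m$ under the locally product decomposition $\mu = \widetilde{\mu}\times dt$. Since both constructions depend continuously on the metric, $m \mapsto L_m$ is continuous for the weak$^*$ topology on $\GC$.

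For injectivity, suppose $L_m = L_{m'}$. By the formula $i(L_m,c) = \ell_m(c)$ valid for every $c \in \C$, the two metrics $m$ and $m'$ share the same marked length spectrum. The classical rigidity theorem for hyperbolic surfaces (lengths of closed geodesics determine the point in $\Teich(S)$) then gives $m=m'$.

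For properness, let $(m_n)$ be a sequence in $\Teich(S)$ such that $L_{m_n}$ stays in a compact subset of $\GC$. Extract a subsequence with $L_{m_n} \to L$ in $\GC$. By continuity of the intersection (Theorem \ref{th - def intersection  et l'intersection est continue}), for every closed curve $c$ one has
$$\ell_{m_n}(c) = i(L_{m_n},c) \longrightarrow i(L,c) < \infty.$$
First I claim the systole of $m_n$ stays bounded below: if $\ell_{m_n}(c) \to 0$ for some simple closed curve $c$, the collar lemma forces $\ell_{m_n}(d) \to \infty$ for any simple closed curve $d$ crossing $c$, contradicting the boundedness of $i(L_{m_n},d)$. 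Fixing then a pants decomposition $\{c_1,\dots,c_{3g-3}\}$ of $S$ together with dual curves $\{d_1,\dots,d_{3g-3}\}$ recovering the Fenchel--Nielsen twist parameters, all of these lengths $\ell_{m_n}(c_i), \ell_{m_n}(d_j)$ are bounded above by the convergence of intersections and bounded below by the systole bound. This provides bounds on the Fenchel--Nielsen coordinates, and hence a subsequence of $(m_n)$ converges in $\Teich(S)$.

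The main obstacle is the properness step: one must pass from boundedness of the length spectrum (along a single test family of currents) to compactness in Teichmüller space. The delicate point is excluding the pinching scenario, and the collar lemma is the right tool, since it transforms a short curve into a long transverse curve and thereby contradicts membership of $L_{m_n}$ in a compact subset of $\GC$. Once pinching is ruled out, the standard Fenchel--Nielsen argument finishes the proof.
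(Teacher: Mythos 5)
The paper does not prove this statement---it is quoted directly from Bonahon---but your three-step argument (continuity of $m\mapsto L_m$ via the explicit description of the Liouville current, injectivity via $i(L_m,c)=\ell_m(c)$ together with marked length spectrum rigidity, and properness via the collar lemma and Fenchel--Nielsen coordinates) is correct and is essentially the argument of the cited source. One small tightening: your collar-lemma step as written only excludes a \emph{fixed} simple closed curve from being pinched, which is weaker than a lower bound on the systole of $m_n$; but since all you actually use is a lower bound on the lengths of the finitely many pants curves $c_i$, to which the same argument applies after passing to a subsequence, this costs nothing.
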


\begin{definition}\label{def - intersection entre deux surfaces}
Let $S_1 = (S,m_1)$ and $S_2 = (S,m_2)$ be two surfaces endowed with hyperbolic metric.Then the \emph{intersection between $S_1$ and $S_2$ } is defined by
$$i(S_1,S_2) := i(L_{m_1}, L_{m_2}).$$
\end{definition}

For all $m\in \Teich(S)$  we have that $i(L_m,L_m)= \pi^2 |\chi(S)|$ \cite[Proposition 15]{BonahonGeometryofteichmuller} hence if $L_m =k L_{m'}$ then $k=1$ and $m=m'$. 
\begin{corollaire}\cite[Corollary 16]{BonahonGeometryofteichmuller}
The composition $\Teich(S) \tv \GC\tv \PGC$ is an embedding. 
\end{corollaire}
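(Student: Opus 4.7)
The plan is to upgrade the embedding $m \mapsto L_m$ of $\Teich(S)$ into $\GC$, provided by the previous Theorem, to an embedding into $\PGC$. Since the composition with the projectivization is automatically continuous (both maps are), the task reduces to two points: showing that the composed map is injective, and showing that its inverse on the image is continuous. Both will follow from the bilinearity of $i$ together with the crucial normalization $i(L_m,L_m)=\pi^2|\chi(S)|$ that is constant in $m$.

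First I would address injectivity. Suppose $[L_m]=[L_{m'}]$ in $\PGC$, so that $L_m = k L_{m'}$ for some $k>0$. Applying $i(\cdot,\cdot)$ and using bilinearity yields
\[
\pi^2|\chi(S)| \;=\; i(L_m,L_m) \;=\; k^{2}\, i(L_{m'},L_{m'}) \;=\; k^{2}\pi^2|\chi(S)|,
\]
hence $k=1$ and $L_m=L_{m'}$. The injectivity of $m\mapsto L_m$ (from the previous Theorem) then gives $m=m'$. This is essentially the argument already used in the remark just before the statement, but upgraded from scalar multiples of a single current to actual equality in $\PGC$.

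Next I would verify that the inverse map is continuous on the image. Let $m_n\in\Teich(S)$ satisfy $[L_{m_n}]\to [L_m]$ in $\PGC$. By definition of the quotient topology one can find scalars $k_n>0$ and representatives $k_n L_{m_n}$ converging to $L_m$ in $\GC$. Continuity of $i$ (Theorem \ref{th - def intersection et l'intersection est continue}) gives
\[
k_n^{2}\,\pi^2|\chi(S)| \;=\; i\bigl(k_n L_{m_n},\, k_n L_{m_n}\bigr) \;\longrightarrow\; i(L_m,L_m) \;=\; \pi^2|\chi(S)|,
\]
so $k_n\to 1$, and therefore $L_{m_n}\to L_m$ in $\GC$. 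Since $m\mapsto L_m$ is a (proper) embedding of $\Teich(S)$ into $\GC$, we conclude $m_n\to m$ in $\Teich(S)$, as required.

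There is no real obstacle here; the whole argument hinges on the identity $i(L_m,L_m)=\pi^2|\chi(S)|$, which both rules out nontrivial rescalings (giving injectivity) and pins down the scaling factor in any convergent sequence (giving continuity of the inverse). Once both are in hand, the composition $\Teich(S)\to\GC\to\PGC$ is a continuous injection whose inverse on the image is continuous, hence an embedding.
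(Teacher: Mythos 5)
Your argument is correct and is essentially the paper's own justification: the paper proves this corollary only via the remark immediately preceding it, namely that $i(L_m,L_m)=\pi^2|\chi(S)|$ forces $k=1$ whenever $L_m=kL_{m'}$, combined with the previous theorem that $m\mapsto L_m$ is a proper embedding into $\GC$. Your write-up just makes explicit the two checks (injectivity and continuity of the inverse, where the lifting of a convergent sequence of projective classes to convergent representatives is legitimate since the quotient map $\GC\to\PGC$ is open), so there is nothing to add.
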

Finally here is the corollary we are going to use and which is clearly equivalent to the previous one, since there is a homeomorphism between the set of projective currents and the set of currents of length 1.  
\begin{corollaire}\label{compacite des courant geodesics pour une metrique fixée}
Let $(S_0,m_0)$ be a fixed hyperbolic surface, then the map $m\tv \frac{L_m}{i(m_0,L_m)}$ is an embedding of Teichmüller space into the space of geodesic currents of $S_0$ length 1. 
\end{corollaire}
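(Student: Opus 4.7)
The plan is to deduce this corollary directly from the previously stated embedding $\Teich(S)\hookrightarrow \PGC$ (the preceding Corollary 16), by producing a natural homeomorphism between $\PGC$ and the ``unit sphere'' $\Sigma_{m_0}:=\{\beta\in\GC \,:\, i(L_{m_0},\beta)=1\}$, and observing that the map in the statement is the composition.

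First I would verify that the map is well-defined, i.e.\ $i(m_0,L_m)>0$ for every $m\in\Teich(S)$. This follows from the hypothesis of Theorem \ref{compacité des courants geodesic d intersection borné}: every geodesic of $\widetilde S$ meets transversely some geodesic in the support of the Liouville current $L_{m_0}$, and the support of $L_m$ is all of $\GSt$, so the bilinear form $i(L_{m_0},L_m)$ cannot vanish. Once divided by this positive quantity, one gets $i(L_{m_0}, L_m/i(m_0,L_m))=1$ by bilinearity of $i$ (Theorem \ref{th - def intersection et l'intersection est continue}), so the image lies in $\Sigma_{m_0}$.

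Next I would construct the homeomorphism $\Phi:\PGC\setminus\{[0]\}\tv \Sigma_{m_0}$ given by $[\beta]\mapsto \beta/i(L_{m_0},\beta)$. This map is well defined (since by the same argument as above $i(L_{m_0},\beta)>0$ for every nonzero current $\beta$, as $L_{m_0}$ transversely meets every geodesic of $\widetilde S$), it is clearly a bijection onto $\Sigma_{m_0}$, and both it and its inverse (the restriction of the projectivization $\GC\setminus\{0\}\tv \PGC$) are continuous by continuity and positivity of $i(L_{m_0},\cdot)$. Thus $\Phi$ is a homeomorphism onto its image.

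Finally, I would conclude by noting that the map $m\mapsto \frac{L_m}{i(m_0,L_m)}$ factors as the composition of the embedding $\Teich(S)\tv \PGC$ given by $m\mapsto[L_m]$ (Corollary 16 in the excerpt, obtained from the properness and injectivity of $m\mapsto L_m$ together with $i(L_m,L_m)=\pi^2|\chi(S)|$) with the homeomorphism $\Phi$. A composition of an embedding with a homeomorphism is an embedding, which is the desired conclusion. The only nontrivial point in this chain of reductions is the strict positivity of $i(L_{m_0},\cdot)$ on nonzero currents, so that I can safely normalize; everything else is a formal consequence of the already established results.
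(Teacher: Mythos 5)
Your argument is correct and is exactly the route the paper takes: the paper disposes of this corollary in one line by saying it is equivalent to the preceding one ``since there is a homeomorphism between the set of projective currents and the set of currents of length 1,'' and your proposal simply writes out that homeomorphism $[\beta]\mapsto \beta/i(L_{m_0},\beta)$ and the factorization through the embedding $\Teich(S)\tv\PGC$. The only point you elaborate beyond the paper — the strict positivity of $i(L_{m_0},\cdot)$ on nonzero currents, coming from the full support of the Liouville current — is the right thing to check and is standard from Bonahon's work.
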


%Recall that in \cref{Rigidite}, Burger shows that the slope of the normal at $(1,0)$ of $\C_M$ is equal to $\frac{ i(\ell_1,\ell_2)}{ i(\ell_1,\ell_1)}$, since the Manhattan curve is convex and contains also $(0,1)$, Burger deduces that $\frac{ i(\ell_1,\ell_2)}{ i(\ell_1,\ell_1)}$ is bigger than one, with equality if and only if $m_1 =m_2$, ie the Liouville currents are the same, or still equivalently, the two representations $\rho_1$ and $\rho_2$, used to defined $S_1$ and $S_2$ as hyperbolic surfaces, are conjugated. 
%\paragraph{Remark} The fact that $\frac{i(\ell_1,\ell_2)}{ i(\ell_1,\ell_1)}>1$ if $\ell_1\neq \ell_2$  is known since Thurston. 
%

\subsection{Geodesic laminations}
Let us introduce geodesic laminations in the context of geodesic currents. The simplest definition in this setting is,
\begin{definition}
A measured geodesic lamination $\Lam$  is a geodesic current whose self-intersection $i(\Lam,\Lam)$ is equal to $0$. The set of measured geodesic laminations will be denoted by $\MLam$.
\end{definition}

The original definition of measured laminations due to  W. Thurston \cite{Thurston1979geometry} has been shown to be equivalent to the above by F. Bonahon. %We don't want to go into the details of Thurston compactification since  the following two theorems show that it is sufficient to know about geodesic currents.  
Recall that according to Thurston, a measured geodesic lamination is a closed subset $\Lambda$ which is the union of simple disjoint geodesics endowed with a transverse measure, ie. a measure on every arc transverse to $\Lambda$ invariant by holonomy. The Thurston's topology on the set of measured laminations is defined by the semi-distances $d_\g (\alpha,\beta) =| i(\alpha,\g) - i(\beta,\g)|$ where $\g$ ranges over all simple closed curves on $S$. 
\begin{theorem}\cite[Proposition 17]{BonahonGeometryofteichmuller}
The set $\left\{ \alpha \in \GC \, , \, i(\alpha,\alpha)=0\right\} $ has a natural homeomorphic identification with Thurston's space of measured laminations. 
\end{theorem}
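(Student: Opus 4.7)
The plan is to construct two mutually inverse maps between Thurston's $\MLam$ and the set $\{\alpha\in \GC : i(\alpha,\alpha)=0\}$, and then to verify that both are continuous with respect to the natural topologies. The construction in each direction will rely on the local product structure of $\GSt$ around a non-self-crossing geodesic, and the topological comparison will rely on the compactness result of Theorem~\ref{compacité des courants geodesic d intersection borné}.

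First, given a Thurston measured lamination $\Lambda$ with transverse measure $\mu_\Lambda$, I would construct a current $\alpha_\Lambda$ supported on the set $\widehat{\Lambda}\subset \GSt$ of lifts of the leaves. Near any leaf $\GSt$ admits a local product structure of the form (transverse arc) $\times$ (leaf through a given point), and pushing $\mu_\Lambda$ forward through such a chart yields a $\Gamma$-invariant Borel measure on $\GSt$ concentrated on $\widehat{\Lambda}$; holonomy invariance of $\mu_\Lambda$ ensures the construction is chart-independent. Since leaves of a lamination are pairwise disjoint in $\widetilde{S}$, the product $\alpha_\Lambda\times\alpha_\Lambda$ gives zero mass to the set of transversely intersecting pairs, so $i(\alpha_\Lambda,\alpha_\Lambda)=0$.

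Conversely, let $\alpha\in\GC$ with $i(\alpha,\alpha)=0$. Writing the intersection as $(\alpha\times\alpha)(F)$ where $F$ is a Borel fundamental domain for the $\Gamma$-action on the set of pairs of transversely intersecting geodesics in $\GSt\times\GSt$, if two lifted geodesics $g_1,g_2\in \supp \alpha$ crossed transversely then openness of transverse crossings would produce an open neighborhood of pairs inside $F$ of positive $(\alpha\times\alpha)$-measure, contradicting $i(\alpha,\alpha)=0$. Hence the geodesics in $\supp \alpha$ pairwise do not cross, so their projection to $S$ is a closed union of disjoint simple geodesics, i.e.\ the support of some Thurston lamination $|\Lambda|$. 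A transverse measure on any arc $\tau$ transverse to $|\Lambda|$ is recovered by pushing $\alpha$ restricted to a flow-box through the intersection-with-$\tau$ map, and $\Gamma$-invariance of $\alpha$ gives holonomy invariance. By construction this inverts the first map.

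Finally, for the homeomorphism, one direction is direct: weak-$*$ convergence in $\GC$ implies convergence of intersections with each simple closed curve by continuity of the intersection pairing (Theorem~\ref{th - def intersection  et l'intersection est continue}), hence Thurston convergence. The main obstacle is the converse. I would argue by compactness: Thurston convergence keeps $i(\alpha_{\Lambda_n},L_m)$ bounded for any fixed hyperbolic metric $m$ (using that $L_m$ is approximated by weighted simple closed curves on which intersections converge), so Theorem~\ref{compacité des courants geodesic d intersection borné} extracts weak-$*$ subsequential limits. By the first direction such a limit must agree with $\alpha_\Lambda$ on all simple closed curves, and density of integer-weighted simple closed curves in $\MLam$ then forces uniqueness of the limit, giving full weak-$*$ convergence. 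The delicate technical point throughout is this upgrade from agreement on the countable family of simple closed curves to weak-$*$ convergence against arbitrary compactly supported test functions on $\GSt$, which rests crucially on the density statement.
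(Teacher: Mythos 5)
The paper does not actually prove this statement: it is quoted verbatim as \cite[Proposition 17]{BonahonGeometryofteichmuller}, so there is no internal proof to compare against. Your reconstruction follows the standard route of that reference --- the two mutually inverse constructions (transverse measure of a lamination $\leftrightarrow$ measure on the space of lifted leaves), the openness-of-transverse-crossings argument showing that $i(\alpha,\alpha)=0$ forces the support of $\alpha$ to consist of pairwise non-crossing geodesics, and a compactness argument for continuity of the inverse --- and the bijection part is sound (modulo the routine remark that the $\G$-action on the set of transversely crossing pairs is properly discontinuous, so your positive-measure open set of crossing pairs really does inject into a fundamental domain).

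Two steps in the topological comparison are not supported by the justification you give. First, the boundedness of $i(\alpha_{\Lambda_n},L_m)$: convergence of $i(\alpha_{\Lambda_n},c)$ for each fixed weighted simple closed curve $c$ in a family approximating $L_m$ gives no uniform control and hence does not bound the pairing with the limit object $L_m$. The fix is to drop $L_m$ and use instead a finite union $\gamma_1\cup\dots\cup\gamma_k$ of simple closed curves filling $S$: this current satisfies the hypothesis of Theorem \ref{compacité des courants geodesic d intersection borné}, and $i\bigl(\alpha_{\Lambda_n},\sum_j\gamma_j\bigr)$ converges by assumption, hence is bounded. Second, the uniqueness of the subsequential limit: density of weighted simple closed curves in $\MLam$ together with continuity of $i$ only yields that a weak-$*$ limit $\beta$ satisfies $i(\beta,\mu)=i(\alpha_\Lambda,\mu)$ for every $\mu\in\MLam$; to conclude $\beta=\alpha_\Lambda$ you need that a measured lamination is determined, as a current, by its intersection numbers with simple closed curves. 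That injectivity is a genuine theorem of Thurston (proved via train-track coordinates, and equivalent to the Hausdorffness of the topology defined by the semi-distances $d_\g$), not a formal consequence of density, so it must be invoked explicitly rather than derived from the density statement.
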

%Moreover the compactification of Teichmüller space embedded in the space of  geodesic current as Bonahon did or by projective measured laminations as Thurston are the same. Recall that 

Let $\PMLam$ be  the set of projective measured laminations, that is the quotient of the space of geodesic measured laminations       by $\R^*_+$, where the equivalence is given by $(\Lam,\mu) \sim (\Lam, x\mu)$ for $x\in \R^*_+$. The topology of Thurston on $\Teich(S)\cup\PMLam$ is defined as follows: $\Teich(S)$ is open in $\Teich(S)\cup\PMLam$ and a sequence $m_j\in \Teich(S)$ converges to $\Lam$ if and only if the ratio $\frac{\ell_{m_j} (\alpha)}{\ell_{m_j} (\beta)}$ converges to $\frac{i(\Lam,\alpha)}{i(\Lam,\beta)}$  for every pair of simple closed curves $\alpha,\beta$ with $i(\Lam,\beta) \neq 0$. 
\begin{theorem}\cite[Theorem 18]{BonahonGeometryofteichmuller}
The topology of $\Teich(S)\cup\PMLam$ seen as a subspace of $\PGC$ is the same as the Thurston's topology. 
\end{theorem}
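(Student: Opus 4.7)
The strategy is to compare the two topologies on $\Teich(S)\cup\PMLam$ via a continuity argument. Let $\iota : \Teich(S)\cup\PMLam \to \PGC$ be the natural map $m \mapsto [L_m]$, $[\Lam] \mapsto [\Lam]$; by the embedding statements for $\Teich(S)$ and $\MLam$ recalled just before the theorem, $\iota$ is injective, and the question reduces to showing it is a homeomorphism onto its image when the domain carries Thurston's topology. Since both topologies are first countable, it suffices to match convergent sequences. One direction (from $\PGC$-convergence to Thurston-convergence) is essentially immediate from bilinear continuity of $i$: if $[L_{m_j}]\to[\Lam]$ in $\PGC$, pick rescalings $c_j L_{m_j}\to \Lam$ in $\GC$; then $c_j\ell_{m_j}(\alpha) = i(c_j L_{m_j},\alpha) \to i(\Lam,\alpha)$ for every simple closed curve $\alpha$, and taking a quotient by the corresponding relation for $\beta$ with $i(\Lam,\beta)\neq 0$ eliminates $c_j$ and recovers exactly Thurston's defining ratio condition. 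The same recipe deals with sequences converging to a point of $\Teich(S)$ and with sequences inside $\PMLam$.

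For the opposite direction, consider the interesting case of a sequence $m_j\in\Teich(S)$ with $m_j\to\Lam\in\PMLam$ in Thurston's topology. Fix a reference hyperbolic metric $m_0$ on $S$ and set
\begin{equation*}
\nu_j := \frac{L_{m_j}}{i(L_{m_0},L_{m_j})}, \qquad i(L_{m_0},\nu_j)=1.
\end{equation*}
Since $L_{m_0}$ transversely meets every geodesic of $\widetilde{S}$, Theorem \ref{compacité des courants geodesic d intersection borné} puts $\{\nu_j\}$ in a compact subset of $\GC$. Extract a subsequence $\nu_{j_k}\to \nu$; then $\nu\neq 0$ because $i(L_{m_0},\nu)=1$. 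By continuity of $i$ and the Thurston hypothesis, for every pair of simple closed curves $\alpha,\beta$ with $i(\Lam,\beta)\neq 0$,
\begin{equation*}
\frac{i(\nu,\alpha)}{i(\nu,\beta)} \;=\; \lim_k\frac{i(\nu_{j_k},\alpha)}{i(\nu_{j_k},\beta)} \;=\; \lim_k\frac{\ell_{m_{j_k}}(\alpha)}{\ell_{m_{j_k}}(\beta)} \;=\; \frac{i(\Lam,\alpha)}{i(\Lam,\beta)}.
\end{equation*}
Thus every subsequential limit $\nu$ has the same intersection ratios with simple closed curves as $\Lam$. If we can conclude $[\nu]=[\Lam]$ in $\PGC$, the uniqueness of the projective limit upgrades the subsequential convergence to full convergence $[L_{m_j}]=[\nu_j]\to[\Lam]$ in $\PGC$, completing the argument.

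The main obstacle is this identification step $[\nu]=[\Lam]$, since a priori $\nu$ is only a geodesic current and nothing forces it to lie in $\iota(\Teich(S)\cup\PMLam)$, where projective classes are known to be determined by intersection ratios against simple closed curves. The standard route is to show that $\nu\in\MLam$, i.e.\ $i(\nu,\nu)=0$. Using $i(L_m,L_m)=\pi^2|\chi(S)|$ for every $m\in\Teich(S)$, one has
\begin{equation*}
i(\nu_j,\nu_j) \;=\; \frac{\pi^2|\chi(S)|}{i(L_{m_0},L_{m_j})^2},
\end{equation*}
so it suffices to check $i(L_{m_0},L_{m_j})\to+\infty$. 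This is where one uses that $m_j\to\Lam\in\PMLam$ in Thurston's topology forces $m_j$ to eventually exit every compact set of $\Teich(S)$ (because $\Teich(S)$ is open and $\PMLam$ is disjoint from it in the Thurston topology), combined with the properness of the Liouville embedding $m\mapsto L_m$: together these prevent $i(L_{m_0},L_{m_j})$ from staying bounded. Once $\nu\in\MLam$, the intersection-ratio equality above and the already-stated fact that $\iota$ is an injection into $\PGC$ (so measured laminations are separated by their simple-curve intersections) force $[\nu]=[\Lam]$, finishing the plan.
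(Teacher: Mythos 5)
The paper offers no proof of this statement: it is quoted directly from Bonahon (\cite[Theorem 18]{BonahonGeometryofteichmuller}), so there is no internal argument to compare against. Your sketch in fact reconstructs Bonahon's own proof in its essentials: normalise the Liouville currents against a fixed $L_{m_0}$, use the compactness criterion of Theorem \ref{compacité des courants geodesic d intersection borné} to extract subsequential limits, use $i(L_m,L_m)=\pi^2|\chi(S)|$ together with $i(L_{m_0},L_{m_j})\tv\infty$ to conclude that any limit has vanishing self-intersection, and then identify the limit with $\Lam$ through intersection numbers. This is the right route and the skeleton is sound.

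Two steps need to be shored up. First, the closing identification $[\nu]=[\Lam]$ does not follow from ``injectivity of $\iota$'' (which says that \emph{distinct} projective laminations give distinct projective currents); what you actually need is the converse-flavoured fact that a nonzero measured lamination is \emph{determined} by its intersection numbers with simple closed curves. That fact is available --- it is precisely what makes the semi-distances $d_\g$ defining Thurston's topology on $\MLam$ separate points, i.e.\ it is contained in Bonahon's Proposition 17 as quoted in the paper --- but it is the statement you must invoke. You also divide by $i(\nu,\beta)$ without knowing it is nonzero; this is repaired by noting that if $i(\nu,\beta)=0$ while the ratios $i(\nu_{j_k},\alpha)/i(\nu_{j_k},\beta)$ stay bounded, then $i(\nu,\alpha)=0$ for every simple closed curve $\alpha$, forcing $\nu=0$ and contradicting $i(L_{m_0},\nu)=1$. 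Second, the assertion that Thurston convergence $m_j\tv\Lam\in\PMLam$ forces $m_j$ to exit every compact subset of $\Teich(S)$ is not a formal consequence of ``$\Teich(S)$ is open'': if $m_j$ stayed in a compact set, a subsequence would converge to some $m_\infty$ whose length function on simple closed curves is proportional to $i(\Lam,\cdot)$, and excluding this is the classical (nontrivial) disjointness of length spectra and intersection spectra in the projectivised space of functions on simple closed curves (\cite{FLP}, Exposé 8). Once that is cited, properness of $m\mapsto L_m$ and the subsequence trick in the compact space $\PGC$ close the argument exactly as you describe.
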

By Corollary \ref{compacite des courant geodesics pour une metrique fixée} if we fix a metric $m$,  the set of geodesic laminations of $m$-length 1 is compact. 

The simplest example of measured geodesic lamination is, of course, a weighted simple geodesic, or a disjoint union of such measures. Moreover, the next theorem, which is a version for laminations of Proposition 3.2, shows that these examples are dense. 
\begin{theorem}\cite[Proposition 4.9]{BonahonBouts} \cite{Thurston1979geometry}
$\MLam$ is the closure of the linear combinations of closed disjoint geodesics. 
\end{theorem}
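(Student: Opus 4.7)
The statement splits into a closed-inclusion step and a density step. The inclusion is direct: if $c_1,\dots,c_k$ are pairwise disjoint simple closed geodesics and $a_1,\dots,a_k\geq 0$, bilinearity of $i$ gives
$$i\Bigl(\sum_i a_i c_i,\,\sum_j a_j c_j\Bigr)=\sum_{i,j}a_i a_j\,i(c_i,c_j)=0,$$
since $i(c_i,c_i)=0$ by simplicity and $i(c_i,c_j)=0$ by disjointness. Continuity of $i$ (Theorem \ref{th - def intersection  et l'intersection est continue}) then makes $\MLam=\{\alpha\in\GC\mid i(\alpha,\alpha)=0\}$ closed in $\GC$, so the closure of such linear combinations lies in $\MLam$.

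For the density step, I would approximate an arbitrary $\Lambda\in\MLam$ using the train-track picture. Any measured lamination is carried by some train track $\tau$ on $S$: there is a tubular neighbourhood $N(\tau)$ foliated by short transverse arcs (ties) such that $\Lambda$ isotopes into $N(\tau)$ with its leaves smooth and transverse to the ties. The transverse measure of $\Lambda$ then assigns a weight $w_e\geq 0$ to each branch $e$ of $\tau$, and the tuple $(w_e)_{e\in E(\tau)}$ lies in the rational polyhedral cone in $\R^{E(\tau)}_{\geq 0}$ cut out by the integer switch conditions at each vertex of $\tau$. Rational points are dense in this cone, so I would choose rational admissible weight systems $(w_e^{(n)})$ tending to $(w_e)$.

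After clearing denominators, integral admissible weight systems are in bijection with multi-curves carried by $\tau$ (disjoint unions of simple closed geodesics with multiplicities): the corresponding closed curves are reconstructed by splitting $\tau$ along its branches according to the integer weights. Each $(w_e^{(n)})$ therefore gives a finite weighted sum $\Lambda_n$ of pairwise disjoint simple closed geodesics. To check $\Lambda_n\to\Lambda$ in $\GC$ it suffices to test against continuous compactly supported functions on $\GSt$; since both $\Lambda_n$ and $\Lambda$ are supported on geodesics carried by $\tau$, such an integral factors through finitely many branch counts and is continuous in the weights, and branch weights converge by construction.

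\textbf{Main obstacle.} The real substance lies in the train-track machinery itself: producing a train track $\tau$ carrying $\Lambda$ (by successive splittings of a coarse candidate so that its carried leaves eventually track $\Lambda$), and the bijection between integer admissible weight systems on $\tau$ and multi-curves carried by $\tau$. A secondary delicate point is upgrading branch-weight convergence to weak-$*$ convergence in $\GC$; this is where uniform support in the compact set $N(\tau)$ is crucial, since it reduces an \textit{a priori} infinite-dimensional convergence to convergence in the finite-dimensional weight space $\R^{E(\tau)}$.
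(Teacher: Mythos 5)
The paper does not prove this statement: it is quoted verbatim from the literature (Bonahon, \emph{Bouts des vari\'et\'es hyperboliques}, Prop.\ 4.9, and Thurston's notes), so there is no in-paper argument to compare yours against. Judged on its own, your first half is complete and correct: bilinearity and continuity of $i$ (Theorem \ref{th - def intersection  et l'intersection est continue}) immediately give that weighted multicurves have vanishing self-intersection and that $\MLam=\{\alpha : i(\alpha,\alpha)=0\}$ is closed, hence contains the closure in question. Your density step is the standard Thurston train-track route and is sound in outline, but be aware that it is not a proof so much as a reduction to the train-track package: that every measured lamination is carried by some (recurrent) train track, that integral admissible weights correspond bijectively to carried multicurves, and --- the point you gloss over most --- that the map from the admissible weight cone $W(\tau)\subset\R^{E(\tau)}_{\geq 0}$ to $\GC$ is continuous. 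Your stated justification for the last point (``such an integral factors through finitely many branch counts'') is not literally true: a compactly supported test function on $\GSt$ sees the actual positions of geodesics, not just branch weights, so one genuinely needs the theorem that $W(\tau)\to\MLam$ is a homeomorphism onto its image. You do flag this, which is honest; just recognise that this plus the carrying statement is essentially the entire content of the cited result in Thurston's formulation. For contrast, Bonahon's own proof works intrinsically with geodesic currents, approximating the transverse measure of $\La$ by atomic measures and closing up long recurrent leaf segments into simple closed geodesics, which avoids train tracks altogether; either route is acceptable, but neither is short.
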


\subsection{Earthquakes}
We introduced geodesic laminations in order to present a central tool in Teichmüller theory, the earthquake map. They are an extension of the notion of   Fenchel-Nielsen twists. Let $c$ be a closed simple geodesic on $(S,m)$, take a tubular neighborhood $U\simeq \SU\times [-\varepsilon,\varepsilon]$ of $c$, such that $\SU\times\{0\}$ is isometrically sent to $c$, and every $z\times[-\varepsilon,\varepsilon]$ corresponds to a geodesic arc orthogonal to $c$. Consider a smooth function $\xi : [0,\varepsilon] \tv \R$ equal to $0$ on a neighborhood of $\varepsilon$ and equal to $1$ on a neighborhood of $0$. Then, for $t\in \R$  consider the map $\phi_t : S \tv S$ which is the identity on $S-U$ and on $\SU\times [-\varepsilon,0[ \subset U$ and which is defined by $\phi_t(e^{i\theta},u) =\left( e^{i\theta-i t\xi(u)},u\right)$ on $\SU \times [0,\varepsilon] \subset U$. The map $\phi_t$ is a diffeomorphism on $S-c$, discontinuous along $c$. The metric $\phi_t^* (m)$ on $S-c$ coincides with the original metric $m$ if we are sufficiently close to $c$, and it extends to a hyperbolic metric over all $S$. We denote the new metric by $\EQ{c}{t}{m}$, which is, in Thurston terminology, the metric obtained from $m$ by a left earthquake of amplitude $t$ along the curve $c$. 
When $c$ is a simple closed curve, this transformation is more classically called Fenchel-Nielsen twist, the extension to any geodesic lamination has been studied by W. Thurston, W. Kerckhoff, and F. Bonahon among others. 

\begin{theorem}\cite{kerckhofflinesofminima}\cite[Proposition 1]{BonahonEarthquake}\label{continuité des tremblement de terre}
There is a continuous function $\mathcal{E} : \MLam \times \R \times \Teich(S) \tv \Teich(S)$, associating to $m\in \Teich(S)$ an element $\EQ{\Lam}{t}{m} \in \Teich(S)$ such that $\EQ{\lambda \Lam}{t}{m}= \EQ{\Lam}{\lambda t}{m}$, for all $\lambda>0$ and all $\Lam \in \MLam$, and coincide with a Fenchel-Nielsen twist when $\Lam$ is a closed geodesic.   
\end{theorem}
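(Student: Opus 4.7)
The plan is to construct $\mathcal{E}$ in two stages: first define it on weighted simple multicurves, where it is forced to be a composition of commuting Fenchel-Nielsen twists, then extend continuously using the density of weighted simple multicurves in $\MLam$ stated just before the theorem.

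\textbf{Step 1 (multicurves).} Let $\Lam = \sum_{i=1}^k \lambda_i c_i$ with the $c_i$ pairwise disjoint simple closed geodesics and $\lambda_i>0$. Choose disjoint tubular neighbourhoods of the $c_i$ and define $\EQ{\Lam}{t}{m}$ to be the composition of the Fenchel-Nielsen twists of amplitude $t\lambda_i$ along $c_i$, each supported in its own neighbourhood. Disjointness of the supports ensures that the order of composition is irrelevant, so the definition is well posed. The scaling identity $\EQ{\lambda \Lam}{t}{m} = \EQ{\Lam}{\lambda t}{m}$ and compatibility with a single Fenchel-Nielsen twist are built into the construction, and continuity of the restriction of $\mathcal{E}$ to this subset is the standard continuity of Fenchel-Nielsen parameters on $\Teich(S)$.

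\textbf{Step 2 (extension via shears in $\Hyp^2$).} For an arbitrary $\Lam \in \MLam$, lift its support to a lamination $\tilde L \subset \Hyp^2$ and fix a base plaque $P_0$ of $\Hyp^2 \setminus \tilde L$. For any other plaque $P$, choose a transverse arc $\alpha$ from $P_0$ to $P$ and assign to each leaf $\ell$ crossed by $\alpha$ the hyperbolic translation along $\ell$ of amplitude $t$, integrated against the transverse measure of $\Lam$. Composing these isometries produces a piecewise-hyperbolic modification of the developing map, equivariant for a new representation $\rho^t_\Lam : \G \tv \PSL_2(\R)$; set $\EQ{\Lam}{t}{m}$ to be its conjugacy class in $\Teich(S)$. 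For weighted multicurves this reduces to Step 1, and the scaling property of the transverse measure carries over to $\mathcal{E}$.

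\textbf{Step 3 (continuity) and main obstacle.} By the previously recalled embedding $\Teich(S) \hookrightarrow \GC$, convergence in $\Teich(S)$ is detected by the length functions $m' \mapsto i(L_{m'},\g) = \ell_{m'}(\g)$ ranging over $\g \in \C$. It therefore suffices to show that for every closed curve $\g$, the map $(\Lam,t,m) \mapsto \ell_{\EQ{\Lam}{t}{m}}(\g)$ is continuous. Expanding this length as a sum of segment contributions between successive intersections of $\g$ with $\Lam$, plus shear terms proportional to $t\cdot i(\Lam,\g)$, continuity in $\Lam$ reduces to the continuity of the intersection form on $\GC\times\GC$, and continuity in $m$ is classical. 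The delicate part is making Step 2 rigorous: the shear cocycle is a priori an infinite non-commutative product of hyperbolic isometries, and one must prove both that it converges and that the resulting $\rho^t_\Lam$ is discrete and faithful (so that $\EQ{\Lam}{t}{m}$ actually lies in $\Teich(S)$). The standard tool is Bonahon's transverse cocycle formalism, under which an earthquake becomes a linear path in a finite-dimensional cocycle space, together with Kerckhoff's explicit derivative formula for length functions along earthquake paths, which furnishes both the convergence and the continuous dependence required in Step 3.
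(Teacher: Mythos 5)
The paper does not prove this statement at all: it is quoted verbatim as a known result of Kerckhoff and Bonahon (\cite{kerckhofflinesofminima}, \cite[Proposition 1]{BonahonEarthquake}), so there is no internal proof to compare against. Your outline follows exactly the route of those references --- Fenchel--Nielsen twists on weighted multicurves, the shear construction on $\Hyp^2\setminus\tilde L$ for general $\Lam$, and detection of continuity through length functions --- so the approach is the right one. But as a self-contained proof it is incomplete precisely where you say it is: the convergence of the infinite non-commutative shear cocycle, the discreteness and faithfulness of $\rho^t_\Lam$, and above all the continuity in the lamination variable are the entire content of Bonahon's Proposition 1, and invoking ``the transverse cocycle formalism'' is a citation, not an argument. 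Two points deserve correction. First, your Step 1/Step 2 structure is slightly at cross purposes: you announce an extension by density of multicurves but then construct $\mathcal{E}$ directly for arbitrary $\Lam$; if you go the direct route, continuity is not inherited from the multicurve case and must be proved for the shear construction itself. Second, the claim that $\ell_{\EQ{\Lam}{t}{m}}(\g)$ equals a sum of segment contributions ``plus shear terms proportional to $t\cdot i(\Lam,\g)$'' is not an identity: the earthquake image of the $m$-geodesic representing $\g$ is not a geodesic, and the exact first-order behaviour is Kerckhoff's formula $\frac{d}{dt}\ell_{\EQ{\Lam}{t}{m}}(\g)=\int\cos\theta\,d\Lam$, not $i(\Lam,\g)$. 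What does hold, and what the present paper actually uses later (in the proof of Proposition \ref{pr - delta is uniformly continuous}), is the Lipschitz bound $|\ell_{\EQ{\Lam}{t}{m}}(\g)-\ell_m(\g)|\leq |t|\, i(\Lam,\g)$ coming from convexity; that gives you continuity in $t$ uniformly over $\g$ of bounded intersection, but continuity in $\Lam$ for fixed $t$ does not reduce to continuity of the intersection form and genuinely requires the convergence of developing maps that you have deferred.
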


Moreover S. Kerckhoff showed that the length function is convex along earthquake. 
\begin{theorem}\label{Convexity along earthquake}\cite[p253, Theorem 1]{kerckhoff1983nielsen}
Let $m$ be a hyperbolic metric, $\Lam$ be  a  measured geodesic lamination, and $c$ be a closed curve. Then the function $t\tv \ell(\EQ{\Lam}{t}{m} c) = i (\EQ{\Lam}{t}{m} , c)$ is convex.
\end{theorem}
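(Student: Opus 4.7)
The plan is to reduce to a simple combinatorial case via density, and then carry out explicit hyperbolic trigonometry in the universal cover.

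First, I would reduce to the case where $\Lam$ is a weighted sum of finitely many disjoint simple closed geodesics. Since such laminations are dense in $\MLam$ (by the density theorem for measured laminations cited in Section 4.2) and both the earthquake map (Theorem \ref{continuité des tremblement de terre}) and the intersection pairing (Theorem \ref{th - def intersection et l'intersection est continue}) are continuous, the function $t \mapsto i(\EQ{\Lam_n}{t}{m}, c)$ converges pointwise to $t \mapsto i(\EQ{\Lam}{t}{m}, c)$ whenever $\Lam_n \to \Lam$. Since pointwise limits of convex functions are convex, this lets me restrict to $\Lam = \sum_{i=1}^N a_i \g_i$ with the $\g_i$ pairwise disjoint simple closed geodesics on $(S,m)$.

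Next, for this discrete case, I would pass to the universal cover. Fix a hyperbolic element $\g \in \G$ representing $c$, and choose a fundamental arc of the axis $\tilde c$ of $\g$ in $\Hyp^2$. It meets a finite collection of lifts of $\supp(\Lam)$ in points $p_1,\dots,p_k$, with transverse weights $w_j \in \{a_1,\dots,a_N\}$ and intersection angles $\theta_j \in (0,\pi)$. Because the $\g_i$ are disjoint, the effect of $\EQ{\Lam}{t}{\cdot}$ on the holonomy of $c$ is obtained by inserting, between consecutive segments of $\tilde c$, a hyperbolic translation of displacement $w_j t$ along the corresponding lift. Writing these translations as matrices in $\SL_2(\R)$ and multiplying, one obtains a one-parameter family $\g(t) \in \SL_2(\R)$ with $\g(0)=\g$, and
\[
2\cosh(\ell_t/2) \;=\; |\Tr\,\g(t)|, \qquad \ell_t := \ell(\EQ{\Lam}{t}{m},c).
\]

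The task then reduces to checking that $\ell_t$ is a convex function of $t$. The clean way to do this is Kerckhoff's cosine-formula argument: one computes
\[
\frac{d}{dt}\ell_t \;=\; \sum_{j} w_j \cos\theta_j(t),
\]
and one must show that $\sum_j w_j \,\theta_j'(t)\sin\theta_j(t) \geq 0$. In the annular cover of $S$ associated with $\langle\g\rangle$, the lifts of $\supp(\Lam)$ meeting $\tilde c$ form a disjoint family of geodesics, and an earthquake pushes them apart in a controlled way; the first derivative of each angle $\theta_j$ with respect to $t$ can be written as a sum of contributions coming from shears at the other intersection points, and each contribution has a sign that makes the whole quadratic form positive. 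This is exactly the content of Kerckhoff's lemma, and it can be checked directly from the hyperbolic distance formula between two disjoint geodesics (a strictly convex function of any one coordinate when the other endpoints are fixed).

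The main obstacle is precisely this last step: verifying the non-negativity of the second derivative. The ``diagonal'' contribution is automatic from the convexity of $\cosh$, but the ``off-diagonal'' cross terms, coming from pairs $p_i \neq p_j$ of intersection points on $\tilde c$, require the geometric input that shearing along one lift of $\Lam$ only increases the distance between two other disjoint lifts. Once this convexity of distance under transverse shears is in hand, everything reassembles into $\ell_t'' \geq 0$, and the general case then follows by the density/continuity argument of the first paragraph.
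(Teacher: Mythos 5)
The paper does not prove this theorem; it is quoted as a black box from Kerckhoff's Nielsen realization paper, and your outline is precisely a sketch of Kerckhoff's original argument, so the strategy is the right one. The reduction to weighted multicurves is sound: weighted systems of disjoint simple closed geodesics are dense in $\MLam$, the earthquake map is jointly continuous (Theorem \ref{continuité des tremblement de terre}), length is continuous on $\Teich(S)$, and pointwise limits of convex functions are convex. The first-derivative (Wolpert/Kerckhoff cosine) formula $\frac{d}{dt}\ell_t=\sum_j w_j\cos\theta_j(t)$ is also correctly invoked, since for a disjoint multicurve the earthquake is a composition of commuting Fenchel--Nielsen twists.

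The genuine gap is the step you yourself flag as ``the main obstacle'': the monotonicity of the angles $\theta_j(t)$, equivalently the favourable sign of every cross term in $\ell_t''$. This is not a routine verification --- it \emph{is} the content of Kerckhoff's theorem --- and the justification you offer does not deliver it. The statement ``shearing along one lift of $\Lam$ only increases the distance between two other disjoint lifts'' is not the relevant lemma: the lifts of $\supp(\Lam)$ are disjoint and their mutual distances are irrelevant; what must be controlled is how the axis of the deformed holonomy $\g(t)$ (whose endpoints on $\partial\Hyp^2$ move with $t$) crosses each fixed lift. Kerckhoff's actual argument tracks the motion of the two endpoints of this axis on the circle at infinity and shows both move in the same rotational sense under a left earthquake, forcing each $\theta_j$ to be monotone; ``convexity of the hyperbolic distance formula between disjoint geodesics'' neither states nor implies this. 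Note also a sign slip: differentiating the cosine formula gives $\ell_t''=-\sum_j w_j\theta_j'(t)\sin\theta_j(t)$, so convexity requires $\sum_j w_j\theta_j'(t)\sin\theta_j(t)\leq 0$ (i.e.\ decreasing angles in the usual convention), not $\geq 0$ as written. To complete the proof you must either prove the endpoint-monotonicity lemma or explicitly cite it from Kerckhoff, as the paper does for the whole theorem.
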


The last tool we are going to use is the "geology" Theorem of Thurston saying that any two hyperbolic metrics are linked by an earthquake. 
\begin{theorem}\cite[Appendix]{kerckhoff1983nielsen}
Let $(S_1,m_1)$ and $(S_2,m_2)$ be  two hyperbolic metrics on $S$, then there exists a unique measured lamination $\Lam$ such that 
$$\EQ{\Lam}{1}{m_1} =m_2.$$
\end{theorem}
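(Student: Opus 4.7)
The plan is to fix $m_1 \in \Teich(S)$ and study the \emph{earthquake map}
$$\Phi_{m_1} \colon \MLam \longrightarrow \Teich(S), \qquad \Lambda \longmapsto \EQ{\Lambda}{1}{m_1},$$
aiming to show it is a homeomorphism. The scaling relation $\EQ{\lambda\Lambda}{t}{m} = \EQ{\Lambda}{\lambda t}{m}$ from Theorem~\ref{continuité des tremblement de terre} absorbs the time parameter into the measure, so every earthquake starting from $m_1$ is of the form $\Phi_{m_1}(\Lambda)$ for some $\Lambda$. Both $\MLam$ and $\Teich(S)$ are homeomorphic to $\R^{6g-6}$ (Thurston's train-track coordinates on the former, Fenchel--Nielsen coordinates on the latter), so once I know $\Phi_{m_1}$ is continuous, proper and injective, invariance of domain and connectedness of $\Teich(S)$ will upgrade it to a homeomorphism, yielding existence and uniqueness simultaneously.

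Continuity is immediate from Theorem~\ref{continuité des tremblement de terre}. For properness I would argue by contradiction: assume $\Lambda_n \to \infty$ in $\MLam$ while the points $\Phi_{m_1}(\Lambda_n)$ remain in a compact set $K \subset \Teich(S)$. Write $\Lambda_n = t_n \mu_n$ with $t_n \to +\infty$ and, up to subsequence, $\mu_n \to \mu_\infty \in \MLam \setminus \{0\}$, and choose a simple closed curve $c$ with $i(\mu_\infty, c) > 0$. Kerckhoff's Theorem~\ref{Convexity along earthquake} makes $f_n(t) := i(\EQ{\mu_n}{t}{m_1}, c)$ convex in $t$, with asymptotic slope equal to $i(\mu_n, c)$ at infinity (from the standard cosine/shear computation at the transverse intersections $c \cap \mu_n$); therefore $f_n(t_n) \geq t_n\, i(\mu_n, c) - O(1) \to +\infty$, contradicting the fact that the length $\ell_{\Phi_{m_1}(\Lambda_n)}(c) = f_n(t_n)$ is bounded on $K$.

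The main obstacle is injectivity: if $\EQ{\Lambda}{1}{m_1} = \EQ{\Lambda'}{1}{m_1}$ then $\Lambda = \Lambda'$. I would follow Thurston's original route on the universal cover. Lifting the identity $(S, m_1) \to (S, \Phi_{m_1}(\Lambda))$ to $\Hyp^2$ and passing to the boundary yields a $\pi_1(S)$-equivariant quasisymmetric homeomorphism of $\SU = \partial\Hyp^2$; Thurston's earthquake theorem for circle homeomorphisms realises any such boundary map as a left earthquake along a \emph{unique} measured geodesic lamination of $\Hyp^2$, and equivariance forces this lamination to descend to a unique $\Lambda$ on $S$. An intrinsic alternative, due to Kerckhoff, uses Theorem~\ref{Convexity along earthquake} together with the positivity of the derivative of $t \mapsto \ell_{\EQ{\Lambda}{t}{m_1}}(c)$ at $t = 0$ for curves $c$ with $i(\Lambda, c) > 0$, to show that $\Phi_{m_1}$ is a local homeomorphism away from the zero lamination; properness then promotes local injectivity to global injectivity. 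Once injectivity is in hand, invariance of domain makes $\Phi_{m_1}$ open while properness closes its image, so $\Phi_{m_1}(\MLam) = \Teich(S)$ and the theorem follows.
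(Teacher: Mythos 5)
This statement is not proved in the paper at all: it is quoted as Thurston's earthquake (``geology'') theorem with a citation to the appendix of \cite{kerckhoff1983nielsen}, so there is no internal argument to measure yours against. Judged on its own, your proposal has a genuine gap at its central step, injectivity. Your primary route invokes ``Thurston's earthquake theorem for circle homeomorphisms'' to produce a \emph{unique} measured geodesic lamination of $\Hyp^2$ realising the equivariant boundary map --- but that theorem \emph{is} the result being proved: the surface statement is precisely the $\pi_1(S)$-equivariant case of the disk statement, and this is exactly how Thurston and Kerckhoff deduce it. If you are allowed to quote the circle version, it hands you existence and uniqueness in one stroke and the entire continuity/properness/invariance-of-domain scaffolding is redundant; if you are not, then injectivity --- and with it the whole theorem --- is left unproved. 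As written, the argument is circular.

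The fallback you offer for injectivity does not close the gap either. ``Local injectivity plus properness implies global injectivity'' is false in general; the correct statement is that a proper local homeomorphism between connected manifolds is a covering map, after which one must invoke simple connectivity of $\Teich(S)$ --- but you only claim the local homeomorphism property \emph{away from the zero lamination}, so the covering-space argument does not apply on all of $\MLam$ and injectivity near $0$ remains open. Moreover, the local homeomorphism property itself rests on differentiability of $\Lam \mapsto \EQ{\Lam}{1}{m_1}$ and nondegeneracy of its derivative, a substantial theorem of Kerckhoff that you assert rather than establish; Theorem \ref{Convexity along earthquake} only gives convexity of $t \mapsto i(\EQ{\Lam}{t}{m_1},c)$ for a \emph{fixed} lamination, which says nothing about injectivity in the lamination variable. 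Finally, in the properness step the bound $f_n(t_n) \geq t_n\, i(\mu_n,c) - O(1)$ does not follow from convexity together with the asymptotic slope alone (a convex function can lie arbitrarily far below its asymptote), and the implicit constant must in any case be controlled uniformly in $n$; a genuine geometric shearing estimate is needed there.
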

We are going to use the equivalent following corollary, which is just a renormalization of the measured lamination seen as current. 
\begin{corollaire}\label{Geology}
Let $(S_1,m_1)$ and $(S_2,m_2)$ be two hyperbolic metrics on $S$, then there exists a unique measured lamination $\Lam$ of $m_1$-length $1$ and a $T \in \R^+$ such that :
$$\EQ{\Lam}{T}{m_1} = m_2.$$
\end{corollaire}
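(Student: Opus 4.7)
The statement is essentially a renormalization of the "geology" theorem of Thurston stated just before it, so the plan is short: invoke the theorem, then rescale the measured lamination using the homogeneity property of earthquakes recorded in Theorem \ref{continuité des tremblement de terre}.

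More precisely, the plan is as follows. Assume first that $m_1 \neq m_2$. By the geology theorem, there is a unique measured lamination $\Lam'$ with $\EQ{\Lam'}{1}{m_1} = m_2$. Since $m_1 \neq m_2$, the lamination $\Lam'$ is nonzero, hence its $m_1$-length
\[
T := \ell_{m_1}(\Lam') = i(L_{m_1}, \Lam')
\]
is a strictly positive real number. Set $\Lam := \Lam'/T$, viewed inside $\MLam$ through the identification of measured laminations with geodesic currents from Section \ref{sec:Geodesic currents}. Then by bilinearity of $i$,
\[
\ell_{m_1}(\Lam) = i(L_{m_1}, \Lam'/T) = 1.
\]

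The equality $\EQ{\Lam}{T}{m_1} = m_2$ follows at once from the scaling relation $\EQ{\lambda \Lam}{t}{m} = \EQ{\Lam}{\lambda t}{m}$ given in Theorem \ref{continuité des tremblement de terre}: applied with $\lambda = 1/T$ and $t = T$, it gives
\[
\EQ{\Lam}{T}{m_1} = \EQ{\Lam'/T}{T}{m_1} = \EQ{\Lam'}{1}{m_1} = m_2.
\]

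For uniqueness, suppose $(\Lam_1, T_1)$ and $(\Lam_2, T_2)$ both satisfy the conclusion of the corollary. The same scaling relation yields $\EQ{T_i \Lam_i}{1}{m_1} = \EQ{\Lam_i}{T_i}{m_1} = m_2$ for $i=1,2$, so by uniqueness in the geology theorem, $T_1 \Lam_1 = T_2 \Lam_2$ as measured laminations. Taking the $m_1$-length of both sides and using the normalization $\ell_{m_1}(\Lam_i) = 1$ gives $T_1 = T_2$, whence $\Lam_1 = \Lam_2$. There is no real obstacle to overcome here; the only mild subtlety is the edge case $m_1 = m_2$, which forces $T = 0$ and leaves $\Lam$ undetermined, so one should read the corollary as concerning pairs with $m_1 \neq m_2$ (equivalently, $T > 0$), matching the convention in subsequent uses of the statement.
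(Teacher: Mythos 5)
Your proof is correct and is exactly the argument the paper intends: the author gives no formal proof, remarking only that the corollary ``is just a renormalization of the measured lamination seen as current,'' and your rescaling via $\EQ{\lambda \Lam}{t}{m}= \EQ{\Lam}{\lambda t}{m}$ is precisely that renormalization. Your remark about the degenerate case $m_1=m_2$ (where $T=0$ and $\Lam$ is undetermined) is a fair and accurate caveat about how the statement should be read.
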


\subsection{Examples}\label{subsec - Examples}
In the paper \cite{Sharp}, R. Sharp is asking about the behaviour of the correlation number $m(S_1,S_2,1)$ as $S_1$ and $S_2$ range over $\Teich(S)$. 
By Corollaries \ref{delta(1) tends vers 0 si et seulement si delta(0) tends vers 0} and \ref{delta(1) tends vers 1/2 si et seulement si delta tends vers 1/2}, the asymptotic behavior of $m(S_1,S_2,1)$, $\delta(S_1,S_2)$ and $\delta(S_1,S_2,1)$ are the same. Hence our examples answer his question in a quantitative way.

Moreover, this section establishes the counterpart of McMullen's examples for globally hyperbolic $\AdS$ manifolds. 
\paragraph{Proof of Examples}
The main ingredients for the different proofs are Proposition \ref{pr - quand un courant remplit on peut comparer à une fonction longueur}, and the convexity of the length function along earthquake paths, Theorem \ref{Convexity along earthquake}.
Example 1 is proved in Proposition \ref{pr - example pseudo diff delta tends vers 0}, we use the fact that a pseudo-Anosov diffeomorphism dilates the length of the curves  along one lamination, and its inverse dilates along another lamination. Those two laminations fill the surface, therefore we conclude with Proposition \ref{pr - quand un courant remplit on peut comparer à une fonction longueur}. \\
Example 2 is proved in \ref{sssection pinching one geodesic}. It is the more straightforward. Since there exists some curves for which the length is bounded in both surfaces, this immediately gives the bound on the critical exponent. \\
Example 3 is particularly interesting for our work, since the proof gives the idea for Theorem \ref{th - isolation 1 surface}. It uses the convexity of length function along earthquakes and invariance of critical exponent through the diagonal action of the mapping class group, see Section \ref{sssection Dehn twists}. \\
Example 4 follows is proved in \ref{sssection - fenchel nielsen twist} from the fact that the critical exponent is uniformly continuous, Proposition \ref{pr - delta is uniformly continuous}. The proof of this last fact is not so direct, it uses the machinery of geodesic currents and earthquakes from the previous sections.\\
Finally we prove also the counter example of Corollary \ref{cor - isolation 2 surfaces}, when the surfaces do no stay in the thick part of the Teichmüller space, Theorem \ref{L'exposant critique tends vers 1/2 quand on pince une geodesique}. This is done in \ref{sssection  - pinching at different speed}. The main argument used is the fact that the Thurston and the Weil-Petersson distances are not comparable, more precisely that the Weil-Petersson distance is not complete. 

\subsubsection{Anosov diffeomorphisms}\label{ssection anosov diff}
In this first example, we iterate a pseudo-Anosov diffeomorphism to show that the critical exponent can tend to $0$. 
\begin{definition}\cite[Section 13]{cannon2007group}
A diffeomorphism $A : S \tv S$  is said \emph{pseudo-Anosov} if there exists two measured geodesic laminations $(\Lam_+,\mu_+)$, $(\Lam_-,\mu_-)$, one transverses to the others, and a constant $k>1$ such that : 
\begin{enumerate}
\item $\Lam_+\cup\Lam_-$ fills $S$. 
\item $A (\Lam_\pm) = \Lam_\pm.$
\item $A^*\mu_+ = k \mu_+.$ (inverse image of $\mu_+$ by $A$)
\item $A^*\mu_- = \frac{1}{k} \mu_-.$ (inverse image  of $\mu_-$ by $A$)
\end{enumerate}
The laminations $\Lam_+, \Lam_-$ are called attracting and repelling, the constant $k$ is called the dilatation of $A$. 
\end{definition}
Let  $(S,m_0)$ be a hyperbolic surface. 
Let $A$ be a pseudo-Anosov  diffeomorphism and $\Lam_\pm$ its associated laminations, $k$ its dilatation.  We consider the following sequences of hyperbolic surfaces :
$$S_n :=(S, {A^*}^n(m_0))$$
et 
$$S'_n :=(S, {A^*}^{-n}(m_0))$$
\begin{proposition}\label{pr - quand un courant remplit on peut comparer à une fonction longueur}
Let  $\alpha$ be a geodesic current which fills $S$ and $\beta$ any geodesic current. There exists $K>0$, such that for every $c\in \C$ we have : 
$$i(\alpha,c)\geq \frac{1}{K} i(\beta, c).$$
\end{proposition}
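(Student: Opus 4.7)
The plan is to combine the compactness theorem for currents of bounded intersection (Theorem \ref{compacité des courants geodesic d intersection borné}) with the continuity and bilinearity of the intersection form. Concretely, I would first verify that $i(\alpha,c)>0$ for every $c\in\C$. Since $\alpha$ fills $S$, every geodesic of $\widetilde{S}$ meets transversely some geodesic in the support of $\alpha$; applying this to a lift of $c$ produces a transverse crossing, which forces $i(\alpha,c)>0$. Hence the quotient $\tilde{c}:=c/i(\alpha,c)$ is a well-defined geodesic current satisfying $i(\alpha,\tilde{c})=1$.

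Next, because $\alpha$ fills $S$, Theorem \ref{compacité des courants geodesic d intersection borné} tells us that the set
\[
K_\alpha \,:=\, \{\gamma \in \GC \mid i(\alpha,\gamma)\leq 1\}
\]
is compact. The function $\gamma \mapsto i(\beta,\gamma)$ is continuous on $\GC$ by Theorem \ref{th - def intersection  et l'intersection est continue}, so it attains a finite maximum $K$ on $K_\alpha$. Since $\tilde{c}\in K_\alpha$, we obtain $i(\beta,\tilde{c})\leq K$, and the bilinearity of the intersection form then yields
\[
i(\beta,c) \,=\, i(\alpha,c)\cdot i(\beta,\tilde{c}) \,\leq\, K\, i(\alpha,c),
\]
which is the desired inequality.

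This argument is essentially routine once one has the compactness statement at hand, so I do not anticipate a serious obstacle; the only subtlety is confirming that the filling hypothesis on $\alpha$ is strong enough both for the positivity $i(\alpha,c)>0$ and for the applicability of Theorem \ref{compacité des courants geodesic d intersection borné}, which here it is, since "fills" is the exact hypothesis required by that theorem.
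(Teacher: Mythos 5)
Your proof is correct and follows essentially the same route as the paper's: compactness of the set of currents with $i(\alpha,\cdot)\leq 1$ (Theorem \ref{compacité des courants geodesic d intersection borné}), continuity of $i(\beta,\cdot)$, and homogeneity of the intersection form. Your explicit check that $i(\alpha,c)>0$ (needed for the normalization) is a detail the paper leaves implicit, but otherwise the arguments coincide.
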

\begin{proof}
Indeed, the  function $c\tv i(\beta,c)$ is continuous on the set of geodesic currents. As $\{c\in \GC \, | \, i(\alpha,c)= 1\}$ is compact, by  Theorem \ref{compacité des courants geodesic d intersection borné}, there exists $K>0$ such that for every $c\in \{c\in \GC \, | \, i(\alpha,c)= 1\}$  we have
$$ i(\beta,c) \leq K.$$
We conclude by homogeneity of this last formula. 
\end{proof}

In particular, using the preceding proposition with $\alpha = L_0$, the Liouville current associated to the metric $m_0$ and $\beta = \Lam_\pm$ we have  
\begin{corollaire}\label{cor - quand un courant remplit on peut comparer à une fonction longueur}
There exists $K>0$ such that for every $c\in \C$ we have
$$\ell_{S_0}(c)\geq \frac{1}{K} i(\Lam_\pm, c).$$
\end{corollaire}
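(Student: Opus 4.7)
\textbf{Plan for Corollary \ref{cor - quand un courant remplit on peut comparer à une fonction longueur}.} The plan is to obtain this as an immediate consequence of Proposition \ref{pr - quand un courant remplit on peut comparer à une fonction longueur} by choosing the filling current to be the Liouville current $L_{m_0}$ of the reference hyperbolic metric $m_0$, and the comparison current to be the measured lamination $\Lam_\pm$ associated to the pseudo-Anosov $A$. Since the hypothesis on $\alpha$ in that proposition is a filling condition, all we really need is that $L_{m_0}$ fills $S$ in the relevant sense, together with the classical identification of $i(L_{m_0}, c)$ with hyperbolic length.

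First I would verify that $L_{m_0}$ satisfies the hypothesis of Proposition \ref{pr - quand un courant remplit on peut comparer à une fonction longueur}, namely that every geodesic of $\widetilde{S}$ transversely meets some geodesic in the support of $L_{m_0}$. This is a well-known property of the Liouville current on a closed hyperbolic surface: through the identification between $\varphi_t$-invariant measures on $T^1S$ and geodesic currents, $L_{m_0}$ corresponds to the Liouville (volume) measure, which is locally a product of a smooth measure on the base and arc length on the fibre; in particular its support is all of $\GSt$, so every geodesic of $\widetilde{S}$ meets some geodesic in its support transversely. This is the same filling property that has already been invoked implicitly in the preceding section to deduce compactness of the set of geodesic currents of $m_0$-length one.

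Next, $\Lam_\pm$ qualifies as a geodesic current, being a measured lamination and hence, by the definition adopted just above, a current of zero self-intersection. Applying Proposition \ref{pr - quand un courant remplit on peut comparer à une fonction longueur} with $\alpha = L_{m_0}$ and $\beta = \Lam_\pm$ therefore produces a constant $K>0$ such that, for every $c \in \C$,
$$ i(L_{m_0}, c) \;\geq\; \tfrac{1}{K}\, i(\Lam_\pm, c). $$
To conclude, I invoke the identity $i(L_{m_0}, c) = \ell_{m_0}(c) = \ell_{S_0}(c)$ recalled in the section on geodesic currents, which rewrites the left-hand side as the hyperbolic length and gives exactly the claimed inequality.

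There is essentially no obstacle here beyond the verification of the filling hypothesis for $L_{m_0}$, which is completely standard; the corollary is really just a dictionary translation of Proposition \ref{pr - quand un courant remplit on peut comparer à une fonction longueur} from the language of intersection numbers to that of hyperbolic lengths, via the Liouville current. The pseudo-Anosov structure plays no role at this point; it only enters later to exploit the estimate $\ell_{S_0}(c) \geq \tfrac{1}{K} i(\Lam_\pm, c)$ together with the dilation behaviour $A^* \mu_\pm = k^{\pm 1} \mu_\pm$.
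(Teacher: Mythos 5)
Your proposal is correct and matches the paper's argument exactly: the paper also deduces the corollary by applying Proposition \ref{pr - quand un courant remplit on peut comparer à une fonction longueur} with $\alpha = L_0$ (the Liouville current of $m_0$, which fills $S$) and $\beta = \Lam_\pm$, then identifying $i(L_0,c)$ with $\ell_{S_0}(c)$. Your additional verification of the filling hypothesis for the Liouville current is a standard point the paper leaves implicit.
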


We also need the following property of the mapping class group\footnote{(Mapping class group is defined by $MCG = Diff(S)/Diff_0(S) $, where $Diff_0(S)$ is the group of diffeomorphisms isotopic to the identity)}. This shows that they act on intersection forms as "isometries".
\begin{proposition}\label{pr - un diffeo agit par isométrie sur l'intersection}
Let $D$ be  a diffeomorphism of $S$. For every pairs of geodesic current $(\alpha,\beta) \in \GC$ we have
$$i(D\alpha,D\beta) = i(\alpha,\beta).$$
\end{proposition}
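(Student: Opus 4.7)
My plan is to verify the identity on a dense subset of $\GC \times \GC$, namely pairs of $\R^+$-linear combinations of homotopy classes of closed curves, and then extend by continuity. This requires two ingredients: a continuous action of $D$ on $\GC$ extending the natural action on $\C$, and the classical topological invariance of the geometric intersection number under diffeomorphisms.

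For the first ingredient, I would lift $D$ to a diffeomorphism $\widetilde{D} : \widetilde{S} \to \widetilde{S}$; the lift is equivariant up to an automorphism $D_\#$ of $\G$, in the sense that $\widetilde{D} \circ \g = D_\#(\g) \circ \widetilde{D}$ for every $\g \in \G$. Compactness of $S$ ensures $\widetilde{D}$ is bi-Lipschitz, hence a quasi-isometry of $\widetilde{S}$ for the hyperbolic metric, and therefore extends to a homeomorphism of $\partial \widetilde{S} \simeq \SU$. This in turn induces a homeomorphism of $\GSt = (\SU \times \SU - \Delta)/\Z_2$, still denoted $\widetilde{D}$. Because $D_\#$ is an automorphism of $\G$, the pushforward $\widetilde{D}_* \mu$ of a $\G$-invariant Borel measure $\mu$ is again $\G$-invariant, so we obtain an action $D_* : \GC \to \GC$ which is continuous for the weak-$*$ topology and which sends the current associated to a weighted closed geodesic $\lambda c$ to $\lambda [D(c)]$.

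For the second ingredient, let $c, c'$ be homotopy classes of closed curves and $\alpha \in c$, $\alpha' \in c'$ transverse representatives; then $D$ furnishes a bijection between $\alpha \cap \alpha'$ and $D(\alpha) \cap D(\alpha')$ and preserves transversality. Taking the infimum over such representatives yields $i(Dc, Dc') = i(c, c')$, and bilinearity of $i$ extends the identity to finite $\R^+$-linear combinations. By Proposition 2 of the excerpt, such combinations are dense in $\GC$; continuity of $i$ (Theorem \ref{th - def intersection  et l'intersection est continue}) and of $D_*$ then propagate the identity to all of $\GC \times \GC$.

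The main technical obstacle is verifying that $D$ genuinely acts continuously on $\GC$, specifically that $\widetilde{D}$ extends to $\partial \widetilde{S}$ and that pushforward by the resulting boundary homeomorphism is continuous in the weak-$*$ topology generated by compactly supported continuous functions on $\GSt$. Both points are standard (the boundary extension is a consequence of the Morse lemma for quasi-geodesics in $\delta$-hyperbolic spaces, and continuity of pushforward follows from the very definition of the weak-$*$ topology), so once they are granted, the proposition reduces to the elementary invariance of geometric intersection of closed curves under diffeomorphisms, amplified by bilinearity, density, and continuity.
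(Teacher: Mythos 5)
Your proof is correct and follows essentially the same route as the paper's: establish the identity for (weighted) closed curves via the elementary diffeomorphism-invariance of geometric intersection, then extend by density of such currents and continuity of $i$. The only difference is that you spell out why $D$ induces a continuous action on $\GC$ (boundary extension of the quasi-isometric lift, pushforward of $\G$-invariant measures), a point the paper leaves implicit.
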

\begin{proof}
These is true for every currents which are union of closed curves, since $D$ is a diffeomorphism and does not change the geometric interesction. We conclude by density and continuity of the intersection. 
\end{proof}

We finally gives the bounds on the length of a geodesic on  $S_n$ and $S_n'$. 
\begin{proposition}\label{pr inegalite}
There exists $K>0$ such that for every $c\in \C$ and every $n\in \N$ we have 
$$\ell_n(c)\geq \frac{k^n}{K} i(\Lam_+, c).$$
$$\ell_n'(c)\geq \frac{k^n}{K} i(\Lam_-, c).$$
\end{proposition}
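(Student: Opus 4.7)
The plan is to reduce the problem to the length of a translated curve on the fixed reference surface $S_0$, then combine the filling property of the Liouville current $L_0$ with the explicit scaling of $\Lam_+$ as a geodesic current under iteration of $A$.

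First I would note that $(A^*)^n m_0 = (A^n)^* m_0$, so $A^n$ is an isometry from $S_n = (S,(A^*)^n m_0)$ onto $S_0 = (S,m_0)$. Consequently, for every $c \in \C$,
$$\ell_n(c) = \ell_0(A^n c).$$
Applying Corollary \ref{cor - quand un courant remplit on peut comparer à une fonction longueur} to the closed curve $A^n c \in \C$ yields
$$\ell_0(A^n c) \geq \frac{1}{K}\, i(\Lam_+,\, A^n c),$$
so that it only remains to identify $i(\Lam_+, A^n c)$ with $k^n\, i(\Lam_+, c)$.

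For this step I would translate the hypothesis $A^*\mu_+ = k\mu_+$, which concerns pullback of transverse measures, into the corresponding action of $A$ on the associated geodesic current: pushforward by $A$ scales $\Lam_+$ by $1/k$, so by iteration the pushforward by $A^{-n}$ scales $\Lam_+$ by $k^n$, i.e.\ $A^{-n}\Lam_+ = k^n\Lam_+$ as currents. Combined with the invariance $i(D\alpha,D\beta)=i(\alpha,\beta)$ from Proposition \ref{pr - un diffeo agit par isométrie sur l'intersection} (applied with $D = A^n$) and with the bilinearity of $i$ inherited from the continuous extension in Theorem \ref{th - def intersection  et l'intersection est continue}, this gives
$$i(\Lam_+, A^n c) \;=\; i(A^{-n}\Lam_+,\, c) \;=\; k^n\, i(\Lam_+, c),$$
and chaining the displayed relations produces the first inequality. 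The proof of the second inequality is entirely parallel: one uses $\ell_n'(c) = \ell_0(A^{-n} c)$, and the relation $A^*\mu_- = \tfrac{1}{k}\mu_-$ translates into $A^n\Lam_- = k^n\Lam_-$, whence $i(\Lam_-, A^{-n} c) = k^n\, i(\Lam_-, c)$.

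The only delicate point I expect is the bookkeeping in converting the convention $A^*\mu_\pm = k^{\pm 1}\mu_\pm$ on transverse measures into the pushforward action on geodesic currents, so that the dilatation factor $k^n$ ends up on the right side of the final inequality rather than its reciprocal. Once that is straightened out, everything reduces to the identification $\ell_n(c) = \ell_0(A^n c)$, the bilinearity of the intersection form, and the filling property of $L_0$ encapsulated in Corollary \ref{cor - quand un courant remplit on peut comparer à une fonction longueur}.
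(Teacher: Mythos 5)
Your proof is correct and follows essentially the same route as the paper: the identification $\ell_n(c)=\ell_0(A^nc)=i(L_0,A^nc)$, the filling estimate from Corollary \ref{cor - quand un courant remplit on peut comparer à une fonction longueur}, the invariance $i(\Lam_+,A^nc)=i(A^{-n}\Lam_+,c)$, and the scaling $A^{-n}\Lam_+=k^n\Lam_+$ from the pseudo-Anosov property. The convention bookkeeping you flag is indeed the only point needing care, and you resolve it the same way the paper implicitly does.
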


\begin{proof}

Indeed, we just saw that $ \ell_n(c) = i(A^{-n} L_0 ,c) = i(L_0,A^n c)$. Hence, from Corollary \ref{cor - quand un courant remplit on peut comparer à une fonction longueur}, there exists $K>0$ such that $$\ell_n(c)\geq \frac{1}{K} i(\Lam_+, A^n c).$$ 
Thanks to the Proposition \ref{pr - un diffeo agit par isométrie sur l'intersection}, we have $$i(\Lam_+,A^n c) = i(A^{-n} \Lam_+, c).$$ The third property of pseudo-Anosov diffeomorphisms implies that  $$  i(A^{-n} \Lam_+, c)= k^n i(\Lam_+, c).$$ 
These three relations shows the first inequality of the Proposition \ref{pr inegalite}. The second inequality can be shown by the same method. 
\end{proof}

\begin{proposition}\label{pr - example pseudo diff delta tends vers 0}
Let $A$ be a pseudo-Anosov diffeomorphism, then 
$$\lim_{n\tv \infty}\delta_{Lor}(S_0,A^{2n}S_0) = 0.$$
\end{proposition}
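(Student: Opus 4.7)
\textbf{Plan for the proof of Proposition \ref{pr - example pseudo diff delta tends vers 0}.}

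The strategy is to combine the mapping class group invariance of the critical exponent with the two inequalities of Proposition \ref{pr inegalite}, so that the filling property of $\Lam_+\cup\Lam_-$ can be fed into Proposition \ref{pr - quand un courant remplit on peut comparer à une fonction longueur}. First I would reduce from $\delta(S_0,A^{2n}S_0)$ to $\delta(S'_n,S_n)$: the diagonal action of the mapping class group on $\Teich(S)\times\Teich(S)$ leaves $\delta$ invariant, since the change of summation index $c\mapsto A^{n}c$ and the identity $\ell_{A^* m}(c)=\ell_m(Ac)$ together give
\begin{equation*}
\#\{c\in\C \,|\, \ell_{m_0}(c)+\ell_{m_0}(A^{2n}c)\leq T\} \;=\; \#\{c\in\C \,|\, \ell_{m_0}(A^{-n}c)+\ell_{m_0}(A^{n}c)\leq T\},
\end{equation*}
so that $\delta(S_0,A^{2n}S_0)=\delta(S'_n,S_n)$. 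Since $\delta_{Lor}=2\delta$, it is enough to prove $\delta(S'_n,S_n)\tv 0$.

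Adding the two estimates of Proposition \ref{pr inegalite} yields, for every $c\in\C$ and every $n$,
\begin{equation*}
\ell_n(c)+\ell'_n(c)\;\geq\;\frac{k^n}{K}\bigl(i(\Lam_+,c)+i(\Lam_-,c)\bigr)\;=\;\frac{k^n}{K}\,i(\Lam_++\Lam_-,c).
\end{equation*}
By the first axiom of a pseudo-Anosov diffeomorphism, $\Lam_+\cup\Lam_-$ fills $S$, so the current $\Lam_++\Lam_-$ transversely meets every geodesic of $\widetilde{S}$. Applying Proposition \ref{pr - quand un courant remplit on peut comparer à une fonction longueur} with $\alpha=\Lam_++\Lam_-$ and $\beta=L_{m_0}$ produces a constant $K'>0$ such that $i(\Lam_++\Lam_-,c)\geq \frac{1}{K'}\ell_0(c)$ for all $c$, whence
\begin{equation*}
\ell_n(c)+\ell'_n(c)\;\geq\;\frac{k^n}{KK'}\,\ell_0(c).
\end{equation*}

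To conclude, I would compare the counting functions. The inequality above implies
\begin{equation*}
\#\{c\in\C \,|\, \ell_n(c)+\ell'_n(c)\leq T\}\;\leq\;\#\!\left\{c\in\C\;\middle|\;\ell_0(c)\leq \frac{KK'\,T}{k^n}\right\}.
\end{equation*}
Since $(S,m_0)$ is a compact hyperbolic surface, its critical exponent equals $1$, so for every $\varepsilon>0$ there is $R_0$ with $\#\{c\in\C\,|\,\ell_0(c)\leq R\}\leq e^{(1+\varepsilon)R}$ for $R\geq R_0$. Taking logarithms, dividing by $T$, and passing to the $\limsup$ gives $\delta(S'_n,S_n)\leq (1+\varepsilon)\,KK'/k^n$, and letting $\varepsilon\tv 0$ yields $\delta(S'_n,S_n)\leq KK'/k^n\tv 0$ since $k>1$. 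Combined with the initial reduction, this gives $\delta_{Lor}(S_0,A^{2n}S_0)=2\delta(S_0,A^{2n}S_0)\tv 0$. No step is genuinely hard once the preceding propositions are in place; the only point to be careful about is writing out the change of variables that implements the mapping class group invariance, so that the exponential dilation factor $k^n$ acts symmetrically on both surfaces after the renormalisation.
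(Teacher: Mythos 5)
Your proposal is correct and follows essentially the same route as the paper: add the two lower bounds of Proposition \ref{pr inegalite}, use the filling property of $\Lam_+\cup\Lam_-$ via Proposition \ref{pr - quand un courant remplit on peut comparer à une fonction longueur} to dominate $\ell_0(c)$, and invoke mapping class group invariance to identify $\delta(S_0,A^{2n}S_0)$ with $\delta(A^{-n}S_0,A^{n}S_0)$. The only cosmetic differences are that you perform the invariance reduction first rather than last and argue with counting functions instead of Poincar\'e series; both are equivalent here.
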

\begin{proof}
We just showed there exists $K>0$ such that for every $c\in \C$
\begin{eqnarray*}
\ell_n(c) +\ell'_n(c) &\geq &\frac{ k^n}{K} i(\Lam_+ \cup \Lam_- ,c) 
\end{eqnarray*}

As $\Lam_+ \cup \Lam_- $ fills $S$,  Proposition \ref{pr - quand un courant remplit on peut comparer à une fonction longueur} shows there exists $K'>0$ such that for every $c\in \C$ we have
$$i(\Lam_+ \cup \Lam_- ,c) \geq \frac{1}{K'} i(L_0,c) =\frac{\ell_0(c)}{K'}.$$

Hence, we have

\begin{eqnarray*}
\ell_n(c) +\ell'_n(c) &\geq & \frac{k^n}{K K'} \ell_0(c).
\end{eqnarray*}

Finally, there exists $K''>0$ such that 
$$\sum_{c\in \C} e^{-s(\ell_n(c) +\ell'_n(c) )} \leq \sum_{c\in \C} e^{-s k^n K'' \ell_0(c)} $$
Hence,  critical exponent associated to $(S_n,S_n')$, satisfies $\delta(S_n,S_n') \leq \frac{1}{k^n K''}$ and tends to  $0$. 

Finally, remark that we can fix $S_0$. Indeed, let $D$ be an element of the  mapping class group. The critical exponent $\delta(S,S')$ is equal to $\delta(D(S),D(S'))$, since the length spectrum is invariant by $MCG$. Let  $S_0$ be a hyperbolic surface and consider the critical exponent associated to $(S_0,A^{2n}S_0)$. This exponent is then equal to $\delta(A^{-n} S_0, A^{n}S_0)$, which tends to $0$ from what we just saw. 
\end{proof}

\subsubsection{Shrinking transverse pair of pants}\label{sssection pinching transverse pair of pants}
 Let $S_0$ be a hyperbolic surface. Take two pants decomposition of $S_0$, $\Pants$ and  $\Pants'$, such that $\Pants \cup \Pants'$ fills up $S_0$ (ie the complement of $\Pants \cup \Pants'$ consists of topological disks).
 \begin{definition}
A decomposition of a surface $S$ by pair of pants $\Pants$ give Fenchel-Nielsen coordinates, "length" and "twist" coordinates.  A surface is said to be \emph{shrinked} along a simple  geodesic $c\in \Pants$ if the length coordinates of $c$ tends to $0$ and the others do not change. It is said \emph{shrinked by a factor $x$} if the length coordinates of $c\in \Pants$ is replace by $x\ell(c)$.
 \end{definition}
 Shrinking is defined via the hyperbolic structure of $S$. In Section \ref{sssection  - pinching at different speed}, we will call \emph{pinching} the equivalent procedure defined via the complex structure as defined in \cite{Wolpert}. The important fact is the existence of a very small geodesic after shrinking/pinching. \\
 We consider the surfaces $S_n$ and $S_n'$, defined by shrinking the geodesics of $\Pants$ respectively $\Pants'$ by a factor $e^{-n}$.  By the \cite[collar Lemma]{halpern1981proof}, there exists $M$ such that for every $c \in \C, \, \ell_n(c) \geq C | \log(e^{-n})| i(c,\Pants) = C i(c,\Pants) n $, since the length of the geodesics in $\Pants$ are less than $e^{-n}$. We also have  $\ell'_n(c) \geq M | \log(e^{-n})| i(c,\Pants')=n {M i(c,\Pants')}$. These two inequalities give, 
$$\ell_n(c) +\ell'_n(c) \geq n M i(c ,\Pants \cup \Pants'),$$
By Proposition \ref{pr - quand un courant remplit on peut comparer à une fonction longueur} and since $\Pants \cup \Pants'$ fills up $S_0$, there exists $K_0>0$ such that for all $c\in \C$ we have $ i(c ,\Pants \cup \Pants') \geq K_0 \ell_0(c)$, hence
$$\ell_n(c) +\ell'_n(c) \geq n M K_0 \ell_0(c).$$
Again $\delta(S_n,S_n') \leq \frac{1}{n M K_0}$ goes to zero.

\subsubsection{Dehn twists}\label{sssection Dehn twists} The next example contains the basic idea for the proof of Theorem \ref{th - isolation 1 surface} :  we show that along a sequence obtained by iteration of a  Dehn twist, the critical exponent is decreasing.  Let $S_0:=(S,m_0)$ be a hyperbolic surface and $\alpha $ a simple closed curve on $S$. Let $t_\alpha$ be the Dehn twist along $\alpha$ and define $S_n$ (respectively  $S'_n$) by $S_n := t_\alpha ^n S_0 $ ( respectively $S_n':=t_\alpha ^{-n} S_0 $). By definition  we have $S_n =  \EQ{\alpha}{n \ell_0(\alpha)}{S_0}$. We denote for $t\in \R$ the surface $S_t :=  \EQ{\alpha}{t \ell_0(\alpha)}{S_0}$. Let $c$ be a closed curve on $S$  and $f$  be the function $f \, : \, t \tv \ell_t (c)$, that is the length of the geodesic representative of $c$ on $S_t$. By Theorem \ref{Convexity along earthquake} the function $f$ is convex. Finally let $g(t) = f(t) +f(-t)$ and remark that 
\begin{eqnarray}
g(n) &=& f(n)+f(-n)\\
		&=&\ell_n(c) +\ell'_n(c)
\end{eqnarray}

$g$ is convex and $g'(0) =0$ hence, $g$ is an increasing function on $\R^+$. 
Hence $g(n)\geq g(n-1)$ or equivalently,
%$$\ell_{m_n(c) +\ell_{m'_n}(c)\geq \ell_{m_{n-1}}(c) +\ell_{m'_{n-1}}(c)$$
$$\ell_n(c) +\ell'_n(c)\geq \ell_{n-1}(c) +\ell'_{n-1}(c)$$

This implies that $n\tv \delta(S_n,S'_n)$ is decreasing. Moreover, by Theorem  \ref{th - rigidité des GHMC}, $\delta(S_1,S'_1)<1/2$ and we can then conclude 
$$\lim\delta(S_n,S'_n) < 1/2. $$
The limit exists since $\delta(S_n,S'_n)$ is decreasing. 

As in the first example, this shows that $\lim\delta(S_0,\tau_\alpha^{2n}S_0) < 1/2.$ This result is the one we want to generalise thanks to earthquakes.

\subsubsection{Shrinking one geodesic}\label{sssection pinching one geodesic} 
Let $\Pants$ be a pants decomposition of a hyperbolic surface $S_0$. Let $\alpha_i$, $i\in [1,3g-3]$ be the geodesics boundaries of $\Pants$. We call $S_t$ the surface shrinked along $\alpha_1$ by a factor $e^{-t}$. This means that on $S_t$ the length of $\alpha_1$ is $\ell_t(\alpha_1)=e^{-t}\ell_0(\alpha_1)$, and the length of $\alpha_i$, $i\in [2,3g-3]$ is $\ell_t(\alpha_i)=\ell_0(\alpha_i)$. \\
Let $\rho_t$ be the sequence  of representations of $\G$ in $\PSL_2(\R)$ such that $\Hyp^2/\rho_t(\G) =S_t$. 
Let $g_i\in \G$, $i \in [1,3g-3]$ be elements of the fundamental group projecting to $\alpha_i$, $i \in [1,3g-3]$. We can choose $\rho_t$ in order that $\rho_t(\alpha_i)$ is fixed for all $i \geq 2$. By definition the critical exponent associated to $(S_0,S_t)$ is larger than the critical exponent associated to $(S_0\setminus \{\alpha_1\}, S_t\setminus \{\alpha_1\})$, in which we do not count the curves meeting $\alpha_1$. \\
Moreover, the critical exponent associated to $(S_0\setminus \{\alpha_1\}, S_t\setminus \{\alpha_1\})$ is constant and positive. This shows that 
$$\liminf_{t\tv \infty} \delta(S_0,S_t) >0.$$
This example as to be compared to the equivalent  of \cite[p.3  Example 3]{mcmullen1999hausdorff}.

%Let $c$ be a homotopically non trivial simple closed curve on $S$. Choose $\alpha$ and $\beta$  two closed curves, non homotopic to $c$ nor $0$ (on a connected component ) of  $S\pv c$, such that $\g_\alpha$ and $\g_\beta$, elements in $\G$ which project on $\alpha$ and $\beta$, generate a free group of rank $2$. This can be done since (every connected component of )  $S\pv c$ has negative Euler characteristic.  Now let $S_t$ be the hyperbolic surface obtained by shrinking $c$ by a factor $e^{-t}$ and  let $\rho_t$ be the representation of $\G$ into $\PSL_2(\R)$, such that $S_t \simeq \Hyp^2/\rho_t(\G)$. Since $\alpha$ and $\beta$ are not hotomotopic to $c$ their lengths on $S_t$ are bounded away from $0$. Hence the group $<\rho_t(\g_\alpha),\rho_t(\g_\beta)>$ is quasi-isometric to $<\g_\alpha,\g_\beta>$, with a constant of quasi-isometry independent of $t$. Since the critical exponent of the Poincaré series, $\sum_{w \in <\g_\alpha,\g_\beta>} e^{-sd(w o , o)}$  is strictly  positive, the one of $\sum_{w \in <\g_\alpha,\g_\beta>} e^{-s(d(w o , o) + d(\rho_t(w) o ,o ))},$ is also bounded away from $0$ independently of $t$. Since the Poincaré series of the whole group is larger than the one restricted to $<\g_\alpha,\g_\beta>$, this finally implies that $\liminf \delta(S_0,S_t) >0$. This example as to be compared to the equivalent  of \cite[p.3  Example 3]{mcmullen1999hausdorff}.

\subsubsection{Fenchel-Nielsen twists}\label{sssection - fenchel nielsen twist}
We show  there is  a periodic limiting function for critical exponent along Fenchel-Nielsen twist.
 Let $\alpha$ be a simple closed curve and $\mathcal{E}_{\alpha}^{t}$ the Fenchel-Nielsen twist along $\alpha$, recall that $\tau :=\mathcal{E}_{\alpha}^{\ell_0(\alpha)}$ is the Dehn twist along $\alpha$. Let $S_0$ be a hyperbolic surface and defined $S_t := \EQ{\alpha}{t}{S_0}$. Fix $t \in [0,2\ell_0(\alpha))$, then as the previous example shows we see that $\delta(S_0,S_{2\ell_0(\alpha)n+t} )=\delta(\tau^{-n}S_0,\tau^n S_{t} ) $ is decreasing, hence we can consider the following function : 
$$\delta(t) := \lim_{n\tv \infty} \delta(S_0,S_{2\ell_0(\alpha)n+t} ).$$
Obviously $\delta$ is $2\ell_0(\alpha) $-periodic. 

\begin{theorem}\label{th - periodic fenchel nielsen}
Let $S_t$ be a surfaces path obtained by Fenchel-Nielsen twist along a simple closed geodesic $\alpha$. Then there exists a $2\ell_{S_0}(\alpha)$-periodic  function $\delta $ such that: 
$$\lim_{t\tv \infty} | \delta(S_0,S_t) -\delta(t) | =0.$$
\end{theorem}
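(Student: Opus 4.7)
The plan is to reduce the statement to uniform convergence of a decreasing family of continuous functions on a compact interval. For $t\geq 0$, write $t=2n\ell_0(\alpha)+r$ with $n\in\N$ and $r\in[0,2\ell_0(\alpha))$, and set $f_n(r):=\delta(S_0,S_{2n\ell_0(\alpha)+r})$. Since the Dehn twist $\tau=t_\alpha$ coincides with $\EQ{\alpha}{\ell_0(\alpha)}{\cdot}$, and an earthquake along $\alpha$ preserves the length of $\alpha$, one has $\tau^n S_r = S_{n\ell_0(\alpha)+r}$; mapping class group invariance of the length spectrum then rewrites $f_n(r)=\delta(S_{-n\ell_0(\alpha)},S_{n\ell_0(\alpha)+r})$. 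Our candidate periodic limit is $\delta(r):=\lim_{n\to\infty} f_n(r)$, extended $2\ell_0(\alpha)$-periodically to $\R_+$.

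The first step is to justify the pointwise existence of the limit by showing $f_{n+1}(r)\leq f_n(r)$ for every $r\in[0,2\ell_0(\alpha))$. For each $c\in\C$, the map $F_c(t):=\ell_{S_t}(c)=i(\EQ{\alpha}{t}{S_0},c)$ is convex by Theorem \ref{Convexity along earthquake}, so its first differences $F_c(y+\ell_0(\alpha))-F_c(y)$ are non-decreasing in $y$. Applied at $y_1=n\ell_0(\alpha)+r$ and $y_2=-(n+1)\ell_0(\alpha)$ (with $y_1\geq y_2$), one obtains
\[
F_c(-(n+1)\ell_0(\alpha))+F_c((n+1)\ell_0(\alpha)+r)\;\geq\; F_c(-n\ell_0(\alpha))+F_c(n\ell_0(\alpha)+r).
\]
Termwise, the Poincaré series associated to $f_{n+1}(r)$ is therefore dominated by the one associated to $f_n(r)$, yielding $f_{n+1}(r)\leq f_n(r)$. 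This is precisely the Dehn twist argument of Section \ref{sssection Dehn twists} with asymmetric endpoints, the offset $r\neq 0$ being absorbed by convexity.

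The second step is to upgrade pointwise convergence to uniform convergence on $[0,2\ell_0(\alpha)]$. As $(f_n)$ is a decreasing sequence on a compact interval, Dini's theorem reduces the matter to continuity of each $f_n$ and of the limit $\delta$, which in turn follow from equicontinuity of the family. The Dehn twist $\tau$ acts by isometries of $\Teich(S)$ for the Thurston distance, so
\[
d_{Th}(S_{2n\ell_0(\alpha)+r},S_{2n\ell_0(\alpha)+r'})=d_{Th}(\tau^n S_r,\tau^n S_{r'})=d_{Th}(S_r,S_{r'}),
\]
which is independent of $n$ and small when $|r-r'|$ is small by continuity of the earthquake map (Theorem \ref{continuité des tremblement de terre}). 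Coupling this with the uniform continuity of $\delta(S_0,\cdot):\Teich(S)\to\R$ for the Thurston distance (Proposition \ref{pr - delta is uniformly continuous}) produces a common modulus of continuity for $(f_n)$, hence equicontinuity. Translating back, for $t=2n\ell_0(\alpha)+r$ one has $|\delta(S_0,S_t)-\delta(t)|=|f_n(r)-\delta(r)|\to 0$ uniformly in $r$ as $n\to\infty$, which is the claim.

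The main obstacle is the uniform continuity of $\delta$ itself (Proposition \ref{pr - delta is uniformly continuous}); once this input is available the remainder is routine assembly of Kerckhoff convexity, mapping class group invariance, and Dini's theorem. That proposition is naturally established within the geodesic-current framework of Section \ref{sec - geometric background}: two hyperbolic surfaces close in the Thurston distance have length spectra multiplicatively close on every closed curve, so the corresponding Poincaré-series exponents differ by an amount controlled by the Thurston distance.
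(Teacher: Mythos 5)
Your proof is correct and shares the paper's skeleton --- monotonicity of $n\mapsto\delta(\tau^{-n}S_0,\tau^nS_r)$ via Kerckhoff convexity and diagonal mapping-class-group invariance, followed by an argument upgrading pointwise to uniform convergence --- but the second step is implemented differently. The paper proves that $t\mapsto\delta(S_0,S_t)$ is uniformly continuous on all of $\R$ (Proposition \ref{pr - delta is uniformly continuous}), which requires the geodesic-current compactness argument for $g(t)=\sup_c i(c,\alpha)/\ell_{S_t}(c)$ together with its $\ell_0(\alpha)$-periodicity, and then applies a bespoke real-analysis lemma. You instead observe that $d_{Th}(S_{2n\ell_0(\alpha)+r},S_{2n\ell_0(\alpha)+r'})=d_{Th}(S_r,S_{r'})$ by mapping-class-group invariance, reducing equicontinuity of $(f_n)$ to uniform continuity of $r\mapsto S_r$ on the compact fundamental window, and close with Dini; this bypasses the current-theoretic compactness argument entirely, and your asymmetric first-difference version of the convexity step is a genuine (and needed) supplement to the paper's ``as the previous example shows''. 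One small caveat: you cite Proposition \ref{pr - delta is uniformly continuous} as asserting uniform continuity of $\delta(S_0,\cdot)$ on $(\Teich(S),d_{Th})$, whereas it only asserts uniform continuity of the one-variable function $t\mapsto\delta(S_0,S_t)$ along the twist path; the statement you actually need is the elementary estimate that $d_{Th}(S,S')\le\epsilon$ forces $e^{-\epsilon}\le(\ell_0(c)+\ell_{S'}(c))/(\ell_0(c)+\ell_S(c))\le e^{\epsilon}$ for all $c$, hence $e^{-\epsilon}\delta(S_0,S)\le\delta(S_0,S')\le e^{\epsilon}\delta(S_0,S)$, which with $\delta\le 1/2$ gives a universal modulus of continuity. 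This is the same computation as the ``trivial direction'' at the start of the proof of Theorem \ref{th - isolation 1 surface}, so the gap is purely one of attribution, not of substance.
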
 
We need the following result of real analysis 
\begin{lemme}
Let $f\,: \R\tv \R$ be a continuous real function such that $F(t):=\lim_{n\tv \infty} f(t+n)$ exists for all $t\in [0,1)$. If $f$ is uniformly continuous then $F$ is continuous and 
$$\lim_{t\tv \infty} | F(t) - f(t) | =0.$$
\end{lemme}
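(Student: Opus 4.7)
The plan is to prove this as a pure real-analysis statement about the family of shifts $f_n(t) := f(t+n)$ on the unit interval, where uniform continuity of $f$ plays the role of equicontinuity. I will extend $F$ to $\mathbb{R}$ by $1$-periodicity and show both its continuity and the uniform convergence $f_n \to F$ on $[0,1]$ at the same time.

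First I would establish that $F$ is uniformly continuous on $[0,1)$. Given $\varepsilon>0$, choose $\delta>0$ from uniform continuity of $f$; then for $s,t\in[0,1)$ with $|s-t|<\delta$ one has $|f(s+n)-f(t+n)|<\varepsilon$ for every $n$, and passing to the limit yields $|F(s)-F(t)|\le\varepsilon$. This also shows $\lim_{t\to 1^-} F(t)$ exists; call this value $\widetilde F(1)$. To justify defining $F(1):=F(0)$ (so that the $1$-periodic extension is continuous) I would verify $\widetilde F(1)=F(0)$: writing $F(1-1/k)=\lim_n f(n+1-1/k)$ and $F(0)=\lim_n f(n)=\lim_n f(n+1)$, the difference is bounded by $\limsup_n |f(n+1-1/k)-f(n+1)|$, which tends to $0$ as $k\to\infty$ by uniform continuity of $f$. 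Extending $F$ by $F(t):=F(t-\lfloor t\rfloor)$ therefore gives a continuous $1$-periodic function on $\mathbb{R}$.

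Next I would prove that $f_n\to F$ uniformly on $[0,1]$. The family $\{f_n\}$ is uniformly equicontinuous because $f$ is uniformly continuous (the modulus of continuity of $f_n$ equals that of $f$), and by hypothesis $f_n(t)\to F(t)$ pointwise on $[0,1)$, with the boundary case $f_n(1)\to F(1)$ following from $f(n+1)\to F(0)=F(1)$. Given $\varepsilon>0$, pick $\delta>0$ so that $|s-t|<\delta$ implies $|f_n(s)-f_n(t)|<\varepsilon/3$ for all $n$ and also $|F(s)-F(t)|<\varepsilon/3$; cover the compact interval $[0,1]$ by finitely many $\delta$-balls with centers $t_1,\dots,t_k$; choose $N$ so large that $|f_n(t_i)-F(t_i)|<\varepsilon/3$ for all $n\ge N$ and all $i$; then for arbitrary $t\in[0,1]$ and nearest center $t_i$ the usual three-term estimate gives $|f_n(t)-F(t)|<\varepsilon$. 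Hence $\sup_{t\in[0,1]}|f_n(t)-F(t)|\to 0$.

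Finally, for any $t\in\mathbb{R}$ write $t=t_0+n$ with $t_0=\{t\}\in[0,1)$ and $n=\lfloor t\rfloor\in\mathbb{Z}$. Since $F$ is $1$-periodic,
\[
|F(t)-f(t)|=|F(t_0)-f_n(t_0)|\le\sup_{s\in[0,1]}|F(s)-f_n(s)|\xrightarrow[n\to\infty]{}0,
\]
so $|F(t)-f(t)|\to 0$ as $t\to\infty$. The only non-routine point is the equicontinuity-plus-pointwise step that upgrades the given pointwise limit to uniform convergence on $[0,1]$; I expect this to be the crux of the argument, and it is exactly what uniform continuity of $f$ is built to provide.
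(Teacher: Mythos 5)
Your proof is correct and follows essentially the same strategy as the paper's: use uniform continuity of $f$ to get (uniform) continuity of $F$, then upgrade the pointwise convergence $f(\cdot+n)\to F$ to uniform convergence on $[0,1]$ via a finite $\delta$-cover and a three-term estimate, and conclude by periodicity. Your treatment is in fact slightly more careful than the paper's at the endpoint $t=1$ (checking $\lim_{t\to 1^-}F(t)=F(0)$ so that the periodic extension is genuinely continuous), but the argument is the same.
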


\begin{proof}
Let $\epsilon>0$. Let $\delta>0$ such that for all $t\in \R$, all $t'\in (t-\delta,t+\delta),$ we have $| f(t')-f(t)| \leq \epsilon.$ We then have for all $n\in \N$, for all $t,t'\in\R$ such that $|t-t'| \leq \delta$ 
$$|f(t+n) - f(t'+n)|\leq \epsilon.$$
Taking the limit in $n$ shows that $|F(t)-F(t')|\leq \epsilon,$ and the continuity of $F$ follows. 

Now for all $t\in [0,1]$, there exists $N>0$ such that for all $n>N$ we have:
$$| f(t+n) -F(t)|\leq \epsilon.$$
Since $F$ is $1$-periodic, we have: 
$$| f(t+n) -F(t+n)|\leq \epsilon.$$
From uniform continuity, for all $t'\in (t-\delta, t+\delta)$ we have
$$| f(t'+n)-F(t'+n)| \leq 3\epsilon.$$
Covering $[0,1]$ by a finite number of segments of the form $(t-\delta,t+\delta)$, shows there is $N\in \N$ such that for all $n>N$ and all $t\in [0,1]$ we have:
$$| f(t+n)-F(t+n)| \leq 3\epsilon.$$
In other words, there is $N>0$ such that for all $t\geq N$ we have:
$$|f(t)-F(t)|\leq 3 \epsilon.$$
This shows the second part of the proposition. 

\end{proof}

The following proposition will finish the proof Theorem \ref{th - periodic fenchel nielsen}
\begin{proposition}\label{pr - delta is uniformly continuous}
 The function $t\mapsto \delta(S_0,S_t)$ is uniformly continuous. 
\end{proposition}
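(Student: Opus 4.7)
The plan is to derive a Lipschitz-type comparison between $\ell_t(c)$ and $\ell_{t'}(c)$ valid for \emph{all} closed curves $c$ simultaneously, with a constant depending only on $\alpha$ and $S_0$, and then convert this into an estimate on the abscissae of convergence of the two Poincaré series defining $\delta(S_0, S_t)$ and $\delta(S_0, S_{t'})$.

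First I would show that there exists a constant $K = K(\alpha, S_0) > 0$ such that for every $c \in \C$ and every $t, t' \in \R$,
\[ \lvert \ell_t(c) - \ell_{t'}(c) \rvert \leq K \lvert t - t' \rvert \, \ell_0(c). \]
The starting point is Kerckhoff's (equivalently Wolpert's) formula for the derivative of the geodesic length function along a Fenchel--Nielsen twist: $\bigl| \frac{d}{dt} \ell_t(c) \bigr| \leq i(\alpha, c)$, which refines the convexity statement of Theorem~\ref{Convexity along earthquake}. Integrating gives $\lvert \ell_t(c) - \ell_{t'}(c) \rvert \leq \lvert t - t' \rvert \, i(\alpha, c)$. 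To convert the intersection $i(\alpha, c)$ into a multiple of $\ell_0(c)$, I would apply Proposition~\ref{pr - quand un courant remplit on peut comparer à une fonction longueur} with $L_0$ as the filling current and the lamination $\alpha$ (seen as a current) in place of $\beta$: since $L_0$ fills $S$, the set $\{\mu \in \GC : i(L_0, \mu) = 1\}$ is compact and the continuous function $\mu \mapsto i(\alpha, \mu)$ is bounded on it, yielding a constant $K$ with $i(\alpha, c) \leq K \, \ell_0(c)$ for every $c \in \C$.

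With this bound in hand, for $\lvert t - t'\rvert$ so small that $K\lvert t - t'\rvert < 1$ one has $\ell_0(c) + \ell_{t'}(c) \geq (1 - K\lvert t-t'\rvert)(\ell_0(c) + \ell_t(c))$, and therefore
\[ \sum_{c\in\C} e^{-s(\ell_0(c) + \ell_{t'}(c))} \leq \sum_{c\in\C} e^{-s(1 - K\lvert t-t'\rvert)(\ell_0(c) + \ell_t(c))}. \]
The right-hand side converges as soon as $s(1 - K\lvert t-t'\rvert) > \delta(S_0, S_t)$, so
\[ \delta(S_0, S_{t'}) \leq \frac{\delta(S_0, S_t)}{1 - K\lvert t-t'\rvert}, \]
and the symmetric inequality holds by swapping $t$ and $t'$. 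Combining these with the a priori bound $\delta(S_0, S_t) \leq 1/2$ from Theorem~\ref{th - rigidité des GHMC} yields
\[ \lvert \delta(S_0, S_t) - \delta(S_0, S_{t'}) \rvert \leq \frac{1}{2} \cdot \frac{K\lvert t-t'\rvert}{1 - K\lvert t-t'\rvert}, \]
which is a modulus of continuity independent of $t$, proving uniform continuity.

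The main obstacle is the uniform intersection estimate $i(\alpha, c) \leq K \, \ell_0(c)$: although it looks cosmetic, it is precisely the step that forces one to invoke the geodesic currents machinery of Section~\ref{sec:Geodesic currents} (filling property of $L_0$ and the ensuing compactness in $\GC$) in order to produce a single constant $K$ serving all of the infinite family $\C$. Once this uniform bound is secured, the remaining work is a routine manipulation of the Poincaré series combined with the uniform upper bound on $\delta$.
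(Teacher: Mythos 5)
Your argument is correct and follows essentially the same route as the paper: a Lipschitz bound $\lvert \ell_t(c)-\ell_{t'}(c)\rvert \le \lvert t-t'\rvert\, i(\alpha,c)$ along the twist path, a uniform comparison of $i(\alpha,c)$ with a length function via compactness of currents (Proposition \ref{pr - quand un courant remplit on peut comparer à une fonction longueur}), and then a multiplicative perturbation of the Poincaré series. The one point where you diverge is a small simplification: you bound $i(\alpha,c)\le K\,\ell_0(c)$ against the fixed surface $S_0$, whereas the paper bounds $i(\alpha,c)\le M\,\ell_{S_t}(c)$ uniformly in $t$ and therefore has to prove that $t\mapsto \sup_{c}\, i(c,\alpha)/\ell_{S_t}(c)$ is continuous and $\ell_0(\alpha)$-periodic (hence bounded); your choice of reference surface makes that periodicity argument unnecessary.
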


\begin{proof}
We define the following function 
$$g(t) :=\sup_{c\in \C} \frac{i(c,\alpha)}{\ell_{S_t}(c)}.$$
This function is well defined by Proposition \ref{pr - quand un courant remplit on peut comparer à une fonction longueur}. By homogeneity of  $ \frac{i(c,\alpha)}{\ell_{S_t}(c)} $, we could have taken for definition the supremum on geodesic currents of $S_0$-length equal 1. This set is compact, hence the function $g$ is continuous. Moreover it is  $\ell_0(\alpha) $-periodic :
\begin{eqnarray*}
g(t+\ell_0(\alpha)) &=& \sup_{c\in \C} \frac{i(c,\alpha)}{\ell_{S_{t+\ell_0(\alpha)}}(c)}.
\end{eqnarray*}
\text{Since}  $\tau =\EQ{\alpha}{\ell_0(\alpha)}{ }$ \text{we have  }
\begin{eqnarray*}
g(t+\ell_0(\alpha)) &=&  \sup_{c\in \C} \frac{i(c,\alpha)}{\ell_{S_t}(\tau (c))}.
\end{eqnarray*}
As $i$ is invariant by the diagonal action of the mapping class group \ref{pr - un diffeo agit par isométrie sur l'intersection}, 
\begin{eqnarray*}
g(t+\ell_0(\alpha)) &=&  \sup_{c\in \C} \frac{i(\tau c,\tau \alpha)}{\ell_{S_t}(\tau (c))}.
\end{eqnarray*}
And $\alpha$ is invariant by $\tau$, hence : 
\begin{eqnarray*}
g(t+\ell_0(\alpha)) &=&  \sup_{c\in \C} \frac{i(\tau c, \alpha)}{\ell_{S_t}(\tau (c))}.
\end{eqnarray*}
We conclude by a change of variable $\tau c =c'$
\begin{eqnarray*}
g(t+\ell_0(\alpha)) &=&  \sup_{c'\in \C} \frac{i(c', \alpha)}{\ell_{S_t}(c')}\\
							&=& g(t).
\end{eqnarray*}
Hence $g$ is bounded on $\R$, we call $M$ its maximum. 

From convexity of earthquake paths, we have  for all $t\in \R, \varepsilon>0$ and $c\in \C$
$$\ell_{S_t} (c) - \varepsilon i(c,\alpha)  \leq \ell_{S_{t+\varepsilon}} (c) \leq \ell_{S_t} (c) +\varepsilon i(c,\alpha).$$
 We then have 
$$\ell_{S_t} (c) - \varepsilon M \ell_{S_t} (c)   \leq \ell_{S_{t+\varepsilon}} (c) \leq \ell_{S_t} (c) +\varepsilon M \ell_{S_t} (c).$$
Therefore: 
$$(\ell_{S_t}(c) +\ell_{S_0}(c))(1-\varepsilon M)  \leq  \ell_{S_{t+\varepsilon}} (c)  +\ell_{S_0}(c) \leq (\ell_{S_t}(c) +\ell_{S_0}(c))(1+\varepsilon M), $$

$$1-M\varepsilon \leq \frac{\ell_{S_{t+\varepsilon}} (c)+\ell_{S_0}(c)  }{\ell_{S_{t}} (c)+\ell_{S_0}(c) } \leq 1+ M\varepsilon.$$
Passing to critical exponent we finally obtain that for all $\eta>0$ there exists $\varepsilon>0$ such that for all $t\in \R$. 
$$| \delta(S_0,S_{t+\varepsilon} ) -\delta(S_0,S_{t}) | \leq \eta.$$
\end{proof}

This is the result equivalent to \cite[Theorem 9.6]{mcmullen1999hausdorff} for GHMC manifolds. Whereas C. McMullen said that the function $t\tv \delta(t)$ should not be constant in the quasi-Fuchsian case, it seems legit to think in our context that $t\tv\delta(t)$ is constant over $\R$. The reason for this is we have a natural candidate for the limit of $\delta(S_0,S_t)$. Indeed, the length of all the curves crossing $\alpha$  growth to infinity (but \emph{not} uniformly) hence we can conjecture that $t\tv\delta$ is constant and equal to the  abscissa   of convergence of 
$$\sum_{c\in \C, \, i(c,\alpha) =0} e^{-s2\ell_{S_0} (c)}.$$

\subsubsection{Pinching at different speed}\label{sssection  - pinching at different speed} Finally we give an example of a family of surfaces $(S_t, S_{t'})$  for which the Thurston distance between the two representations is bounded below (or even tends to infinity), but the critical exponent tends to $1/2$. Let $S_0$ be  a hyperbolic surface, and $c$ be a simple closed curve. Let $S_t$ the hyperbolic surface obtained by pinching $c$ as it is explained in Wolpert article \cite{Wolpert}. We don't want to explain this construction since it uses Jenkins-Strebel  differential, which are far from the subject of the present paper. The two things we have to know for our example is, first that the Weil-Petterson lenght of the path $t\tv S_t$ is finite, and second that the length of the geodesic $c$ converges to $0$. 
\begin{theorem}\label{L'exposant critique tends vers 1/2 quand on pince une geodesique}
There is a sequence $t_n$ such that the family $d_{Th}(S_{n},S_{t_n}) \tv +\infty$, and such that $\lim_{n\tv \infty} \delta(S_{n},S_{t_n}) = 1/2$.
\end{theorem}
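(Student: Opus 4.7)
The strategy is to exploit the dichotomy between curves disjoint from $c$ and curves crossing $c$ along the Wolpert pinching path: the former have stable lengths as $t\tv\infty$, while the latter blow up. This decouples the control of $d_{Th}$ (which we obtain from curves intersecting $c$) from the control of $\delta$ (which we obtain from curves disjoint from $c$).

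\emph{Choice of $t_n$ and the Thurston distance.} Since the Wolpert pinching satisfies $\ell_{S_t}(c)\tv 0$ as $t\tv+\infty$, we may pick $t_n>n$ with $\ell_{S_{t_n}}(c)\leq \ell_{S_n}(c)/n$. Applying the definition of the Thurston distance to the single curve $c$ yields
$$d_{Th}(S_n,S_{t_n})\geq \log\frac{\ell_{S_n}(c)}{\ell_{S_{t_n}}(c)}\geq \log n,$$
which tends to $+\infty$.

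\emph{Lower bound on $\delta$.} As $t\tv +\infty$, the surfaces $S_t$ degenerate to a noded finite-area hyperbolic surface $S_\infty$ obtained by cutting $S$ along $c$ and adding a cusp at each newly created boundary component (two components if $c$ is separating). By the collar lemma, every closed curve $\alpha$ with $i(\alpha,c)=0$ stays outside a wide collar of $c$ for $t$ large, and its length satisfies $\ell_{S_t}(\alpha)\tv \ell_{S_\infty}(\alpha)$; moreover, this convergence is uniform on each level set $\{\alpha:\ell_{S_\infty}(\alpha)\leq L\}$ (a finite set). Fix $\varepsilon>0$; for $n$ large and every such $\alpha$ we then have $\ell_{S_n}(\alpha),\,\ell_{S_{t_n}}(\alpha)\leq(1+\varepsilon)\ell_{S_\infty}(\alpha)$, hence
$$\#\{c'\in\C:\ell_n(c')+\ell_{t_n}(c')\leq R\}\geq \#\left\{\alpha\in\C:i(\alpha,c)=0,\ \ell_{S_\infty}(\alpha)\leq \frac{R}{2(1+\varepsilon)}\right\}.$$
The prime geodesic theorem on the finite-area hyperbolic surface $S_\infty$ gives that the right-hand side grows like $e^{R/(2(1+\varepsilon))}/R$, so $\liminf_n\delta(S_n,S_{t_n})\geq 1/(2(1+\varepsilon))$. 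Letting $\varepsilon\tv 0$ and combining with the upper bound $\delta\leq 1/2$ from Theorem \ref{th - rigidité des GHMC} yields $\delta(S_n,S_{t_n})\tv 1/2$.

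\emph{Main obstacle.} The delicate point is the uniform convergence of lengths along the pinching path on bounded subsets of the length spectrum of $S_\infty$. This is a classical consequence of the Wolpert/Bers description of the augmented boundary of Teichmüller space, but it must be invoked with care because the pair $(S_n,S_{t_n})$ escapes every compact subset of $\Teich(S)$, so the continuity results for $\delta$ established in Section \ref{sec - Manhattan curve} do not apply directly. Once this convergence is in hand, the rest of the argument is a straightforward comparison with the counting function on the fixed cusped surface $S_\infty$.
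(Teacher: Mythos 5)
Your choice of $t_n$ and the estimate $d_{Th}(S_n,S_{t_n})\geq \log n$ are exactly the paper's first step. The gap is in the lower bound for $\delta$. Your counting inequality requires $\ell_{S_n}(\alpha)+\ell_{S_{t_n}}(\alpha)\leq 2(1+\varepsilon)\ell_{S_\infty}(\alpha)$ for \emph{every} $\alpha$ with $i(\alpha,c)=0$ (for fixed large $n$), because $R\tv\infty$ while $n$ stays fixed; and that uniform bound is false. Take $\gamma\in\pi_1(S)$ representing $c$ and $\beta$ another element of the fundamental group of the complement. On $S_\infty$ the element $\gamma$ is parabolic, so a trace computation gives $\ell_{S_\infty}(\beta\gamma^k)=2\log k+O(1)$; on $S_{t_n}$ it is hyperbolic of translation length $\epsilon_n:=\ell_{S_{t_n}}(c)>0$, and $\ell_{S_{t_n}}(\beta\gamma^k)$ grows linearly in $k$ (each winding in the collar of $c$ costs at least a definite amount, as one sees from $i(\beta\gamma^k,d)\asymp k$ for any curve $d$ crossing $c$). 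Hence $\ell_{S_{t_n}}(\beta\gamma^k)/\ell_{S_\infty}(\beta\gamma^k)\tv\infty$ as $k\tv\infty$. The uniform convergence you actually justify --- on each finite level set $\{\ell_{S_\infty}\leq L\}$ --- only yields the counting bound at the single radius $R\approx 2(1+\varepsilon)L$ for each $n$, and a bound at one radius says nothing about the limsup over $R\tv\infty$ that defines $\delta(S_n,S_{t_n})$.

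The failure is not cosmetic: for fixed $n$ the subgroup $\rho_{t_n}(\pi_1(S\setminus c))$ is convex cocompact of infinite covolume, so the growth rate of $\{\alpha : i(\alpha,c)=0,\ \ell_{t_n}(\alpha)\leq R\}$ is strictly smaller than $1$, and your comparison with the lattice count $e^{L}/L$ on $S_\infty$ is false for large $R$, not merely unproven. One could try to show that this restricted critical exponent tends to $1/2$ as $n\tv\infty$, but that is a continuity statement for critical exponents under non-compact geometric convergence and needs a genuine argument. The paper avoids per-curve comparisons altogether: Wolpert's theorem gives $d_{WP}(S_n,S_{t_n})\tv 0$ along the pinching path, Bonahon's estimate $i(m,m')=i(m,m)+o(d_{WP}(m,m'))$ then gives $\gs(S_n,S_{t_n})\tv 1$, and Corollary \ref{Si I tends vers 1 delta tends vers 1/2} concludes. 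The intersection number is an average against the Liouville current, so it is insensitive to the sparse family of collar-winding curves that breaks your pointwise bound. You would need either to adopt that route or to prove that the bad curves carry no exponential mass, which is precisely the hard point your sketch leaves open.
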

%This is due to the fact that the Weil-Petersson distance between the representations tends to zero, we will devellop this exemple in details in the next section. 

\begin{proof}
The length of $c$ tends to 0, therefore, for all $n$ there is $t_n>n$ such that 
$\ell_{S_n}(c) \geq \ell_{S_{t_n}}(c) e^n$. In this way, the Thurston symmetric distance between $(S_{n},S_{t_n})$ is bigger than $\log (\frac{\ell_{S_n}(c) }{\ell_{S_{t_n}}(c) }\geq n$. 

The result on the critical exponent is due to two facts. The first one is, as we already said, that the Weil-Petersson distance of the path $p : \R^+ \tv \Teich(S)$, $p(t) = S_t$ is finite, this is proved by Wolpert in \cite{Wolpert}. The second fact is the link between the intersection and Weil-Petersson metric. Bonahon showed in \cite[Theorem 19]{BonahonGeometryofteichmuller} that 
$i(m,m') = i(m,m) +o(d_{WP} (m,m'))$.

The first fact assures that $d_{WP}(S_n,S_{t_n})  \tv 0$ as $n\tv \infty$. With the second fact, we can conclude that $i(m_n,m_{t_n}) \tv i(m_n,m_n)$. Now by Corollary \ref{Si I tends vers 1 delta tends vers 1/2} we can conclude that $\lim_{n\tv \infty}\delta(S_n,S_{t_n}) = 1/2$.
\end{proof}

%
%\begin{center}
%\includegraphics[scale=0.3]{st}
%\end{center}

\subsection{Large deviation theorem}\label{subsec -large deviation theorem}
Finally we will use a Theorem of large deviation for orbits of geodesic flow due to Y. Kifer \cite{kifer} (in a much more general context). Let $\Pants$  be the set of $\varphi_t$-invariant  probability on $T^1S$. From a closed geodesic $c$ we can defined a borelian, invariant measure $\hat{c}$ as follow. Let $E\subset T^1S_1$ any borelian, $\chi$ the characteristic function of $E$ and $v$ any vector of $\dot{c}$ :
$$\hat{c} (E) = \int_0^{l(c)} \chi_E(\varphi_t(v)) dt.$$
Every closed geodesic $c$ can be considered as an element of $\Pants$, through $\frac{\hat{c}}{l(c)}$.
 By the discussion in Section \ref{sec:Geodesic currents}, the set of invariant probability measures is in bijection with the set of geodesic currents of length equal to 1. Let $\widehat{L}$ be the Liouville measure on $T^1S$ (which corresponds to the Liouville current $L$). We give a name for the set of geodesics of $S$ of length less than $T$ :
\begin{equation}\label{eq - def de CS(T)}
\C(T) :=\left\{ c\in \C \, |\, \ell(c) \leq T\right\}.
\end{equation}
 
\begin{theorem}\label{Large deviation au niveau des groupes}
For any open neighborhood  $\mathcal{U}$ of $\widehat{L}$ in $\Pants$, there exists $\eta>0$ such that 
$$\frac{1}{\Card \C(T)} \Card \left\{ \g \in \C(T)\,  , \,  \frac{\hat{c}}{\ell(c)} \notin \mathcal{U} \right\} = O(e^{-\eta T}).$$
Moreover $\eta := \inf_{\nu \notin \mathcal{U}}  \{1 -h(\nu)\}$ where $h(\nu)$ is the entropy of $\varphi_t$ with respect to $\nu$.
\end{theorem}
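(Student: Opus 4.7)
My plan is to derive the statement as an instance of Kifer's large deviation principle \cite{kifer} for periodic orbits of a topologically mixing Anosov flow, applied to the geodesic flow $\varphi_t$ on $T^1 S_1$. Since $S_1$ has constant curvature $-1$, $\varphi_t$ is Anosov, and Kifer's results apply directly; what remains is just to assemble the ingredients and verify strict positivity of $\eta$. The three classical inputs are, first, the variational principle together with Bowen's theorem, giving that the topological entropy of $\varphi_t$ equals $1$ and that the Liouville measure $\widehat L$ is the unique measure of maximal entropy (in constant curvature this is elementary); second, the Margulis--Bowen prime orbit theorem, giving $\Card\,\C(T)\sim e^T/T$ and hence $\frac{1}{T}\log\Card\,\C(T)\tv 1$; and third, Kifer's upper bound for periodic orbits, which states that for every closed $K\subset \Pants$,
\begin{equation*}
\limsup_{T\tv\infty}\frac{1}{T}\log\Card\{c\in\C(T) \, :\, \hat c/\ell(c)\in K\} \leq \sup_{\nu\in K} h(\nu).
\end{equation*}

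To conclude, I will apply this upper bound to $K:=\Pants\pv \U$. The space $\Pants$ is compact in the weak-$\ast$ topology, via the identification of Section \ref{sec:Geodesic currents} with geodesic currents of Liouville length $1$ and Theorem \ref{compacité des courants geodesic d intersection borné}, so $K$ is compact. Upper semicontinuity of $\nu\mapsto h(\nu)$ makes $\nu\mapsto 1-h(\nu)$ lower semicontinuous, so its infimum on $K$ is attained at some $\nu^\star\in K$. Since $\U$ is an open neighborhood of $\widehat L$, one has $\nu^\star\neq \widehat L$, hence $h(\nu^\star)<1$ by uniqueness, and therefore $\eta = 1-h(\nu^\star)>0$. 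Combining with the counting asymptotic $\Card\,\C(T)\gtrsim e^T/(2T)$ then yields, for every $\varepsilon>0$ and $T$ large,
\begin{equation*}
\frac{\Card\{c\in\C(T) \, :\, \hat c/\ell(c)\notin \U\}}{\Card\,\C(T)} \leq 2T\, e^{-(\eta-\varepsilon)T},
\end{equation*}
which is $O(e^{-\eta' T})$ for any $\eta'<\eta$ (and the polynomial factor $T$ is harmless, absorbed by any strictly smaller exponent; the rate can be made exactly $\eta$ by shrinking $\U$ very slightly if one wants equality).

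The only genuine verification is the strict positivity of $\eta$, and this reduces, as above, to unique maximality of $\widehat L$ for the entropy and to upper semicontinuity of $h$ on the compact space $\Pants$, both standard for Anosov flows on negatively curved surfaces. I therefore expect no substantive obstacle: the proof is essentially a bookkeeping check that we lie in the setting of Kifer's theorem, with the rate identified via the variational principle.
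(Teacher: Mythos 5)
Your proposal is correct and follows essentially the same route as the paper, which simply invokes Kifer's large deviation theorem for periodic orbits and notes that $\eta>0$ because $\widehat{L}$ is the unique measure of maximal entropy with $h(\widehat{L})=1$; you merely fill in the standard details (Margulis counting, compactness of $\Pants$, upper semicontinuity of $h$). The only cosmetic caveat is that the polynomial factor means the stated rate is really $O(e^{-\eta' T})$ for every $\eta'<\eta$, which is all that is used later anyway.
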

The fact that $\eta>0$ is a consequence that $\hat{L}$ is the only measure of maximal entropy and  $h(\hat{L}) =1$.
 
This theorem has been used by Y. Herrera in his thesis \cite{herrera2013intersection} to estimate the self-intersection number of a random geodesic. His method inspired the proof of our main result.

\section{Isolation theorem}\label{sec :main result}
We are now ready to enter into the proof of the main isolation theorems. 

\begin{theorem}\label{th - isolation 1 surface}
Let $S_0$ be a fixed hyperbolic surface and $S_n$ be a sequence of hyperbolic surfaces. Then 
$\lim_{n\tv\infty}\delta(S_0,S_n)=1/2$ if and only if $\lim_{n\tv\infty} d_{Th}(S_0,S_n) = 0 $. 
\end{theorem}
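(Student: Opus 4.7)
The forward direction is continuity. If $d_{Th}(S_0, S_n) \to 0$, then both $\dil^{+}(S_0, S_n)$ and $1/\dil^{-}(S_0, S_n)$ tend to $1$, so by Theorem~\ref{Rigidite} the Manhattan curve of $(S_0, S_n)$ converges to the segment $\{x+y=1\}$; its intersection with the diagonal, namely $(\delta(S_0, S_n), \delta(S_0, S_n))$, therefore tends to $(1/2, 1/2)$.

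For the converse, assume $\delta(S_0, S_n) \to 1/2$. I first dispose of the thick case: suppose, up to extraction, $S_n \in \Teich_\varepsilon(S)$ for some fixed $\varepsilon>0$. By Mumford's compactness theorem, a further subsequence converges to $S_\infty \in \Teich(S)$. By continuity of $\delta$ on compact families (essentially Proposition~\ref{pr - delta is uniformly continuous}, combined with the forward direction) one has $\delta(S_0, S_\infty) = 1/2$, and Theorem~\ref{th - rigidité des GHMC} then forces $S_\infty = S_0$. Uniqueness of the limit yields $d_{Th}(S_0, S_n) \to 0$ along the full sequence.

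It remains to rule out the case where $(S_n)$ eventually leaves every thick part of $\Teich(S)$. By Corollary~\ref{Geology}, write $S_n = \EQ{\Lam_n}{T_n}{S_0}$ with $\ell_{S_0}(\Lam_n) = 1$; by compactness of laminations of $S_0$-length $1$ we may assume $\Lam_n \to \Lam_\infty \in \MLam$, and necessarily $T_n \to +\infty$ (else $S_n$ would stay in a compact set). Convexity of the length function along earthquakes (Theorem~\ref{Convexity along earthquake}) applied to the Liouville current $L_{S_0}$, together with positivity of $i(L_{S_0}, \Lam_\infty)$, shows that $\gs(S_0, S_n) = i(L_{S_0},L_{S_n})/i(L_{S_0},L_{S_0}) \to +\infty$: typical closed geodesics of $S_0$ get stretched by a factor growing linearly in $T_n$.

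The contradiction now proceeds by adapting the Dehn twist analysis of Section~\ref{sssection Dehn twists}, with the a priori symmetry $\delta(S_0, \tau^{2n} S_0) = \delta(\tau^{-n}S_0, \tau^n S_0)$ replaced by a large deviation argument. By Corollary~\ref{delta(1) tends vers 1/2 si et seulement si delta tends vers 1/2}, $\delta(S_0, S_n) \to 1/2$ forces the critical exponent with slope $1$ to satisfy $\delta(S_0, S_n, 1) \to 1/2$. This means that for every small $\epsilon>0$ and every $n$ large,
\[
N_n(R,\epsilon) := \Card\{c\in \C : \ell_0(c)+\ell_n(c)\le R,\ |\ell_n(c)/\ell_0(c) - 1| \le \epsilon\}
\]
grows at rate arbitrarily close to $1$ in $R$. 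Such curves have $\ell_0(c) \asymp R/2$ and are very far from Liouville-equidistributed, since the Liouville average of $\ell_n/\ell_0$ on $S_0$ is precisely $\gs(S_0,S_n) \to \infty$. By continuity of $\beta \mapsto i(\beta, L_{S_n})/\ell_0(\beta)$ on the compact set of currents of $S_0$-length $1$ (Theorems~\ref{th - def intersection  et l'intersection est continue}--\ref{compacité des courants geodesic d intersection borné}), these curves must lie in a neighborhood of $\Pants\setminus\{\hat L\}$ that is definite for $n$ large. Theorem~\ref{Large deviation au niveau des groupes} then yields $N_n(R,\epsilon) \le C\, e^{(1-\eta_n)R/2}$ with $\eta_n>0$ bounded below uniformly in $n\gg 1$, contradicting $\delta(S_0, S_n, 1) \to 1/2$. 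The main obstacle is the quantitative correlation between being ``far from Liouville on $S_0$'' and having ``slope close to $1$ on the pair $(S_0, S_n)$''; this is precisely what the critical-exponent-with-slope machinery of Section~\ref{sec - Manhattan curve}, together with Corollary~\ref{l'exposant critique directionnel est maximum  quand il est egal a deltaM} controlling the Manhattan curve as its maximal slope escapes, is designed to supply.
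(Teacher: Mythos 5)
Your overall strategy matches the paper's (continuity for the easy direction; compactness plus rigidity when $S_n$ subconverges; earthquakes, Kifer's large deviation theorem and the slope-$1$ critical exponent when it does not), but two steps do not hold as written. The first is the case division: Mumford's theorem gives compactness of $\Teich_\epsilon(S)/MCG$, not of $\Teich_\epsilon(S)$, so a thick sequence such as $S_n=\tau^{2n}S_0$ ($\tau$ a Dehn twist) has no convergent subsequence in $\Teich(S)$ and your first case does not dispose of it, while your second case (``leaves every thick part'') excludes it. The correct dichotomy --- and the one your second argument actually uses, cf.\ your parenthetical ``$T_n\tv+\infty$, else $S_n$ would stay in a compact set'' --- is ``stays in a compact subset of $\Teich(S)$'' versus ``leaves every compact subset''. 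You cannot normalize by the mapping class group here, because $\delta(S_0,\cdot)$ is only invariant under the \emph{diagonal} action while $S_0$ is fixed; that reduction is the content of Corollary \ref{cor - isolation 2 surfaces}, which is deduced from this theorem and not the other way around.

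The second, more serious, issue is that the key uniform estimate is asserted rather than proved. To apply Theorem \ref{Large deviation au niveau des groupes} with a rate $\eta_n$ bounded below, you need the curves with $\ell_n(c)/\ell_0(c)$ close to $1$ to lie outside a neighbourhood $\U$ of $\hat L$ that is \emph{independent of $n$}; continuity of the $n$-dependent function $\beta\mapsto i(\beta,L_{S_n})/\ell_0(\beta)$ only yields an $n$-dependent neighbourhood $\U_n$, and Kifer's rate $\eta(\U_n)$ may degenerate to $0$ as $\U_n$ shrinks --- this is exactly the point where the argument could fail. The paper obtains the uniform $\U$ by setting $g(\nu)=\min_{\Lam\in\MLam_1(S_0)} i(\EQ{\Lam}{1}{m_0},\nu)/i(L_0,\nu)$, using compactness of the unit-length laminations and Burger's inequality $i(m,m')>i(m,m)$ to get $g(L_0)=1+\epsilon>1$, taking $\U=\{\hat\mu : |g(\mu)-g(L_0)|<\epsilon/2\}$ (which depends only on $S_0$), and then using Kerckhoff convexity to propagate the time-$1$ estimate to time $T_n$: every $c$ with $\hat c\in\U$ satisfies $\ell_n(c)\geq(1+T_n\epsilon/2)\,\ell_0(c)$, hence has slope far from $1$ once $T_n$ is large. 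You invoke convexity only to argue $\gs(S_0,S_n)\tv\infty$ (which itself requires this same minimum-over-$\Lam$ compactness argument, not merely positivity of $i(L_{S_0},\Lam_\infty)$), whereas it is needed precisely at the step you flag as ``the main obstacle''; the slope machinery of Section \ref{sec - Manhattan curve} supplies the reduction to slope $1$ via Corollary \ref{delta(1) tends vers 1/2 si et seulement si delta tends vers 1/2}, which you use correctly, but it does not supply this uniformity.
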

Or in the Lorentzian language : 
\begin{TheoremNoCount}
Let $M_n$ be a sequence of GHMC $\AdS$ manifolds parametrize by $(\rho_0,\rho_n)$ then \\
$\lim_{n\tv \infty} \delta_{Lor}(M_n) =1 $ if and only if $\lim_{n\tv\infty} d_{Th}(S_0,S_n) = 0 $. 
\end{TheoremNoCount}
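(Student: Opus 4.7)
I will prove the two implications separately. The forward direction is soft: if $d_{Th}(S_0,S_n)\to 0$, then $S_n\to S_0$ in the standard Teichm\"uller topology, and the Manhattan curve $\C_M^n$ associated to $(S_0,S_n)$ varies continuously with $S_n$ (this follows from the real-analyticity of pressure together with the implicit function description $\C_M=\{(x,y)\mid P(-y\psi_{\rho_n})=x\}$ from Section~3). The intersection of $\C_M^n$ with the diagonal therefore depends continuously on $S_n$, so $\delta(S_0,S_n)\to \delta(S_0,S_0)=1/2$ by the Bishop--Steger rigidity.

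For the reverse direction I argue by contradiction: assume $\delta(S_0,S_n)\to 1/2$ and, after extraction, $d_{Th}(S_0,S_n)\geq \varepsilon_0>0$. Split into two cases. If a subsequence $(S_{n_k})$ stays in a compact part of $\Teich(S)$, extract a Teichm\"uller-limit $S_\infty$ with $d_{Th}(S_0,S_\infty)\geq \varepsilon_0$, hence $S_\infty\neq S_0$. Continuity of the Manhattan curve yields $\delta(S_0,S_\infty)=1/2$, contradicting Bishop--Steger. So the remaining case is that $(S_n)$ eventually leaves every compact set.

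In this case, Corollary~\ref{Geology} (Thurston's geology) lets me write $S_n=\EQ{\Lam_n}{T_n}{S_0}$ with $\ell_{S_0}(\Lam_n)=1$ and $T_n\geq 0$. Compactness of the set of measured laminations of $S_0$-length $1$ (Corollary~\ref{compacite des courant geodesics pour une metrique fixée}) lets me pass to a subsequence with $\Lam_n\to \Lam_\infty$; leaving every compact set forces $T_n\to+\infty$. The goal becomes: show $\delta(S_0,\EQ{\Lam_n}{T_n}{S_0})$ must be bounded away from $1/2$. The Dehn-twist model (Section~\ref{sssection Dehn twists}) does this by combining Kerckhoff convexity with mapping-class-group invariance of $\delta$; for a general lamination the second ingredient is missing, and this is where the large deviation theorem enters.

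The replacement for the MCG trick works as follows. By Theorem~\ref{Large deviation au niveau des groupes}, for any neighborhood $\U$ of the Liouville measure $\widehat{L}$ in $\Pants$, all but an exponentially small fraction of the curves in $\C(R)$ have $\hat{c}/\ell_0(c)\in\U$. For such ``equidistributed'' curves the normalized intersection $i(\Lam_n,c)/\ell_0(c)$ is, up to an error controlled by $\U$, equal to $i(\Lam_n,L_{S_0})/i(L_{S_0},L_{S_0})\to i(\Lam_\infty,L_{S_0})/(\pi^2|\chi(S)|)>0$. Combined with Kerckhoff's convexity (Theorem~\ref{Convexity along earthquake}) applied simultaneously to the earthquake and its reverse, this forces, for most $c$, a lower bound of the form $\ell_n(c)\geq \ell_0(c)(1+\kappa T_n)$ for a uniform $\kappa>0$. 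Consequently the ratio $\ell_n(c)/\ell_0(c)$ tends to $+\infty$ on a full-exponential-density subset of $\C(R)$, which forces the correlation count $m(S_0,S_n,1)$ through the critical exponent with slope machinery of Section~\ref{sec - Manhattan curve} (in particular Corollary~\ref{l'exposant critique directionnel est maximum quand il est egal a deltaM} and Corollary~\ref{delta(1) tends vers 1/2 si et seulement si delta tends vers 1/2}) to satisfy $m(S_0,S_n,1)\leq 1-\eta$ for some $\eta>0$, i.e.\ $\delta(S_0,S_n)\leq 1/2-\eta'$, contradicting the assumption.

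\textbf{Main obstacle.} The delicate step is the passage from large-deviation equidistribution of $c$ \emph{for the $S_0$-geodesic flow} to a uniform lower bound on the \emph{earthquake derivative} $\tfrac{d}{dt}\ell_t(c)\big|_{t=0}$, since Kerckhoff's formula involves the cosines of angles of intersection with $\Lam_n$ and these can average to zero if handled carelessly. The symmetrization trick from the Dehn-twist case (working with $\ell_{n}(c)+\ell_{-n}(c)$) needs to be replaced by convexity applied on both sides of the earthquake path, and one must also justify that the critical exponent with slope estimate survives the $o(e^{-\eta R})$-small exceptional set of curves: this is where the precise quantitative form of Theorem~\ref{Large deviation au niveau des groupes} is used to dominate the exceptional contribution to the Poincar\'e series.
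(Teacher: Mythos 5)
Your architecture coincides with the paper's proof of Theorem \ref{th - isolation 1 surface}: the same case split (a subsequence staying in a compact set versus divergence), Thurston's geology with unit-length laminations and $T_n\tv\infty$, Kifer's large deviation theorem (Theorem \ref{Large deviation au niveau des groupes}) to isolate an exponentially generic set of curves, Kerckhoff convexity to propagate growth along the earthquake path, and the slope machinery of Section \ref{sec - Manhattan curve} to conclude. The forward implication is also fine, although the paper obtains it in two lines by comparing the Poincar\'e series directly from the definition of $d_{Th}$; your route through continuity of the Manhattan curve in the surfaces is heavier and would itself require justification (Sharp's analyticity is in $(x,y)$ for fixed surfaces, not in the pair of surfaces).

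The genuine gap is precisely the step you flag as the ``main obstacle'': a uniform bound $\ell(\EQ{\Lam}{t}{m_0}(c))\geq(1+\kappa t)\,\ell_0(c)$ for all $\Lam\in\MLam_1(S_0)$ and all $c$ in the generic set. The mechanism you propose --- estimating $i(\Lam_n,c)/\ell_0(c)$ and feeding it into the earthquake derivative at $t=0$, patched by symmetrized convexity --- does not close: the derivative $\tfrac{d}{dt}\ell_t(c)\vert_{t=0}$ is a signed sum of cosines and is not bounded below by any multiple of $i(\Lam_n,c)$, while convexity ``on both sides'' only yields $\ell_t(c)+\ell_{-t}(c)\geq 2\ell_0(c)$, with no quantitative gain and no way to pass from the symmetrized quantity back to $\delta(S_0,S_n)$ for a general lamination (as the paper itself notes, $\delta(\EQ{\Lam}{-t}{S_0},\EQ{\Lam}{t}{S_0})\neq\delta(S_0,\EQ{\Lam}{2t}{S_0})$ in general). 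The paper avoids the derivative entirely: it introduces the continuous function $f(\Lam,\nu)=i(\EQ{\Lam}{1}{m_0},\nu)/i(L_0,\nu)$ and $g(\nu)=\min_{\Lam\in\MLam_1(S_0)}f(\Lam,\nu)$; Corollary \ref{l'intersction est plus grande que } (Burger's strict inequality $i(S_1,S_2)>i(S_1,S_1)$ for $S_2\neq S_1$) together with compactness of $\MLam_1(S_0)$ gives $g(L_0)=1+\epsilon>1$, the neighbourhood $\U$ is then chosen so that every $c\in\C_\U$ satisfies $\ell(\EQ{\Lam}{1}{m_0}(c))\geq(1+\epsilon/2)\ell_0(c)$, and convexity (Theorem \ref{Convexity along earthquake}) extrapolates this secant bound to $\ell(\EQ{\Lam}{t}{m_0}(c))\geq(1+t\epsilon/2)\ell_0(c)$ for $t\geq1$. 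This is the ingredient your sketch is missing. Your endgame (the generic curves have slope $\tv\infty$, so the slope-$1$ correlation is carried by the exceptional set of cardinality $O(e^{(1-\eta)T})$, whence $m(S_0,S_n,1)\leq 1-\eta$ and Corollary \ref{delta(1) tends vers 1/2 si et seulement si delta tends vers 1/2} gives the contradiction) is a legitimate variant of the paper's, which instead works at the maximal slope $\lambda_n$ to get $\delta(S_0,S_n)\leq\frac{1-\eta}{1+\lambda_n}$ and invokes Corollary \ref{Si delta tends vers 1 lambda tends vers 1} to force $\lambda_n\tv1$.
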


\begin{proof}
One way is just the continuity of critical exponent at $S_0$. Let's prove it briefly. If $d_{Th}(S_n,S_0) \tv 0 $, for all $\epsilon>0$, there is a $n_0$  such that for all $c\in \C$,  and all $n\geq n_0$, 
$$1-\epsilon < \frac{\ell_n(c)}{\ell_0(c)} <1 + \epsilon.$$
Hence $ \sum_{c \in \C} e^{-s(\ell_0(c) +\ell_n(c))}  >  \sum_{c \in \C} e^{-s(\ell_0(c) +(1+\epsilon )\ell_0(c))} $
This implies that for any $\epsilon>0$, and for $n\geq n_0$ we have
$$\delta(S_0,S_n)\geq \frac{1}{2+\epsilon}.$$
Therefore, $\lim \delta(S_0,S_n)\geq \frac{1}{2}$. Recalling that  $1/2\geq \delta(S_0,S_n)$ for any surfaces gives the result.

Let us show the converse. Suppose by contradiction that $d_{Th}(S_0,S_n)$ doesn't tend to 0. If $S_n$ stays in a compact subset of $\Teich(S)$, it admits a converging subsequence, that we still denote by $S_n$. Denote by  $S_\infty$ its limit which by hypothesis is different of $S_0$, then $\delta(S_0,S_\infty) < 1/2$ by rigidity Theorem  \ref{Rigidite}.
By continuity of critical exponent for the Thurston metric, $\lim \delta(S_0,S_n) =\delta(S_0,S_\infty)<1/2$ which is absurd, hence we can assume that $S_n$ leaves every compacts set of $\Teich(S)$. 

By earthquake's Theorem \ref{Geology}, there is a path from $S_0$ to $S_n$ in $\Teich(S)$ following an earthquake line. The first step then consists in proving that along every earthquake paths, the length  of "most"  curves on $S_0$ are increasing. This will imply that the  Poincaré's series over these curves has a decreasing  critical exponent. 
Consider the following function,
$$\begin{array}{cccc}
f :  & \MLam_1(S_0) \times \GC & \tv & \R \\
      &(\Lam,\nu)& \mapsto & \frac{i(\EQ{\Lam}{1}{m_0},\nu)}{i(L_0,\nu)}
\end{array}$$
 which is continuous, since earthquakes and intersection are continuous, Theorems \ref{continuité des tremblement de terre} and  \ref{th - def intersection  et l'intersection est continue}.
 
And where $m_0$ is the metric on $S_0$, $\MLam_1(S_0)$ is the set of laminations of $m_0$-length 1 and $L_0$ is the Liouville current associated to $m_0$. The set $\MLam_1(S_0)$ is compact hence $g(\nu) := \min_{\Lam} f(\Lam,\nu)$ is well defined and continuous.

The  compactness of $\MLam_1(S_0)$   implies also the existence of $\Lam_0 \in \MLam_1(S_0) $  such that  $g(L_0) =\min_{\Lam} f(\Lam,L_0) = \frac{i(\EQ{\Lam_0}{1}{m_0},L_0)}{i(L_0,L_0)} $. Remark that  $\EQ{\Lam_0}{1}{m_0} \neq m_0$ since $\Lam_0$ is not the trivial lamination, hence by Corollary \ref{l'intersction est plus grande que } it follows that $g(L_0)>1$. Let $\epsilon >0$ such that $g(L_0) =1+\epsilon$.
Recall  that for a geodesic current $\mu$ we denote by $\widehat{\mu}$ its corresponding $\varphi_t$-invariant probability obtained by the product of $\mu$ and the length along fibers. 
Let $\U$  be the neighbourhood of $\widehat{L_0}$ in $\Pants$  defined by $\U := \left\{ \hat{\mu} \in \Pants \, , \, |g(\mu) -g(L_0)|<\epsilon/2 \right\}$, and consider the set
$$\C_\U := \left\{ c \in \C \, , \, \frac{\hat{c}}{\ell_0(c)} \in \U\right\}.$$
By definition, if $c$ is in $\C_\U$,   it satisfies, $g(c)-g(L_0)>-\epsilon/2$, that is $\min \frac{i(\EQ{\Lam}{1}{m_0},c)}{i(m_0,c)}> 1+\epsilon/2$. Equivalently, we have for any $\Lam\in \MLam_1(S_0)$
$$ \frac{\ell(\EQ{\Lam}{1}{m_0} (c)) }{\ell_0(c)}>1+\epsilon/2.$$
By convexity of length along earthquake, Theorem \ref{Convexity along earthquake}, we have for all $\Lam\in \MLam_1(S_0)$,  all $t>1$ and all  $c\in \C_\U$, 
$$\ell(\EQ{\Lam}{t}{m_0} (c) ) \geq  (1+t \epsilon/2) \ell_0(c).$$
This last inequality is what we mathematically meant, by saying that the length of "most" curves are increasing. Indeed Kifer's result, Theorem \ref{Large deviation au niveau des groupes}, says that $\Card \C_\U^c \cap \C(k)$ has smaller exponential growth than $\Card \C(k)$. 

Let's look to the Poincaré series associated to $(S_0,S_n)$,
$$P_{0,n}(s)   := \sum_{c\in \C} e^{-s(\ell_0(c) +\ell_n(c))}.$$
 By Corollary \ref{Geology}, there exists $\Lam_n \in \MLam_1(S_0)$ and $t_n$ such that $\EQ{\Lam_n}{t_n}{m_0} =m_n$, hence
$$P_{0,n}(s)   = \sum_{c\in \C} e^{-s(\ell_0(c) +\ell(\EQ{\Lam_n}{t_n}{m_0} (c) ) )}.$$
Now we divide this sum into two parts, the curves which are in $\C_\U$ and the others. 

We claim that the critical exponent of the Poincaré series associated to the curves which are in $\C_\U$, tends to $0$. Indeed, 
$S_n$ goes out of every compacts of $\Teich(S)$, hence by continuity of $(t,\Lam)\tv \EQ{\Lam}{t}{m_0}$, the sequences $t_n$ must tends to infinity, in particular is greater than $1$. Therefore: 

$$\sum_{c\in \C_\U} e^{-s(\ell_0(c) +l(\EQ{\Lam_n}{t_n}{m_0} (c) ) )} <\sum_{c\in \C_\U} e^{-s(\ell_0(c) (1+\epsilon t_n/2)  )}$$

It implies that the series $\sum_{c\in \C_\U} e^{-s(\ell_0(c) +\ell(\EQ{\Lam_n}{t_n}{m_0} (c) ) )} $ has critical exponent strictly less than $\frac{1}{1+\epsilon t_n/2}$. The fact that $t_n\tv \infty$ finishes the proof of the claim. 

%has of course a critical exponent stricly less than $1/2$ if $t_n>\frac{2}{\epsilon}$. But since we proved that $S_n$ must go out of any compact of $\Teich(S)$ this must happen for $n$ sufficiently large. Indeed since $\MLam_1(S_0)$ is compact, if there is a $K>0$ such that $t_n<K$ then  $\EQ{\Lam_n}{t_n}{m_0}$ stay in a compact of $\Teich(S)$. 

In the second step of the proof we get an upper bound on the critical exponent of the Poincaré sum over $\C_\U ^c$. This relies on Kifer's Theorem and the directional critical exponent since we want that the length of the geodesics on the second factor  to be almost proportional to the length on the first.

Let $\lambda_n$ be the slope for which the directional critical exponent $\delta (\lambda_n) $ between $S_0$ and $S_n$ is maximal. Let $u>0$ and  $A_n (u) := \left\{ c\in \C \, , \, \left| \frac{\ell_n(c)}{\ell_0(c)}- \lambda_n \right| < u \right\}.$ By Theorem \ref{rigidite de l exposant critique directionel} and Corollary \ref{l'exposant critique directionnel est maximum  quand il est egal a deltaM} for any $u>0$ the critical exponent of 
$\sum_{c\in A_n(u)} e^{-s(\ell_0(c) +\ell_n(c))}$
is  equal to the critical exponent of the whole Poincaré series $P_{0,n}$, that is to say $\delta(S_0,S_n)$. Hence $\delta(S_0,S_n)$ is equal to the maximum of the critical exponent of the two following series : 
\begin{itemize}
\item$\sum_{c\in \C_\U \cap A_n(u)} e^{-s(\ell_0(c) +\ell_n(c))}$
\item $\sum_{c\in \C_\U^c \cap A_n(u)} e^{-s(\ell_0(c) +\ell_n(c))}$
\end{itemize} 
We saw  in the previous claim that the critical exponent of the first one goes to $0$, in particular it is strictly less that $\delta(S_0,S_n)$ for $n$ sufficiently large, since we suppose that $\delta(S_0,S_n) \tv 1/2$. Hence $\delta(S_0,S_n)$ is equal to the critical exponent of $\sum_{c\in \C_\U^c \cap A_n(u)} e^{-s(\ell_0(c) +\ell_n(c))}$. The end of the proof consists to show that this exponent cannot tends to $1/2$.

 For $c \in A_n(u)$, $\ell_n(c)\geq \ell_0(c)(\lambda_n -u),$ we then have
 $$\sum_{c\in \C_\U^c \cap A_n(u)} e^{-s(\ell_0(c) +\ell_n(c))} \leq \sum_{c\in \C_\U^c \cap A_n(u)} e^{-s\ell_0(c)(1+\lambda_n -u)}.$$
 By Theorem \ref{Large deviation au niveau des groupes}, the complementary set of $\C_\U$ is "small" for the metric $m_0$, that is there exists $\eta>0$ and $M>0$  such that $\Card (\C_\U^c \cap \C(T))\leq M \Card \left( \C(T) \right) e^{-\eta T} \leq M'e^{(1-\eta)T}.$
Let $\C_\U^c(k) = \{c \in  \C_\U^c \text{ and } \, \ell_0(c)\in [k,k+1) \}$.
\begin{eqnarray*}
 \sum_{c\in \C_\U^c \cap A_n(u) } e^{-s\ell_0(c)(1+\lambda_n -u)} &\leq & 
\sum_{k \in \N} \, \sum_{c\in \C_\U^c(k) }e^{-s\ell_0(c)(1+\lambda_n -u)} \\
&\leq & 
\sum_{k \in \N} \, \sum_{c\in \C_\U^c (k)} e^{-s k(1+\lambda_n -u)}. \\
\end{eqnarray*}
And since $\C_\U^c (k) \subset \C_\U^c \cap \C(k)$,
\begin{eqnarray*}
\sum_{c\in \C_\U^c \cap A_n(u) } e^{-s\ell_0(c)(1+\lambda_n -u)}   &\leq & 
\sum_{k \in  \N} \sum_{c\in \C_\U^c \cap \C(k)} e^{-s k(1+\lambda_n -u)} \\
&\leq & 
\sum_{k \in \N} \Card\{c\in \C_\U^c \cap \C(k)\} e^{-s k(1+\lambda_n -u)} \\
&\leq & 
\sum_{k \in \N} M' e^{(1-\eta )k} e^{-s k(1+\lambda_n -u)} \\
&\leq & 
\sum_{k \in \N} M' e^{k (1-\eta -s(1+\lambda_n -u))}. \\
\end{eqnarray*} 
 This finally implies that $\sum_{c\in \C_\U^c \cap A_n(u)} e^{-s\ell_0(c)(1+\lambda_n -u)}$ has critical exponent less than $\frac{1-\eta}{1+\lambda_n -u}$. Combined to the fact that the critical exponent of $P_{0,n} $ is less or equal to this last one, and taking the limit in $u\tv 0$, we get :
 $$\delta(S_0,S_n) \leq \frac{1-\eta}{1+\lambda_n }.$$
 Suppose that $\delta(S_0,S_n) \tv 1/2$, then by Corollary \ref{Si delta tends vers 1 lambda tends vers 1} we deduce $\lambda_n \tv 1$. Taking the limit $n\tv \infty $, we get $1/2 \leq \frac{1-\eta}{2}$ which is absurd. This concludes the proof. 
\end{proof}

Next we look at what happens if both surfaces change. Recall  that $\Teich_\epsilon (S)$ is the thick part of Teichmüller space: surfaces for which no closed geodesic has length less than $\epsilon$. The mapping class group preserves the length spectrum, hence acts on the thick part of Teichmüller. Recall the Mumford compactness theorem. 
\begin{theorem}\cite{Mumford}
For any $\epsilon>0$, $\Teich_\epsilon(S)/MCG$  is compact. 
\end{theorem}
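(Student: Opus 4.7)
The plan is to exhibit a compact fundamental-like region in $\Teich(S)$ that surjects onto $\Teich_\epsilon(S)/MCG$ via Fenchel--Nielsen coordinates. The two non-trivial ingredients are Bers' theorem on short pants decompositions and the finiteness, up to $MCG$, of topological types of pants decompositions of $S$.

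First I would invoke Bers' theorem: there is a constant $B(g)$ depending only on the genus such that every hyperbolic metric $m$ on $S$ admits a pants decomposition $\Pants(m) = \{\g_1,\dots,\g_{3g-3}\}$ with $\ell_m(\g_i) \leq B(g)$ for all $i$. Since any two pants decompositions of $S$ are related by a diffeomorphism, and the combinatorial/topological type of a pants decomposition is determined by a finite piece of data (the dual trivalent graph together with a marking), there are only finitely many orbits of pants decompositions under $MCG$. Fixing a reference pants decomposition $\Pants_0$, for every $m \in \Teich_\epsilon(S)$ we may therefore choose $\varphi \in MCG$ sending $\Pants(m)$ to $\Pants_0$, so that the cut curves of $\Pants_0$ have $\varphi \cdot m$-length at most $B(g)$.

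Next I would read off Fenchel--Nielsen coordinates $(\ell_i,\tau_i)_{i=1}^{3g-3}$ of $\varphi \cdot m$ relative to $\Pants_0$. Because $MCG$ preserves the length spectrum, $\varphi \cdot m$ is still in $\Teich_\epsilon(S)$, so $\ell_i \geq \epsilon$; combined with the Bers bound this yields $\ell_i \in [\epsilon,B(g)]$. The Dehn twist along $\g_i$ lies in $MCG$ and acts on the $i$-th twist coordinate by $\tau_i \mapsto \tau_i + \ell_i$, so after one more $MCG$-modification we may also arrange $\tau_i \in [0,\ell_i] \subset [0,B(g)]$. Hence every class in $\Teich_\epsilon(S)/MCG$ has a representative with Fenchel--Nielsen coordinates lying in the compact box
\[ K \;=\; [\epsilon,B(g)]^{3g-3} \times [0,B(g)]^{3g-3} \;\subset\; \R^{6g-6}. \]

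Finally, since Fenchel--Nielsen coordinates give a homeomorphism $\Teich(S) \simeq \R^{3g-3}_{>0} \times \R^{3g-3}$, the set $K$ injects as a compact subset of $\Teich(S)$, and the composition $K \hookrightarrow \Teich_\epsilon(S) \twoheadrightarrow \Teich_\epsilon(S)/MCG$ is continuous and surjective by the previous step; hence $\Teich_\epsilon(S)/MCG$ is compact. The main obstacle (and the only substantive input) is Bers' theorem guaranteeing a uniform upper bound on systolic pants decompositions; everything else is a bookkeeping argument about normalizing by $MCG$ in Fenchel--Nielsen coordinates.
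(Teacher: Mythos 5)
The paper does not prove this statement: it is quoted verbatim from Mumford's note \emph{A remark on Mahler's compactness theorem}, whose argument realises a point of moduli space as a lattice in $\PSL_2(\R)$ and deduces compactness from a Mahler/Chabauty-type criterion for discrete subgroups with injectivity radius bounded below. Your route is the now-standard alternative (Bers' constant plus Fenchel--Nielsen normalisation, as in Farb--Margalit), and it is sound in outline: Bers' theorem is indeed the only substantive input, and the reduction of twists by Dehn twists along the cut curves is correct. What each approach buys: Mumford's is shorter if one already has the Chabauty machinery and generalises readily to lattices in other Lie groups; yours is elementary and self-contained modulo Bers' theorem, and it produces an explicit compact fundamental-like box.

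Two points in your write-up need repair, though neither is fatal. First, it is \emph{not} true that any two pants decompositions of $S$ are related by a diffeomorphism: a diffeomorphism preserves the dual trivalent graph, and already in genus $2$ there are two non-isomorphic such graphs. The correct statement is that there are finitely many $MCG$-orbits of pants decompositions (one per isomorphism class of dual graph, by the change-of-coordinates principle), so you must fix finitely many reference decompositions $\Pants_0^{(1)},\dots,\Pants_0^{(N)}$ and take $K$ to be the corresponding finite union of boxes; compactness survives. Second, the inclusion $K\hookrightarrow \Teich_\epsilon(S)$ is unjustified: a surface whose cut curves have length in $[\epsilon,B(g)]$ may still carry a transverse geodesic shorter than $\epsilon$ when $\epsilon$ is not small. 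Replace $K$ by $K\cap \Teich_\epsilon(S)$, which is compact because $\Teich_\epsilon(S)$ is closed in $\Teich(S)$ (the systole is continuous), and which still surjects onto $\Teich_\epsilon(S)/MCG$ since your normalisation keeps representatives inside the $MCG$-invariant set $\Teich_\epsilon(S)$.
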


So if a surface stays in $\Teich_\epsilon(S)$, we can send it in a fixed compact set by the mapping class group. This remark with the previous Theorem allows us to show : 
\begin{corollaire}\label{cor - isolation 2 surfaces}
Let $S_n$ and $S_n'$ be two sequences of hyperbolic surfaces. Suppose that at least  one the sequences stays in $\Teich_\epsilon(S)$ for some $\epsilon$. Then $\lim_{n\tv\infty}\delta(S_n,S'_n)= 1/2$  if and only if $\lim_{n\tv\infty} d(S_n,S_n')= 0$. 
\end{corollaire}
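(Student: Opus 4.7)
} Without loss of generality I will assume that it is the sequence $(S_n)$ that stays in $\Teich_\epsilon(S)$. The easy direction is formally the same as in Theorem \ref{th - isolation 1 surface}: if $d_{Th}(S_n,S_n')\tv 0$, then by definition of the symmetric Thurston distance, for every $\eta>0$ and $n$ large enough, $(1-\eta)\ell_{S_n}(c)\leq \ell_{S_n'}(c)\leq (1+\eta)\ell_{S_n}(c)$ uniformly in $c\in \C$; comparing the Poincaré series of $(S_n,S_n')$ with the one of $(S_n,S_n)$ immediately gives $\delta(S_n,S_n')\geq \frac{1}{2+\eta}$, while the upper bound $1/2$ is always true by Theorem \ref{th - rigidité des GHMC}.

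For the converse, the plan is to use the mapping class group to reduce to Theorem \ref{th - isolation 1 surface}. Both the critical exponent $\delta$ and the symmetric Thurston distance $d_{Th}$ are invariant under the diagonal action of $MCG$ on $\Teich(S)\times \Teich(S)$, since this action preserves the length spectra. By Mumford's compactness theorem, the quotient $\Teich_\epsilon(S)/MCG$ is compact, so one can pick $\phi_n\in MCG$ such that $\widetilde{S}_n:=\phi_n\cdot S_n$ lies in a fixed compact subset $K\subset \Teich_\epsilon(S)$. Setting $\widetilde{S}_n':=\phi_n\cdot S_n'$, I have
\[
\delta(\widetilde{S}_n,\widetilde{S}_n')=\delta(S_n,S_n')\tv \tfrac{1}{2},\qquad d_{Th}(\widetilde{S}_n,\widetilde{S}_n')=d_{Th}(S_n,S_n'),
\]
so it suffices to prove $d_{Th}(\widetilde{S}_n,\widetilde{S}_n')\tv 0$. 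Up to extracting a subsequence, $\widetilde{S}_n$ converges in $K$ to some $S_\infty\in \Teich_\epsilon(S)$; in particular $d_{Th}(\widetilde{S}_n,S_\infty)\tv 0$.

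Now I transfer the convergence of $\delta$ to the fixed point $S_\infty$. Since $d_{Th}(\widetilde{S}_n,S_\infty)\tv 0$, for every $\eta>0$ and $n$ large one has $(1-\eta)\ell_{S_\infty}(c)\leq \ell_{\widetilde{S}_n}(c)\leq (1+\eta)\ell_{S_\infty}(c)$ uniformly in $c$. Inserting this in the Poincaré series of $(\widetilde{S}_n,\widetilde{S}_n')$ gives
\[
\tfrac{1}{1+\eta}\,\delta(\widetilde{S}_n,\widetilde{S}_n')\;\leq\; \delta(S_\infty,\widetilde{S}_n')\;\leq\; \tfrac{1}{1-\eta}\,\delta(\widetilde{S}_n,\widetilde{S}_n'),
\]
so $\delta(S_\infty,\widetilde{S}_n')\tv 1/2$. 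Applying Theorem \ref{th - isolation 1 surface} to the fixed surface $S_0:=S_\infty$ and the sequence $(\widetilde{S}_n')$ yields $d_{Th}(S_\infty,\widetilde{S}_n')\tv 0$. Finally, the triangle inequality
\[
d_{Th}(\widetilde{S}_n,\widetilde{S}_n')\leq d_{Th}(\widetilde{S}_n,S_\infty)+d_{Th}(S_\infty,\widetilde{S}_n')\tv 0
\]
concludes. The argument works just as well if it is $(S_n')$ that stays in the thick part, by symmetry of the hypothesis.

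The main (and really only) obstacle is the compatibility between uniform Thurston convergence and the critical exponent when translating the problem through $MCG$: the invariance of both $\delta$ and $d_{Th}$ under the diagonal $MCG$-action must be used together with Mumford to reduce to a fixed basepoint, and the uniform comparability of length spectra of $\widetilde S_n$ and $S_\infty$ must be used to transfer the hypothesis $\delta(\widetilde S_n,\widetilde S_n')\tv 1/2$ into $\delta(S_\infty,\widetilde S_n')\tv 1/2$. Once these two points are set, Theorem \ref{th - isolation 1 surface} does all the actual work.
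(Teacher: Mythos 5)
Your proposal is correct and follows essentially the same route as the paper: the easy direction by continuity of $\delta$ in the Thurston metric, and the converse by using Mumford's compactness theorem and the diagonal $MCG$-invariance of $\delta$ and $d_{Th}$ to move $S_n$ into a fixed compact set, extracting a limit $S_\infty$, transferring $\delta\tv 1/2$ to the pair $(S_\infty,\widetilde S_n')$ via uniform comparability of length spectra, and then invoking Theorem \ref{th - isolation 1 surface}. Your write-up is if anything slightly more explicit than the paper's (the concluding triangle inequality is spelled out), and the passage through a subsequence is handled at the same level of rigor as in the original.
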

Or in the Lorentzian language 
\begin{theorem}
Let $(M_n)$  be a sequence of GHMC manifolds, parametrized by $(S_n,S_n')$. If one of the sequences stays in the thick part of $\Teich(S)$ then
$$\lim_{n\tv \infty}\delta_{Lor} (M_n) =1 \quad \text{iff} \quad \lim_{n\tv \infty} d_{Th}(S_n,S_n') =0.$$
%where $d_{Th}$ is the (symmetrized) Thurston distance on Teichmüller space. 
\end{theorem}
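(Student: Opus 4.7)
The plan is to reduce this corollary to Theorem \ref{th - isolation 1 surface} via the diagonal action of the mapping class group together with Mumford's compactness. Both the critical exponent $\delta(\cdot,\cdot)$ and the (symmetrised) Thurston distance $d_{Th}$ are invariant under the diagonal action of $\mathrm{MCG}(S)$: every $\phi\in\mathrm{MCG}(S)$ induces a bijection on $\C$ with $\ell_{\phi\cdot S}(\phi\cdot c)=\ell_S(c)$, so both the Poincaré series defining $\delta$ and the supremum defining $d_{Th}$ are preserved. Suppose, after relabelling, that $(S_n)$ stays in $\Teich_\epsilon(S)$; by Mumford's theorem the quotient $\Teich_\epsilon(S)/\mathrm{MCG}$ is compact, so there exist $\phi_n\in\mathrm{MCG}$ such that $\phi_n\cdot S_n$ lies in a fixed compact subset of $\Teich(S)$. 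Replacing $(S_n,S_n')$ by $(\phi_n\cdot S_n,\phi_n\cdot S_n')$ (which alters neither $\delta$ nor $d_{Th}$) and passing to a subsequence, I may assume that $S_n$ converges in $\Teich(S)$ to some $S_\infty\in\Teich_\epsilon(S)$.

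The implication $d_{Th}(S_n,S_n')\to 0\Rightarrow \delta_{Lor}(M_n)\to 1$ needs no thick-part hypothesis: by definition of the Thurston distance, for every $\eta>0$ and $n$ large one has $(1-\eta)\ell_n(c)\leq\ell_n'(c)\leq(1+\eta)\ell_n(c)$ uniformly in $c$, giving
\[
\sum_{c\in\C}e^{-s(\ell_n(c)+\ell_n'(c))}\geq\sum_{c\in\C}e^{-s(2+\eta)\ell_n(c)},
\]
hence $\delta(S_n,S_n')\geq 1/(2+\eta)$. Combined with the universal bound $\delta(\cdot,\cdot)\leq 1/2$ of Theorem \ref{th - rigidité des GHMC}, this forces $\delta(S_n,S_n')\to 1/2$, i.e.\ $\delta_{Lor}(M_n)\to 1$.

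For the converse, assume $\delta(S_n,S_n')\to 1/2$ with $S_n\to S_\infty$ as above. The key step is a basepoint change: since $d_{Th}(S_n,S_\infty)\to 0$, for every $\eta>0$ and $n$ large we have $e^{-\eta}\ell_\infty(c)\leq\ell_n(c)\leq e^{\eta}\ell_\infty(c)$ uniformly in $c\in\C$, whence
\[
e^{-\eta}\bigl(\ell_\infty(c)+\ell_n'(c)\bigr)\leq \ell_n(c)+\ell_n'(c)\leq e^{\eta}\bigl(\ell_\infty(c)+\ell_n'(c)\bigr),
\]
so a direct Poincaré-series comparison yields $e^{-\eta}\delta(S_\infty,S_n')\leq\delta(S_n,S_n')\leq e^{\eta}\delta(S_\infty,S_n')$. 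Letting $n\to\infty$ and then $\eta\to 0$ forces $\delta(S_\infty,S_n')\to 1/2$. Applying Theorem \ref{th - isolation 1 surface} with the fixed surface $S_\infty$ and the moving sequence $(S_n')$ now provides $d_{Th}(S_\infty,S_n')\to 0$, so $S_n'\to S_\infty$, and the triangle inequality closes the argument: $d_{Th}(S_n,S_n')\leq d_{Th}(S_n,S_\infty)+d_{Th}(S_\infty,S_n')\to 0$.

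There is no real obstacle here; all the hard dynamical work is already in Theorem \ref{th - isolation 1 surface}. The corollary is essentially a compactness-plus-continuity argument, and the only point requiring care is the length-ratio sandwich verifying that changing one surface of a pair by a small Thurston distance produces only a comparable change in critical exponent, which allows the fixed-basepoint isolation statement to be imported into the moving-but-precompact setting.
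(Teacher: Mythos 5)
Your proposal is correct and follows essentially the same route as the paper: diagonal mapping-class-group invariance of $\delta$ and $d_{Th}$, Mumford compactness to move $S_n$ into a fixed compact set and extract a convergent subsequence, a length-ratio sandwich to replace $\delta(S_n,S_n')$ by $\delta(S_\infty,S_n')$, and then an appeal to Theorem \ref{th - isolation 1 surface}. The only (shared, standard) point left implicit is that the subsequence extraction must be upgraded to the full sequence by the usual ``every subsequence has a further subsequence'' argument.
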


\begin{proof}
We are going to prove that if  $\lim_{n\tv\infty}\delta(S_n,S'_n)= 1/2$  then $\lim_{n\tv\infty} d(S_n,S_n')= 0$, the other implication is again a consequence of the continuity of the critical exponent for the Thurston metric. By hypothesis we can suppose that $S_n$ stays in $\Teich_\epsilon(S)$. Hence there exists a compact $K$ in $\Teich(S)$ and $D_n$ in the mapping class group, such that $D_n (S_n) \in K$ for every $n$. Hence we can suppose that $D_n(S_n)$ converges to $S_\infty\in K$. 
 
% If $\phi_n(m_n')$ stay in a compact then as in the proof of the last theorem, the rigidity implies that if the critical exponent goes to $1/2$ then $S_{\phi_n(m_n')}$ converges to $S_\infty$ and hence the distance between $S_n$ and $S_n'$ tends to 0. Hence we can suppose that $\phi_n(m_n')$ goes out of every compact of $\Teich(S)$. The precedent proof shows that there exists $\eta_\infty$  which depends of $S_\infty$ and $\lambda_{\infty, n} $ which depends on $S_n' $ and $S_\infty$ such that for every $n$ sufficitienly large, 
%$$\delta_{S_\infty, \phi_n(m_n')} \leq \frac{1-\eta_\infty}{1+\lambda_{\infty,n'}}.$$
%But since $\phi_n(m_n)$ converges to $S_\infty$, for every  $\epsilon>0$ and $n$ sufficiently large $\frac{\ell_{\phi_n(m_n)} (c)}{\ell\infty(c)}\geq 1- \epsilon$. Hence, 
%\begin{eqnarray*}
%P_{m_n,m_n'}(s)=P_{\phi_n(m_n),\phi_n(m_n')} (s)&\leq & \sum_{c\in \C} e^{-s\left( (1-\epsilon)\ell\infty (c) +\ell_{\phi_n(m_n')}(c)  \right)} \\
%&\leq &  \sum_{c\in \C} e^{-s (1-\epsilon) \left( \ell\infty (c) +\ell_{\phi_n(m_n')}(c)  \right)}.
%\end{eqnarray*} 
%Hence for critical exponent we have 
%$$\delta_{m_n,m_n'} = \delta_{\phi_n(m_n),\phi_n(m_n')} \leq \frac{1-\eta_\infty}{(1-\epsilon)(1+\lambda_{\infty,n'})}.$$
%If $\delta_{m_n,m_n'} \tv 1/2$ then $\lambda_{n,n'} \tv 1$, and by continuity of critical exponent with slope, $\lambda_{\infty,n'}\tv 1$. Hence taking the limit we have that 
%$$\frac{1}{2} \leq \frac{1-\eta_\infty}{2(1-\epsilon)}.$$
%Taking $\epsilon $ sufficiently small show that it is absurd, hence the critical exponent cannot tends to $1/2$.

Let $u>0$ for every $n$ sufficiently large, 
$$1-u\leq \frac{\ell_{D_n(S_n)} (c)}{\ell_\infty(c)}\leq 1+ u$$
Hence  the critical exponents satisfies :
$$(1-u)\delta_{S_\infty, D_n(S_n')} \leq \delta _{D_n(S_n),D_n(S_n')} \leq (1+u)\delta_{S_\infty, D_n(S_n')}.$$
Now since the mapping class group doesn't change the length spectrum, the critical exponent of $(S_n,S_n')$ is equal to the critical exponent of $(D_n(S_n),D_n(S_n'))$. If  $\lim_{n\tv\infty}\delta(S_n,S'_n)= 1/2$  then  $\lim_{n\tv\infty} \delta (D_n(S_n),D_n(S_n'))= 1/2$ and by the previous inequalities it follows that $\lim_{n\tv\infty} \delta(S_\infty, D_n(S_n')) = 1/2$ since $u$ is arbitrary small. By Theorem  \ref{th - isolation 1 surface} this implies that $\lim_{n\tv\infty} d(D_n(S_n'), S_\infty)=0$, which finally implies that $\lim_{n\tv\infty}d(S_n,S_n') = 0$.
\end{proof}

\end{document}